\newcommand{\aosversion}[2]{\iftoggle{vaos}{#1}{#2}}
\newcommand{\sgn}{\mathop{\mathrm{sign}}}
\newcommand{\nn}{\nonumber}
\def\##1\#{\begin{align}#1\end{align}}
\def\$#1\${\begin{align*}#1\end{align*}}
\newcommand{\blue}[1]{\textcolor{blue}{#1}}
\newcommand{\bfsym}[1]{\ensuremath{\boldsymbol{#1}}}
\def \balpha   {\bfsym{\alpha}}       \def \bbeta    {\bfsym{\beta}}
\newcommand{\Rom}[1]{\text{\uppercase\expandafter{\romannumeral #1\relax}}}
\definecolor{revisecolor}{HTML}{000000}
\definecolor{myred}{HTML}{ae1908}
\definecolor{myblue}{HTML}{05348b}
\definecolor{myorange}{HTML}{ec813b}
\definecolor{mylightblue}{HTML}{9acdc4}
\newcommand{\myred}[1]{{\color{myred} #1}}
\newcommand{\myblue}[1]{{\color{myblue} #1}}
\newcommand{\myorange}[1]{{\color{myorange} #1}}
\numberwithin{equation}{section}
\begin{document}

\title{How do noise tails impact on deep ReLU networks?
}

\author{Jianqing Fan\thanks{Department of Operations Research and Financial Engineering, Princeton University, Princeton, NJ 08544. E-mail: jqfan@princeton.edu. Fan's research was supported by ONR grant N00014-19-1-2120 and the NSF grants DMS-1662139, DMS-1712591, DMS-2052926, and DMS-2053832.},~~~Yihong Gu\thanks{Department of Operations Research and Financial Engineering, Princeton University, Princeton, NJ 08544.  E-mail: yihongg@princeton.edu.}~~\mbox{ and }~Wen-Xin Zhou\thanks{Department of Mathematical Sciences,
University of California, San Diego, La Jolla, CA 92093. E-mail: wez243@ucsd.edu. Zhou's research was supported by the NSF grant DMS-2113409.} }

\date{Princeton University and University of California, San Diego}

\maketitle

\vspace{-0.25in}

\def\todo#1{\textcolor{purple}{[TODO] #1}}
\def\r#1{\textcolor{red}{#1}}
\def\b#1{\textcolor{blue}{#1}}

\newcommand{\revise}[1]{\textcolor{revisecolor}{#1}}

\begin{abstract}
This paper investigates the stability of deep ReLU neural networks for nonparametric regression under the assumption that the noise has only a finite $p$-th moment. We unveil how the optimal rate of convergence depends on $p$, the degree of smoothness and the intrinsic dimension in a class of nonparametric regression functions with hierarchical composition structure when deep ReLU neural networks and an adaptively chosen Huber loss are used.  This optimal rate of convergence cannot be obtained by the ordinary least squares but can be achieved by the Huber loss with a properly chosen parameter that adapts to the sample size, smoothness, and moment parameters. A concentration inequality for the adaptive Huber ReLU neural network estimators with allowable optimization errors is also derived.  To establish a matching lower bound within the class of neural network estimators using the Huber loss, we employ a different strategy from the traditional route:  constructing a deep ReLU network estimator that has a smaller empirical loss than the true function and the difference between these two functions furnishes a low bound.  This step is related to the Huberization bias, yet more critically to the approximability of  deep ReLU networks. As a result, we also contribute some new results on the approximation theory of deep ReLU neural networks.
\end{abstract}

\noindent{\bf Keywords}: Robustness, Heavy Tails, Optimal Rates, Approximability of ReLU networks, Composition of functions.

\section{Introduction}
\label{sec:1} 

Consider a nonparametric regression model $Y = f_0(X) + \varepsilon$, where $Y$ is the response variable, $X$ is a $d$-dimensional covariate vector,  $f_0: \RR^d \to \RR$ is an unknown function of interest, and the observation noise $\varepsilon$ satisfies $\mathbb{E}[\varepsilon|X]=0$. A fundamental statistical problem is to estimate the unknown regression function $f_0$ based on a sample of $n$ independent observations $\{(X_i, Y_i)\}_{i=1}^n$ from the above model.
From the early 1960s to the late 1990s, there has been a rich development in nonparametric regression methods, including kernel and local polynomial regressions \citep{nadaraya1964estimating,watson1964smooth,haberman1977maximum,cleveland1979robust,fan1992design,fan1993local}, spline-based methods \citep{friedman1991multivariate}, tree-based methods \citep{breiman2017classification}, regression in reproducing kernel Hilbert  spaces (RKHS) \citep{wahba1990spline} and (shallow) neural network regression \citep{barron1994approximation, mccaffrey1994convergence}, to name a few. We refer to \cite{fan1996local}, \cite{gyorfi2002distribution} and \cite{tsybakov2009nonparametric} for a comprehensive exposition on nonparametric regression. 

With the development of computational capability that makes training deep neural networks possible and efficient, deep neural networks have shown superior performance over classical methods in many machine learning tasks \citep{lecun2015deep}. From a statistical view, 
a key reason for the great success of neural networks is their approximation ability in the sense that many complex nonlinear functions (of several variables) can be effectively approximated by neural networks.
The well-known {\it universal approximation theorem} \citep{cybenko1989approximation, hornik1991approximation, barron1993universal} shows that a neural network with one hidden layer can approximate any continuous function up to any specified precision.
To analyze the statistical properties of neural network
estimators, it is necessary to derive nonasymptotic approximation error bounds for neural networks. For fully-connected deep neural networks with ReLU activation function (ReLU-DNN), \cite{telgarsky2016benefits} demonstrated the great benefits of using deep neural networks. As an illustrative example, a tooth function with $\mathcal{O}(2^k)$ oscillations can be realized as a ReLU-DNN with depth $\mathcal{O}(k)$ and width $\mathcal{O}(1)$, and a one-hidden-layer ReLU-DNN but with $\Omega(2^k)$ nodes. 
Since smooth functions can be well approximated by their Taylor series, based on the idea of point fitting and approximating polynomials via tooth function, \cite{yarotsky2017error} proved a near-optimal approximation error (in terms of the number of parameters) for Lipschitz functions.
This idea is widely used in deriving nonasymptotic approximation error bounds for various classes of functions \citep{approxshen2019deep, approxshen2022optimal, approxsuzuki2018adaptivity, approxyarotsky2018optimal, lu2020deep}. Via more delicate point fitting techniques, \cite{lu2020deep} established a nonasymptotic $L_\infty$ approximation error bound for smooth functions using ReLU-DNN with arbitrary depth and width, which is optimal (in terms of width and depth) up to logarithmic factors.

Another important reason for the success of neural networks is their ability to be adaptive to unknown low-dimensional structures.
By employing the compositional nature of the deep neural network and the aforementioned approximation results for smooth functions,  
\cite{bauer2019deep}, \cite{schmidt2020nonparametric} and \cite{kohler2021rate} showed that neural networks could circumvent the curse of dimensionality if the intrinsic dimension $d^*$ of the regression function $f_0$ is much smaller than the input dimension $d$. Specifically, suppose $f_0$ can be represented as a hierarchical composition of several smooth functions, with either a high degree of smoothness condition or low input dimension,  the neural network can automatically adapt to the intrinsic low-dimensional structure without knowing the composition structure explicitly. \cite{schmidt2020nonparametric} also showed that neural network estimates achieve the minimax-optimal rate of convergence when the regression function has such a structure. Furthermore, the deep neural network can also be used to estimate the nonlinear component of a semi-parametric model, which circumvents the curse of dimensionality and facilitates statistical inference on the linear component \citep{farrell2021deep, zhong2022deep}.

The existing results for the least squares ReLU-DNN regression estimates rely on a sub-Gaussian moment condition on the regression error \citep{schmidt2020nonparametric, kohler2021rate}. A natural question is:
\begin{align*}
    \mbox{\emph{Whether the least squares ReLU-DNN estimator achieves the same convergence rate when the}} \\
    \mbox{\emph{error distribution is heavy-tailed, and if not, whether there exists a robust alternative that can.}}
\end{align*}

\subsection{Related Works}

\noindent \textbf{Nonparametric least squares with heavy-tailed errors.} 
When regressing directly over the nonparametric function class to which $f_0$ belongs,  some recent works discuss the effect of heavy-tailed errors on the convergence rate of (constrained) least squares estimator \citep{han2018robustness, han2019convergence, kuchibhotla2019least}. They argue that the convergence rate of least squares estimators may depend on both the complexity of the function class and the order of moments of the regression error. Specifically, for a uniformly bounded function class satisfying a standard ``entropy condition'' with exponent $\alpha \in (0, 2)$, \cite{han2019convergence} showed that the corresponding least squares estimator converges at a rate $\mathcal{O}_{\mathbb{P}}(n^{-1/(2+\alpha)} \vee n^{-\frac{1}{2}(1-1/p)})$ (in $L_2$ error) when $\varepsilon$ has bounded $(p+\epsilon)$-th  ($p\geq 1$) moment and is independent of $X$.
Therefore, for certain function class that is not sufficiently complex, the heavy-tailedness of the errors is the main cause for the least squares estimator to converge at a slower rate (when $p< 1+ 2/\alpha$). Moreover, \cite{han2019convergence} also proved the sharpness of this rate by constructing some non-smooth function classes that witness the worst case rate $\mathcal{O}_{\mathbb{P}}(  n^{-\frac{1}{2}(1-1/p)})$. \cite{kuchibhotla2019least} proved similar results when $\varepsilon$ may also depend on $X$. Specifically, they provided a detailed characterization of the convergence rate when $\mathcal{F}$ is a uniform VC-type function class (indexed by $\alpha \geq 0$) and $p=2$. Note that a ReLU-DNN with fixed depth and width belongs to a parametric function class with finite VC dimension, which corresponds to the case of $\alpha=0$.
In this case, \cite{han2019convergence} showed that the convergence rate is of order $\mathcal{O}_{\PP}(n^{-1/4})$, achieved by a highly non-smooth function class.  \cite{kuchibhotla2019least} claimed that the degree of smoothness for the function class, measured via local envelope function, determines the convergence rate which is $\mathcal{O}_{\PP}(n^{-1/2})$ for the most smooth function class and $\mathcal{O}_{\PP}(n^{-1/4})$ for the most non-smooth class. However, due to the more complex nature of neural network classes, the impact of the tails of the noise on the least squares estimator remains unclear.
\medskip

\noindent \textbf{Robust loss for heavy-tailed errors.} 
To robustify least squares estimates, several robust loss functions have been widely used, including but not limited to the $L_1$ loss, Huber's loss \citep{huber1973robust}, the Cauchy loss and Tukey's biweight loss \citep{beaton1974fitting}.
Originally these robust methods were introduced to guard against outliers in the observations, say under Huber's contamination model. When there is no contamination but the underlying distribution itself is heavy-tailed and skewed,  
\cite{fan2017estimation} and \cite{sun2020adaptive} revisited the Huber regression method and advocated the use of an adaptive robustification parameter $\tau$ for the bias-robustness tradeoff. Via a deviation study, \cite{sun2020adaptive} showed that the adaptive Huber (linear) regression estimator  satisfies sub-Gaussian-type concentration bounds even when the error only has low-order moments. It should be noted that for linear models, both the least squares estimator and its robust alternative admit the same rate of convergence as long as the errors have finite variance, while the advantage of the latter is that it achieves exponential-type deviation bounds even when the error variable does not have exponentially thin tails. The main reason for this is that linear functions of the form $f(x)=\beta^\top x$ not only have simple structures but also are sufficiently smooth if $\|\beta\|_2$ is bounded. For nonparametric models, it is unclear whether a robust regression estimator can achieve a faster convergence rate than least squares estimators when $p\ge 2$.

\medskip
\noindent \textbf{Robust methods for ReLU neural network.} 
The shortcomings of the nonparametric least squares estimators, specifically the lack of robustness, have motivated the development of robust methods {when ReLU-DNN is used} \citep{shen2021deep, shen2021robust, hernan2020quantile, lederer2020risk}. Using deep neural networks, these papers studied nonparametric robust regression with a $\lambda_L$-Lipschitz continuous loss $\rho$, typified by the Huber loss and the check loss \citep{shen2021deep, hernan2020quantile}, and established upper bounds on the excess risk $\mathbb{E}[\rho(\hat{f}(X)-Y)-\rho(f^*(X)-Y)]$ where $f^*$ is the population risk minimizer. \cite{shen2021robust} showed that when the observation noise has bounded $p$-th moment, the empirical risk minimizer $\hat{f}$ satisfies the excess risk bound
\begin{align*}
        \mathbb{E} \big[\rho(Y-\hat{f}(X)) - \rho(Y-f^*(X)) \big] \lesssim \frac{\lambda_L (NL)^2}{n^{1-1/p}} +  \omega^2_{f^*}\big((NL)^{-2/d}\big) 
\end{align*} up to logarithmic factors, where $\omega_{f}(\cdot)$ is the modulus of continuity of function $f$, i.e., $\omega_{f}(\delta) = \sup_{\|x- y\|\le \delta} |f(x) - f(y)|$. These results provide a first glance at the impact of noise tails on regression with ReLU-DNN, but still leave several loopholes as follows: (i) the convergence rate cannot take advantage of the low-dimensional structure of $f_0$ since $f^*$ and $f_0$ are in general not the same; (ii) the convergence rate (under $L_2$ loss) for estimating $f_0$, i.e., $ \mathbb{E}_X|\hat{f}(X)-f_0(X)|^2 $, is still unclear; (iii) the theoretical benefit of using a robust ReLU-DNN estimator is ambiguous from the above result because the obtained convergence rate turns out to be slower than that of the least squares counterpart when $f_0=f^*$ is $(\beta, C)$-smooth; see the discussions in Section~\aosversion{\blue{D.1}}{\ref{subsec:upper-bound-discussion}}.

\subsection{Our contributions}

In this paper, we attempt to address the aforementioned questions by comprehensively analyzing the impact of heavy-tailed noise on the convergence rate of fully-connected ReLU-DNN estimators. Inspired by \cite{fan2017estimation}, we focus on the Huber-type ReLU-DNN estimator, defined as the empirical Huber loss minimizer over the ReLU-DNN function class with robustification parameter $\tau$. {\color{revisecolor} When the noise $\varepsilon$ is heavy-tailed and has uniformly bounded $p$-th moment, we unveil how the $L_2$ error $\|\hat{f}_{n} - f_0\|_2=  \{\EE_X |\hat{f}_{n}(X)- f_0(X)|^2\}^{1/2}$ depends on the smoothness of $f_0$, moment index $p$, and a combination of hyper-parameters, including the network depth $\bar{L}$, width $\bar{N}$ and robustification parameter $\tau$. This further demystifies how the regression function class and the degree of heavy-tailedness jointly impact the convergence rates of both adaptive Huber and least squares ReLU-DNN estimators.

\medskip

\noindent \textbf{A generic upper bound.} We start by establishing a non-asymptotic bound on the $L_2$ error of the Huber ReLU-DNN estimator for any $\bar{N}$, $\bar{L}\geq 3$ and sufficiently large $\tau$. Specifically, we will show in Theorem \ref{thm:generic-bound} that if the noise $\varepsilon$ has bounded $p$-th moment ($p\geq 2$), any approximate (within a given order of optimization error) empirical (Huber) risk minimizer satisfies, up to logarithmic factors, that, for large enough $n$ and $D$,
\begin{align}
    \label{eq:intro-upper}
    \begin{split}
    \mathbb{P}\Bigg[\| \hat{f}_n - f_0 \|_2 \ge D \Bigg\{ \inf_{f\in \mathcal{F}_n} \|f-f_0\|_2 + \frac{1}{\tau^{p-1}} & + \frac{\bar{N}\bar{L}\sqrt{\tau}}{\sqrt{n}} \land \Bigg(\frac{\bar{N}\bar{L}}{\sqrt{n}}\Bigg)^{1-1/p}\Bigg\} \Bigg]  \\
    & \lesssim e^{-(\bar{N}\bar{L})^2 D^2} +\frac{1\{ (\frac{\sqrt{n}}{\bar{N}\bar{L}})^{\frac{2}{p}} = o(\tau)\}}{D^{2p}}.
   	\end{split}
\end{align}
This result applies to the least squares ReLU-DNN estimator by taking $\tau=\infty$. Compared to the oracle-type $L_2$ error bound under sub-Gaussian noise, which takes the form $\delta_{\mathtt{a}} + \delta_{\mathtt{s,g}}$ with approximation error $\delta_{\mathtt{a}} = \inf_{f\in \mathcal{F}_n} \|f-f_0\|_2$ and stochastic error $\delta_{\mathtt{s,g}} = n^{-1/2}\bar{N}\bar{L}$, our result \eqref{eq:intro-upper} depicts how the heavy-tailed noise impacts the $L_2$ error: it first introduces a Huberization bias term $\delta_{\mathtt{b}}=\tau^{1-p}$ when the noise is asymmetric, and then inflates the stochastic error term, leading to $\delta_{\mathtt{s}} = \delta_{\mathtt{s,g}} (\sqrt{\tau}\land \delta_{\mathtt{s,g}}^{-1/p})$. Moreover, the error bound and tail probability go through two phases according to the choice of $\tau$. If $\tau = \mathcal{O}(\delta_{\mathtt{s,g}}^{-2/p})$, the $L_2$ error admits an exponential-type deviation bound and is of order $\delta_{\mathtt{s,g}}\sqrt{\tau}$. On the other hand, if $ \delta_{\mathtt{s,g}}^{-2/p}=o(\tau)$, the error admits a polynomial-type deviation bound with a dominating term $\delta_{\mathtt{s,g}}^{1-1/p}$ that is independent of $\tau$. This is the same as the error bound for the least squares estimator ($\tau=\infty$).

\medskip
\noindent \textbf{Adapting to the low-dimensional structure under heavy-tailed noise.} The above oracle-type inequality indicates that one needs to carefully balance the Huberization bias $\delta_{\mathtt{b}}$, the ReLU-DNN approximation error $\delta_{\mathtt{a}}$ and the stochastic error $\delta_{\mathtt{s}}$ to reach an optimal statistical rate of convergence. As an application of \eqref{eq:intro-upper}, we show that with properly tuned hyper-parameters ($\tau$, $\bar{N}$, $\bar{L}$) depending on $p$ and $\gamma^*$, the intrinsic dimension-adjusted smoothness of the regression function class $\mathcal{H}$, it holds for any $D\geq 1$ that
\begin{align}
\label{eq:intro:error-bound}
    \sup_{f_0\in \mathcal{H}} \mathbb{P}\big(  \|\hat{f}_n - f_0 \|_2 \ge D \delta_n \big) \lesssim \exp  (- n^{c} D^2  ) \qquad  \text{with} \qquad \delta_n \asymp n^{-\frac{ \nu^* \gamma^*}{2\gamma^* + \nu^* } } ,
\end{align} 
where $\nu^*=1-1/(2p-1)$, and $c$ is a positive constant independent of $n$ and $D$. Our results reveal the following two advantages of the \emph{adaptive Huber estimator}.
\begin{itemize}
    \item[(a).] With properly chosen hyper-parameters, the adaptive Huber ReLU-DNN estimator circumvents the curse of dimensionality in the heavy-tailed setting the same way as the least squares estimator does with sub-Gaussian errors: the convergence rate depends only on the intrinsic dimension. 
    \item[(b).] Applying \eqref{eq:intro-upper} also yields an error bound for the least squares ReLU-DNN estimator. In particular, the least squares estimator converges at the rate $\mathcal{O}_{\mathbb{P}} ( n^{-  \nu^{\dagger}  \gamma^* / (2\gamma^* +\nu^{\dagger}) } )$ with $\nu^\dagger = 1- 1/p < \nu^*$. Although both estimators adapt to the low-dimensional structure of $f_0$, the adaptive Huber estimator achieves a faster convergence rate than the least squares estimator under the $p$-th ($p\geq 2$) moment condition. In addition, from a nonasymptotic perspective, the adaptive Huber estimator admits exponential-type deviation bounds whereas the tail probability for the least squares estimator decays polynomially. 
\end{itemize}

Moreover, in the special case where the heavy-tailed error is symmetric, we further show in Theorem \ref{thm:convergence-rate-nn-symmetric} that the Huber ReLU-DNN estimator with a fixed robustification parameter achieves the optimal convergence rate $\mathcal{O}_{\mathbb{P}} ( n^{- \gamma^* / ( 2\gamma^*+1 )} )$, attainable by the least squares estimator only when $\varepsilon$ is sub-Gaussian.

\begin{figure}[t!]
    \centering
    \includegraphics{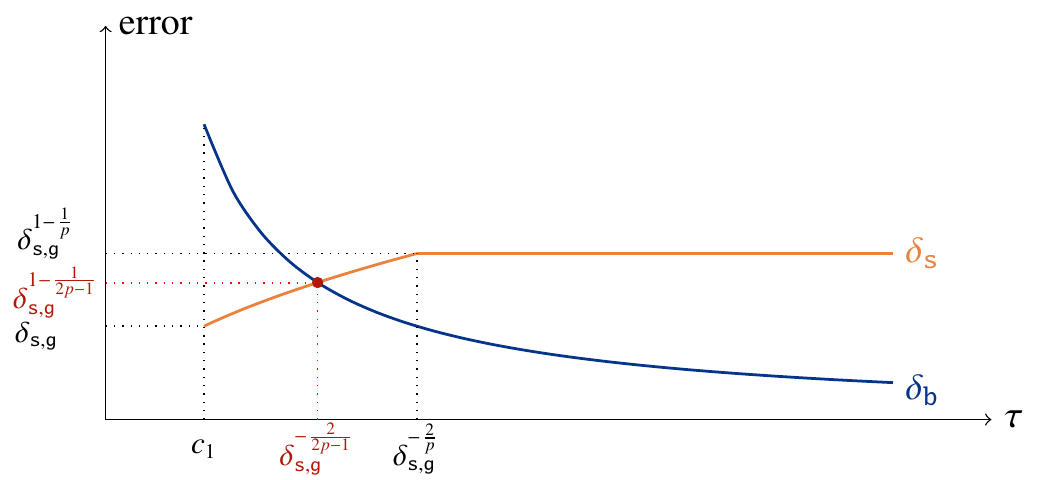}
\caption{A visualization of the bias-robustness trade-off for fixed $\bar{N}$, $\bar{L}$, $n$ and varying $\tau$, provided that it is sufficiently large. 
The bias $\delta_{\mathtt{b}}$ is the dominating term when $\tau\in [0,\delta_{\mathtt{s,g}}^{-2/(2p-1)}]$, and the stochastic error $\delta_{\mathtt{s}}$ dominates when $\tau \geq \delta_{\mathtt{s,g}}^{-2/(2p-1)}$. In particular when $\tau \geq \delta_{\mathtt{s,g}}^{-2/p}$, $\delta_{\mathtt{s}}$ coincides with $\delta_{\mathtt{s},\mathtt{LS}}$, the stochastic error term that corresponds to the least squares estimator.
}
\label{fig:effects-tau}
\end{figure}

\medskip
\noindent \textbf{The tightness of the upper bound.} A natural question is whether the obtained upper bound is sharp. We further show in Theorem \ref{thm:generic-lower-bound} that if the minimax-optimal $L_2$ risk over the function class $\mathcal{F}_0$ is lower bounded by $n^{-\frac{\alpha}{2\alpha+1}}$ , then for any $\bar{N}, \bar{L}, \tau \ge C$, 
\begin{align}
\label{eq:intro-lower}
    \sup_{\substack{f_0 \in \mathcal{F}_0, X\sim \mathrm{Unif} \\ \mathbb{E}[|\varepsilon|^p|X=x]\le 1}} \mathbb{P}\Bigg[ \|\hat{f} - f_0\|_2 \ge C^{-1} \Bigg\{ (\bar{N}\bar{L})^{-2\alpha} + \frac{1}{\tau^{p-1}}+ \delta_{\mathtt{s,g}} (\sqrt{\tau} \land \delta_{\mathtt{s,g}}^{-1/p})\Bigg\} \Bigg] = 1 - o(1) ,
\end{align}
ignoring logarithmic factors. This confirms the tightness of \eqref{eq:intro-upper}. Moreover, combining \eqref{eq:intro-lower} with the minimax-optimal rate associated with class $\mathcal{H}$  indicates that the convergence rates for the adaptive Huber estimator \eqref{eq:intro:error-bound} and the least squares estimator are both sharp. The former is intrinsically more robust to heavy-tailedness as expected.

\medskip
\noindent \textbf{An exact characterization of the stochastic error.} A key ingredient in proving \eqref{eq:intro-upper} and \eqref{eq:intro-lower} is an exact characterization of the stochastic error $\delta_{\mathtt{s}}$ under heavy-tailed noise.  
The upper bound analysis involves a combination of peeling and truncation arguments and strengthens the previous results \citep{shen2021robust, kuchibhotla2019least}. The analysis of the lower bound consists of two novel parts. The first one is an argument that relates the slow stochastic error rate to a specific approximation ability. The second one is the development of ReLU-DNN's approximation ability for fitting outliers in a highly non-smooth manner (Theorem \ref{thm:approx-noise-dim-d}). Such an approximation result plays a key role in attaining the lower bound and is of independent interest. It also uncovers a less favorable aspect of deep ReLU neural networks' superior approximation ability, which makes ReLU-DNN regression vulnerable to outliers and thus less robust.


Moreover, the above sharp characterization indicates that adaptive Huberization always helps in the presence of heavy-tailed noise. As shown in \eqref{eq:intro-upper} and \eqref{eq:intro-lower}, the heavy-tailedness contributes to two terms, $\delta_{\mathtt{b}}$ and $\delta_{\mathtt{s}}$, in the $L_2$ error bound. Compared to the least squares estimator whose stochastic error is $\delta_{\mathtt{s},\mathtt{LS}} = (\delta_{\mathtt{s,g}})^{1-1/p}$, the stochastic error of the Huber estimator with optimally tuned $\tau$ that balances $\delta_{\mathtt{b}}$ and $\delta_{\mathtt{s}}$ is
\begin{align*}
    \delta_{\mathtt{s},\mathtt{AH}} \asymp \delta_{\mathtt{s,g}}^{1- 1/(2p-1)} = o(\delta_{\mathtt{s}, \mathtt{LS}}).
\end{align*} 
See Figure \ref{fig:effects-tau} for an illustration of the impact of $\tau$ on $L_2$ error. Though stated specifically for ReLU-DNN, our analysis of $\delta_{\mathtt{s}}$ and the higher degree of robustness of adaptive Huberization (over least squares) also apply to more general classes that share the same non-smooth approximation ability as ReLU-DNN.
}\subsection{Organization} 
The rest of the paper is as follows. Section \ref{sec:method} describes the Huber ReLU-DNN estimator, which is the main objective of this work. In Section \ref{sec3}, we first provide an upper bound on the Huberization bias, and then establish a generic nonasymptotic $L_2$ error bound for the Huber ReLU-DNN estimator with any hyper-parameters. Based on this generic bound, we present convergence rates for the adaptive Huber estimator (with an optimally tuned $\tau$) and the least squares estimator ($\tau=\infty$) under both symmetric and asymmetric error settings. To examine the tightness of the obtained upper bounds, in Section \ref{sec:lb} we further establish a generic lower bound on the $L_2$ error of Huber ReLU-DNN estimators with any sufficiently large robustification parameter. All the technical proofs are provided in the supplemental material.

\subsection{Notations} 
The following notations will be used throughout this paper. We use $c_1,c_2,\ldots$ to denote the global constants that appear in the statement of any theorem, proposition, corollary, and lemma. We use $C_1, C_2,\ldots$ to denote the local intermediate constants in the proof. Hence all the  $c_1,c_2,\cdots$ have unique referred numbers, while all the $C_1, C_2,\ldots$ will have different referred numbers in respective proofs. We use $a\lesssim b$ if there exists some universal constant $C$ such that $a \le Cb$, we use $a\gtrsim b$ if there exists some universal constant $C>0$ such that $a \ge Cb$, we use $a \asymp b$ if $a\lesssim b$ and $a \gtrsim b$. 

\section{Setup and Methodology}
\label{sec:method}

Consider a nonparametric regression model 
\begin{align}
    Y = f_0(X) + \varepsilon, \label{np.model}
\end{align} 
where $X\in [0,1]^d$ is the $d$-dimensional covariate vector, and $\varepsilon$ is the noise variable satisfying
\begin{align}
    \mathbb{E}[\varepsilon|X=x]=0 ~~\mbox{ and }~~ \mathbb{E}[|\varepsilon|^p|X=x]\le v_p < \infty \text{ for all } x\in [0,1]^d. \label{moment.cond}
\end{align}
{\color{revisecolor} 
Note that the bounded $p$-th moment assumption is related to Huber's contamination model in a specific way.  It contains errors of the form $\varepsilon \sim (1-\epsilon)  N(0, 1) + \epsilon  N(0, \epsilon^{-2/p})$, or more generally that the distribution of the main component (inlier distribution) has bounded $p$-th moment and the distribution of the contaminated component (outlier distribution) has $p$-th moment bounded by $\epsilon^{-1}$,  which admits outliers of magnitude $n^{1/p}$ among $n$ data points. 
}

Let $\{(X_i, Y_i)\}_{i=1}^n$ be i.i.d. observations from model \eqref{np.model}. Our goal is to estimate the unknown regression function $f_0: [0, 1]^d \to \mathbb{R}$. Within a suitably chosen function class $\mathcal{F}_n$, the nonparametric least squares method aims to find some $\hat{f}_n$ that minimizes the $L_2$ loss
\begin{align}
    \hat{\mathcal{R}}(f) = \frac{1}{n} \sum_{i=1}^n \{ f(X_i) - Y_i \}^2.
\end{align}

The accuracy of the estimator $\hat{f}_n$ can be evaluated through the mean squared error $\mathbb{E}_X |\hat f_n(X) - f_0(X)|^2$,  which is the \emph{excess risk} 
\begin{align}
    \mathcal{R}(\hat{f}_n) - \mathcal{R}(f_0) 
    = \EE_X [|\hat{f}_n(X)-f_0(X)|^2],
\end{align}
where $\mathcal{R}(f) = \mathbb{E}_{(X,Y)}[|Y - f(X)|^2]$ is the {\it $L_2$ risk} of $f$. The statistical rate of convergence of $\hat f_n$ depends, among several other factors, on the class that $f_0$ lies in. We first revisit the class of $(\beta, C)$-smooth functions as follows.

\begin{definition}[$(\beta,C)$-smooth function]
    Let $\beta = r+s$ for some nonnegative integer $r$ and $0<s\le 1$, and $C>0$. A $d$-variate function $f$ is called $(\beta, C)$-smooth if for every sequence $\{\alpha_j\}_{j=1}^d$ of nonnegative integers such that $\sum_{j=1}^d \alpha_j = r$, the partial derivative $(\partial f)/(\partial x_1^{\alpha_1}\cdots \partial x_d^{\alpha_d})$ exists and satisfies for any $x, z \in \RR^d$ that
    \begin{align}
        \left|\frac{\partial^r f}{ \partial x_1^{\alpha_1}\cdots \partial x_d^{\alpha_d}}(x) - \frac{\partial^r f}{\partial x_1^{\alpha_1}\cdots \partial x_d^{\alpha_d}}(z) \right| \le C \|x-z\|_2^s.
    \end{align} 
\end{definition}
It is well-known that the minimax rate of convergence over the $(\beta, C)$-smooth function class is of order $n^{-\frac{\beta}{2\beta+d}}$ \citep{gyorfi2002distribution}. This is often referred to as the \emph{curse of dimensionality} in nonparametric regression as the rate is substantially slower when $d$ is moderately large.   \revise{As our problem involves different dimensions and degrees of smoothness, we will refer to $\gamma = \beta/d$ as dimension-adjusted degree of smoothness and expression the convergence rate as $n^{-\gamma/(2\gamma+1)}$.}  \revise{To alleviate the curse of dimensionality in an algorithmic manner, that is, efficiently estimate a regression function $f_0$ when it satisfies an inherently low-dimensional structure, \cite{bauer2019deep} and \cite{kohler2021rate} introduced the following hierarchical composition model to characterize such a structure of $f_0$.}

{
\color{revisecolor}
\begin{definition}[Hierarchical composition model]
\label{hcm} 
Given positive integers $d, l \in \mathbb{N}^+$ and a subset of $[1, \infty) \times \mathbb{N}^+$, denoted by $\mathcal{P}$, satisfying $\sup_{(\beta, t) \in \mathcal{P}} \max\{ \beta, t\} <\infty$, the hierarchical composition model $\mathcal{H}(d, l, \mathcal{P})$ is defined recursively as follows. For $l=1$, 
\begin{align*}
\mathcal{H}(d, 1,\mathcal{P}) = \big\{  &h: \mathbb{R}^d \to \mathbb{R}: h(x) = g(x_{\pi(1)},...,x_{\pi(t)})\text{, where } \pi: [t]\to [d] \text{ and} \\
&~~~~~~~~g:\mathbb{R}^t \to \mathbb{R}\text{ is }(\beta, C)\text{-smooth for some } (\beta,t)\in \mathcal{P}, C>0 \big\};
\end{align*} 
and for $l>1$,
\begin{align*}
\mathcal{H}(d, l,\mathcal{P}) = \big\{&h: \mathbb{R}^d \to \mathbb{R}: h(x) = g(f_1(x),...,f_t(x))\text{, where } f_i \in \mathcal{H}(d, l-1,\mathcal{P}) \text{ and}\\
&~~~~~g:\mathbb{R}^t \to \mathbb{R}\text{ is }(\beta, C)\text{-smooth for some } (\beta,t)\in \mathcal{P}, C>0  \big\}.
\end{align*}
\end{definition}

When $f_0\in \mathcal{H}(d, l, \mathcal{P})$ and the noise variable $\varepsilon \in \RR$ is sub-Gaussian, \cite{kohler2021rate} showed that the least squares ReLU-DNN estimator achieves the convergence rate $n^{-\frac{\gamma^*}{2\gamma^*+1}}$ (up to some logarithmic factor) with properly tuned network width $\bar{N}$ and depth $\bar{L}$ that depend on $n$ and $\gamma^*$, where
\begin{align}
\label{eq:def-gamma-star}
    \gamma^* = \frac{\beta^*}{d^*}~~~~\text{with}~~~~ (\beta^*,d^*) = \argmin_{(\beta, t)\in \mathcal{P}} \frac{\beta}{t}
\end{align} characterizes the dimension-adjusted smoothness of the least smooth (after dimension adjustment) component in the compositions. A similar result under sparsely connected deep ReLU networks is obtained by \cite{schmidt2020nonparametric}.
}


Notably, most of the existing results on estimation error rates are established under the assumption that the noise variable $\varepsilon$, or equivalently, the response variable $Y$, is sub-Gaussian. Such an assumption would raise legitimate concerns when heavy-tailed data is observed. 
A natural question is how well would deep neural networks work in the context of nonparametric regression with heavy-tailed errors, or how critical this sub-Gaussian condition is so as to achieve a faster convergence rate via DNN.
To approach this question, we start with the Huber loss \citep{huber1964}, which robustifies the $L_2$-loss through a truncation parameter $\tau>0$.

\begin{definition}[Huber Loss]
Given some parameter $\tau \in (0, \infty]$, the Huber loss $\ell_{\tau}(\cdot)$ is defined as
\begin{align}
    \ell_\tau(x) = \begin{cases}
     \frac{1}{2} x^2 & \qquad |x| \le \tau \\
     \tau |x| - \frac{1}{2}\tau^2 & \qquad |x| > \tau
    \end{cases} .
\end{align} 
Note that the Huber loss is continuously differentiable with score function $\ell_\tau'(x) = \min \{ \max( -\tau, x ), \tau \}$. \revise{In particular, the Huber loss with $\tau=\infty$ coincides with the squared loss. }
\end{definition}

Given a robustification parameter $\tau =\tau_n>0$, consider the empirical Huber loss
\begin{align}
    \hat{\mathcal{R}}_\tau(f) = \frac{1}{n} \sum_{i=1}^n \ell_{\tau}(Y_i - f(X_i)) .
\end{align} 
The corresponding nonparametric Huber estimator is defined as
\begin{align}
    \hat{f}_n  \in  \text{argmin}_{f\in \mathcal{F}_n(d, L, N, M)} \hat{\mathcal{R}}_\tau(f) ,  \label{huber-DNN-est}
\end{align} 
where $\mathcal{F}_n(d, L, N, M)$ denotes the space of truncated ReLU neural networks with width $N$ (number of neurons per hidden layer), depth $L$ (number of layers), input dimension $d$ and a truncation parameter $M$. A multilayer feedforward neural network with network architecture $(L, N)$ and the ReLU activation function can be written as
\begin{align}
    f(x) = \mathcal{L}_{L+1} \circ \sigma \circ \mathcal{L}_{L} \circ \sigma \circ \mathcal{L}_{L-1} \circ \sigma \circ \cdots \circ \mathcal{L}_2 \circ \sigma \circ \mathcal{L}_1 (x), \label{def:dnn}
\end{align} 
where $\mathcal{L}_i(x)=W_i x + b_i$ is a linear transformation with $W_i\in \mathbb{R}^{d_i\times d_{i-1}}$, $b_i\in \mathbb{R}^{d_i}$ and $(d_0, d_1, \cdots, d_L, d_{L+1})$ $= (d, N, \cdots, N, 1)$, and $\sigma:\mathbb{R}^{d_i}\to \mathbb{R}^{d_i}$ applies the ReLU function $\sigma(x)=\max\{0,x\}$ to each entry of an $\mathbb{R}^{d_i}$-valued vector. We refer to this type of networks as \emph{deep ReLU network with width $N$ and depth $L$}, and $\{(W_i, b_i)\}_{i=1}^{L+1}$ are the \emph{network weights} or parameters. 
Now we are ready to define the following two classes of network functions:
\begin{align*}
    \mathcal{F}_n(d, L, N) = \big\{&f : \RR^d \to \RR \text{ is of the form \eqref{def:dnn} with width $N$ and depth $L$}  \big\}
\end{align*} 
and 
\begin{align*}
    \mathcal{F}_n(d, L, N, M) = T_M  \mathcal{F}_n(d, L, N) =  \big\{f=T_M g : g \in \mathcal{F}_n(d, L, N) \big\} ,
\end{align*} 
where $T_M$ is the truncation operator at level $M>0$, defined as $T_M u  = \text{sgn}(u) (|u|\land M)$.


\section{Statistical Analysis}
\label{sec3}
{
We first impose the following minimal assumptions on the regression model \eqref{np.model}.
\color{revisecolor}
\begin{condition}[Boundedness]
\label{cond1}
The random covariate vector $X\in \RR^d$ follows some distribution $\mathcal{P}_X$ over the unit cube $[0,1]^d$. The regression function $f_0: [0, 1]^d \to \RR$ is uniformly bounded, i.e., $\|f_0\|_\infty := \sup_{x\in [0, 1]^d } |f_0(x)| \le M$ for some $M \ge 1$. 
\end{condition}
}

\begin{condition}[Moment conditions]
\label{cond2}
The noise variable  $\varepsilon$ has zero mean and uniformly bounded (conditional) $p$-th moments for some $p\ge 1$, that is,
\begin{align} 
    \mathbb{E}[ \varepsilon|X=x  ] =0 ~\mbox{ and }~  \mathbb{E} [  | \varepsilon|^p|X=x ] \le v_p < \infty ~ \text{ for all } x\in [0,1]^d . \label{moment.cond}
\end{align}
\end{condition}

For any $\tau \in (0, \infty]$, define the population risk under the Huber loss
\begin{align}
\label{eq:rn-loss}
    \mathcal{R}_\tau(f) = \mathbb{E}_{X,Y} \big\{  \ell_{\tau} ( Y-f(X)  ) \big\}.
\end{align} 
Our goal is to derive the rate of convergence for $\hat f_n$ \eqref{huber-DNN-est} under the $|| \cdot ||_2= \| \cdot \|_{L_2(\mathcal{P}_X)}$-norm, defined as $\| f \|_2 = \sqrt{ \EE_{X \sim \mathcal{P}_X} | f(X) |^2 }$. 

For the Huber loss with $\tau\in(0, \infty)$,  let $\psi_\tau(x) = \ell_\tau'(x) =  \text{sgn}(x) (|x| \land \tau)$ be the corresponding score function, which is Lipschitz continuous and has a derivative almost everywhere, that is,
\begin{align} \label{def:score}
  \psi'_\tau(x) = 1\{|x| \le \tau\}.
\end{align}
The parameter $\tau$ plays an important role in robustness-bias tradeoff \citep{sun2020adaptive}, and depends on the scale of $\varepsilon$ (e.g., standard deviation).
If the distribution of $\varepsilon$ is symmetric around zero, Huberization will not introduce bias because the underlying regression function $f_0$ is also the population Huber risk minimizer for any $\tau>0$. In this case, $\tau$ will only depend on the noise scale. Under a bounded second moment condition, we will show that the Huber regression estimator based on deep ReLU networks achieves the same convergence rate as its least squares counterpart under sub-Gaussian noise.
In the case of asymmetric noise, the Huberization bias can no longer be disregarded, and needs to be balanced with the statistical error.
Therefore, the robustification parameter $\tau$ should adapt to the sample size $n$ in a suitable way so as to achieve a bias-robustness trade-off. The amount of bias that should be traded for robustness depends on the moment order $p$. As we shall see, the final convergence rate is slower than but infinitely close to the minimax rate as $p\to \infty$.

As a complement to the moment Condition \ref{cond2}, the following symmetry assumption is of independent interest, although it deemphasizes the impact of heavy-tailedness.

\begin{condition}[Symmetric noise]
\label{cond3}
For each $x\in [0,1]^d$, the conditional distribution of $\varepsilon|X=x$ is  symmetric around 0. 
\revise{Moreover,  there exists some constant $v_1>0$ such that}
\begin{align}
    \revise{\mathbb{E}  \big[  |\varepsilon| | X= x \big] \le v_1 ~~\text{for all}~~ x\in [0,1]^d.}
\end{align}
\end{condition}
\subsection{Lower bound on excess risk and upper bound on Huberization bias}

We first examine the population Huber loss and quantify the bias induced by Huberization. Denote by $\bTheta$ the set of all measurable functions $f: [0, 1]^d \to \mathbb{R}$ satisfying $\| f \|_\infty \leq M$ for the same $M\geq 1$ as in Condition~\ref{cond1}.  For any $r>0$, define the local set $\Theta_0(r) = \{f\in \Theta: \|f-f_0\|_2\le r\}$ and its complement $\Theta_0^{{\rm c}}(r)=\{f\in\Theta: \|f-f_0\|_2 > r\}$. For every $f \in \Theta$, we write $\Delta_f(x) = f_0(x) - f(x)$ such that the population Huber risk at $f$ can be written as 
\begin{align*}
    \mathcal{R}_\tau(f) = \mathbb{E}_{(X,Y)} \big[ \ell_\tau (Y-f(X)  )\big] =\mathbb{E}_{(X,Y)} \big[  \ell_\tau ( \varepsilon + \Delta_f(X) )\big] .
\end{align*}  

\begin{proposition}
\label{prop:strong-convex}
    \revise{Assume that $f_0 \in \Theta$ and let $\tau \ge c_1 = 2\max\{ 2M ,   (2v_p)^{1/p} \}$. Then, Condition \ref{cond2} ensures that }
    \begin{align}
    \label{eq:str-convex}
        \mathcal{R}_\tau(f) - \mathcal{R}_\tau(f_0) \ge \frac{1}{8} \|f-f_0\|_2^2 ~~\text{ for all }~ f\in \Theta_0^{{\rm c}}(8v_p \tau^{1-p}).
    \end{align} 
    \revise{Under Condition \ref{cond3}, we have}
    \begin{align}
    \label{eq:str-convex-bias}
        \mathcal{R}_\tau(f) - \mathcal{R}_\tau(f_0) \ge \frac{1}{4} \|f-f_0\|_2^2 ~~\text{ for all }~ f\in \Theta.
    \end{align}
\end{proposition}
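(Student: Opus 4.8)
The plan is to prove both inequalities from a single pointwise estimate: conditionally on $X=x$, with $\delta=\Delta_f(x)=f_0(x)-f(x)$, I will lower bound $\mathbb{E}[\ell_\tau(\varepsilon+\delta)-\ell_\tau(\varepsilon)\mid X=x]$ and then integrate over $X\sim\mathcal{P}_X$ using $\mathbb{E}_X|\Delta_f(X)|\le\|f-f_0\|_2$. Since $\ell_\tau\in C^1$ with $\ell_\tau'=\psi_\tau$, the exact first-order identity $\ell_\tau(\varepsilon+\delta)-\ell_\tau(\varepsilon)=\psi_\tau(\varepsilon)\,\delta+\int_0^\delta\{\psi_\tau(\varepsilon+t)-\psi_\tau(\varepsilon)\}\,dt$ holds. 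Taking $\mathbb{E}[\cdot\mid X=x]$, the first term is $\delta\,\mathbb{E}[\psi_\tau(\varepsilon)\mid X=x]$; since $\mathbb{E}[\varepsilon\mid X=x]=0$ this equals $\delta\,\mathbb{E}[(\tau\,\mathrm{sgn}(\varepsilon)-\varepsilon)\mathbf{1}\{|\varepsilon|>\tau\}\mid X=x]$, whose modulus is at most $\mathbb{E}[|\varepsilon|\,\mathbf{1}\{|\varepsilon|>\tau\}\mid X=x]\le\tau^{1-p}v_p$ (on $\{|\varepsilon|>\tau\}$ one has $|\varepsilon|\le|\varepsilon|^p\tau^{1-p}$ since $p\ge1$). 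This is exactly the Huberization-bias contribution, of size $O(\tau^{1-p}|\delta|)$.

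For the remainder, monotonicity of $\psi_\tau$ makes the integrand have the sign of $t$, so the integral is always $\ge0$; moreover on the event $\{|\varepsilon|\le\tau-|\delta|\}$ every $t$ between $0$ and $\delta$ satisfies $|\varepsilon+t|\le\tau$, hence $\psi_\tau(\varepsilon+t)-\psi_\tau(\varepsilon)=t$ and the integral equals $\delta^2/2$ exactly. Therefore $\mathbb{E}[\int_0^\delta\{\psi_\tau(\varepsilon+t)-\psi_\tau(\varepsilon)\}\,dt\mid X=x]\ge\tfrac12\delta^2\,\mathbb{P}(|\varepsilon|\le\tau-|\delta|\mid X=x)$. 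Here I use $|\delta|=|f_0(x)-f(x)|\le 2M\le\tau/2$ (as $\tau\ge c_1\ge4M$) so that $\tau-|\delta|\ge\tau/2$, and Markov's inequality with $\tau\ge c_1\ge2(2v_p)^{1/p}$ gives $\mathbb{P}(|\varepsilon|>\tau/2\mid X=x)\le v_p(\tau/2)^{-p}\le\tfrac12$. Combining, for every $f\in\Theta$ and every $x$, $\mathbb{E}[\ell_\tau(\varepsilon+\delta)-\ell_\tau(\varepsilon)\mid X=x]\ge\tfrac14\delta^2-v_p\tau^{1-p}|\delta|$. Integrating over $X$ and setting $r=\|f-f_0\|_2$ yields $\mathcal{R}_\tau(f)-\mathcal{R}_\tau(f_0)\ge\tfrac14 r^2-v_p\tau^{1-p}r=\tfrac r4(r-4v_p\tau^{1-p})$; on $\Theta_0^{{\rm c}}(8v_p\tau^{1-p})$ we have $r>8v_p\tau^{1-p}$, hence $r-4v_p\tau^{1-p}>r/2$ and the right side is $\ge\tfrac18 r^2$, which is \eqref{eq:str-convex}.

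For \eqref{eq:str-convex-bias}, note that under Condition~\ref{cond3} the function $\psi_\tau$ is odd and $\varepsilon\mid X=x$ is symmetric about $0$, so $\mathbb{E}[\psi_\tau(\varepsilon)\mid X=x]=0$ and the bias term drops out entirely; the same remainder estimate (now invoking $\mathbb{E}[|\varepsilon|\mid X=x]\le v_1$ together with $\tau\ge c_1$ to still get $\mathbb{P}(|\varepsilon|\le\tau/2\mid X=x)\ge\tfrac12$) gives $\mathbb{E}[\ell_\tau(\varepsilon+\delta)-\ell_\tau(\varepsilon)\mid X=x]\ge\tfrac14\delta^2$ for all $f\in\Theta$, and integrating over $X$ finishes the proof. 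I do not expect a genuine obstacle here: this is a curvature computation for a convex loss. The only places that need care are the orientation of $\int_0^\delta$ when $\delta<0$ (checking that monotonicity still forces nonnegativity of the remainder and that the integrand equals $t$ on the good event), and verifying that the numerical value of $c_1$ is exactly what makes both reductions $|\delta|\le\tau/2$ and $\mathbb{P}(|\varepsilon|\le\tau/2\mid X=x)\ge\tfrac12$ go through.
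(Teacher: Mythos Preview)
Your proposal is correct and follows essentially the same route as the paper: a Taylor-type decomposition of $\ell_\tau(\varepsilon+\Delta_f)-\ell_\tau(\varepsilon)$ into a linear term bounded via $|\mathbb{E}[\psi_\tau(\varepsilon)\mid X]|\le v_p\tau^{1-p}$ (vanishing under symmetry) and a remainder lower-bounded by $\tfrac12\Delta_f^2\,\mathbb{P}(|\varepsilon|\le\tau/2\mid X)\ge\tfrac14\Delta_f^2$ using $|\Delta_f|\le 2M\le\tau/2$ and Markov. The only cosmetic difference is that the paper writes the remainder as $\int_0^{\Delta_f}\psi_\tau'(\varepsilon+t)(\Delta_f-t)\,dt$ and argues via the indicator $1\{|\varepsilon+t|\le\tau\}$, whereas you write it as $\int_0^{\Delta_f}\{\psi_\tau(\varepsilon+t)-\psi_\tau(\varepsilon)\}\,dt$ and argue via monotonicity of $\psi_\tau$ on the event $\{|\varepsilon|\le\tau-|\Delta_f|\}$; these are equivalent.
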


Proposition~\ref{prop:strong-convex} provides lower bounds for the population excess risk over some subset of the function space.  Let $f_{0, \tau}$ be the global minimizer of the population Huber risk, i.e.,
\begin{align}  \label{eq3.8}
    f_{0, \tau} \in  \text{argmin}_{f\in \Theta} \mathcal{R}_{\tau}(f). 
\end{align} 
If the distribution of $\varepsilon$ is asymmetric,  $f_{0, \tau}$ generally differs from $f_0$ with $r_\tau =\|f_{0, \tau}-f_0\|_2 > 0$. By the optimality of $f_{0, \tau}$,  $\cR_\tau(f_{0, \tau}) - \cR_\tau(f_0) \leq 0$.  This explains why the lower bound \eqref{eq:str-convex} holds only outside a local neighborhood of $f_0$. 
On the other hand,  if the (conditional) distribution of $\varepsilon$ is symmetric,  it is easy to see that $f_{0, \tau} =f_0$.  The lower bound \eqref{eq:str-convex} can be viewed as a form of the restricted (outside the local neighborhood) strong convexity,  provided that the robustification parameter $\tau$ is sufficiently large.




The following proposition provides an upper bound for the Huberization bias $\| f_{0, \tau} -f_0\|_2$.


\begin{proposition}
\label{prop:bias}
Assume Condition \ref{cond2} holds,  and let $\tau \ge c_1= 2\max\{ 2M ,   (2v_p)^{1/p} \}$.  Then, the global minimizer  $f_{0, \tau}$ of the population Huber loss satisfies
    \begin{align}
    \| f_{0, \tau}  - f_0\|_2 \le 4 v_p  \tau^{1-p} .
    \end{align}
In addition, assume there exists some constant $\sigma  >0$ such that
\begin{align}
	\PP\big( |\varepsilon| \geq   t | X=x\big) \leq 2 e^{-\frac{t^2}{2\sigma^2 }}~\mbox{ for all }~  t \geq 0 ~\mbox{ and }~ x \in [0, 1]^d .  \label{subgaussian.cond}
\end{align}
Then
\begin{align}
 \| f_{0, \tau}  - f_0\|_2 \le    2. 75 (\tau + \sigma^2/\tau)  e^{- \frac{\tau^2}{2 \sigma^2}} . \label{bias.exponential.decay}
\end{align}
\end{proposition}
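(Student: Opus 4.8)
The plan is to combine the global optimality of $f_{0,\tau}$ with a strong-convexity-type lower bound on the population Huber excess risk, thereby reducing both assertions to controlling a single quantity, the ``Huberization bias at the truth'' $b_\tau(x):=\mathbb{E}[\psi_\tau(\varepsilon)\mid X=x]$. Set $\Delta:=\Delta_{f_{0,\tau}}=f_0-f_{0,\tau}$; since $f_0,f_{0,\tau}\in\Theta$ we have $\|\Delta\|_\infty\le 2M\le\tau/2$ (using $\tau\ge c_1\ge 4M$), and by \eqref{eq3.8} we have $\mathcal{R}_\tau(f_{0,\tau})-\mathcal{R}_\tau(f_0)\le 0$. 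So everything comes down to turning the latter inequality into a bound on $\|\Delta\|_2$.

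First I would establish a quadratic lower bound on the excess risk --- essentially Proposition~\ref{prop:strong-convex}, but keeping the linear term rather than discarding it by restricting to $\Theta_0^{{\rm c}}$. For any $f\in\Theta$ with $\|f-f_0\|_\infty\le\tau/2$, writing
\[
\ell_\tau(\varepsilon+\Delta_f(X))-\ell_\tau(\varepsilon)=\psi_\tau(\varepsilon)\Delta_f(X)+\int_0^1\big(\psi_\tau(\varepsilon+t\Delta_f(X))-\psi_\tau(\varepsilon)\big)\Delta_f(X)\,dt
\]
and using $\psi_\tau'(\cdot)=1\{|\cdot|\le\tau\}$ a.e., the integral is nonnegative by monotonicity of $\psi_\tau$ and equals $\tfrac12\Delta_f(X)^2$ on the event $\{|\varepsilon|\le\tau-|\Delta_f(X)|\}\supseteq\{|\varepsilon|\le\tau/2\}$. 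Since $\mathbb{P}(|\varepsilon|>\tau/2\mid X=x)\le v_p(\tau/2)^{-p}\le\tfrac12$ under Condition~\ref{cond2} and $\tau\ge 2(2v_p)^{1/p}$, taking conditional expectations and then expectation over $X$ (and using the tower property for the linear term) gives
\[
\mathcal{R}_\tau(f)-\mathcal{R}_\tau(f_0)\ \ge\ \mathbb{E}\big[b_\tau(X)\,\Delta_f(X)\big]+\tfrac14\|f-f_0\|_2^2 .
\]
Applying this at $f=f_{0,\tau}$, using $\mathcal{R}_\tau(f_{0,\tau})-\mathcal{R}_\tau(f_0)\le 0$, $\mathbb{E}|\Delta(X)|\le\|\Delta\|_2$ and Cauchy--Schwarz yields $\tfrac14\|\Delta\|_2^2\le-\mathbb{E}[b_\tau(X)\Delta(X)]\le(\sup_x|b_\tau(x)|)\|\Delta\|_2$, hence $\|f_{0,\tau}-f_0\|_2\le 4\sup_x|b_\tau(x)|$.

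It then remains to estimate $\sup_x|b_\tau(x)|$. Because $\mathbb{E}[\varepsilon\mid X=x]=0$, we have $b_\tau(x)=\mathbb{E}[\psi_\tau(\varepsilon)-\varepsilon\mid X=x]$ and $|\psi_\tau(t)-t|=(|t|-\tau)_+$, so $|b_\tau(x)|\le\mathbb{E}[(|\varepsilon|-\tau)_+\mid X=x]$. Under Condition~\ref{cond2}, the elementary bound $(|t|-\tau)_+\le|t|^p\tau^{1-p}$ gives $|b_\tau(x)|\le v_p\tau^{1-p}$, which is the first assertion. Under the sub-Gaussian tail \eqref{subgaussian.cond} I would instead use $\mathbb{E}[(|\varepsilon|-\tau)_+\mid X=x]\le\mathbb{E}[|\varepsilon|\,1\{|\varepsilon|>\tau\}\mid X=x]=\tau\,\mathbb{P}(|\varepsilon|>\tau\mid X=x)+\int_\tau^\infty\mathbb{P}(|\varepsilon|>s\mid X=x)\,ds\lesssim(\tau+\sigma^2/\tau)e^{-\tau^2/(2\sigma^2)}$; moreover \eqref{subgaussian.cond} makes $\mathbb{P}(|\varepsilon|>\tau/2\mid X=x)$ exponentially small, which improves the curvature constant $\tfrac14$ in the display above. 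Carrying these sharper constants through the reduction produces $\|f_{0,\tau}-f_0\|_2\le 2.75(\tau+\sigma^2/\tau)e^{-\tau^2/(2\sigma^2)}$.

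I expect the crux to be the quadratic lower bound, i.e.\ verifying that the optimal perturbation $\Delta$ lies in the region where $\ell_\tau$ behaves like a genuine quadratic with a dimension-free curvature constant; this is exactly what forces $\tau\ge c_1$, since one needs both $\tau\ge 4M$ (so that $\|\Delta\|_\infty\le\tau/2$) and $\tau\ge 2(2v_p)^{1/p}$ (so that $|\varepsilon|$ stays within $\tau/2$ with probability at least $\tfrac12$), and it is the phenomenon already captured by Proposition~\ref{prop:strong-convex}. A minor point worth noting is that $f_{0,\tau}$ may sit on the boundary of $\Theta$ when $v_p$ is large, but this causes no difficulty because the argument uses only $\mathcal{R}_\tau(f_{0,\tau})\le\mathcal{R}_\tau(f_0)$ and never a first-order stationarity condition for $f_{0,\tau}$. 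The remaining steps are routine tail estimates for $\mathbb{E}[(|\varepsilon|-\tau)_+]$.
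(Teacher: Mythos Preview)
Your proposal is correct and follows essentially the same route as the paper: derive the quadratic lower bound $\mathcal{R}_\tau(f)-\mathcal{R}_\tau(f_0)\ge \mathbb{E}[b_\tau(X)\Delta_f(X)]+\tfrac14\|f-f_0\|_2^2$ (which is precisely what is extracted from the proof of Proposition~\ref{prop:strong-convex}), specialize to $f=f_{0,\tau}$ using global optimality, and then bound $\sup_x|b_\tau(x)|\le\mathbb{E}[(|\varepsilon|-\tau)_+\mid X=x]$ by $v_p\tau^{1-p}$ under Condition~\ref{cond2} and by a Mill's-ratio--type estimate under \eqref{subgaussian.cond}, with the improved curvature constant $1-2e^{-2}$ in the latter case yielding the factor $2.75$. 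Your remark that only $\mathcal{R}_\tau(f_{0,\tau})\le\mathcal{R}_\tau(f_0)$ (not stationarity) is used is exactly how the paper proceeds.
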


From the above result we see that the Huberization bias, at least an upper bound of it, depends on both the tuning parameter $\tau$ and the moment index $p\geq 2$.  If  $\varepsilon$ only has bounded moments up to order $p$,  the bias decays at polynomial rates; if $\varepsilon$ is sub-Gaussian as assumed in \cite{kohler2021rate},  the bias decays exponentially fast as a function of $\tau$. In this case, \eqref{bias.exponential.decay} with $\tau = \sigma \sqrt{2 \log n}$ implies $
 \| f_{0, \tau}  - f_0\|_2 \lesssim \sigma n^{-1} (\log n)^{1/2}$. The robustification bias is thus negligible compared to the statistical error.  In the case of heavy-tailed noise,  say $p=2$, the bias will play a bigger role,  and may result in a slower convergence rate.

{
\color{revisecolor}
\subsection{A generic upper bound}

We first present a generic upper bound for a Huber ReLU-DNN estimator with arbitrary network architecture hyper-parameters $(\bar{L}$, $\bar{N})$ and robustification hyper-parameter $\tau \in [c_1, \infty]$. 
Here we use $\bar L$ and $\bar N$ to denote the depth and width, respectively.

\begin{theorem}[High probability bounds for Huber ReLU-DNN estimation]
\label{thm:generic-bound}
Assume Conditions~\ref{cond1} and \ref{cond2} hold with $p \ge 2$. Write $\mathcal{F}_n = \mathcal{F}_n(d, \bar L , \bar N, M)$ with  $\bar{N}, \bar{L}, n \in \{3,4,\ldots,\}$, and let $\tau \in [c_1,\infty]$.  For any $\omega, D \ge 1$, define $\delta_{n,\tau} = \delta_{\mathtt{b}} \lor \delta_{\mathtt{a}} \lor \delta_{\mathtt{s}}$, where
\begin{align}
\label{eq:generic-upper-bound-three-terms}
    \delta_{\mathtt{b}} = \frac{v_p}{\tau^{p-1}}, ~~~~~~~~ \delta_{\mathtt{a}} = \inf_{f\in \mathcal{F}_n } \|f - f_0\|_2, ~~~~~~~~ \delta_{\mathtt{s}} = \sqrt{V_n  \{v_2 + \min \{ \tau ,  \omega v_p^{1/p} V_n^{-1/p}  \} \}} 
\end{align} 
and $V_n = n^{-1} (\bar{N} \bar{L})^2 \log (\bar{N}\bar{L}) \log n$. Let $\mathcal{S}_{n,\tau}(\delta)$ be the set of approximate empirical risk minimizers with optimization error $\delta$, that is, $\mathcal{S}_{n,\tau}(\delta) =   \{f \in \mathcal{F}_n : \hat{\mathcal{R}}_\tau(f) \le \inf_{g\in \mathcal{F}_n} \hat{\mathcal{R}}_\tau(g) + \delta^2  \}$. Then, there exists some universal constant $c_2>0$ independent of $(\bar{N}, \bar{L}, n, \tau, \omega, D, v_p, v_2)$, $f_0$ and the distribution of $(X,\varepsilon)$ such that, for any $\delta_{\mathtt{opt}} > 0$,
\begin{align}
\label{eq:high-prob-generic-error-upper-bound}
    \mathbb{P}\Bigg\{  \sup_{f\in \mathcal{S}_{n,\tau}(\delta_{\mathtt{opt}})} &\|f - f_0\|_2 \ge c_2 (D\delta_{n,\tau} + \delta_{\mathtt{opt}})\Bigg\} \nn \\
    &\lesssim \begin{cases}
        e^{- nV_n D^2 / c_2} & \text{if}~\tau \le \omega D^2 (v_p/V_n)^{1/p} \\
        e^{- nV_n / c_2} + \omega^{1-p} D^{-2p} & \text{if}~\tau > \omega D^2 (v_p/V_n)^{1/p} 
    \end{cases}.
\end{align}
\end{theorem}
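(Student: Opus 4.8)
The argument marries the restricted strong convexity of the population Huber risk (Proposition~\ref{prop:strong-convex}) with a localized analysis of the empirical process indexed by $\mathcal{F}_n$, run through a peeling device and a truncation of the noise; throughout, $c_2$ is allowed to depend only on the fixed $(d,M)$. Fix $f^\ast\in\mathcal{F}_n$ with $\|f^\ast-f_0\|_2\le 2\delta_{\mathtt{a}}$, and for $f\in\mathcal{F}_n$ set $g_f(x,y)=\ell_\tau(y-f(x))-\ell_\tau(y-f_0(x))$, so that $\hat{\mathcal{R}}_\tau(f)-\hat{\mathcal{R}}_\tau(f_0)=\mathbb{P}_n g_f$ and $\mathcal{R}_\tau(f)-\mathcal{R}_\tau(f_0)=\mathbb{P} g_f$, with $\mathbb{P}_n$ the empirical average over $\{(X_i,Y_i)\}_{i=1}^n$. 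For $\hat f\in\mathcal{S}_{n,\tau}(\delta_{\mathtt{opt}})$ the defining inequality $\mathbb{P}_n g_{\hat f}\le\mathbb{P}_n g_{f^\ast}+\delta_{\mathtt{opt}}^2$ rearranges into
\[
  \mathbb{P} g_{\hat f}\ \le\ (\mathbb{P}-\mathbb{P}_n)g_{\hat f}\ +\ (\mathbb{P}_n-\mathbb{P})g_{f^\ast}\ +\ \mathbb{P} g_{f^\ast}\ +\ \delta_{\mathtt{opt}}^2 .
\]
Using $g_f=\int_0^{\Delta_f}\psi_\tau(\varepsilon+s)\,ds$, the bias bound $|\mathbb{E}[\psi_\tau(\varepsilon)\mid X]|\le v_p\tau^{1-p}$ (the estimate underlying Proposition~\ref{prop:bias}) and $|\Delta_f|\le 2M$ give $\mathbb{P} g_{f^\ast}\lesssim\delta_{\mathtt{b}}\delta_{\mathtt{a}}+\delta_{\mathtt{a}}^2\lesssim\delta_{n,\tau}^2$, while Bernstein's inequality dominates the single-function fluctuation $(\mathbb{P}_n-\mathbb{P})g_{f^\ast}$. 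If $\|\hat f-f_0\|_2\le 8v_p\tau^{1-p}=8\delta_{\mathtt{b}}$ the conclusion already holds; otherwise $\hat f\in\Theta_0^{{\rm c}}(8v_p\tau^{1-p})$ and Proposition~\ref{prop:strong-convex} gives $\mathbb{P} g_{\hat f}\ge\frac{1}{8}\|\hat f-f_0\|_2^2$. Hence it suffices to show that, with the claimed probability, $\sup\bigl\{(\mathbb{P}-\mathbb{P}_n)g_f:f\in\mathcal{F}_n,\ \|f-f_0\|_2\le R\bigr\}\lesssim R^2$ holds simultaneously over the dyadic shells $R=R_j=2^j r$, $j\ge 0$, with $r\asymp D\delta_{n,\tau}+\delta_{\mathtt{opt}}$; summing the per-shell failure probabilities then yields the theorem.

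\textbf{Localized empirical process, covering, and noise truncation.} For a fixed $R$ I would control the localized supremum via Dudley's entropy integral together with Talagrand's (Bousquet's) concentration inequality, using that $\mathcal{F}_n(d,\bar L,\bar N,M)$ has pseudo-dimension $\lesssim(\bar N\bar L)^2\log(\bar N\bar L)\asymp nV_n/\log n$, hence a uniform-entropy bound for the induced class $\{g_f\}$. The integral identity supplies the pointwise bounds $|g_f|\le\tau|\Delta_f|$ and $|g_f|\le|\Delta_f|(|\varepsilon|+|\Delta_f|)$; since $p\ge 2$ (so $\mathbb{E}[\varepsilon^2\mid X]\le v_2$) this gives the variance bound $\sup_{\|f-f_0\|_2\le R}\mathbb{P} g_f^2\lesssim v_2R^2$. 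The Huber loss is already ``truncated'' ($|g_f|\le 2M\tau$ on the class), and I would impose an additional truncation of the noise, writing $g_f=g_f\mathbf{1}\{|\varepsilon|\le T\}+g_f\mathbf{1}\{|\varepsilon|>T\}$ at a carefully chosen level $T$ comparable to $\min\{\tau,\ \omega D^2(v_p/V_n)^{1/p}\}$. The truncated piece is bounded by $\lesssim T$ with variance $\lesssim v_2R^2$, so chaining plus Talagrand control its localized fluctuations at a scale of order $\delta_{\mathtt{s}}$ on each shell (the envelope contribution $\asymp V_nT$ being absorbed because the shells have radius $R\gtrsim D\delta_{\mathtt{s}}$), with sub-exponential deviations whose exponent grows like $4^{j}\,nV_nD^2$. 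Collecting the variance term $\sqrt{V_nv_2}\,R$ and the envelope term reconstructs exactly $\delta_{\mathtt{s}}^2=V_n\{v_2+\min\{\tau,\omega v_p^{1/p}V_n^{-1/p}\}\}$, the logarithmic factors having been folded into $V_n$; summing the geometric series over $j$ produces the $e^{-nV_nD^2/c_2}$ contribution.

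\textbf{The heavy tail, the two regimes, and peeling.} When $\tau\le\omega D^2(v_p/V_n)^{1/p}$ one may take $T=\tau$, so the residual $g_f\mathbf{1}\{|\varepsilon|>T\}$ is still bounded by $\tau|\Delta_f|$ and is absorbed into the truncated analysis; only the exponential term survives. When $\tau>\omega D^2(v_p/V_n)^{1/p}$ — in particular for the least squares case $\tau=\infty$, where the Lipschitz truncation built into the Huber loss is unavailable — one has $T<\tau$ and the genuinely heavy part $g_f\mathbf{1}\{|\varepsilon|>T\}$ remains. Bounding it uniformly in $f$ by $2M(|\varepsilon|+2M)\mathbf{1}\{|\varepsilon|>T\}$ and using $\mathbb{E}[|\varepsilon|\mathbf{1}\{|\varepsilon|>T\}\mid X]\le v_pT^{1-p}$ shows this term is $\lesssim v_pT^{1-p}\lesssim R^2$ in expectation — this is exactly what the choice of $T$ buys, and it is why the heavy tail does not inflate $\delta_{\mathtt{s}}$. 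For its deviation, however, one can only invoke a Markov-type inequality applied to $n^{-1}\sum_i|\varepsilon_i|^q\mathbf{1}\{|\varepsilon_i|>T\}$ for $q\le p$, through a Rosenthal bound for sums of nonnegative i.i.d.\ variables; at scale $R_j^2\asymp 4^jD^2\delta_{\mathtt{s}}^2$ this produces a per-shell probability $\lesssim 4^{-jp}\omega^{1-p}D^{-2p}$, and summing over $j\ge 0$ yields the extra polynomial term $\omega^{1-p}D^{-2p}$. A union bound over the shells, combined with the deterministic bias estimate and the Bernstein bound for $f^\ast$, closes the argument.

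\textbf{Main obstacle.} The delicate part is the simultaneous calibration: one must pick the noise-truncation level $T$ so that (i) the chaining variance term $\sqrt{V_nv_2}\,R$, (ii) the truncated-envelope term $\asymp V_nT$, and (iii) the tail-bias term $\asymp v_pT^{1-p}$ are all $\lesssim R^2$ on every dyadic shell while together reconstructing $\delta_{\mathtt{s}}^2=V_n\{v_2+\min\{\tau,\omega v_p^{1/p}V_n^{-1/p}\}\}$, and so that the crossover between the sub-exponential and the polynomial deviation regimes lands exactly at $\tau\asymp\omega D^2(v_p/V_n)^{1/p}$ — the extra $D^2$ in the affordable truncation level being available precisely because the peeled shells already have radius $\gtrsim D\delta_{\mathtt{s}}$. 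Carrying $\delta_{\mathtt{opt}}$, the truncation parameter $M$, and the logarithmic factors absorbed in $V_n$ consistently through the peeling, so that $c_2$ stays free of $(\bar N,\bar L,n,\tau,\omega,D,v_p,v_2)$, is the bookkeeping-heavy core of the proof; the empirical-process ingredients themselves (pseudo-dimension bounds for ReLU classes, Dudley's integral, the concentration inequalities of Talagrand and Bernstein, and Rosenthal's inequality) are standard.
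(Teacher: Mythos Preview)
Your proposal is correct and follows essentially the same route as the paper: restricted strong convexity from Proposition~\ref{prop:strong-convex}, a peeling over dyadic shells, a truncation of the score at level $B\asymp\omega D^2(v_p/V_n)^{1/p}$ (exactly the paper's choice $B_k=\omega D^2 v_p^{1/p}V_n^{-1/p}$), Talagrand's inequality for the truncated part with the Dudley/entropy bound coming from the pseudo-dimension of $\mathcal{F}_n$, and a Markov-type bound for the residual heavy tail; the paper packages all of this into a reusable Lemma~\ref{lemma:convergence-rate} and then verifies its four conditions, but the content is the same. One very minor difference: for the heavy-tail residual the paper applies a first-moment Markov inequality to the uniform bound $|h_f-h_f^B|\le 2M|\psi_\tau(\varepsilon+f_0-f^\ast)|\mathbf{1}\{|\psi_\tau|\ge B_k\}$, obtaining a per-shell probability $\lesssim 4^{-k}\omega^{1-p}D^{-2p}$ rather than your $4^{-jp}$ via Rosenthal; either sums to the same $\omega^{1-p}D^{-2p}$, so your refinement is harmless but unnecessary.
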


Theorem \ref{thm:generic-bound} provides a general high probability bound on the $L_2$ error of any Huber ReLU-DNN estimator with $\tau \in [c_1, \infty]$. The total estimation error, which depends explicitly on all the hyper-parameters, is composed of four terms: the optimization error $\delta_{\mathtt{opt}}$, the neural network approximation error $\delta_{\mathtt{a}}$ to the regression function $f_0$, the bias $\delta_{\mathtt{b}}$ induced by the Huber loss due to asymmetric noise tails, and the stochastic error $\delta_{\mathtt{s}}$. From this, one can derive a specific error bound by choosing a robustification parameter $\tau \in [c_1, \infty]$, neural network hyper-parameters $\bar{N}$, $\bar{L}$ and a function class that includes $f_0$. As we shall see, $\bar{N} \bar{L}$ determines the complexity of the network class, so that a larger $\bar{N} \bar{L}$ corresponds to a smaller approximation error $\delta_{\mathtt{a}}$ but an increased stochastic error $\delta_{\mathtt{s}}$. On the other hand, the magnitude of $\tau$ controls the degree of robustness of the estimator against heavy-tailed errors. A smaller value of $\tau$ helps improve robustness, resulting in a smaller stochastic error $\delta_{\mathtt{s}}$, at the cost of a larger bias $\delta_{\mathtt{b}}$ in the presence of asymmetric errors.


\begin{remark}[High probability bound for least squares ReLU-DNN estimation]
\label{coro:generic-bound-lse}
Taking $\tau = \infty$ and $\omega=1$ in Theorem~\ref{thm:generic-bound} immediately yields a high probability bound for the least squares ReLU-DNN estimator under heavy-tailed errors, which is of independent interest. 
Write $\mathcal{F}_n = \mathcal{F}_n(d, \bar L , \bar N, M)$ with  $\bar{N}, \bar{L}, n \in \{3,4,\ldots,\}$, and define 
\begin{align*}
    \delta_n = \inf_{f\in \mathcal{F}_n } \|f - f_0\|_2 + v_p^{\frac{1}{2p}} V_n^{\frac{1}{2}(1-1/p)} + \sqrt{v_2 V_n} ~~ \text{with} ~ V_n = n^{-1} (\bar{N} \bar{L})^2 \log (\bar{N}\bar{L}) \log n.
\end{align*} 
With the same universal constant as in Theorem \ref{thm:generic-bound}, we have for any $D \geq 1$ that
\begin{align*}
    \mathbb{P}\Bigg\{  \sup_{f\in \mathcal{S}_{n,\infty}(\delta_{\mathtt{opt}})} \|f - f_0\|_2 \ge c_2 (D\delta_n + \delta_{\mathtt{opt}})\Bigg\}  \lesssim \exp\big\{  - n^{1/p} (\bar{N}\bar{L})^{2(1-1/p)} / c_2 \big\} + \frac{1}{D^{2p}}
\end{align*}
This result improves Theorem~5.1 of \cite{kuchibhotla2019least} in the case of $\alpha = \beta=0$.
\end{remark}

In the proof of Theorem~\ref{thm:generic-bound}, the stochastic error term $\delta_{\text{s}}$ is shown to be of the form
\begin{align}
\label{eq:stochastic-error-term}
    \delta_{\text{s}} = \sqrt{\frac{\mathrm{Pdim}(\mathcal{F}_n) \log n}{n}}  \underbrace{  \min \Bigg\{\sqrt{\tau}, \,  \bigg(\frac{n}{\mathrm{Pdim}(\mathcal{F}_n) \log n}\bigg)^{1/(2p)} \Bigg\} }_{=: \, \lambda(\tau, n, \mathcal{F}_n ) }  ,
\end{align} 
where $\mathrm{Pdim}(\mathcal{F}_n)$ denotes the pseudo-dimension of the ReLU-DNN class $\mathcal{F}_n$, satisfying  $\mathrm{Pdim}(\mathcal{F}_n) \asymp (\bar{N} \bar{L})^2 \log (\bar{N}\bar{L})$ \citep{BHLM2019}; see Definition~\aosversion{\blue{A.2}}{\ref{def-pdim}} for a precise definition of pseudo-dimension. In contrast to the sub-Gaussian error case, $\lambda(\tau, n, \mathcal{F}_n)$ quantifies the joint impact of the Huber loss and the lower-order moments of $\varepsilon$. In the heavy-tailed case that $\EE[|\varepsilon|^p | X]$ is uniformly bounded, the Huber and least squares ReLU-DNN estimators achieve the same convergence rate provided that $\tau \gtrsim \big[ n/\{ \mathrm{Pdim}(\mathcal{F}_n) \log n \} \big]^{1/p}$. 

Compared to previous works, one of the key contributions of our analysis is to provide a tight upper bound on the stochastic error term $ \delta_{\mathtt{s}}$ in \eqref{eq:generic-upper-bound-three-terms} for the Huber ReLU-DNN estimator when the noise is heavy-tailed. The tightness of the above stochastic error term will be affirmed by a matching lower bound in Proposition \ref{prop:lowerbound-variance}. It also strengthens a result concerning the least squares estimator in \cite{kuchibhotla2019least} under heavy-tailed errors; see Theorem~5.1 therein when $\alpha=\beta=0$. We refer to Section~\aosversion{\blue{D.1}}{\ref{subsec:upper-bound-discussion}} for a summary of upper bounds results we derived and a detailed comparison of our upper bound results with those from \cite{farrell2021deep} and \cite{shen2021robust}.

Although stated specifically for ReLU-DNN estimation, Theorem \ref{thm:generic-bound} and Corollary \ref{coro:generic-bound-lse} also apply to other nonparametric regression estimators (e.g., spline-based methods) as long as the pseudo-dimension of the function class is bounded.

\begin{remark}
We adopt a neural network assumption similar to those considered in \cite{farrell2021deep}, \cite{kohler2021rate}, and \cite{shen2021robust}, which does not require the network weights to be uniformly bounded. Such a relaxation of the uniform boundedness constraint on network weights not only facilitates practical implementation but also strengthens neural network approximation power to some extent. 
Also, it is worth noticing that our lower bound analysis remains valid even when the network weights are uniformly bounded as long as $\bar L \lesssim n^\epsilon$ for any $\epsilon>0$. See Remark~\ref{remark:bounded-approximation} for a detailed discussion.

\end{remark}

\begin{remark}
Given the high probability bound \eqref{eq:high-prob-generic-error-upper-bound} on the population $L_2$ error, one can further establish a similar bound on the empirical $L_2$ error $\|f - f_0\|_n$, defined as $\|f - f_0\|_n^2 =  (1/n) \sum_{i=1}^n \{f(X_i) - f_0(X_i)\}^2$, using a uniform concentration property of the empirical $L_2$ risk around the population $L_2$ risk. More specifically, Lemma 3 in \cite{fan2022factor}  implies that the event
\begin{align*}
   \mathcal{E}_t = \bigg\{  \big| \|f - f_0\|_n^2 - \|f - f_0\|_2^2 \big| \leq \frac{1}{2} \|f - f_0\|_2^2 + C\bigg(V_n + \frac{t}{n}\bigg) , ~~   \forall f\in \mathcal{F}_n(d, \bar{L}, \bar{N}, M)   \bigg\}
\end{align*} occurs with probability at least $1-e^{-t/C}$ for any $t\ge 1$. Therefore, further conditioned on $\mathcal{E}_{nV_nD^2}$, we obtain
\begin{align*}
  \forall f \in \mathcal{S}_{n,\tau}(D\delta_{n,\tau}), ~~~\|f - f_0\|_2 \lor \|f - f_0\|_n \lesssim D\sqrt{V_n} + D\delta_{n,\tau} \lesssim D\delta_{n,\tau}.
\end{align*}
\end{remark}
}

\subsection{Convergence analysis under general heavy-tailed noise}
\label{subsec:roc}

We first present the general convergence results in the absence of symmetry -- Condition \ref{cond3}.  The following neural network approximation result provides the key to establishing the \revise{$L_2$ error rate}.
{
\color{revisecolor}
\begin{proposition}[An upper bound on neural network approximation error for $\mathcal{H}(d,l,\mathcal{P})$]
\label{prop:nn-approx}
There exist universal constants $c_3$--$c_5$ that depend only on $l, \sup_{(\beta,t)\in \mathcal{P}} \max\{ \beta, t\}$ and $C$ from Definition \ref{hcm} such that for any $L, N\geq 3$, 
\begin{align*}
    \sup_{f_0 \in \mathcal{H}(d,l,\mathcal{P})} \inf_{ f_n \in \mathcal{F}_n(d, \bar L, \bar N,M)} \| f_n  -f_0\|_\infty \le c_5  ( N L )^{-2\gamma^*} ,
\end{align*}
where $\bar L = c_3 \lceil L\log L\rceil$ and $\bar N = c_4 \lceil N\log N\rceil$.
\end{proposition}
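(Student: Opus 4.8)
The plan is to prove the bound by induction on the composition depth $l$, building the approximating network as a composition of $l$ blocks. The base ingredient is the near-optimal $L^\infty$ approximation of smooth functions by ReLU networks due to \cite{lu2020deep}: any $(\beta,C)$-smooth function on a compact cube $K\subset\mathbb{R}^t$ is approximated to $L^\infty$ error $\mathcal{O}((NL)^{-2\beta/t})$ by a ReLU network of width $\mathcal{O}(N\log N)$ and depth $\mathcal{O}(L\log L)$, with constants depending only on $\beta,t,C$ and $K$. Write $T=\sup_{(\beta,t)\in\mathcal{P}}\max\{\beta,t\}<\infty$. Since $\beta/t\ge\gamma^*=\min_{(\beta,t)\in\mathcal{P}}\beta/t$ for every component, each component smooth function occurring in $f_0$ can be approximated to error $\lesssim(NL)^{-2\gamma^*}$ within the same budget width $\mathcal{O}(N\log N)$, depth $\mathcal{O}(L\log L)$, the constants now depending only on $T$ and $C$. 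Using the uniformly bounded $C^\beta$-norms of the component functions, a short induction on $l$ also shows $\sup_{x\in[0,1]^d}|f_i(x)|\le B$ for every intermediate function $f_i$ in the unrolled representation of any $f_0\in\mathcal{H}(d,l,\mathcal{P})$, where $B=B(l,T,C)$; all intermediate approximants will be clipped to $[-2B,2B]$ via a truncation sub-network (which is itself a ReLU network).

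\textbf{The inductive construction.} I would carry out the induction with the claim that for every $f_0\in\mathcal{H}(d,l,\mathcal{P})$ there is an $f_n\in\mathcal{F}_n(d,\bar L,\bar N)$ with $\bar L\le c_3^{(l)}\lceil L\log L\rceil$, $\bar N\le c_4^{(l)}\lceil N\log N\rceil$ and $\|f_n-f_0\|_{L^\infty([0,1]^d)}\le c_5^{(l)}(NL)^{-2\gamma^*}$, where $c_3^{(l)},c_4^{(l)},c_5^{(l)}$ depend only on $l,T,C$. For $l=1$, $f_0(x)=g(x_{\pi(1)},\dots,x_{\pi(t)})$: apply the smooth-approximation result to $g$ on $[0,1]^t$ and precompose with the coordinate-selection linear map, which is absorbed into the first layer and costs no extra width or depth (the ambient dimension $d$ enters only through the first linear map $W_1\in\mathbb{R}^{\bar N\times d}$, which does not count toward width or depth). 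For the inductive step $f_0=g(f_1,\dots,f_t)$ with $f_i\in\mathcal{H}(d,l-1,\mathcal{P})$: obtain by the inductive hypothesis networks $\widetilde f_1,\dots,\widetilde f_t$; stack them in parallel (feeding the common input $x$ through one first layer, and padding shorter sub-networks to a common depth using identity layers $z\mapsto\sigma(z)-\sigma(-z)$); append a clipping sub-network forcing each coordinate into $[-2B,2B]$; and feed the result into the network $\widetilde g$ approximating $g$ on $[-2B,2B]^t$, finally composing with the outer truncation $T_M$. The resulting network has width $\le\max\{T c_4^{(l-1)},\,c_4^{g}\}\lceil N\log N\rceil$ and depth $\le(c_3^{(l-1)}+c_3^{g})\lceil L\log L\rceil+\mathcal{O}(1)$; since $t\le T$, $l$ is a fixed constant, and the smooth-approximation constants $c_i^{g}$ depend only on $T,C$, both are of the claimed form.

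\textbf{Error propagation.} For the error I would telescope over levels. Because $\mathcal{P}\subset[1,\infty)\times\mathbb{N}^+$, every component $g$ has $\beta\ge1$, hence is $C^1$ on the relevant compact cube with gradient bounded by a constant $\lambda=\lambda(T,C,B)$ and therefore $\lambda$-Lipschitz there. Telescoping over the two levels in the inductive step, and using that clipping is $1$-Lipschitz and leaves the (in-range) true intermediate values unchanged,
\[
\|f_n-f_0\|_\infty\le\|\widetilde g-g\|_{L^\infty([-2B,2B]^t)}+\lambda\max_{1\le i\le t}\|T_{2B}\widetilde f_i-f_i\|_\infty\le c_5^{g}(NL)^{-2\gamma^*}+\lambda\,c_5^{(l-1)}(NL)^{-2\gamma^*},
\]
so $c_5^{(l)}=c_5^{g}+\lambda\,c_5^{(l-1)}$, and iterating $l$ times yields a constant depending only on $l,T,C$. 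Finally, since $\|f_0\|_\infty\le M$ (Condition~\ref{cond1}) and $T_M$ is $1$-Lipschitz with $T_M f_0=f_0$, replacing $f_n$ by $T_M f_n\in\mathcal{F}_n(d,\bar L,\bar N,M)$ does not increase the error. Taking the supremum over $f_0\in\mathcal{H}(d,l,\mathcal{P})$ and setting $c_3=c_3^{(l)}$, $c_4=c_4^{(l)}$, $c_5=c_5^{(l)}$ finishes the proof.

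\textbf{Main obstacle.} Invoking \cite{lu2020deep} and the Lipschitz telescoping are routine. The hard part will be the bookkeeping, which is twofold: (i) obtaining \emph{uniform} range bounds $B(l,T,C)$ for all intermediate functions, so that the Lipschitz constants, and hence $c_5$, do not depend on the particular $f_0$ — this is where the clipping sub-networks and the uniformly bounded $C^\beta$-norm convention enter; and (ii) carrying out the parallel composition so that the width/depth multipliers $c_3,c_4$ depend only on $l$ and $T$ and not on the ambient dimension $d$, which works precisely because the unrolled composition has only $\mathcal{O}(T^{l})$ nodes and $d$ is absorbed into the free input layer.
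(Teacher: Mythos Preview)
Your proposal is correct and follows essentially the same approach as the paper: approximate each smooth component via the Lu--Zhou--Shen--Yang result, truncate intermediate outputs to a uniform range, compose, and propagate the error using Lipschitz continuity, with width and depth accumulating multiplicatively (by at most $T$) and additively across the $l$ levels. Two minor points to be aware of when you write it up: (i) the result of \cite{lu2020deep} is stated for integer smoothness on $[0,1]^t$, so you need a one-line extension to real $\beta=r+s$ via the Taylor remainder (the paper does this as a lemma) and an explicit rescaling of each $g$ from $[-2B,2B]^t$ to $[0,1]^t$ before invoking it; (ii) the paper unrolls the whole composition tree at once rather than inducting on $l$, but the resulting bookkeeping and constants are the same.
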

}
Compared to the approximation results in \cite{kohler2021rate},  the above result applies to a broader range of neural network architectures. To be specific, to obtain similar approximation errors, \cite{kohler2021rate} suggested using neural networks with a ``special shape'', which is either thin and deep  or wide and shallow. In contrast, Proposition \ref{prop:nn-approx} allows one to tune the width and depth of a neural network more flexibly,  thus leading to weakened conditions and new insights.

{
\color{revisecolor}
Based on the above neural network approximation result, the following theorem establishes the convergence rate for the adaptive Huber ReLU-DNN estimator after a delicate trade-off among the bias $\delta_{\text{b}}$, approximation error $\delta_{\text{a}}$ and stochastic error $\delta_{\text{s}}$.

\begin{theorem}[Optimal rate for adaptive Huber estimator under $\mathcal{H}(d,l,\mathcal{P})$ and asymmetric noise]
\label{thm:rate-adaptive-huber-upper}
    Assume Conditions \ref{cond1} holds, $v_2 \asymp 1$ and $v_p^{\frac{1}{(2p-1)p}} \lesssim (\log n)^{C'}$ for some constant $C'>0$. Let $\beta^*$, $d^*$ and $\gamma^*$ be as in \eqref{eq:def-gamma-star}, and $L, N \geq 3$ be such that 
\begin{align}
\label{eq:depth.width.order.n}
        L N \asymp  \Bigg(\frac{n}{\log^6 n}\Bigg)^{\frac{\nu^*}{2(2\gamma^* + \nu^*)}} ~~\mbox{ with }~~\nu^*=1-\frac{1}{2p-1}.
\end{align} 
Consider the neural network class $\mathcal{F}_n=\mathcal{F}_n(d,  \bar{L}, \bar{N}, M)$ with depth and width
    \begin{align}
      \bar{L} = c_3 \lceil L \log L \rceil ~~\mbox{ and }~~  \bar{N} = c_4 \lceil N \log N \rceil,  \label{depth.width.order}
    \end{align} 
    where $c_3$--$c_4$ are the positive constants from Proposition~\ref{prop:nn-approx}. Moreover, let $\delta_{n,\mathtt{AH}}$ and $\tau_n$ be such that 
    \begin{align}
        \delta_{n,\mathtt{AH}} \asymp \Bigg(\frac{\log^6 n}{n}\Bigg)^{\frac{\gamma^* \nu^*}{2\gamma^* + \nu^*}} v_p^{\frac{1}{2p-1}} ~~~~~~\text{and}~~~~~~ \tau_n \asymp  \Bigg(\frac{n}{\log^6 n}\Bigg)^{\frac{2\gamma^*(1-\nu^*)}{2\gamma^* + \nu^*}} v_p^{\frac{2}{2p-1}} \label{tau.delta.order}.
    \end{align} 
    Provided that $n$ is sufficiently large, we have for any $D\geq 1$ that
    \begin{align*}
        \sup_{\substack{f_0\in \mathcal{H}(d,l,\mathcal{P}), \\ \mathbb{E}[\varepsilon|X]=0, \mathbb{E}[|\varepsilon|^p|X] \le v_p}} \mathbb{P} \Bigg\{  \sup_{f\in \mathcal{S}_{n,\tau_n}(\delta_{n,\mathtt{AH}})} \|f-f_0\|_2 &  \ge c_6 D \delta_{n,\mathtt{AH}} \Bigg\}  \\
        &   \lesssim \exp\Big\{ - D^2 n^{\frac{\nu^*}{2\gamma^*+\nu^*}} (\log n)^{\frac{8\gamma^*-2\nu^*}{2\gamma^*+\nu^*}} / c_6\Big\}  ,
    \end{align*}
    where $c_6>0$ is a universal constant independent of $(n, D, p, v_p)$.
\end{theorem}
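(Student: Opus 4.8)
The plan is to derive Theorem~\ref{thm:rate-adaptive-huber-upper} as a corollary of the generic bound in Theorem~\ref{thm:generic-bound}, by feeding in the neural network approximation rate of Proposition~\ref{prop:nn-approx} and then verifying that the prescribed choices of $L$, $N$ (hence $\bar L$, $\bar N$), $\tau_n$, and $\delta_{n,\mathtt{AH}}$ actually equate the three error terms $\delta_{\mathtt{b}}$, $\delta_{\mathtt{a}}$, $\delta_{\mathtt{s}}$ up to constants and logarithmic factors. Concretely: first, by Proposition~\ref{prop:nn-approx}, with $\bar L = c_3\lceil L\log L\rceil$ and $\bar N = c_4\lceil N\log N\rceil$ we have $\delta_{\mathtt{a}} = \inf_{f\in\mathcal{F}_n}\|f-f_0\|_2 \le c_5 (NL)^{-2\gamma^*}$ uniformly over $f_0\in\mathcal{H}(d,l,\mathcal{P})$. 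Second, I record that with the choice \eqref{eq:depth.width.order.n}, $(\bar N\bar L)^2\log(\bar N\bar L)\asymp (NL)^2 \log^3 n$ up to constants, so that $V_n = n^{-1}(\bar N\bar L)^2\log(\bar N\bar L)\log n \asymp n^{-1}(NL)^2\log^4 n$, and then $(NL)^{2} \asymp (n/\log^6 n)^{\nu^*/(2\gamma^*+\nu^*)}$ gives $V_n\asymp (\log^6 n/n)^{2\gamma^*/(2\gamma^*+\nu^*)}\cdot(\text{log factors})$; care has to be taken to track the exact power of $\log n$ so that it matches the statement.

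Third, I would plug into the stochastic-error expression $\delta_{\mathtt{s}} = \sqrt{V_n\{v_2 + \min\{\tau, \omega v_p^{1/p} V_n^{-1/p}\}\}}$. With $v_2\asymp 1$ the question is whether the $\min$ is attained at $\tau_n$; the choice of $\tau_n$ in \eqref{tau.delta.order} is designed so that $\tau_n \asymp v_p^{1/p}V_n^{-1/p}$ up to the constant $\omega$ (and up to $v_p$ powers — one checks $v_p^{2/(2p-1)}$ versus $v_p^{1/p}\cdot v_p^{2/((2p-1)p)}$, using the moment-ratio hypothesis $v_p^{1/((2p-1)p)}\lesssim (\log n)^{C'}$ to absorb the discrepancy), so that $\delta_{\mathtt{s}}^2 \asymp V_n\tau_n \asymp V_n^{1-1/p}v_p^{1/p}$; substituting the value of $V_n$ yields $\delta_{\mathtt{s}}\asymp (\log^6 n/n)^{\gamma^*\nu^*/(2\gamma^*+\nu^*)} v_p^{1/(2p-1)}$, matching $\delta_{n,\mathtt{AH}}$. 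Simultaneously $\delta_{\mathtt{b}} = v_p\tau_n^{1-p}\asymp v_p\cdot (n/\log^6 n)^{-2\gamma^*(p-1)(1-\nu^*)/(2\gamma^*+\nu^*)} v_p^{-2(p-1)/(2p-1)}$, and one checks $(p-1)(1-\nu^*) = (p-1)/(2p-1)$ so the exponent of $n$ becomes $-2\gamma^*/((2p-1)(2\gamma^*+\nu^*)) = -\gamma^*\nu^*/\cdots$? — here I would verify the arithmetic identity $2/(2p-1) = \nu^*$ fails but $1/(2p-1)$ relates correctly, so that $\delta_{\mathtt{b}}\asymp \delta_{n,\mathtt{AH}}$; and $\delta_{\mathtt{a}} = c_5(NL)^{-2\gamma^*} \asymp (n/\log^6 n)^{-\gamma^*\nu^*/(2\gamma^*+\nu^*)}$, again matching. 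Hence $\delta_{n,\tau_n} = \delta_{\mathtt{b}}\lor\delta_{\mathtt{a}}\lor\delta_{\mathtt{s}} \asymp \delta_{n,\mathtt{AH}}$ after choosing $\omega$ a suitable polylog of $n$.

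**Finally**, I invoke Theorem~\ref{thm:generic-bound} with this $\tau = \tau_n$, $\omega$, and $\delta_{\mathtt{opt}}$ taken as (part of) $\delta_{n,\mathtt{AH}}$ so that $\mathcal{S}_{n,\tau_n}(\delta_{n,\mathtt{AH}})$ is covered. To land in the first (exponential) branch of \eqref{eq:high-prob-generic-error-upper-bound}, I need $\tau_n \le \omega D^2 (v_p/V_n)^{1/p}$ for all $D\ge 1$; since $\tau_n\asymp \omega^{?}v_p^{1/p}V_n^{-1/p}$ by construction, choosing the constant $\omega$ large enough (absorbing $v_p$ powers via the polylog hypothesis) secures this for $D\ge 1$. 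Then the tail probability is $\lesssim \exp(-nV_nD^2/c_2)$, and substituting $nV_n \asymp (NL)^2\log^4 n \asymp n^{\nu^*/(2\gamma^*+\nu^*)}(\log n)^{(8\gamma^*-2\nu^*)/(2\gamma^*+\nu^*)}$ — here one must carefully combine the $\log^6 n$ from the definition of $NL$ with the extra $\log^4 n$ — gives exactly the claimed exponent. \textbf{The main obstacle} I anticipate is bookkeeping: pinning down the exact powers of $\log n$ (six inside the rate, the discrepancy between $\log(\bar N\bar L)\log n$ and $\log^3 n$, and the $(8\gamma^*-2\nu^*)/(2\gamma^*+\nu^*)$ exponent in the deviation bound), and correctly handling the $v_p$-dependence — in particular checking that $v_p^{2/(2p-1)}$ in $\tau_n$ is consistent with $v_p^{1/p}$ from $\delta_{\mathtt{s}}$ and $v_p^{1/(2p-1)}$ in $\delta_{n,\mathtt{AH}}$ via the algebraic identities among $p$, $\nu^* = 1-1/(2p-1)$, and the moment-ratio assumption $v_p^{1/((2p-1)p)}\lesssim(\log n)^{C'}$. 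The conceptual content is entirely in Theorems~\ref{thm:generic-bound} and Proposition~\ref{prop:nn-approx}; this theorem is the optimization of the three-way trade-off, so the work is verifying that the stated parameter choices are the optimizers and that the constants/logs propagate as claimed.
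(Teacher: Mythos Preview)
Your approach is essentially the same as the paper's: apply Theorem~\ref{thm:generic-bound} with the approximation bound from Proposition~\ref{prop:nn-approx}, then verify that the choices of $\bar N,\bar L,\tau_n$ make $\delta_{\mathtt{a}},\delta_{\mathtt{b}},\delta_{\mathtt{s}}$ each $\lesssim \delta_{n,\mathtt{AH}}$ and land in the exponential branch of \eqref{eq:high-prob-generic-error-upper-bound}.

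One concrete correction: your claim that $\tau_n \asymp v_p^{1/p}V_n^{-1/p}$ is wrong, and the intermediate identity $\delta_{\mathtt{s}}^2\asymp V_n^{1-1/p}v_p^{1/p}$ that you derive from it is the \emph{least-squares} rate ($\nu^\dagger$), not the Huber rate ($\nu^*$). In fact $\tau_n \asymp v_p^{2/(2p-1)} V_n^{-1/(2p-1)}$, which is \emph{polynomially smaller} than $(v_p/V_n)^{1/p}$ (the ratio has exponent $\frac{2\gamma^*}{2\gamma^*+\nu^*}\big(\frac{1}{2p-1}-\frac{1}{p}\big)<0$ in $n/\log^6 n$; this is exactly where the hypothesis $v_p^{1/((2p-1)p)}\lesssim (\log n)^{C'}$ is used). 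Consequently the paper simply takes $\omega=1$, the minimum in $\delta_{\mathtt{s}}$ is attained at $\tau_n$ for all large $n$, and $\delta_{\mathtt{s}}^2\asymp V_n\tau_n \asymp V_n^{\nu^*}v_p^{2/(2p-1)}$, which then gives the correct $\delta_{\mathtt{s}}\asymp \delta_{n,\mathtt{AH}}$. No polylog choice of $\omega$ is needed. Once you fix this, your bookkeeping for $nV_n$ and the log powers goes through as you outlined.
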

}

If the noise variable $\varepsilon$ has a bounded (conditional) $p$-th ($p\geq 2$) moment, Theorem~\ref{thm:rate-adaptive-huber-upper} shows that the adaptive Huber ReLU-DNN estimator  with a suitably chosen robustification parameter admits a convergence rate
\begin{align}
\label{eq:asymmetric-convergence-rate-adaptive-huber}
    \|\hat{f}_n - f_0\|_2 = O_\mathbb{P}\Big\{ n^{-\frac{ \beta^* \nu^*}{2\beta^*+d^*\nu^*}} (\log n)^{\frac{6\gamma^*\nu^*}{2\gamma^*+\nu^*} } \Big\}, ~~\mbox{ where }~ \nu^* =  \frac{2 p-2}{2p-1} \in [ 2/3 , 1).
\end{align}  
Compared to the least squares ReLU-DNN estimator that achieves a convergence rate of $ n^{-\frac{ \beta^* }{2\beta^*+d^* }}$ (up to some logarithmic factor) when $\varepsilon$ is sub-Gaussian, there is a statistical price to be paid by allowing for heavy-tailed errors that only have bounded moments of low order.

\begin{remark}
When the noise variable $\varepsilon$ satisfies the sub-Gaussian tail assumption \eqref{subgaussian.cond},  by Proposition~\eqref{prop:bias} we may choose $\tau = \tau_n \asymp \sigma \sqrt{\log n}$, which is much smaller than that in \eqref{tau.delta.order}, so that the Huberization bias is negligible.  Following the proof of Theorem~\ref{thm:generic-bound}, it can be shown that  the resulting nonparametric Huber estimator satisfies
\begin{align*}
	\| \hat f_n - f_0 \|_2 = \cO_{\PP}\Big\{ n^{-\frac{\beta^*}{2\beta^*+d^*}} (\log n)^{\frac{6\gamma^*}{2\gamma^*+1}}\Big\}.
\end{align*}
\end{remark}
\begin{remark}
	If $p=p_n$ grows with $n$ and satisfies $p \asymp  \log n$, it follows that $\nu^* = 1 - (2p-1)^{-1} = 1 - \cO((\log n)^{-1})$. Then, the convergence rate in Theorem~\ref{thm:rate-adaptive-huber-upper} coincides with that under the sub-Gaussian tail assumption by noting that
	\begin{align*}
			n^{-\frac{\beta^* \nu^*}{2\beta^*+d^* \nu^*}} 
			\lesssim 
			n^{-\frac{\beta^*}{2\beta^*+d^*}\big(1-\frac{1}{2p-1}\big)} \lesssim n^{-\frac{\beta^*}{2\beta^*+d^*}} n^{\frac{\beta^*}{2\beta^*+d^*}\frac{1}{\log n}} \lesssim n^{-\frac{\beta^*}{2\beta^*+d^*}} ,
	\end{align*} 
	where the last step follows from the fact that $n^{\alpha/\log n} = e^{\alpha\log n/\log n} = e^{\alpha}$ for any constant $\alpha$.
\end{remark}

\begin{remark}
   Adaptive Huber ReLU-DNN regression is easy-to-implement using the \texttt{Python} library \texttt{TensorFlow}. Specifically, we can use the function \texttt{tf.keras.losses.Huber} instead of mean-square loss \texttt{tf.keras.losses.MeanSquaredError}.
\end{remark}

{
\color{revisecolor}
Combining Proposition \ref{prop:nn-approx} and Corollary \ref{coro:generic-bound-lse}, we further obtain the following optimal error rate for the least squares ReLU-DNN estimator.
\begin{theorem}[Optimal rate for least squares estimator under $\mathcal{H}(d,l,\mathcal{P})$]
\label{thm:rate-lse-upper}
    Assume Condition \ref{cond1} holds and $v_2 \asymp 1$. Let $\beta^*, d^*$ and $\gamma^*$ be as in \eqref{eq:def-gamma-star}, and $N, L \ge 3$ satisfying $(NL) \asymp \{n/  \log^6(n) \}^{\nu^\dagger/(4\gamma^*+2\nu^\dagger)}$ for $\nu^\dagger=1-1/p$. Consider the approximate least squares estimates with optimization error bounded by $\delta_{n,\mathtt{LS}}^2$, where $\delta_{n,\mathtt{LS}} \asymp \{ \log^6(n)/n\}^{ \gamma^*\nu^\dagger/(2\gamma^*+\nu^\dagger)} v_p^{1/(2p)}$ and the depth and width of the network class $\mathcal{F}_n = \mathcal{F}_n(d, \bar{L}, \bar{N}, M)$ satisfy \eqref{depth.width.order}. Then, for all sufficiently large $n$, the following bound
    \begin{align*}
        \sup_{\substack{f_0\in \mathcal{H}(d,l,\mathcal{P}) \\ \mathbb{E}[\varepsilon|X]=0, \mathbb{E}[|\varepsilon|^p|X] \le v_p}} \mathbb{P} \Bigg\{ \sup_{f\in \mathcal{S}_{n,\infty}(\delta_{n,\mathtt{LS}})}\|f - f_0\|_2 \ge c_7 D \delta_{n, \mathtt{LS}} \Bigg\} \le c_7 D^{-2p}
    \end{align*}
    holds for any $D \geq 1$, where $c_7$ is a universal constant independent of $(n, D, p, v_p)$.
\end{theorem}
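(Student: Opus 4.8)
The plan is to obtain Theorem~\ref{thm:rate-lse-upper} by assembling two pieces already established — the ReLU-DNN approximation bound for $\mathcal{H}(d,l,\mathcal{P})$ from Proposition~\ref{prop:nn-approx} and the generic high-probability bound for the least squares ReLU-DNN estimator from Remark~\ref{coro:generic-bound-lse} (Theorem~\ref{thm:generic-bound} with $\tau=\infty$, $\omega=1$) — and then choosing the width--depth budget so as to balance approximation against the heavy-tailed stochastic error. First I would bound the two quantities that feed into $\delta_n$ in Remark~\ref{coro:generic-bound-lse}. For $f_0\in\mathcal{H}(d,l,\mathcal{P})$, Proposition~\ref{prop:nn-approx} applied with $\bar L=c_3\lceil L\log L\rceil$, $\bar N=c_4\lceil N\log N\rceil$ gives $\delta_{\mathtt{a}}=\inf_{f\in\mathcal{F}_n}\|f-f_0\|_2\le\inf_{f\in\mathcal{F}_n}\|f-f_0\|_\infty\le c_5(NL)^{-2\gamma^*}$. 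Since the prescribed choice $NL\asymp\{n/\log^6 n\}^{\nu^\dagger/(4\gamma^*+2\nu^\dagger)}$ makes $NL$ a fixed power of $n$ up to logarithmic factors, one has $\bar N\bar L\asymp NL\log N\log L$ and $\log(\bar N\bar L)\asymp\log n$, so $V_n=n^{-1}(\bar N\bar L)^2\log(\bar N\bar L)\log n\lesssim (NL)^2\log^6(n)/n$ (the $\log^6$ being the worst case over how the budget splits between width and depth). Using $v_2\asymp 1$ and the Jensen bound $v_p^{1/(2p)}\gtrsim v_2^{1/4}\asymp 1$, together with $V_n\to 0$, the term $v_p^{1/(2p)}V_n^{(1-1/p)/2}$ dominates $\sqrt{v_2 V_n}$ in the expression $\delta_n=\delta_{\mathtt{a}}+v_p^{1/(2p)}V_n^{(1-1/p)/2}+\sqrt{v_2 V_n}$, so that $\delta_n\asymp (NL)^{-2\gamma^*}+v_p^{1/(2p)}\big((NL)^2\log^6(n)/n\big)^{\nu^\dagger/2}$ with $\nu^\dagger=1-1/p$.

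Next I would solve the balancing equation $(NL)^{-2\gamma^*}\asymp\big((NL)^2\log^6(n)/n\big)^{\nu^\dagger/2}$, which yields exactly $NL\asymp\{n/\log^6 n\}^{\nu^\dagger/(4\gamma^*+2\nu^\dagger)}$ and, after substitution, $\delta_n\asymp v_p^{1/(2p)}\{\log^6(n)/n\}^{\gamma^*\nu^\dagger/(2\gamma^*+\nu^\dagger)}=\delta_{n,\mathtt{LS}}$, matching the theorem's prescription. Then I apply Remark~\ref{coro:generic-bound-lse} with optimization-error parameter $\delta_{\mathtt{opt}}=\delta_{n,\mathtt{LS}}$: since $\delta_n\asymp\delta_{n,\mathtt{LS}}$ and $D\ge 1$, we have $c_2(D\delta_n+\delta_{n,\mathtt{LS}})\le c_7 D\delta_{n,\mathtt{LS}}$ for a suitable universal $c_7$, hence
\[
\mathbb{P}\Big\{\sup_{f\in\mathcal{S}_{n,\infty}(\delta_{n,\mathtt{LS}})}\|f-f_0\|_2\ge c_7 D\delta_{n,\mathtt{LS}}\Big\}\lesssim \exp\{-n^{1/p}(\bar N\bar L)^{2(1-1/p)}/c_2\}+D^{-2p}.
\]

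What remains is to absorb the exponential term into $c_7 D^{-2p}$ uniformly over $D\ge 1$, and this is the one point that needs a little care. The exponent $n^{1/p}(\bar N\bar L)^{2(1-1/p)}$ is a positive power of $n$ up to logarithmic factors, so the exponential term is super-polynomially small in $n$. On the other hand, every $f\in\mathcal{F}_n(d,\bar L,\bar N,M)$ satisfies $\|f\|_\infty\le M$ and $\|f_0\|_\infty\le M$, so $\sup_{f\in\mathcal{S}_{n,\infty}(\delta_{n,\mathtt{LS}})}\|f-f_0\|_2\le 2M$ and the event in the display is empty once $c_7 D\delta_{n,\mathtt{LS}}>2M$; thus it suffices to treat $D\le 2M/(c_7\delta_{n,\mathtt{LS}})$, on which range $D^{-2p}\gtrsim\delta_{n,\mathtt{LS}}^{2p}$ is only polynomially small in $n$ and therefore dominates the exponential term for all sufficiently large $n$. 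Combining the two ranges of $D$ and enlarging $c_7$ gives the claimed bound; taking the supremum over $f_0\in\mathcal{H}(d,l,\mathcal{P})$ and over admissible noise distributions is immediate since every constant above is universal.

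As for the main obstacle: there is no new conceptual idea here, since the heavy lifting sits in Theorem~\ref{thm:generic-bound}/Remark~\ref{coro:generic-bound-lse} and Proposition~\ref{prop:nn-approx}. The step most prone to error is the polylogarithmic bookkeeping — confirming the $\log^6 n$ power and the exponent $\nu^\dagger/(4\gamma^*+2\nu^\dagger)$ of $NL$, and verifying that $v_p^{1/(2p)}V_n^{\nu^\dagger/2}$ is indeed the dominant stochastic term for every $p\ge 2$ — while the truncation argument handling large $D$ is the only non-routine twist in the assembly.
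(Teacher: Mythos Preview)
Your proposal is correct and follows essentially the same route as the paper: apply Remark~\ref{coro:generic-bound-lse} (i.e., Theorem~\ref{thm:generic-bound} with $\tau=\infty$) together with the approximation bound of Proposition~\ref{prop:nn-approx}, compute $V_n$ and $\delta_{\mathtt a}$ under the prescribed $NL$, and then absorb the exponential tail into $D^{-2p}$. The only cosmetic difference is in the large-$D$ truncation: the paper restricts to $D\in[1,n]$ (declaring $D\ge n$ trivial) and then uses $\exp\{-Cn^{1/p}(\bar N\bar L)^{2(1-1/p)}\}\ll n^{-2p}\le D^{-2p}$, whereas you truncate at $D\asymp 1/\delta_{n,\mathtt{LS}}$ via the boundedness $\|f-f_0\|_2\le 2M$; both arguments are equivalent in substance.
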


From Theorems \ref{thm:rate-adaptive-huber-upper} and \ref{thm:rate-lse-upper} we see that the convergence rate $\delta_{n,\mathtt{LS}}$ for the least squares ReLU-DNN estimator is slower than that for the adaptive Huber estimator since $\nu^\dagger < \nu^*$. In Section \ref{sec:lb} we will show that the two upper bounds, $\delta_{n,\mathtt{LS}}$ and $\delta_{n,\mathtt{AH}}$, are both \emph{sharp} up to logarithmic factors.

It is also worth noting that the convergence rate of the least squares estimator (under the bounded $p$-th moment condition) is regardless of the symmetry/asymmetry of $\varepsilon$ provided that the conditional mean of $\varepsilon$ is zero.
This indicates that, unlike adaptive Huber regression, the least squares estimator does not benefit from the blessing of symmetry in the presence of heavy-tailed errors. We will provide a detailed discussion in Remark \ref{remark:stochastic-error-symmetric}.}


\subsection{Faster rate under symmetric noise}

In the robust regression literature,  the case of symmetric noise is often of independent interest \citep{hampel1986robust, ronchetti2009robust}.
The following result shows that the Huber estimator will benefit from the blessing of symmetry although the tails are still heavy: with a robustification parameter of constant level,  it achieves the same rate of convergence as its least squares counterpart when the noise is sub-Gaussian.

{
\color{revisecolor}
\begin{theorem}[Optimal rate under $\mathcal{H}(d,l,\mathcal{P})$ and symmetric noise]
\label{thm:convergence-rate-nn-symmetric}
    Assume Conditions \ref{cond1} and \ref{cond3} hold. Consider the function class $\mathcal{F}_n=\mathcal{F}_n(d, \bar{L}, \bar{N}, M)$, where the depth $\bar{L}$ and width $\bar{N}$ satisfies \eqref{depth.width.order}
    with $L, N \geq 3$ satisfying $LN \asymp \{n/ \log^6(n)\}^{1/(4\gamma^*+2)}$. Furthermore, let $c_1 \le \tau \lesssim 1$ and $\delta_{n,\mathtt{H}} \asymp  \{(\log n)^6/n\}^{\frac{\gamma^*}{2\gamma^*+1}}$. Then, for all sufficiently enough $n$ and arbitrary $D\ge 1$, it holds
    \begin{align*}
        \sup_{f_0\in \mathcal{H}(l,d,\mathcal{P})} \mathbb{P}\Bigg\{ \sup_{f\in \mathcal{S}(\delta_{n,\mathtt{H}})} \|f - f_0\|_2 \ge c_8 D \delta_{n,\mathtt{H}} \Bigg\}  \lesssim \exp\Big\{ - n^{\frac{1}{2\gamma^*+1}}(\log n)^{\frac{12\gamma^*}{2\gamma^*+1}} D^2 / c_8 \Big\} ,
    \end{align*} where $c_8$ is a universal constant independent of $n, D$.
\end{theorem}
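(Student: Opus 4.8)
The plan is to reduce Theorem~\ref{thm:convergence-rate-nn-symmetric} to the generic high-probability bound of Theorem~\ref{thm:generic-bound} by exploiting the key structural simplification afforded by symmetry: under Condition~\ref{cond3}, the Huberization bias vanishes, i.e. $f_{0,\tau}=f_0$ for every $\tau>0$, so that $\delta_{\mathtt{b}}=0$ and the restricted strong convexity \eqref{eq:str-convex-bias} holds \emph{globally} on $\Theta$ rather than only outside a neighborhood of $f_0$. Concretely, I would first check that the hypotheses of Theorem~\ref{thm:generic-bound} are met with $p=2$: Condition~\ref{cond1} is assumed, Condition~\ref{cond3} supplies $\EE[|\varepsilon|\,|\,X=x]\le v_1$ and zero conditional mean, and we only need a bounded second moment; if $v_2$ is not directly controlled one argues via symmetry and the bounded first absolute moment together with the truncation built into the Huber loss (the relevant quantities in the proof of Theorem~\ref{thm:generic-bound} only involve $\EE[\psi_\tau(\varepsilon)^2\,|\,X]\le \tau\,\EE[|\varepsilon|\,|\,X]\le \tau v_1$, which is finite since $\tau\lesssim 1$). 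Because $\tau$ is of constant order, the stochastic-error formula \eqref{eq:stochastic-error-term} gives $\delta_{\mathtt{s}}\asymp\sqrt{V_n}\cdot\min\{\sqrt\tau,(n/(\mathrm{Pdim}\log n))^{1/4}\}\asymp\sqrt{V_n}$ up to constants, since $\sqrt\tau$ is the binding term — this is exactly the sub-Gaussian-type stochastic rate, with \emph{no} inflation by a power of the moment index.

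Next I would plug in the network architecture. With $LN\asymp\{n/\log^6 n\}^{1/(4\gamma^*+2)}$ and $\bar L=c_3\lceil L\log L\rceil$, $\bar N=c_4\lceil N\log N\rceil$ as in \eqref{depth.width.order}, Proposition~\ref{prop:nn-approx} yields $\delta_{\mathtt{a}}=\inf_{f\in\mathcal F_n}\|f-f_0\|_2\le\inf_{f\in\mathcal F_n}\|f-f_0\|_\infty\le c_5(NL)^{-2\gamma^*}\asymp\{\log^6 n/n\}^{\gamma^*/(2\gamma^*+1)}$, uniformly over $f_0\in\mathcal H(d,l,\mathcal P)$. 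Meanwhile $\mathrm{Pdim}(\mathcal F_n)\asymp(\bar N\bar L)^2\log(\bar N\bar L)\asymp(NL)^2\log^3(NL)$ up to logarithmic factors, so $V_n\asymp n^{-1}(NL)^2\log^{\text{(poly)}}(n)$, and hence $\sqrt{V_n}\asymp\{(NL)^2/n\}^{1/2}\cdot\mathrm{polylog}(n)\asymp\{\log^6 n/n\}^{\gamma^*/(2\gamma^*+1)}\cdot\mathrm{polylog}(n)$ — i.e. $\delta_{\mathtt{s}}$ and $\delta_{\mathtt{a}}$ are balanced and both of order $\delta_{n,\mathtt{H}}\asymp\{\log^6 n/n\}^{\gamma^*/(2\gamma^*+1)}$ after absorbing the logarithmic factors into the stated $\log^6 n$ (one should verify the bookkeeping of log powers carefully — the exponent $\tfrac{12\gamma^*}{2\gamma^*+1}$ in the tail bound comes precisely from tracking $\log^6 n$ through $nV_nD^2$). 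Since $\delta_{\mathtt{b}}=0$, we get $\delta_{n,\tau}=\delta_{\mathtt{a}}\lor\delta_{\mathtt{s}}\asymp\delta_{n,\mathtt{H}}$.

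Finally, I would invoke \eqref{eq:high-prob-generic-error-upper-bound}. With $p=2$, $\tau\lesssim1$ a constant, and $(v_p/V_n)^{1/p}\asymp V_n^{-1/2}\to\infty$ while $\omega D^2$ can be taken of constant-times-$D^2$ order, we are in the regime $\tau\le\omega D^2(v_p/V_n)^{1/p}$ (for $n$ large this holds for all $D\ge1$ after choosing $\omega$ a suitable constant), so the first branch applies and the tail is $\exp(-nV_nD^2/c_2)$. Substituting $nV_n\asymp(NL)^2\log^{\text{(poly)}}n\asymp n^{1/(2\gamma^*+1)}\log^{\text{(poly)}}n$ and taking $\delta_{\mathtt{opt}}=\delta_{n,\mathtt{H}}$ (the stated optimization tolerance in $\mathcal S(\delta_{n,\mathtt{H}})$) gives the claimed bound $\sup_{f_0}\mathbb P\{\sup_{f\in\mathcal S(\delta_{n,\mathtt H})}\|f-f_0\|_2\ge c_8D\delta_{n,\mathtt H}\}\lesssim\exp\{-n^{1/(2\gamma^*+1)}(\log n)^{12\gamma^*/(2\gamma^*+1)}D^2/c_8\}$, with $c_8$ depending only on the universal constants $c_2$--$c_5$ and the constants in $\mathcal P$. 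The main obstacle I anticipate is not the reduction itself but the precise verification that one may use a \emph{fixed} $\tau$ rather than one growing with $n$: this is exactly where symmetry is essential, and it requires re-reading the proof of Theorem~\ref{thm:generic-bound} to confirm that the only place the lower bound \eqref{eq:str-convex} (valid merely on $\Theta_0^{\mathrm c}(8v_p\tau^{1-p})$) was used can be upgraded to the global \eqref{eq:str-convex-bias} under Condition~\ref{cond3}, thereby removing the $\delta_{\mathtt b}=v_p\tau^{1-p}$ floor on the error and allowing $\tau$ to stay bounded; a secondary but routine nuisance is the careful propagation of the several logarithmic factors (from $\mathrm{Pdim}$, from $V_n$, and from the approximation rate) into the single consolidated exponent.
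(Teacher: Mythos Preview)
Your strategy is correct and matches the paper's: symmetry yields $f_{0,\tau}=f_0$ and the global lower bound \eqref{eq:str-convex-bias}, so with $c_1\le\tau\lesssim1$ the bias floor disappears and the stochastic error collapses to $\asymp\sqrt{V_n}$; balancing against $\delta_{\mathtt a}\lesssim(NL)^{-2\gamma^*}$ from Proposition~\ref{prop:nn-approx} under the stated architecture gives $\delta_{n,\mathtt H}$, and your log accounting $nV_n\asymp(NL)^2(\log n)^6\asymp n^{1/(2\gamma^*+1)}(\log n)^{12\gamma^*/(2\gamma^*+1)}$ is right.

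Two remarks on execution where the paper is slightly cleaner than your outline. First, you cannot literally invoke Theorem~\ref{thm:generic-bound}: its bound carries $\delta_{\mathtt b}=v_p\tau^{1-p}$, which is $\gtrsim1$ when $\tau\lesssim1$, and Condition~\ref{cond3} supplies no $v_2$ at all, so the ``take $p=2$'' framing is artificial. The paper instead states a dedicated Lemma~\ref{lemma:generic-bound-symmetric} and proves it directly from the underlying peeling machinery (Lemma~\ref{lemma:convergence-rate}) with $\delta_*=\delta_{\mathtt a}\vee2\delta_{\mathtt s}$ and $c_{26}=1/16$ from \eqref{eq:str-convex-bias}; the second-moment issue is handled by $C_v=v_2\wedge\tau^2\le\tau^2\lesssim1$, so only the first-moment bound in Condition~\ref{cond3} is ever needed. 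Second, rather than arguing about which branch of \eqref{eq:high-prob-generic-error-upper-bound} applies, the paper sets the truncation level $B_k=2\tau$ in Lemma~\ref{lemma:convergence-rate}; since $B_k>\tau$, the indicator $1\{B_k<\tau\}$ vanishes and the polynomial-tail term $\mathsf T_2$ is identically zero, yielding the pure exponential tail with no side conditions on $\omega$ or $D$.
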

}

\begin{remark}
    It should be noted that the adaptive Huber estimator is not the only estimator that achieves the $n^{-\frac{ \beta^* \nu^*}{2\beta^*+d^*\nu^*}}$ rate of convergence under asymmetric and heavy-tailed noise. 
    For example, one may also use the robust loss considered in \cite{Catoni2012} or a pseudo-Huber loss that is twice continuously differentiable everywhere. The theoretical analysis of these estimators will follow the same argument.
    On the other hand, a simpler robustification strategy is to 
    apply adaptive truncation on $Y$ \citep{fan2021shrinkage}, resulting in the empirical risk $\hat{\mathcal{R}}_{T,\tau}(f) =  n^{-1} \sum_{i=1}^n \{T_\tau(Y_i) - f(X_i)\}^2$. This corresponds to the least squares estimator with truncated response responses and facilitates neural network training.
    It is possible to obtain a result that is comparable to Theorem \ref{thm:generic-bound} using a similar argument. 
    The first key step is to show a lower bound of the population excess risk as in Proposition \ref{prop:strong-convex}, i.e.,
    \begin{align*}
        \mathcal{R}_{T,\tau}(f) - \mathcal{R}_{T,\tau}(f_0) \gtrsim \|f - f_0\|_2^2 \qquad \text{ as long as } \qquad \|f - f_0\|_2 \gtrsim \frac{1}{\tau^{p-1}},
    \end{align*} 
    where $\mathcal{R}_{T,\tau}(f) = \mathbb{E}_{(X,Y)} \{T_\tau(Y) - f(X)\}^2$ is the population risk. The proof of this is similar to that for the Huber risk by using the fact that $ | \mathbb{E} \{ T_\tau(f_0(X) + \varepsilon) | X \} - f_0(X)  | \lesssim \tau^{1-p}$ under Condition \ref{cond2}. The second key step is to show that the variance term grows linearly with $\tau$ as we see from Lemma~\aosversion{\blue{A.5}}{\ref{lemma:empirical-process}} for the Huber estimator. Combining these with a modified version of Lemma~\aosversion{\blue{A.1}}{\ref{lemma:convergence-rate}} yields an upper bound on the convergence rate. However, when the noise is symmetric, Theorem \ref{thm:convergence-rate-nn-symmetric} implies that the least squares estimator with truncated response cannot achieve the rate $n^{-\frac{ \beta^* }{2\beta^*+d^*}}$. The main reason is that the (population) excess risk lower bound \eqref{eq:str-convex-bias} may not hold because $\mathbb{E} \{ T_\tau(f_0(X) + \varepsilon) | X\}$ is not necessarily equal to $f_0(X)$. 
\end{remark}


\section{Lower Bound under Heavy-tailed Noise}
\label{sec:lb}
This section provides lower bounds for the Huber ReLU-DNN regression estimator for any given network structure under a bounded (conditional) $p$-th moment condition. In Section \ref{subsec:main-result}, we present the setting and a generic lower bound on the $L_2$ error of Huber ReLU-DNN estimators (Theorem \ref{thm:generic-lower-bound}). We also present a generic lower bound on the $L_2$ error of the least squares ReLU-DNN estimator (Theorem \ref{thm:generic-lower-bound-lse}) as a special case of Theorem \ref{thm:generic-lower-bound} when $\tau=\infty$. Then we apply the above results to obtain lower bounds on the $L_2$ errors for both the adaptive Huber and least squares ReLU-DNN estimators when the regression function lies in (1) hierarchical composition model $\mathcal{H}(d,l,\mathcal{P})$ and (2) $d$-variate $(\beta, C)$-smooth function class $\mathcal{C}(d,\beta)$. Subsequently, we develop the tools, including an approximation theory of the ReLU-DNN network, to prove the result.  In Section~\ref{subsec:lowerbound}, we first provide some intuitive explanations of the three terms in the lower bound, namely, the bias induced by Huber loss, the approximation error of ReLU-DNN to potential function class, and the statistical error. Then we outline the key ideas behind the proof of Theorem \ref{thm:generic-lower-bound}.   In Section~\ref{sec:lb:revisit}, we furnish a new ReLU-DNN approximation result that is related to ReLU-DNN's non-robustness nature. This result is the key to analyzing the statistical error term in Section \ref{subsec:lowerbound}. We provide insights into why the ReLU-DNN approximation result leads to a slower error rate under heavy-tailed errors. The above discussion focuses only on the best convergence rate for Huber-type ReLU-DNN estimators under heavy-tailed errors. A natural question is what  the best possible rate a ReLU-DNN estimator can achieve (under the same scenario) is. We summarize our results and give some preliminary answers to this question in Section \ref{sec:nn-meet-htr}. The relationships among all the results in this section are depicted in Figure \ref{fig:lowerbound-roadmap}.

\begin{figure}[!t]
\centering
\includegraphics[scale=0.3]{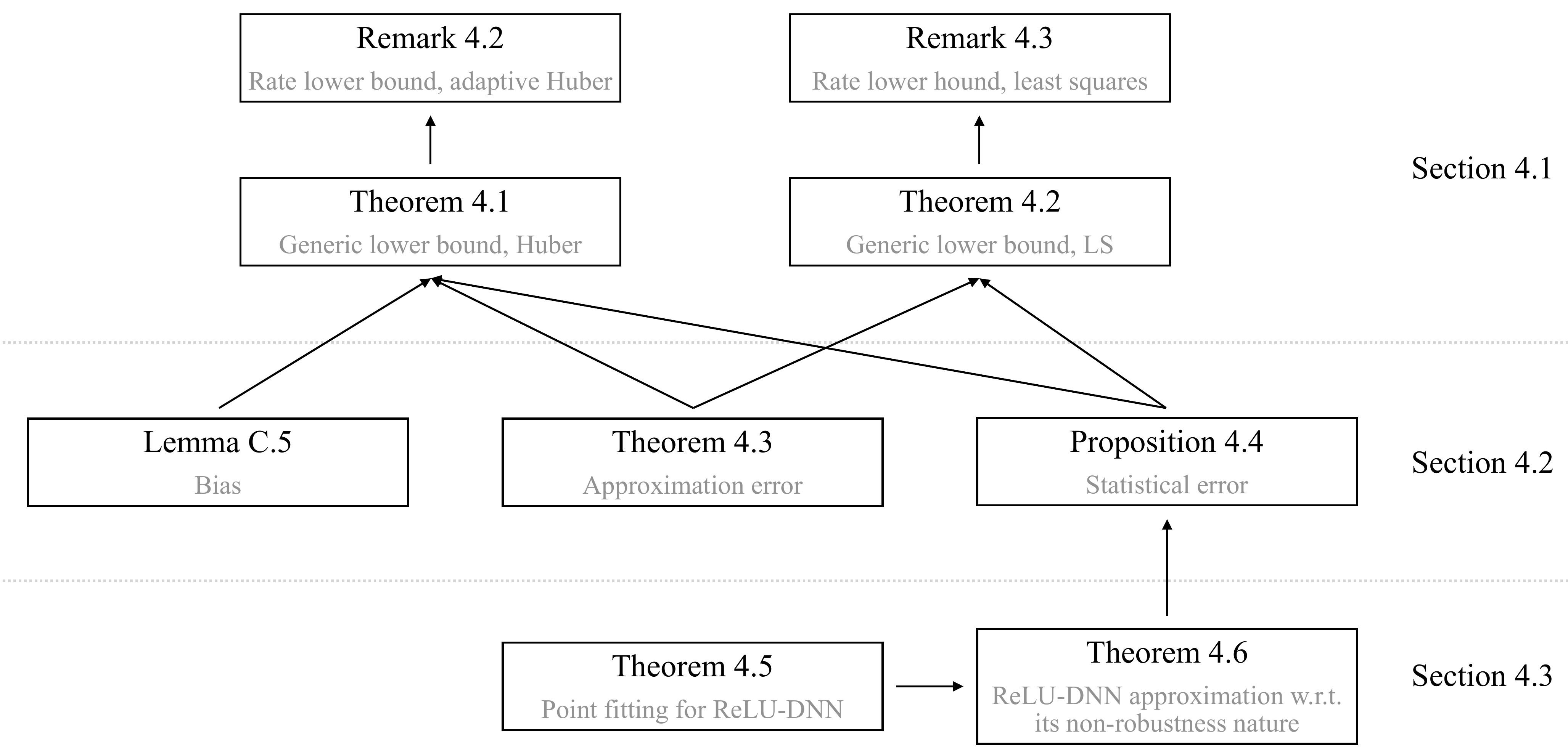}
\caption{The connections of all the results presented in Section \ref{sec:lb}. The arrow from claim A to claim B means we will use claim A to prove claim B.  The main result is presented in Theorem 4.1, and subsequently, we develop the tools, including an approximation theory of ReLU-DNN, to prove it.}
\label{fig:lowerbound-roadmap}
\end{figure}

\subsection{Main results on lower bound}
\label{subsec:main-result}

\revise{Recall that Theorem \ref{thm:generic-bound} and Remark \ref{coro:generic-bound-lse} establish upper bounds on the $L_2$ error of the Huber ReLU-DNN estimator and least squares ReLU-DNN estimator, respectively, under a bounded (conditional) $p$-th moment condition.} A natural question arises: are these upper bounds tight under the assumed moment condition? To answer this question, in this section we provide several lower bounds for both (adaptive) Huber and least squares ReLU-DNN estimators under the same moment conditions. 

Without loss of generality, in this section we assume $\| f_0 \|_\infty \leq 1$, $X$ follows a uniform distribution on $[0,1]^d$ and $ \mathbb{E} [|\varepsilon|^p|X=x] \leq 1$ (almost surely). \revise{Moreover, we assume that the regression function $f_0$ belongs to a class $\mathcal{F}$ with intrinsic dimension-adjusted smoothness bounded by $\alpha$ in the following sense.}

{
\color{revisecolor}
\begin{definition}[Intrinsic dimension-adjusted smoothness] \label{def:4.1}
We say a class $\mathcal{F}$ of functions $\{f: \mathbb{R}^d\to \mathbb{R}\}$ has an {intrinsic dimension-adjusted smoothness upper bounded by $\alpha$} if the minimax $L_2$ risk over this function class is lower bounded by $n^{-\frac{2\alpha}{2\alpha+1}}$ up to a constant, that is,
\begin{align*}
    \liminf_{n\to\infty} \inf_{\hat{f}_n} \sup_{f_0 \in \mathcal{F}} n^{\frac{2\alpha}{2\alpha+1}} \mathbb{E} [  \|\hat{f}_n - f_0\|_2^2 ]  > 0,
\end{align*} where the infimum is taken over all estimators constructed from the i.i.d. sample $\{ (X_i, Y_i)  \}_{i=1}^n$ satisfying $Y_i = f_0(X_i) + \varepsilon_i$ with $\varepsilon_i \sim \mathcal{N}(0,1)$ and $X_i\sim \mathrm{Uniform}([0,1]^d)$.
\end{definition}

For example, the $d$-variate $(\beta, 1)$-smooth function class $\mathcal{C}(d,\beta)$, formally defined as
\begin{align}
\label{eq:smooth-function-class}
		\mathcal{C}(d,\beta) = \big\{f: [0,1]^d\to \mathbb{R}  :   f ~\text{is}~(\beta, 1)~\text{smooth} \big\},
\end{align}
is a function class with intrinsic dimension-adjusted smoothness upper bounded by $\beta/d$. For the hierarchical composition model $\mathcal{H}(d,l,\mathcal{P})$ specified in Definition \ref{hcm}, the following lemma shows that its intrinsic dimension-adjusted smoothness is upper bounded by $\gamma^*$.
\begin{lemma}
\label{lemma:minimax-hcm}
    Assume that $d\ge d^*$, we have \begin{align*}
    \liminf_{n\to\infty} \inf_{\hat{f}_n} \sup_{f_0 \in \mathcal{H}(d,l,\mathcal{P})} n^{\frac{2\gamma^*}{2\gamma^*+1}} \mathbb{E}[\|\hat{f}_n - f_0\|_2^2]   > 0 .
\end{align*}
\end{lemma}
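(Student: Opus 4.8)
The plan is to establish the minimax lower bound $n^{-2\gamma^*/(2\gamma^*+1)}$ over $\mathcal{H}(d,l,\mathcal{P})$ by exhibiting a rich enough sub-family of regression functions within $\mathcal{H}(d,l,\mathcal{P})$ on which estimation is already as hard as $(\beta^*, C)$-smooth regression in $d^*$ variables. Concretely, recall from \eqref{eq:def-gamma-star} that $(\beta^*, d^*) = \argmin_{(\beta,t)\in\mathcal{P}} \beta/t$, so $(\beta^*, d^*) \in \mathcal{P}$. First I would observe that any $(\beta^*, C)$-smooth function $g:\mathbb{R}^{d^*}\to\mathbb{R}$ depending only on the first $d^*$ coordinates of $x\in\mathbb{R}^d$ — which is legitimate since $d \ge d^*$ — belongs to $\mathcal{H}(d,1,\mathcal{P})$ by taking $\pi$ to be the inclusion $[d^*]\hookrightarrow[d]$. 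Then, to embed this class into depth $l$, I would compose with $l-1$ fixed, smooth, invertible ``identity-like'' maps (e.g. one-variable smooth functions $\phi$ with bounded derivatives of all relevant orders, or simply the identity with an appropriate smooth bijection absorbed into $g$), noting that composing a $(\beta^*,C)$-smooth function with fixed smooth functions of bounded norm stays in a $(\beta^*, C')$-smooth class with a possibly enlarged constant, and remains expressible in the recursive form of Definition \ref{hcm} at level $l$. This gives $\mathcal{C}_{d^*}(\beta^*, C) \subseteq \mathcal{H}(d,l,\mathcal{P})$ (up to the constant in the smoothness, which only affects constants in the lower bound, as flagged in Definition \ref{def:4.1}).

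Next I would invoke the classical minimax lower bound for nonparametric regression over $(\beta^*, C)$-smooth functions of $d^*$ variables under Gaussian noise $\varepsilon\sim\mathcal{N}(0,1)$ and uniform design on $[0,1]^{d^*}$ (equivalently on $[0,1]^d$ since only $d^*$ coordinates are active): the minimax $L_2$ risk is lower bounded by a constant times $n^{-2\beta^*/(2\beta^*+d^*)} = n^{-2\gamma^*/(2\gamma^*+1)}$; see \cite{gyorfi2002distribution} or \cite{tsybakov2009nonparametric}. The standard route here is Assouad's lemma or the Fano method applied to a bump-function construction: one places $m \asymp h^{-d^*}$ disjoint scaled bumps $x\mapsto c\,h^{\beta^*}K((x-x_j)/h)$ on a grid of width $h$ in $[0,1]^{d^*}$, with $K$ a fixed smooth compactly supported kernel, chooses $h \asymp n^{-1/(2\beta^*+d^*)}$, and controls the pairwise KL divergences between the induced distributions of the sample. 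Since every function in this hypercube of bump combinations is $(\beta^*, C)$-smooth for a suitable universal $C$ and hence lies in $\mathcal{H}(d,l,\mathcal{P})$ by the embedding above, the lower bound transfers verbatim:
\begin{align*}
    \liminf_{n\to\infty} \inf_{\hat f_n} \sup_{f_0\in\mathcal{H}(d,l,\mathcal{P})} n^{\frac{2\gamma^*}{2\gamma^*+1}} \mathbb{E}\big[\|\hat f_n - f_0\|_2^2\big] \;\ge\; \liminf_{n\to\infty} \inf_{\hat f_n} \sup_{f_0\in\mathcal{C}_{d^*}(\beta^*,C)} n^{\frac{2\gamma^*}{2\gamma^*+1}}\mathbb{E}\big[\|\hat f_n - f_0\|_2^2\big] \;>\; 0.
\end{align*}

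The main obstacle — really the only subtle point — is verifying the embedding $\mathcal{C}_{d^*}(\beta^*, C) \subseteq \mathcal{H}(d,l,\mathcal{P})$ cleanly at an arbitrary depth $l > 1$ while respecting the smoothness budget $\mathcal{P}$. One must check that the ``padding'' compositions needed to reach level $l$ can be realized using pairs $(\beta, t) \in \mathcal{P}$: the cleanest choice is to use the identity in one variable as the outer functions, which is $(\beta, 1)$-smooth for \emph{every} $\beta$, so it is admissible as long as $\mathcal{P}$ contains \emph{some} pair with $t = 1$; if $\mathcal{P}$ contains no such pair, one instead pushes the depth the other way — keep the nontrivial $(\beta^*, d^*)$ function at the \emph{innermost} level $1$ (with $\pi$ selecting $d^*$ of the $d$ inputs) and let the outer $l-1$ levels be functions of $t$ inputs from $\mathcal{P}$ that act as (smooth, norm-controlled) near-identities on their first argument and constant in the rest; their composition with a $(\beta^*,C)$-smooth function remains $(\beta^*, C')$-smooth by the chain rule and Faà di Bruno bounds. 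Either way the constant $C'$ depends only on $l$, $\sup_{(\beta,t)\in\mathcal{P}}\max\{\beta,t\}$ and the fixed kernel $K$, which is harmless. Once this structural inclusion is in hand, the rest is the textbook minimax argument and requires no new ideas.
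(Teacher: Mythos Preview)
Your approach is essentially the same as the paper's: the paper simply asserts $\mathcal{C}(d^*,\beta^*,C)\subset\mathcal{H}(d,l,\mathcal{P})$ when $d\ge d^*$ and then invokes the classical minimax lower bound (Theorem~3.2 in \cite{gyorfi2002distribution}) for $(\beta^*,C)$-smooth $d^*$-variate functions, exactly as you propose. Your discussion of how to pad to depth $l$ and the potential need for $(\beta,1)\in\mathcal{P}$ is more careful than the paper, which leaves this inclusion unjustified, but the overall route is identical.
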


Under the above settings, we first present a generic lower bound on the $L_2$ error of the class of Huber ReLU-DNN estimators.

\begin{theorem}[A generic lower bound on the $L_2$ error]
\label{thm:generic-lower-bound}
	Let $\mathcal{F}_0 \subseteq \{f: \mathbb{R}^d\to [-1, 1]\}$ be a function class with intrinsic dimension-adjusted smoothness upper bounded by some $\alpha >0$. Suppose $0 \in \mathcal{F}_0$. Define the family of data generating processes as
	\begin{align}
	\label{eq:lb-dist-family}
	    \mathcal{U}(d, p, \mathcal{F}) = \big\{ (X,f_0, \varepsilon): X\sim \mathrm{Uniform}([0,1]^d), f_0 \in \mathcal{F},  \mathbb{E}[\varepsilon|X]=0, \mathbb{E}[ |\varepsilon|^p | X] \le 1 \big\}.
	\end{align} 
	Let $\mathcal{S}^{\mathtt{HN}}_{n,\tau}(\delta)$ be the set of all approximate Huber ReLU-DNN estimates with given robustification parameter $\tau$, depth $\bar{L}$ and width $\bar{N}$, i.e.,  
	\begin{align}
	\label{eq:thm:approx-adaptive-huber}
	\begin{split}
		\mathcal{S}^{\mathtt{HN}}_{n,\tau}(\delta) = \Bigg\{\tilde{f}_n \in \mathcal{F}_n(d, \bar{L}, \bar{N}, 1): ~& \hat{\mathcal{R}}_\tau(\tilde{f}_n) \le \min \Big \{\hat{\mathcal{R}}_\tau(f_{0,\tau}), \inf_{f\in \mathcal{F}_n(d, \bar{L}, \bar{N}, 1)} \hat{\mathcal{R}}_\tau(f) + c_{9}\delta^2\Big\} \\
		& \text{or}~~ \hat{\mathcal{R}}_\tau(\tilde{f}_n) \le \inf_{f\in \mathcal{F}_n(d, \bar{L}, \bar{N}, 1)} \hat{\mathcal{R}}_\tau(f) + n^{-100} \Bigg\}
	\end{split}
	\end{align} for some universal constant $c_{9}>0$ (independent of $\bar{N}$, $\bar{L}$, $n$ and $\tau$).
	Then, there exists universal positive constants $c_{10}$--$c_{12}$ independent of $n, \bar{L}, \bar{N}$, $\tau$ such that the following statements hold.
	\begin{itemize}
	    \item[(1)] For any $n \ge 3$, $\bar{N}, \bar{L} \ge c_{11}$  and $\tau \ge c_{12}$,
	\begin{align*}
		\sup_{(X,f_0,\varepsilon) \in \mathcal{U}(d,p,\mathcal{F}_0)}\mathbb{P}\big\{ \exists \hat{f}_n \in \mathcal{S}^{\mathtt{HN}}_{n,\tau}(\delta_n) ~{\rm s.t.}~ \|\hat{f}_n - f_0\|_2 \ge \delta_n\big\}  \ge 1-\frac{c_{10}}{\log n},
	\end{align*} where
	\begin{align}
	\label{eq:lb-terms}
	    \delta_n \asymp \frac{1}{(\bar{N}\bar{L})^{2\alpha} \log^{5\alpha} (\bar{N}\bar{L})} \bigvee \Bigg[  1\bigwedge \frac{\bar{N}\bar{L}}{\sqrt{n} \log n}  \Bigg\{ \sqrt{\tau} \land \Bigg(\frac{\sqrt{n} \log n}{\bar{N} \bar{L}}\Bigg)^{1/p}\Bigg\} \Bigg]  \bigvee \frac{1}{\tau^{p-1} \log^2(n)}.
	\end{align} 
        \item[(2)] There exists $\delta_{n,*} \asymp n^{-\frac{\alpha \nu^*}{2\alpha+\nu^*}} (\log n)^{-\frac{\alpha(3\nu^*+4)}{2\alpha+\nu^*}}$ such that
        \begin{align*}
            \liminf_{n\to\infty} \inf_{\bar{N}, \bar{L} \ge c_{11}, \tau \ge c_{12}} \sup_{(X,f_0,\varepsilon) \in \mathcal{U}(d,p,\mathcal{F}_0)} \mathbb{P} \big\{  \exists \hat{f}_n \in \mathcal{S}_{n,\tau}^{\mathtt{HN}}(\delta_{n,*}) ~{\rm s.t.}~  \|\hat{f}_n - f_0\|_2 \ge \delta_{n,*}\big\}  = 1.
        \end{align*}
    \end{itemize}
\end{theorem}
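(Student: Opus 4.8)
The plan is to prove parts (1) and (2) by a common strategy: for each given triple $(\bar N,\bar L,\tau)$ exhibit an explicit ``bad'' data-generating process in $\mathcal{U}(d,p,\mathcal{F}_0)$ under which \emph{some} approximate Huber ReLU-DNN estimator is forced to be $\delta_n$-far from $f_0$ with probability $1-o(1)$. The lower bound \eqref{eq:lb-terms} is a maximum of three terms, and the natural route is to treat them separately and then combine. First, the approximation-error term $(\bar N\bar L)^{-2\alpha}\log^{-5\alpha}(\bar N\bar L)$: since $\mathcal{F}_0$ has intrinsic dimension-adjusted smoothness at most $\alpha$, there is some $f_0\in\mathcal{F}_0$ that no function in $\mathcal{F}_n(d,\bar L,\bar N,1)$ can approximate in $\|\cdot\|_2$ better than this order (this is essentially a quantitative converse to Proposition \ref{prop:nn-approx}, matching the pseudo-dimension complexity $\mathrm{Pdim}(\mathcal F_n)\asymp(\bar N\bar L)^2\log(\bar N\bar L)$); for that $f_0$, any element of $\mathcal{S}^{\mathtt{HN}}_{n,\tau}(\delta_n)$ that achieves near-minimal empirical risk must, by Proposition \ref{prop:strong-convex}-type population-to-empirical transfer, stay $\gtrsim \delta_{\mathtt a}$ away from $f_0$. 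Second, the Huberization-bias term $\tau^{1-p}\log^{-2}n$: take $\varepsilon$ asymmetric with bounded $p$-th moment (e.g.\ the two-point construction used to make Proposition \ref{prop:bias} tight), so that $f_{0,\tau}$ differs from $f_0$ by order $\tau^{1-p}$; since the candidate set $\mathcal{S}^{\mathtt{HN}}_{n,\tau}(\delta)$ explicitly includes any $\tilde f_n$ with $\hat{\mathcal R}_\tau(\tilde f_n)\le \hat{\mathcal R}_\tau(f_{0,\tau})$, a network approximating $f_{0,\tau}$ rather than $f_0$ will be admissible and will sit at distance $\gtrsim\tau^{1-p}$ from $f_0$.

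The heart of the argument — and what I expect to be the main obstacle — is the stochastic-error term
$\big(\bar N\bar L/(\sqrt n\log n)\big)\wedge 1$ times $\big\{\sqrt\tau\wedge(\sqrt n\log n/\bar N\bar L)^{1/p}\big\}$, i.e.\ showing that the inflation of the stochastic error by heavy tails is unavoidable. This is where the ``different strategy'' advertised in the abstract comes in: rather than a classical Fano/Assouad two-point argument over $f_0$, I would fix $f_0=0$ (allowed since $0\in\mathcal{F}_0$) and design a heavy-tailed noise distribution — roughly, $\varepsilon_i=0$ except with probability $\asymp m/n$ it takes a large ``outlier'' value of magnitude $\asymp (n/m)^{1/p}$, with $m$ chosen as $m\asymp\mathrm{Pdim}(\mathcal F_n)$ up to logs — so that the number of outliers is $\asymp m$ with high probability. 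The key enabling fact is the new approximation result Theorem \ref{thm:approx-noise-dim-d}: a ReLU-DNN of the prescribed width and depth can interpolate (in a highly non-smooth way) these $\asymp m$ outlier locations with large spikes while remaining small elsewhere, producing a network $\hat f_n$ with strictly smaller empirical Huber loss than $f_0=0$ yet with $\|\hat f_n\|_2\gtrsim \delta_{\mathtt s}$. One then checks that this $\hat f_n$ lies in $\mathcal{S}^{\mathtt{HN}}_{n,\tau}(\delta_n)$ — using that its empirical risk beats both $f_{0,\tau}$'s and the infimum by at most the allowed slack — and that $\|\hat f_n - f_0\|_2 = \|\hat f_n\|_2$ meets the claimed rate; the two regimes $\sqrt\tau$ vs.\ $(\sqrt n\log n/\bar N\bar L)^{1/p}$ in \eqref{eq:lb-terms} correspond to whether the Huber clipping level $\tau$ or the outlier magnitude governs the per-spike gain in empirical loss. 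Combining the three bad constructions (one may even need a single noise law exhibiting all three effects, or take the worst over the three) and taking the maximum yields part (1), with the $1-c_{10}/\log n$ probability coming from concentration of the outlier count and of the relevant empirical averages.

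For part (2), the goal is to pass from the per-$(\bar N,\bar L,\tau)$ bound \eqref{eq:lb-terms} to a bound uniform over all admissible $(\bar N,\bar L,\tau)$, i.e.\ to optimize the right-hand side of \eqref{eq:lb-terms} in $(\bar N\bar L,\tau)$. Writing $u=\bar N\bar L$, the three terms are decreasing in $u$ (approximation), a unimodal-type expression balancing against $u$ (stochastic), and decreasing in $\tau$ (bias), while the stochastic term is increasing in $\tau$ up to the cap; the infimum over $(\bar N,\bar L,\tau)\ge(c_{11},c_{12})$ of the maximum is therefore achieved at the point where the approximation term, the bias term and the stochastic term are all balanced. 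Carrying out this calculus — set $u^{-2\alpha}\asymp \tau^{1-p}\asymp (u/\sqrt n)\sqrt\tau$ (the regime where $\sqrt\tau$ is the active branch, which one verifies is the binding one for the optimum) — gives $\tau\asymp n^{\,\alpha(1-\nu^*)/(2\alpha+\nu^*)}$-type scaling and $u\asymp n^{\,\nu^*/(2(2\alpha+\nu^*))}$ up to logs, yielding the stated $\delta_{n,*}\asymp n^{-\alpha\nu^*/(2\alpha+\nu^*)}(\log n)^{-\alpha(3\nu^*+4)/(2\alpha+\nu^*)}$ with $\nu^*=1-1/(2p-1)$; since this holds for \emph{every} choice of hyper-parameters with the same (up to constants) lower bound, taking $\liminf_n$ of the infimum over hyper-parameters preserves it, and the probability statement upgrades from $1-c_{10}/\log n$ to $1-o(1)\to 1$ in the limit. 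The logarithmic bookkeeping in matching the exact powers of $\log n$ in $\delta_{n,*}$ will be the fiddly part, but is routine once the polynomial balancing is fixed.
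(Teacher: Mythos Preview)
Your overall architecture---three separate constructions for the approximation, stochastic, and bias terms, then combining---matches the paper, and your treatment of the stochastic term (the outlier-fitting via Theorem~\ref{thm:approx-noise-dim-d}) is essentially the paper's Proposition~\ref{prop:lowerbound-variance}. Two points need correction.

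\textbf{Bias term.} You misread the definition of $\mathcal{S}^{\mathtt{HN}}_{n,\tau}(\delta)$: the first clause requires $\hat{\mathcal R}_\tau(\tilde f_n)\le \min\{\hat{\mathcal R}_\tau(f_{0,\tau}),\,\inf_f\hat{\mathcal R}_\tau(f)+c_9\delta^2\}$, i.e.\ \emph{both} inequalities simultaneously, not just the first. So exhibiting a network that approximates $f_{0,\tau}$ and therefore beats $\hat{\mathcal R}_\tau(f_{0,\tau})$ does not by itself place it in the admissible set---you would still need to show it is within $c_9\delta_n^2$ of the empirical infimum, which is exactly the hard part. The paper's route is quite different and is the key idea you are missing: it works with the $\mathcal{S}_1$ portion (the $n^{-100}$ near-exact ERM) and proves an \emph{upper bound} (Lemma~\ref{lemma:convergence-f0tau}) showing that every near-ERM $\hat f_n$ satisfies $\|\hat f_n-f_{0,\tau}\|_2\lesssim (\log n)^2\,\delta_2$; then, for a carefully constructed asymmetric $\varepsilon$ with $\|f_0-f_{0,\tau}\|_2\gtrsim\tau^{1-p}$, the triangle inequality gives $\|\hat f_n-f_0\|_2\ge\|f_0-f_{0,\tau}\|_2-\|\hat f_n-f_{0,\tau}\|_2\gtrsim\delta_3$ whenever $\delta_3\ge\delta_2$. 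The bias lower bound thus rests on an auxiliary convergence result, not on directly placing $f_{0,\tau}$ into the candidate set.

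\textbf{Part (2).} You cannot simply quote Part~(1) and minimize: in Part~(1) the slack in $\mathcal{S}^{\mathtt{HN}}_{n,\tau}(\delta_n)$ is $c_9\delta_n^2$ with $\delta_n$ depending on $(\bar N,\bar L,\tau)$, whereas in Part~(2) the slack is the fixed $c_9\delta_{n,*}^2$. When $\delta_2\gg\delta_{n,*}$, the Part~(1) witness has optimization error $\asymp\delta_2^2$, which is too large. The paper handles this (Case~3 of its proof of Claim~(2)) by re-applying Proposition~\ref{prop:lowerbound-variance} with \emph{smaller} sub-network sizes $N,L$ so that the resulting $\delta_n$ is $\asymp\delta_{n,*}$ rather than $\asymp\delta_2$; this keeps the witness inside $\mathcal{S}_2(\delta_{n,*})$. (Minor: for the approximation term the paper's argument is purely deterministic---$\|\hat f_n-f_0\|_2\ge\inf_{f\in\mathcal F_n}\|f-f_0\|_2$ for any $\hat f_n\in\mathcal F_n$---so no population-to-empirical transfer is needed.)
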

}


\begin{remark}
Note that our lower bound result holds for a sufficiently large $\tau$, i.e., $\tau \ge c_{12}$. When $\tau<c_{12}$, by following the similar proof strategy, we can find some $f_0$ and asymmetric noise distribution such that $\|f_0 - f_{0,\tau}\|_2 \gtrsim \frac{1}{\tau^{p-1}} \gtrsim 1$, while $\|\hat{f}_n - f_{0,\tau}\|_2 \to 0$. But in this case, we will rely on a different convergence analysis of $\|\hat{f}_n - f_{0,\tau}\|_2$ because Proposition \ref{prop:strong-convex} does not necessarily hold. This will further complicate the proofs. We thus omit this scenario for ease of presentation.
\end{remark}

{\color{revisecolor} 
\begin{remark}
\label{remark:adaptive-huber-lb}
Theorem \ref{thm:generic-lower-bound} provides a generic lower bound on the $L_2$ error for the class of Huber ReLU-DNN estimators.  Plugging in minimax optimal convergence rates over $\mathcal{H}(d,l,\mathcal{P})$ with $d\ge d^*$ and $\mathcal{C}(d,\beta)$ into Theorem \ref{thm:generic-lower-bound}, (2) shows that the lower bounds on the convergence rates of adaptive Huber estimators over these two function classes are
\begin{align*}
	\delta_{n,*}^{\mathcal{H}(d,l,\mathcal{P})} \asymp  n^{-\frac{\gamma^* \nu^*}{2\gamma^*+\nu^*}} (\log n)^{-\frac{\gamma^*(3\nu^*+4)}{2\gamma^*+\nu^*}} \qquad \text{and} \qquad \delta_{n,*}^{\mathcal{C}(d,\beta)} \asymp n^{-\frac{\beta \nu^*}{2\beta+d\nu^*}} (\log n)^{-\frac{\beta(3\nu^*+4)}{2\beta+d\nu^*}},
\end{align*}
respectively.  This confirms that the obtained upper bound for adaptive Huber estimator (Theorem \ref{thm:rate-adaptive-huber-upper}) is sharp up to a logarithmic factor of $n$.

\end{remark}
Meanwhile, since the Huber ReLU-DNN estimator coincides with its least squares counterpart when $\tau = \infty$, we obtain the following generic lower bound on the $L_2$ error of least squares ReLU-DNN estimators.

\begin{theorem}[A generic lower bound on the $L_2$ error of least squares estimator]
\label{thm:generic-lower-bound-lse}
	Let $\mathcal{F}_0$, $\mathcal{U}(d, p,\mathcal{F})$ and $\mathcal{S}_{n,\tau}^{\mathtt{HN}}(\delta)$ be as in Theorem \ref{thm:generic-lower-bound}. The following two statements hold.
	\begin{itemize}
	    \item[(1)] For any $n \ge 3$, $\bar{N}, \bar{L} \ge c_{11}$,
        	\begin{align*}
        		\sup_{(X,f_0,\varepsilon) \in \mathcal{U}(d,p,\mathcal{F}_0)}\mathbb{P}\big\{ \exists \hat{f}_n \in \mathcal{S}_{n,\infty}^{\mathtt{HN}}(\delta_n) ~{\rm s.t.}~ \|\hat{f}_n - f_0\|_2 \ge \delta_n\big\} \ge 1-\frac{c_{10}}{\log n},
        	\end{align*} 
         where $\delta_n \asymp \frac{1}{(\bar{N}\bar{L})^{2\alpha} \log^{5\alpha} (\bar{N}\bar{L})} \lor \big\{(\tfrac{ \bar{N}\bar{L} }{\sqrt{n}\log n})^{1-1/p} \land 1\big\}$.
        \item[(2)] There exists $\delta_{n,*} \asymp n^{-\frac{\alpha \nu^\dagger}{2\alpha+\nu^\dagger}} (\log n)^{-\frac{7\alpha\nu^\dagger}{2\alpha+\nu^\dagger}}$ such that
        \begin{align}
        \label{eq:lower-bound-lse-alpha}
            \liminf_{n\to\infty} \inf_{\bar{N}, \bar{L} \ge c_{11}} \sup_{(X,f_0,\varepsilon) \in \mathcal{U}(d,p,\mathcal{F}_0)} \mathbb{P} \big\{  \exists \hat{f}_n \in \mathcal{S}_{n,\infty}^{\mathtt{HN}}(\delta_{n,*}) ~{\rm s.t.}~ \|\hat{f}_n - f_0\|_2 \ge \delta_{n,*}\big\}  = 1,
        \end{align}
    \end{itemize}
\end{theorem}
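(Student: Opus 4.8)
The plan is to obtain Theorem~\ref{thm:generic-lower-bound-lse} as the $\tau=\infty$ specialization of Theorem~\ref{thm:generic-lower-bound}. The least squares ReLU-DNN estimator is exactly the Huber ReLU-DNN estimator with $\tau=\infty$, since $\ell_\infty(x)=\tfrac12 x^2$; moreover, at $\tau=\infty$ the population Huber minimizer is $f_{0,\infty}=\mathbb{E}[Y\mid X]=f_0$, because $\mathbb{E}[\varepsilon\mid X]=0$ and $\|f_0\|_\infty\le 1$ imply $f_0\in\Theta$. Hence $\mathcal{S}^{\mathtt{HN}}_{n,\infty}(\delta)$ coincides with the set of approximate least squares ReLU-DNN minimizers (with optimization tolerance $c_9\delta^2$, or $n^{-100}$), and both parts of the claim should fall out of Theorem~\ref{thm:generic-lower-bound} by setting $\tau=\infty$.

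For part (1), I would read off the generic bound \eqref{eq:lb-terms} at $\tau=\infty$: the Huberization-bias term $\tau^{1-p}\log^{-2}n$ vanishes, and since $\sqrt{\tau}\wedge(\sqrt n\log n/(\bar N\bar L))^{1/p}=(\sqrt n\log n/(\bar N\bar L))^{1/p}$ once $\tau$ is large, the stochastic term collapses to $(\bar N\bar L/(\sqrt n\log n))^{1-1/p}\wedge 1$. What remains is exactly $\delta_n\asymp(\bar N\bar L)^{-2\alpha}\log^{-5\alpha}(\bar N\bar L)\vee\{(\bar N\bar L/(\sqrt n\log n))^{1-1/p}\wedge 1\}$, with the probability statement inherited verbatim. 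One should check that the mechanism behind Theorem~\ref{thm:generic-lower-bound}(1) --- constructing, for an adversarial $(X,f_0,\varepsilon)\in\mathcal{U}(d,p,\mathcal{F}_0)$, a ReLU network (via the non-smooth outlier-fitting result Theorem~\ref{thm:approx-noise-dim-d}) whose empirical Huber loss is no larger than that of $f_{0,\tau}$ and within $c_9\delta_n^2$ of the network infimum, yet is $\|\cdot\|_2$-distance $\ge\delta_n$ from $f_0$ --- remains valid at $\tau=\infty$. This is immediate: $\tau$ enters the argument only through the truncation in the Huber score $\psi_\tau$, which is vacuous here, and through the bias, which is now zero; if anything, the squared loss rewards overfitting outliers more than $\ell_\tau$ does, so the construction is only easier.

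For part (2), I would re-optimize the part (1) bound over the network size $u:=\bar N\bar L$ with $\tau=\infty$ held fixed, in contrast to Theorem~\ref{thm:generic-lower-bound}(2), which optimizes jointly over $\tau$ and $(\bar N,\bar L)$ and thus produces the exponent $\nu^*$. With $\tau=\infty$ the two competing terms are $u^{-2\alpha}\log^{-5\alpha}u$ and $(u/(\sqrt n\log n))^{\nu^\dagger}$ with $\nu^\dagger=1-1/p$; they balance at $u\asymp n^{\nu^\dagger/(2(2\alpha+\nu^\dagger))}$ up to logarithmic corrections, and substituting back while tracking the $\log^{5\alpha}$ and $(\log n)^{\nu^\dagger}$ factors gives the minimal value $\delta_{n,*}\asymp n^{-\alpha\nu^\dagger/(2\alpha+\nu^\dagger)}(\log n)^{-7\alpha\nu^\dagger/(2\alpha+\nu^\dagger)}$, the claimed rate. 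Since for every admissible $(\bar N,\bar L)$ the part (1) bound $\delta_n(\bar N,\bar L)$ is at least a fixed constant multiple of $\delta_{n,*}$, applying part (1) at each $(\bar N,\bar L)$ should produce an element of $\mathcal{S}^{\mathtt{HN}}_{n,\infty}(\delta_{n,*})$ with $L_2$ error $\ge\delta_{n,*}$ with probability at least $1-c_{10}/\log n$; taking $\inf_{\bar N,\bar L\ge c_{11}}$ and then $\liminf_{n\to\infty}$ yields \eqref{eq:lower-bound-lse-alpha}. Feeding in the minimax rates from Lemma~\ref{lemma:minimax-hcm} and from the $(\beta,1)$-smooth class would then recover, in parallel to Remark~\ref{remark:adaptive-huber-lb}, the concrete lower bounds $n^{-\gamma^*\nu^\dagger/(2\gamma^*+\nu^\dagger)}$ and $n^{-\beta\nu^\dagger/(2\beta+d\nu^\dagger)}$ matching Theorem~\ref{thm:rate-lse-upper} up to logarithmic factors.

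The step I expect to be the main obstacle is forcing part (1) to yield the single fixed target $\delta_{n,*}$ uniformly over all $(\bar N,\bar L)\ge c_{11}$, since $\delta_n(\bar N,\bar L)$ can exceed $\delta_{n,*}$ by polynomial factors away from the balanced network size. For the balanced size the two agree and part (1) applies directly. For much smaller networks the approximation-error term dominates and the argument becomes deterministic: an adversarially chosen hard-to-approximate $f_0\in\mathcal{F}_0$ keeps every element of $\mathcal{F}_n\supseteq\mathcal{S}^{\mathtt{HN}}_{n,\infty}(\delta_{n,*})$ at $\|\cdot\|_2$-distance $\gtrsim(\bar N\bar L)^{-2\alpha}\log^{-5\alpha}(\bar N\bar L)\ge\delta_{n,*}$ from $f_0$, invoking the ReLU-network approximation lower bound for $\mathcal{F}_0$ that its intrinsic dimension-adjusted smoothness guarantees (the same ingredient behind Theorem~\ref{thm:generic-lower-bound} and Lemma~\ref{lemma:minimax-hcm}), and observing that $\mathcal{S}^{\mathtt{HN}}_{n,\infty}(\delta_{n,*})$ is always nonempty. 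For much larger networks the stochastic term dominates, and I would recalibrate the outlier-overfitting construction from the proof of Theorem~\ref{thm:generic-lower-bound} to the smaller scale $\delta_{n,*}$: overfit only enough of the heaviest observations to displace the fitted function by $\delta_{n,*}$ in $\|\cdot\|_2$, which still drives the empirical squared loss below that of $f_0$ and keeps the estimator inside $\mathcal{S}^{\mathtt{HN}}_{n,\infty}(\delta_{n,*})$; here all the constraints (number of outliers, realizability within the network, available risk budget) are slack relative to the balanced case. Beyond this case analysis and the bookkeeping of logarithmic factors, no genuinely new ingredient is needed, since the heavy lifting --- the stochastic-error characterization and the non-smooth approximation theorem --- has already been carried out for Theorem~\ref{thm:generic-lower-bound}.
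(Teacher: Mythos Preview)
Your proposal is correct and follows essentially the same route as the paper: both obtain Part~(1) as the $\tau=\infty$ case of Theorem~\ref{thm:generic-lower-bound}(1) (the paper re-traces Steps~1,~2,~4 of that proof rather than citing the theorem, but the logic is identical), and both handle Part~(2) via the same case split --- when the approximation term $\delta_1$ dominates, use the deterministic approximation lower bound together with nonemptiness of $\mathcal{S}_1$; when the stochastic term $\delta_2$ dominates, recalibrate Proposition~\ref{prop:lowerbound-variance} with smaller $N,L$ so that the constructed $\tilde f_n$ lands in $\mathcal{S}^{\mathtt{HN}}_{n,\infty}(\delta_{n,*})$ rather than merely $\mathcal{S}^{\mathtt{HN}}_{n,\infty}(\delta_n)$. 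You have correctly identified the one nontrivial point, namely that Part~(1) alone does not immediately give membership in $\mathcal{S}^{\mathtt{HN}}_{n,\infty}(\delta_{n,*})$ when $\delta_n\gg\delta_{n,*}$, and your proposed resolution is exactly what the paper does.
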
}

{
\color{revisecolor}
\begin{remark}
	Similar to Remark \ref{remark:adaptive-huber-lb}	 for adaptive Huber ReLU-DNN estimators, plugging $\alpha=\beta/d$ for $\mathcal{C}(d,\beta)$ and $\alpha=\gamma^*$ for $\mathcal{H}(d,l,\mathcal{P})$ with $d\ge d^*$ into Theorem \ref{thm:generic-lower-bound-lse}, (2) gives  lower bounds on the convergence rates of least squares estimators over these two function classes, which are 
	\begin{align*}
		\delta_{n,*}^{\mathcal{H}(d,l,\mathcal{P})} \asymp  n^{-\frac{\gamma^* \nu^\dagger}{2\gamma^*+\nu^\dagger}} (\log n)^{-\frac{7\gamma^*\nu^\dagger}{2\gamma^*+\nu^\dagger}}  \qquad \text{and} \qquad \delta_{n,*}^{\mathcal{C}(d,\beta)} \asymp n^{-\frac{\beta \nu^\dagger}{2\beta+d\nu^\dagger}} (\log n)^{-\frac{7\beta \nu^\dagger}{2\beta+d\nu^\dagger}} ,
	\end{align*} 
	respectively.  This affirms the tightness of Theorem \ref{thm:rate-lse-upper} as well as the superiority of adaptive Huber ReLU-DNN regression over its least squares counterpart under heavy-tailed errors.

\end{remark}
}

It is worth noticing that the above lower bound is tailored to the Huber regression estimator trained on ReLU-DNN to reveal the impact of the tails of noise on ReLU-DNN estimators. 
For the the H\"older class $\mathcal{C}(\beta, d)$, \cite{kuchibhotla2019least} showed that the $\mathcal{C}(\beta, d)$-constrained least squares estimators achieves the optimal rate $n^{-\frac{\beta}{2\beta+d}}$ when $p \ge 2+d/\beta$.
In Section \ref{sec:nn-meet-htr}, we will discuss whether ReLU-DNN-based estimators can achieve such an optimal rate.

\subsection{Insights from the lower bound analysis}
\label{subsec:lowerbound}

From the previous upper bound analysis, we see that the overall convergence rate is determined by a trade-off among the three terms in \eqref{eq:generic-upper-bound-three-terms}: bias introduced by the Huber loss, neural network approximation error, and statistical error. 

To derive a lower bound on the convergence rate, we will show that when $\mathcal{F}_n(d, \bar{L}, \bar{N}, 1)$ is used as the function class, for each one of the above three terms, there exists \revise{some data generating process $(X,f_0,\varepsilon) \in \mathcal{U}(d,p,\mathcal{F}_0)$} such that, up to logarithmic factors,
\begin{align*}
	\revise{\delta_{\mathtt{b}}}(\tau) \gtrsim \frac{1}{\tau^{p-1}}, \quad \revise{\delta_{\mathtt{a}}}(\bar{N}, \bar{L}) \gtrsim \big( \bar{N}  \bar{L}  \big)^{-2 \revise{\alpha}}, \quad \revise{\delta_{\mathtt{s}}(n,\bar{N}, \bar{L}, \tau)} \gtrsim  \bar{N} \bar{L} \sqrt{\frac{\tau \land   \{  n / (\bar{N} \bar{L})^2  \}^{1/p}}{n} } \bigwedge 1 
\end{align*}
and the $L_2$ error is bounded from below by each of these terms.
This gives rise to the lower bound	\eqref{eq:lb-terms} and forms the strategy of our proof.  We now further explain each of the above three terms, starting with the second and third terms.

\medskip
\noindent \textbf{Approximation error.} We begin with a lower bound on the approximation error term $\revise{\delta_{\mathtt{a}}}(\bar{N}, \bar{L})$. Since $X$ is assumed to follow the uniform distribution on $[0,1]^d$,  $\|\cdot\|_2$ coincides with the $L_2$ norm on $[0,1]^d$ equipped with the Lebesgue measure. The next theorem provides a lower bound on the approximation error. 

{\color{revisecolor}
\begin{theorem}[Neural network approximation error, lower bound]
\label{thm:neural-network-approx-lower-bound}
	 Suppose $\bar{L}, \bar{N} \geq 3$ are arbitrarily given integers. Let $\mathcal{F}_0 \subseteq \{f: \mathbb{R}^d \to [-1, 1]\}$ be a function class with intrinsic dimension-adjusted smoothness upper bounded by $\alpha$. Let $\mathcal{F}_n=\mathcal{F}_n(d, \bar{L}, \bar{N}, 1)$ be the neural network class of interest. Then, there exists some constant $c_{13}>0$ independent of $\bar{N}$ and $\bar{L}$ such that
	\begin{align*}
		\sup_{f_0 \in \mathcal{F}_0} \inf_{f\in \mathcal{F}_n} \|f-f_0\|_2 \ge c_{13} \big\{ (\bar{N}\bar{L})^2\log^5(\bar{N}\bar{L})\big\}^{-\alpha},
	\end{align*} where $\|\cdot\|_2=\| \cdot \|_{L_2(\mathcal{P}_X)}$ with $\mathcal{P}_X$ denoting the uniform distribution on $[0,1]^d$. 
\end{theorem}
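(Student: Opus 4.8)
The plan is to convert the \emph{statistical} minimax lower bound of Definition~\ref{def:4.1} into this \emph{approximation-theoretic} lower bound. The guiding principle is that if deep ReLU networks of a fixed size $(\bar{N},\bar{L})$ could approximate every $f_0\in\mathcal{F}_0$ much more accurately than the claimed rate, then the least-squares estimator constrained to $\mathcal{F}_n=\mathcal{F}_n(d,\bar{L},\bar{N},1)$ would estimate $f_0$ in the Gaussian model of Definition~\ref{def:4.1} strictly faster than the minimax rate over $\mathcal{F}_0$ allows --- a contradiction. The key degree of freedom is that, for \emph{fixed} $(\bar{N},\bar{L})$, we are free to pick the sample size $n$ in this auxiliary problem; we will choose $n=n(\bar{N},\bar{L})$ large enough that the stochastic error of the network least-squares estimator is swamped by the minimax rate $n^{-2\alpha/(2\alpha+1)}$.

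Concretely, fix $\bar{N},\bar{L}\ge 3$, write $\delta^{\star}:=\sup_{f_0\in\mathcal{F}_0}\inf_{f\in\mathcal{F}_n}\|f-f_0\|_2$ for the quantity to be bounded below, and set $P:=\mathrm{Pdim}(\mathcal{F}_n)$, so $P\asymp(\bar{N}\bar{L})^2\log(\bar{N}\bar{L})$ by \citep{BHLM2019}. In the experiment of Definition~\ref{def:4.1} --- $n$ i.i.d.\ observations $Y_i=f_0(X_i)+\varepsilon_i$ with $X_i\sim\mathrm{Uniform}([0,1]^d)$, $\varepsilon_i\sim\mathcal{N}(0,1)$ and $f_0\in\mathcal{F}_0\subseteq\{\,\|f\|_\infty\le1\,\}$ --- let $\hat{f}_n$ be a least-squares minimizer over $\mathcal{F}_n$, whose outputs lie in $[-1,1]$. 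A standard oracle inequality for constrained least squares over a class of bounded pseudo-dimension under sub-Gaussian noise, which is the $\tau=\infty$ specialization of Theorem~\ref{thm:generic-bound} (after integrating its tail bound; a $\mathcal{N}(0,1)$ variable has moments of every order --- see also the sub-Gaussian discussion following Theorem~\ref{thm:rate-adaptive-huber-upper}), gives, for all $n$,
\[
  \sup_{f_0\in\mathcal{F}_0}\EE\big[\|\hat{f}_n-f_0\|_2^2\big]\;\le\;c_{14}\Big((\delta^{\star})^2+\tfrac{P\log^{a}n}{n}\Big)
\]
for universal constants $c_{14}$ and $a$ (a small absolute power; truncating $\varepsilon_i$ at level $\asymp\sqrt{\log n}$ controls the Gaussian tails). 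Conversely, Definition~\ref{def:4.1} supplies $n_0$ and $c_{15}>0$ with $\sup_{f_0\in\mathcal{F}_0}\EE\|\tilde{f}_n-f_0\|_2^2\ge c_{15}\,n^{-2\alpha/(2\alpha+1)}$ for \emph{every} estimator $\tilde{f}_n$ and every $n\ge n_0$; apply this to $\hat{f}_n$.

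It remains to choose $n=n(\bar{N},\bar{L}):=\big\lceil(c_{16}\,P\log^{a}P)^{2\alpha+1}\big\rceil$ with $c_{16}$ a large absolute constant. Since $P\ge9\log9>1$, we have $\log n\asymp\log P\asymp\log(\bar{N}\bar{L})$ with constants depending only on $\alpha$ and $a$, hence $n^{1/(2\alpha+1)}\gtrsim c_{16}P\log^{a}P\gtrsim c_{16}P\log^{a}n$, so that for $c_{16}$ large enough $P\log^{a}n/n\le\tfrac{c_{15}}{2c_{14}}n^{-2\alpha/(2\alpha+1)}$. Combining the two displays gives $(\delta^{\star})^2\ge\tfrac{c_{15}}{2c_{14}}n^{-2\alpha/(2\alpha+1)}\asymp(P\log^{a}P)^{-2\alpha}\asymp\big((\bar{N}\bar{L})^2\log^{a+1}(\bar{N}\bar{L})\big)^{-2\alpha}$, i.e.\ $\delta^{\star}\gtrsim\big((\bar{N}\bar{L})^2\log^{a+1}(\bar{N}\bar{L})\big)^{-\alpha}$; since $\log(\bar{N}\bar{L})\ge\log3>1$ and one may keep $a+1\le 5$, this is at least $c_{13}\{(\bar{N}\bar{L})^2\log^{5}(\bar{N}\bar{L})\}^{-\alpha}$, as claimed. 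This argument requires $n(\bar{N},\bar{L})\ge n_0$, which holds once $\bar{N}\bar{L}$ exceeds an absolute threshold; for the finitely many remaining pairs $\delta^{\star}$ is a fixed positive number (it is positive, since otherwise $\mathcal{F}_0$ would lie in the $L_2$-closure of a single finite-pseudo-dimension class, forcing a near-parametric minimax rate and contradicting $\alpha<\infty$), so shrinking $c_{13}$ absorbs these cases.

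The main obstacle is not conceptual but lies in pinning down the oracle inequality in the second paragraph with constants $c_{14},a$ that are genuinely \emph{uniform} in $(\bar{N},\bar{L})$ and with only a polynomial-in-$\log n$ stochastic term --- this is exactly what legitimizes the tuning $n=n(\bar{N},\bar{L})$. Any of the standard routes (Gaussian truncation plus a chaining/peeling bound with the pseudo-dimension estimate of \citep{BHLM2019}, or a direct appeal to Theorem~\ref{thm:generic-bound} with $\tau=\infty$) delivers such a bound, and because the target rate carries a generous $\log^{5}(\bar{N}\bar{L})$ factor, the precise exponent $a$ of the logarithm is immaterial; the remaining steps are then routine.
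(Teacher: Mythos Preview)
Your proposal is correct and uses essentially the same idea as the paper: pair the minimax lower bound of Definition~\ref{def:4.1} with an oracle inequality for the network least-squares estimator under Gaussian noise, choosing the auxiliary sample size $n$ in terms of $(\bar{N},\bar{L})$ so that the approximation term must dominate. The paper's execution differs only in packaging---it argues by contradiction along a subsequence (Proposition~\ref{prop:approx-error-lb-limit}) and invokes Lemma~18 of \cite{kohler2021rate} for the oracle inequality rather than Theorem~\ref{thm:generic-bound}---but the mechanism is the same, and your direct choice of $n=n(\bar{N},\bar{L})$ together with the finitely-many-small-cases argument is a clean alternative to the paper's two-part liminf structure.
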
}

The result in Theorem~\ref{thm:neural-network-approx-lower-bound} furnishes a lower bound on the $L_2$ error for any ReLU-DNN estimator\revise{, that is,  given any $\bar{N}, \bar{L} \geq 3$, we have}
\begin{align*}
    \revise{\inf_{\hat{f}_n \in \mathcal{F}(d,\bar{L}, \bar{N}, 1)} \sup_{f_0\in \mathcal{F}_0} \|\hat{f}_n - f_0\|_2 \ge \sup_{f_0 \in \mathcal{F}_0} \inf_{f\in \mathcal{F}_n} \|f-f_0\|_2 \gtrsim \big\{ (\bar{N}\bar{L})^2\log^5(\bar{N}\bar{L})\big\}^{-\alpha}.}
\end{align*}
\begin{remark}
    \revise{A direct application of Theorem \ref{thm:neural-network-approx-lower-bound} in the case of $\mathcal{F}_0=\mathcal{C}(d,\beta)$ yields
    \begin{align*}
        \sup_{f_0 \in \mathcal{C}(d,\beta)} \inf_{f\in \mathcal{F}_n} \|f-f_0\|_2 \gtrsim \big\{ (\bar{N}\bar{L})^2\log^5(\bar{N}\bar{L})\big\}^{-\beta/d}.
    \end{align*}}Using VC dimension-based techniques, \cite{yarotsky2017error} and \cite{lu2020deep} derived lower bounds on ReLU-DNN approximation errors under $L_\infty$ norm for the H\"older class $\mathcal{C}(d,\beta)$, with the latter being $\sup_{f_0 \in \mathcal{C}(d,\beta)} \inf_{f\in \mathcal{F}_n} \|f-f_0\|_\infty \gtrsim \big\{ (\bar{N}\bar{L})^2\log(\bar{N}\bar{L}) \big\}^{-\beta/d}$. The VC dimension-based technique, however, cannot be directly applied to control the $L_2$ approximation error. We thus follow a different route by combining an upper bound of the least squares ReLU-DNN estimator \citep{kohler2021rate} and a lower bound for nonparametric estimators \revise{over the function class $\mathcal{F}$ of interest. This strategy can be applied to a wide range of function classes as long as the corresponding minimax risks are known.}
\end{remark}

\noindent \textbf{Statistical error.} The next proposition establishes a lower bound on the statistical error term \revise{$\delta_{\mathtt{s}}(n, \bar{N}, \bar{L}, \tau)$}. To this end, we let $X$ be uniformly distributed on $[0,1]^d$ and $f_0=0$. The following proposition provides a lower bound on $\|\hat{f}_n - f_0\|_2$, and serves as a supporting lemma for the proof of Theorem~\ref{thm:generic-lower-bound}; see Figure~\ref{fig:lowerbound-roadmap}.

\begin{proposition}[Lower bound of convergence rate in the null case]
\label{prop:lowerbound-variance}
	Let $f_0=0$, $X$ be uniformly distributed on $[0,1]^d$, and $M=1$. Moreover, let  $n\ge\sqrt{2(d+1)}$, $p\ge 2$, $\tau \geq c_{12}$ be arbitrary, and let $N, L$ be positive integers satisfying $(NL)^2 \ge c_{14}$.
	Then, there exists some symmetric distribution of $\varepsilon$ with $\mathbb{E}[|\varepsilon|^p|X=x]\le 1$ such that with probability at least $ 1- \frac{c_{15}}{n\land (NL)^2}$, there exists some $\tilde{f}_n \in \cF_n$  satisfying $ \hat{\mathcal{R}}_\tau(\tilde{f}_n) \le \min \big\{\hat{\mathcal{R}}_\tau(f_0), \inf_{ \|f\|_\infty \le 1} \hat{\mathcal{R}}_\tau(f) + c_{16}\delta_n^2 \big\}$ and the lower bound $ \|\tilde{f}_n - f_0\|_2  = \|\tilde{f}_n\|_2   \ge \delta_n$, where $\cF_n$ is either $\mathcal{F}_n(d, c_{17}L \log n, c_{18} N, 1)$ or $\mathcal{F}_n(d, c_{19}L, c_{20} N \log n, 1)$, and 
		\begin{align}
		\label{eq:prop-lb-delta-n}
	 	\delta_n \asymp \frac{N L}{\sqrt{n}} \sqrt{  \tau \land \big\{ n/(NL)^2 \big\}^{1/p} }  \bigwedge 1  .
	\end{align}
	Here $c_{12}$ is the constant in Theorem \ref{thm:generic-lower-bound}, $c_{14}$--$c_{20}$ are positive constants independent of $N$, $L$, $\tau$, $n$.
\end{proposition}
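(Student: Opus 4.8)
The plan is to construct a symmetric heavy-tailed noise distribution that occasionally produces large outliers, and then exhibit a deep ReLU network $\tilde f_n$ that "interpolates" those outliers well enough to drive the empirical Huber risk strictly below $\hat{\mathcal{R}}_\tau(f_0)$, while the $L_2$ norm of $\tilde f_n$ is forced to be $\gtrsim \delta_n$. The key is a two-regime analysis according to whether the minimum in \eqref{eq:prop-lb-delta-n} is attained by $\sqrt{\tau}\, NL/\sqrt n$ or by the least-squares-type quantity $(NL/\sqrt n)^{1-1/p}$ (equivalently whether $\tau \lessgtr \{n/(NL)^2\}^{1/p}$). In the first regime, take $\varepsilon$ to be, say, a symmetrized two-point/Bernoulli-type variable that equals a large value $\pm b$ with probability $\asymp b^{-p}$ (so $\EE|\varepsilon|^p\le 1$) and is small otherwise, with $b$ tuned so that the expected number of "big" observations among the $n$ samples matches the number of localized bumps a width-$N$, depth-$L$ ReLU network can fit. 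In the second regime one instead chooses $b\asymp n^{1/p}$, producing $\mathcal O(1)$ genuinely large outliers of magnitude $n^{1/p}$, which is exactly the Huber-loss-insensitive regime.

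The main construction step invokes the non-smooth approximation power of ReLU-DNNs — this is the result the paper bills as Theorem~\ref{thm:approx-noise-dim-d} (referenced as forthcoming). Concretely: given $m$ points $x_{i_1},\dots,x_{i_m}$ in $[0,1]^d$ (the locations of the big-noise observations) and target heights $h_1,\dots,h_m$ (with $|h_j|\le 1$ after truncation), there is $\tilde f_n\in\mathcal F_n(d, c_{17}L\log n, c_{18}N,1)$ (or the wide-shallow variant) that takes values close to $h_j$ on small balls around $x_{i_j}$ and is close to $0$ elsewhere, provided $m\lesssim (NL)^2$ up to logs. First I would condition on a high-probability event — via a Poisson/Bernoulli concentration bound — that (i) the number $m$ of big-noise indices is of the right order $\asymp n b^{-p}$ and $\lesssim (NL)^2\log n$, and (ii) the big-noise locations are $\Lambda$-separated in $[0,1]^d$ for a suitable scale $\Lambda$ (a union bound over $\binom{m}{2}$ pairs; this is where $n\ge\sqrt{2(d+1)}$ and $(NL)^2\ge c_{14}$ enter). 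On this event one builds $\tilde f_n$ fitting $h_j = T_1(\text{sgn}(\varepsilon_{i_j}))$ — i.e. $\pm1$ in the direction of each outlier — on the separated bumps.

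Next I would do the risk accounting. Writing $\hat{\mathcal R}_\tau(\tilde f_n)-\hat{\mathcal R}_\tau(f_0) = \frac1n\sum_i\{\ell_\tau(\varepsilon_i-\tilde f_n(X_i))-\ell_\tau(\varepsilon_i)\}$, the sum splits into (a) the $m$ "big" indices, where moving from $0$ toward $\mathrm{sgn}(\varepsilon_i)$ decreases the Huber loss by a definite amount $\gtrsim \min\{\tau,\ |\varepsilon_i|\}\cdot(\text{bump height})$ because $\psi_\tau$ is positive there, contributing $\lesssim -\frac{m}{n}(\tau\wedge b)$; (b) the "small" indices, where $\tilde f_n\approx 0$ and the perturbation is controlled on the tiny total measure $\lesssim m\Lambda^d$ of the bumps, contributing at most $O(m\Lambda^d/n)$ which is negligible; plus the optimization-slack term $c_{16}\delta_n^2$ one is allowed to spend. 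Choosing $b$ (hence $m$) to balance these shows $\hat{\mathcal R}_\tau(\tilde f_n)\le \hat{\mathcal R}_\tau(f_0)$ and simultaneously $\hat{\mathcal R}_\tau(\tilde f_n)\le \inf_{\|f\|_\infty\le1}\hat{\mathcal R}_\tau(f)+c_{16}\delta_n^2$ — the latter because any competitor can only gain $O(1)$ on each of the $\lesssim m$ outliers too. Meanwhile $\|\tilde f_n\|_2^2\gtrsim m\Lambda^d$ from the bumps, and unwinding the relation between $m$, $b$, $\Lambda$, and the target $\delta_n$ in \eqref{eq:prop-lb-delta-n} (one verifies $m\Lambda^d\asymp \delta_n^2$ in each regime, using $\Lambda\asymp(\text{number of bumps per network})^{-1/d}$ scaling from the approximation theorem) gives $\|\tilde f_n\|_2\ge \delta_n$ on the good event.

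The hard part will be the second step: proving — or carefully importing from Theorem~\ref{thm:approx-noise-dim-d} — the quantitative statement that a width-$\bar N$, depth-$\bar L$ ReLU network can simultaneously fit $\asymp(\bar N\bar L)^2/\mathrm{polylog}$ arbitrarily placed, arbitrarily signed, well-separated bumps while staying uniformly small off the bumps, and doing so in \emph{both} the thin-deep and wide-shallow shapes quoted in the statement. Getting the $\mathrm{Pdim}\asymp(\bar N\bar L)^2\log(\bar N\bar L)$ scaling to come out with the right power of $\log n$ so that the two regimes of \eqref{eq:prop-lb-delta-n} knit together — in particular matching the $\sqrt\tau$ versus $(\cdot)^{1/p}$ crossover at $\tau\asymp\{n/(NL)^2\}^{1/p}$ — is the delicate bookkeeping, and it is also where the separation-event probability $1-c_{15}/\{n\wedge(NL)^2\}$ and the allowed optimization error $c_{16}\delta_n^2$ must be threaded consistently.
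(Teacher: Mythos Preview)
Your construction has a genuine gap in the $L_2$ lower bound. You build $\tilde f_n$ as a sum of height-$\pm 1$ bumps of radius $\Lambda$ at the outlier locations and $\approx 0$ elsewhere, and claim $\|\tilde f_n\|_2^2 \gtrsim m\Lambda^d$. But the empirical-risk cost from the non-outlier (``small'') indices is \emph{not} $O(m\Lambda^d/n)$: each non-outlier $X_i$ that lands on a bump contributes $\tfrac{1}{n}\cdot O(1)$ to $\hat{\mathcal R}_\tau(\tilde f_n)-\hat{\mathcal R}_\tau(f_0)$, and the expected number of such points is $\asymp n\cdot m\Lambda^d$, so the cost is $\asymp m\Lambda^d$. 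Thus the cost term and the $L_2$ norm are the \emph{same} quantity $m\Lambda^d$, while the gain from the outliers is $\asymp \tfrac{m}{n}(\tau\wedge b)\asymp \delta_n^2$. To get $\|\tilde f_n\|_2^2\ge \delta_n^2$ you need $m\Lambda^d\gtrsim\delta_n^2$; to get $\hat{\mathcal R}_\tau(\tilde f_n)\le \hat{\mathcal R}_\tau(f_0)$ you need $m\Lambda^d\ll\delta_n^2$. These cannot both hold. (Your suggested scale $\Lambda\asymp m^{-1/d}$ makes it worse: then $m\Lambda^d\asymp 1$, far larger than $\delta_n^2$.)

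The paper's fix is simple but essential: instead of setting $\tilde f_n\approx 0$ off the bumps, set $\tilde f_n\equiv u$ there, for a constant $u\asymp\delta_n$. Theorem~\ref{thm:approx-noise-dim-d} is stated precisely to allow this constant baseline. Now the $L_2$ norm comes from the \emph{plateau} $u$ (giving $\|\tilde f_n\|_2\ge u/2$), and the bump radius $\Delta_2$ can be taken as small as $1/(2n^3)$ so that \emph{no} sample point other than the designated outlier falls on any bump. The risk accounting then reads: gain $\asymp \tfrac{S}{n}(\tau\wedge(n/S)^{1/p})$ from the $\asymp S$ outliers fitted to $\pm 1$, versus cost $\lesssim u^2$ from the $n$ points sitting at height $u$; one chooses $u^2\asymp \tfrac{S}{n}(\tau\wedge(n/S)^{1/p})=\delta_n^2$ to balance. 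A second point you miss is that Theorem~\ref{thm:approx-noise-dim-d} requires the fitted points to lie in \emph{distinct} sub-cubes $Q_\alpha$ of a fixed $K^d$-grid with $K=\lfloor N^{1/d}\rfloor^2\lfloor L^{1/d}\rfloor^2$, not merely to be pairwise separated; the paper handles this via a balls-in-boxes concentration (Lemma~\ref{lemma:concentration-ball}) showing $\gtrsim S$ sub-cubes are hit by outliers with high probability. Finally, there is no need for a two-regime split on $\tau$: the single trinomial noise $\varepsilon\in\{0,\pm(n/S)^{1/p}\}$ with $\PP(\varepsilon\neq 0)=S/n$ handles both regimes simultaneously through the $\tau\wedge(n/S)^{1/p}$ clipping; the paper's two cases are instead about whether $(NL)^2\le C_4 n$ (so $S\asymp(NL)^2$) or $(NL)^2>C_4 n$ (so $S\asymp n$ and $\delta_n\asymp 1$).
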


\begin{remark}
Note that $\inf_{ \|f\|_\infty \le 1} \hat{\mathcal{R}}_\tau(f)  \le  \inf_{ f\in \mathcal{F}_n} \hat{\mathcal{R}}_\tau(f)$.  Thus,
in Proposition~\ref{prop:lowerbound-variance} we show the existence of an approximate empirical risk minimizer $\tilde{f}_n$, instead of the exact minimizer, satisfying the desired lower bound. We believe this is an artifact of the proof technique.
The obtained lower bound still matches the upper bound stated in Theorem \ref{thm:generic-bound} because $\hat{f}_n$ therein only needs to be an approximate minimizer satisfying $\hat{\mathcal{R}}_\tau(\hat{f}_n) - \inf_{f\in \mathcal{F}_n} \hat{\mathcal{R}}_\tau(f) \lesssim \delta_n^2$ with high probability.
\end{remark}

\begin{remark}
 \label{remark:stochastic-error-symmetric}
\revise{Recall from Theorem \ref{thm:convergence-rate-nn-symmetric} that when $\varepsilon$ is symmetric and has bounded $p$-th moment ($p\geq 1$), the Huber estimator achieves the near-optimal rate $\tilde{\mathcal{O}}(n^{-\gamma^*/(2\gamma^*+1)})$ as if $\varepsilon$ is sub-Gaussian. {In the proof of Proposition \ref{prop:lowerbound-variance}, the noise $\varepsilon$ is constructed to be symmetric.} Therefore, one can show a stronger version of claim \eqref{eq:lower-bound-lse-alpha} that with $\delta_{n,*} \asymp n^{- \alpha \nu^\dagger/(2\alpha+\nu^\dagger)}$ up to logarithmic factors,
\begin{align*}
    \liminf_{n\to\infty} \inf_{\bar{N}, \bar{L} \ge c_{11}} \sup_{\substack{(X,f_0,\varepsilon) \in \mathcal{U}(d,p,\mathcal{F}_0) \\ \varepsilon ~{\rm is~symmetric}}} \mathbb{P} \big\{  \exists \hat{f}_n \in \mathcal{S}_{n,\infty}^{\mathtt{HN}}(\delta_{n,*}) ~{\rm s.t.}~ \|\hat{f}_n - f_0\|_2 \ge \delta_{n,*}\big\} = 1.
\end{align*} 
Applying this to the function class $\mathcal{H}(d,l,\mathcal{P})$ indicates that the least squares estimator cannot achieve the optimal rate as the Huber estimator does under heavy-tailed symmetric noise.} 
\end{remark}

\begin{figure*}[t!]
    \centering
\begin{tikzpicture}
\definecolor{myred}{HTML}{ae1908}
\definecolor{myblue}{HTML}{05348b}
\definecolor{myorange}{HTML}{ec813b}
\draw[->] (0,-3.4) -- (0,3.4);
\draw[->] (0,0) -- (11,0);
\draw (0.5pt,-1cm) -- (-0.5pt,-1cm) node[anchor=east] {$-1$};
\draw (0.5pt,1cm) -- (-0.5pt,1cm) node[anchor=east] {$1$};
\draw (0.5pt,0.25cm) -- (-0.5pt,0.25cm) node[anchor=east] {$u$};
\draw (0.5pt, 2.8cm) -- (-0.5pt, 2.8cm) node[anchor=east] {$(\frac{n}{S})^{1/p}$};
\draw (0.5pt, -2.8cm) -- (-0.5pt, -2.8cm) node[anchor=east] {$-(\frac{n}{S})^{1/p}$};

\draw (11, -0.3) node{$x$};
\draw[myblue, thick] (0,0) -- (10, 0);
\draw[myblue] (10.1, -0.3) node{$f_0$};
\draw (1.5,2.8) circle[radius=0.1];
\draw (3.5,-2.8) circle[radius=0.1];
\draw (6,2.8) circle[radius=0.1];
\draw (9,-2.8) circle[radius=0.1];
\draw (1,0) circle[radius=0.1];
\draw (2.5,0) circle[radius=0.1];
\draw (4.5,0) circle[radius=0.1];
\draw (5,0) circle[radius=0.1];
\draw (7.2,0) circle[radius=0.1];
\draw (8.1,0) circle[radius=0.1];
\draw (9.7,0) circle[radius=0.1];

\draw[myred, thick] (0, 0.25) -- (1.4, 0.25);
\draw[myred, thick] (1.4, 0.25) -- (1.5, 1);
\draw[myred, thick] (1.5, 1) -- (1.6, 0.25);
\draw[myred, thick] (1.6, 0.25) -- (3.4, 0.25);
\draw[myred, thick] (3.4, 0.25) -- (3.5, -1);
\draw[myred, thick] (3.5, -1) -- (3.6, 0.25);
\draw[myred, thick] (3.6, 0.25) -- (5.9, 0.25);
\draw[myred, thick] (5.9, 0.25) -- (6, 1);
\draw[myred, thick] (6, 1) -- (6.1, 0.25);
\draw[myred, thick] (6.1, 0.25) -- (8.9, 0.25);
\draw[myred, thick] (8.9, 0.25) -- (9.0, -1);
\draw[myred, thick] (9.0, -1) -- (9.1, 0.25);
\draw[myred, thick] (9.1, 0.25) -- (10.0, 0.25);
\draw[myred] (10.1, 0.5) node{$\tilde{f}$};
\draw[gray, dotted] (0, 2.8) -- (11, 2.8);
\draw[gray, dotted] (0, 1) -- (11, 1);
\draw[gray, dotted] (0, -1) -- (11, -1);
\draw[gray, dotted] (0, -2.8) -- (11, -2.8);

\end{tikzpicture}
    \caption{An illustration on the construction of $f_0$, which is zero (blue), and noise $\varepsilon$ that follows a Trinomial distribution such that $y=\varepsilon$ (black circle), and $\tilde{f}$ (red function), which is capped at 1 due to the constraint $\|\tilde{f}\|_\infty \leq1$.  The red function fits better than the true function (blue) when $u$ is sufficiently small. This is related to the approximation ability of a ReLU neural network with a given depth and width.}
    \label{fig:proof-sketch}
\end{figure*}
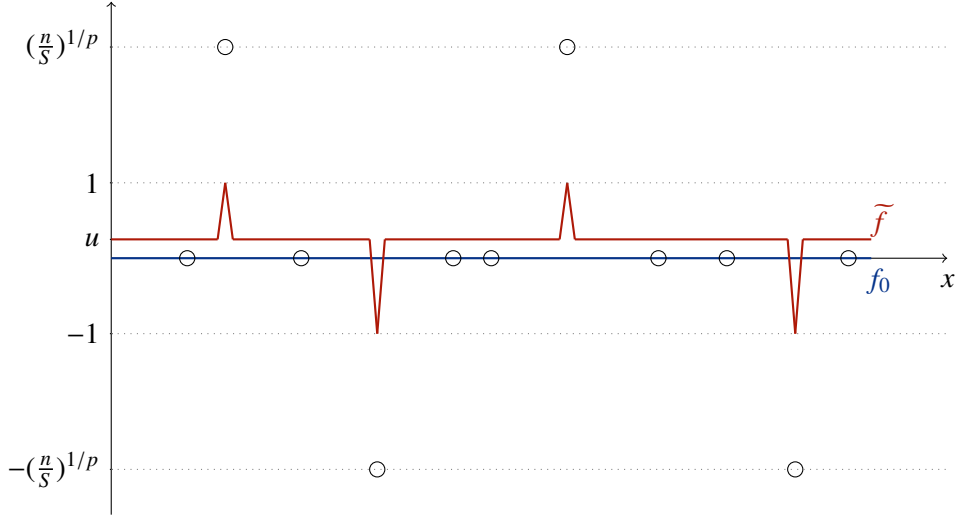

To gain insights into how such a lower bound is established, we consider the nonparametric Huber regression estimator over some uniformly bounded function class $\mathcal{F}$ in the simplest case of $d=1$. Here $\mathcal{F}$ is a generic function class that is capable of fitting $S$ points in a highly non-smooth manner. Its structure will be specified in the proof sketch below and Section \ref{sec:lb:revisit}. The proof sketch reveals that the approximation ability of the ReLU neural network is essential in our construction of the lower bound and demystifies how this is related to the statistical rate of convergence for heavy-tailed noises.  To complete the proof of Proposition \ref{prop:lowerbound-variance} in more general cases,  Theorem \ref{thm:approx-noise-dim-d} also plays an important role, indicating that a ReLU neural network with depth $\lesssim \bar{L}$ and width $\lesssim \bar{N}$ can approximate $S\asymp (\bar{N}\bar{L})^2$ points arbitrarily in a highly non-smooth manner. 

\begin{proof}[Proof Sketch of Proposition~\ref{prop:lowerbound-variance}]
Our target is to find a distribution of $\varepsilon$ satisfying $\mathbb{E}[|\varepsilon|^p|X=x] \le 1$ such that there exists some $\tilde{f} \in \mathcal{F}$ satisfying (with $f_0=0$)
\begin{align}
\label{eq:lb-stochastic-error-req}
    \| \tilde{f} - f_0 \|_2 \gtrsim \delta_n \qquad \text{ and } \qquad \hat{\mathcal{R}}_\tau(\tilde{f}) \le \hat{\mathcal{R}}_\tau(f_0) \le \inf_{\|f\|_\infty \le 1} \hat{\mathcal{R}}_\tau(f) + \delta_n^2 .
\end{align}
Let the random noise
\begin{align*}
    \varepsilon = \begin{cases}
        \left(\frac{n}{S}\right)^{1/p} \qquad &\text{ with probability } \frac{S}{2n} \\
        -\left(\frac{n}{S}\right)^{1/p} \qquad &\text{ with probability } \frac{S}{2n} \\
        0 \qquad &\text{ with probability } 1-\frac{S}{n}
    \end{cases} 
\end{align*}
be independent of $X \sim \text{Uniform}[0,1]$. The observed data are $\{(X_i,Y_i)\}_{i=1}^n$ with $Y_i = \varepsilon_i$ satisfying $\mathbb{E}(|\varepsilon_i|^p|X_i) =1$. 

If  $S=o(n)$, then by a concentration result, we have that there are approximate $k \asymp S$ samples with non-zero $\varepsilon_i$ while the rest $n-k \asymp n-S$ samples all have zero $\varepsilon_i$. Without loss of generality, let $|\varepsilon_1| = |\varepsilon_2| = \cdots = |\varepsilon_k| = (n/S)^{1/p}$ and $\varepsilon_{k+1}=\varepsilon_{k+2}=\cdots = \varepsilon_n = 0$. Now, we need to find some $\tilde{f}$ satisfying \eqref{eq:lb-stochastic-error-req}.  The key idea is to construct a red function as shown in Figure~\ref{fig:proof-sketch}
in which $n=10$ and $k=S=4$. Given the large outliers, fitting those by $\sgn(\varepsilon_i)$ (the maximum magnitude allowed in our function class) has much smaller losses than by zero (true value), and they can even compensate the losses elsewhere by fitting a non-zero constant $u$. As a result, the red function has a better fit to the data than the true function (blue), yet $\| \tilde{f} - f_0 \|_2 \geq u$, satisfying \eqref{eq:lb-stochastic-error-req}.

Let us now formally implement the above idea.
For an arbitrary $\Delta>0$, suppose we can take $\tilde{f} \in \mathcal{F}$ such that  $\tilde{f}(X_i)=\text{sgn}(\varepsilon_i)$ for $i\in \{1,\ldots, k\}$, and $\tilde{f}(x)\equiv u$ in $[0,1]\setminus \bigcup_{i=1}^k (X_i-\Delta, X_i+\Delta)$.
Then, we have $\|\tilde{f}-f_0\|_2 \gtrsim u$ if $\Delta$ is small. If we further require that $\Delta < \min_{i\neq j} |X_i-X_j|$ with probability tending to $1$, then $\tilde{f}(X_i) = u$ for all $i\in \{k+1,\ldots , n\}$. By a second-order Taylor expansion, we have
\begin{align*}
    \hat{\mathcal{R}}_\tau(\tilde{f}) - \hat{\mathcal{R}}_\tau(f_0) &\le  \frac{1}{n} \left\{\sum_{i=1}^k \left(\psi_\tau(\varepsilon_i) \tilde{f}(X_i) + \frac{1}{2} \tilde{f}(X_i)^2 \right) +  \sum_{i=k+1}^{n} \frac{1}{2} \tilde{f}(X_i)^2 \right\} \\
    &\le -\frac{k}{n} \big\{(n/S)^{1/p} \land \tau \big\} + \frac{k}{2n} + \frac{1}{2} u^2 \lesssim u^2 - \frac{S}{n} \big\{(n/S)^{1/p} \land \tau \big\},
\end{align*} as long as $\tau \ge 1$. Thus,  $\hat{\mathcal{R}}_\tau(\tilde{f}) \le \hat{\mathcal{R}}_\tau(f_0)$ if $u \le \sqrt{\frac{S}{n}\left\{(n/S)^{1/p} \land \tau \right\}}$. Now by choosing $\delta_n \asymp u \asymp \sqrt{\frac{S}{n}\left\{(n/S)^{1/p} \land \tau \right\}}$, we also have
\begin{align*}
    \hat{\mathcal{R}}_\tau(f_0) - \inf_{\|f\|_\infty\le 1}\hat{\mathcal{R}}_\tau(f) =   \frac{2 k}{n} \big\{  (n/S)^{1/p} \land \tau \big\}  - \frac{k}{n} \le \delta_n^2.
\end{align*}
Therefore,  Claim \eqref{eq:lb-stochastic-error-req} holds with $\delta_n \asymp \sqrt{\frac{S}{n} \{ (n/S)^{1/p} \land \tau \} } \land 1$.
\end{proof}

In Section \ref{sec:lb:revisit}, we will show that under multivariate settings ($d \ge 2$), the class of ReLU neural networks with depth $\bar{L}$ and width $\bar{N}$ shares a similar approximation ability as $\mathcal{F}$ when $S\asymp (\bar{L} \bar{N})^2$. Proposition \ref{prop:lowerbound-variance} can then be proved following a similar argument.

\medskip
\noindent \textbf{Robustification bias.} Suppose $X$ and $\varepsilon$ are independent. It is easy to construct an asymmetric error $\varepsilon$ such that $\|f_0 - f_{0,\tau}\|_2 \gtrsim \tau^{1-p}$. The difficulty here is to show that $\|\hat{f}_n - f_0\|_2 \gtrsim \|f_0 - f_{0,\tau}\|_2 \gtrsim \tau^{1-p}$. The key idea of the bias analysis is that if we choose $f_0=0$ and let $\varepsilon$ be independent of $X$, then $f_{0,\tau}(x)\equiv C_\tau$ and $\|\hat{f}_n - f_{0,\tau}\|_2$ can be upper bounded by \revise{$\frac{\bar{N}\bar{L}}{\sqrt{n}} \{\sqrt{\tau} \land (\frac{\sqrt{n}}{\bar{N}\bar{L}})^{1/p}\}$} up to logarithmic factors. This result is formally stated in Lemma C.5. 
We now divide the discussion into two cases.

(1) $\hat{f}_n$ converges to $f_0$ faster than $f_{0,\tau}$ does, i.e., $\revise{\frac{\bar{N}\bar{L}}{\sqrt{n}} \{\sqrt{\tau} \land (\frac{\sqrt{n}}{\bar{N}\bar{L}})^{1/p}\}} \lesssim \tau^{1-p}$. In this case, based on our construction of $\varepsilon$ and the triangle inequality, the $L_2$ risk satisfies $\|\hat{f}_n - f_0\|_2 \ge \|f_{0,\tau} - f_0\|_2 - \|f_{0,\tau} - \hat{f}_n\|_2 \gtrsim \tau^{1-p}$.

(2) $\hat{f}_n$ converges to $f_0$ slower than $f_{0,\tau}$ does, i.e., $\revise{\frac{\bar{N}\bar{L}}{\sqrt{n}} \{\sqrt{\tau} \land (\frac{\sqrt{n}}{\bar{N}\bar{L}})^{1/p}\}} \gtrsim \tau^{1-p}$. In this case the bias is no longer the dominating term. Taking $f_0$ and $\varepsilon$ as in Proposition \ref{prop:lowerbound-variance}, we show that $\|\hat{f}_n - f_0\|_2 \gtrsim \revise{\delta_{\mathtt{s}}(n, \bar{N}, \bar{L},\tau)} \gtrsim \revise{\delta_{\mathtt{b}}}(\tau)$.

\subsection{Revisiting neural network approximation}
\label{sec:lb:revisit}

The proof sketch in Section \ref{subsec:lowerbound} connects the lower bound on the convergence rate to a specific approximation ability of the function class $\mathcal{F}$ as shown in Figure~\ref{fig:proof-sketch}. To be specific, suppose the function class $\mathcal{F}$ has the approximation ability that it can approximate any $S=o(n)$ points $\{(x_i, y_i)\}_{i=1}^S$, with $y_i$ being arbitrarily chosen from $\{-1,+1\}$, in a way that it stays as a constant $u \asymp \sqrt{\frac{S}{n} \{ (n/S)^{1/p} \land \tau \} } \asymp \delta_n$ except in the region $\bigcup_{i=1}^S B_\infty(x_i, \Delta)$ for a sufficiently small $\Delta>0$, where $B_\infty(x, r)=\{y: \|y-x\|_\infty \le r\}$ is the hypercube with the half-width $r$. Then the convergence rate will be lower bounded by $\delta_n$.

It is, however, unclear whether the aforementioned approximation ability holds. 
Yet, we can show a weak version of the approximation ability by relaxing the requirements in two aspects: the $S$ points $\{x_i\}$ are located in $S$ distinct sub-cubes in $[0,1]^d$ (see Figure~\ref{fig:thm-approx}(a)) and  either the width or the depth can depend logarithmically on $\Delta$. It turns out that such a weaker approximation ability is sufficient to prove Proposition \ref{prop:lowerbound-variance} based on a similar strategy as outlined in Section \ref{subsec:lowerbound}.

The next theorem claims that a ReLU neural network with depth $\bar{L}$ and width $\bar{N}$ can fit any piecewise constant function in $  (\bar{N}\bar{L})^2$ sub-hypercubes. This lays the foundation to prove Theorem \ref{thm:approx-noise-dim-d}, which in turn provides the approximation ability required in Proposition \ref{prop:lowerbound-variance}.

\begin{theorem}
\label{thm:approx-n2l2-dim-d}
	For any given $N, L\in \mathbb{N}^+$, let $K=\lfloor N^{1/d}\rfloor^2 \lfloor L^{1/d}\rfloor^2$, and $\{y_\alpha\}_{\alpha\in \mathcal{A}} \subseteq [0,1]$ be an arbitrary set of values indexed by $\mathcal{A}=\{1, \ldots, K\}^d$. For any tolerance parameter $\Delta \in (0, 1/(3K)]$, and precision parameter $\epsilon \in (0,1)$, let 
	\begin{align}
	\label{eq:good-region-q-alpha}
		Q_\alpha(\Delta) = \Big\{x = (x_1,\cdots, x_d):  (\alpha_i-1)/K \le x_i \le \alpha_i / K - 1_{\{\alpha_i < K\}}\Delta \Big\}.
	\end{align} 
	Then, there exist a deep ReLU neural network $f^\dagger_1$ with depth $(5L+7)(\lceil \log_2 (1/\epsilon) \rceil+2)$ and width $(4N+3)d \lor (8N+10)$, and a deep ReLU neural network $f_2^\dagger$ with depth  $9L+12$ and width $(4N+3)d\lor (8N+6)(\lceil \log_2 (1/\epsilon) \rceil +1)$ such that 
	\begin{align}
	\label{eq:approx-n2l2-dim-d-res}
		|f_s^\dagger(x) - y_\alpha| \le \epsilon ~\text{ for all }~ x\in Q_\alpha(\Delta), \qquad s=1,2.
	\end{align} Moreover, if $y_\alpha = \sum_{i=0}^{r} 2^{-i} \theta_i$ for $(\theta_1,\ldots,\theta_r)\in \{0,1\}^r$ with $r \le \lceil \log_2 (1/\epsilon)\rceil$, we have $f_s^\dagger(x)=y_\alpha$ instead of $|f_s^\dagger(x)-y_\alpha| \le \epsilon$ in \eqref{eq:approx-n2l2-dim-d-res}.  In this case, the term $\lceil \log_2 (1/\epsilon) \rceil +1)$ in the width and depth can further be reduced to $r$ if all the $y_\alpha$ can be written as the above form.
\end{theorem}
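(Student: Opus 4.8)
The plan is to realize $f_s^\dagger$ as the composition of two ReLU sub-networks: a \emph{quantizer} $x\mapsto \mathrm{idx}(x)$ that returns the scalar address of the sub-hypercube containing $x$, correct exactly on $\bigcup_{\alpha\in\mathcal{A}}Q_\alpha(\Delta)$, followed by a \emph{look-up} network that maps each address to a dyadic truncation of the prescribed value $y_\alpha$. Both networks in the statement share the same quantizer and differ only in how the look-up is budgeted.

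For the quantizer, I would first build, for every coordinate $i\in[d]$, a one-dimensional ReLU step function $\psi_i:[0,1]\to\{0,\dots,K-1\}$ with $\psi_i(t)=\alpha_i-1$ for all $t\in[(\alpha_i-1)/K,\ \alpha_i/K-1_{\{\alpha_i<K\}}\Delta]$ and $\psi_i$ unconstrained on the intervening length-$\Delta$ gaps. Since $\Delta\le 1/(3K)$, each gap is a fixed fraction of the step width, so the step-function construction of \citet{lu2020deep}, together with its two-scale (coarse index plus rescaled residual) refinement, realizes $K=\lfloor N^{1/d}\rfloor^2\lfloor L^{1/d}\rfloor^2$ steps using width $O(N)$ and depth $O(L)$. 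Running the $d$ maps in parallel (this is the source of the $(4N+3)d$ width term) and forming the exact linear combination $\mathrm{idx}(x)=\sum_{i=1}^d(\alpha_i-1)K^{i-1}\in\{0,\dots,K^d-1\}$ yields a piecewise-constant map that equals the address of $\alpha$ on $Q_\alpha(\Delta)$, precisely because the $\Delta$-margins keep $x$ away from every coordinate's jump set.

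Next I would replace each target by its $r$-bit binary truncation $\tilde y_\alpha=\sum_{j=1}^r 2^{-j}\theta_j(\alpha)$ with $r=\lceil\log_2(1/\epsilon)\rceil$, so that $|\tilde y_\alpha-y_\alpha|\le 2^{-r}\le\epsilon$, and realize the map $\mathrm{idx}(\alpha)\mapsto\tilde y_\alpha$ by a ReLU network. This is a bit-extraction problem: reading the length-$r$ block indexed by the address out of one dyadic ``table'' constant. There are two natural implementations, which give the two networks. For $f_1^\dagger$ I would extract the $r$ output bits sequentially, each extraction a depth-$O(L)$, width-$O(N)$ operation, producing depth $O(L\log(1/\epsilon))$ and width $O(N)$; for $f_2^\dagger$ I would perform the $r$ extractions in parallel along the width, producing depth $O(L)$ and width $O(N\log(1/\epsilon))$. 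Composing the look-up with the quantizer and tallying the exact widths and depths through each concatenation yields the stated bounds; when every $y_\alpha$ is already dyadic with at most $r$ bits, the truncation is exact, so one uses $r$ in place of $\lceil\log_2(1/\epsilon)\rceil$ in the look-up network and obtains $f_s^\dagger(x)=y_\alpha$ on $Q_\alpha(\Delta)$ verbatim.

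The main obstacle is the step-function construction behind the quantizer: realizing $K\asymp(N^{1/d}L^{1/d})^2$ — and, for the look-up table over $K^d\asymp N^2L^2$ addresses, that many — correctly ordered steps with only width $O(N)$ and depth $O(L)$ is far beyond the $O(NL)$ pieces a naive layer-by-layer scheme produces; it requires the two-scale trick combined with Telgarsky-type sawtooth composition, and it must remain correct on the whole interval $[(\alpha_i-1)/K,\alpha_i/K-\Delta]$ rather than at isolated points, which is exactly why $\Delta\le 1/(3K)$ is imposed. Beyond that, the remaining work is the tedious-but-routine accounting of the exact constants ($4N+3$, $8N+10$, $9L+12$, the $\lceil\log_2(1/\epsilon)\rceil+2$ factors, etc.) as the quantizer, the linear combine, and the look-up sub-network are stitched together, plus the analogous accounting for the special dyadic case.
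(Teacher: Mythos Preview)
Your proposal is correct and follows essentially the same route as the paper. The paper likewise builds an encoder $f_e$ from $d$ coordinate-wise step functions (its Lemma~\ref{lemma:stepfunc2}, taken from \cite{lu2020deep}) followed by the linear index map $I(\alpha)=\sum_i(\alpha_i-1)K^{i-1}$, then a decoder $f_d$ that fits the $r=\lceil\log_2(1/\epsilon)\rceil$ bits of the dyadic truncation of $y_\alpha$ via $r{+}1$ independent point-fitting networks (its Lemma~\ref{lemma:pointfitting}), combined either in parallel (width $\times(r{+}1)$) or in series (depth $\times(r{+}1)$)---exactly your two implementations.
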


Theorem \ref{thm:approx-n2l2-dim-d} indicates that for any integers $N, L \geq 1$, if we divide the unit cube $[0,1]^d$ uniformly into $K^d$ sub-cubes with length $\frac{1}{K}$ for $K\asymp (N L )^{2/d}$, then it is possible to find a deep ReLU neural network $f^\dagger$ with depth $\lesssim L$ and width $\lesssim N$ such that $f^\dagger$ is approximately equal to an arbitrary pre-specified value for each sub-cube except in a small ``bad'' region, i.e., $[0,1]^d \setminus \bigcup_{\alpha \in \mathcal{A}} Q_\alpha(\Delta)$. Moreover, such a bad region can be arbitrarily small, and to achieve  $\epsilon$-accuracy, we only need to multiply a factor of $\log(1/\epsilon)$ to either the width or the depth of the neural network. 

Figure~\ref{fig:thm-approx}(a) illustrates the result of Theorem \ref{thm:approx-n2l2-dim-d}. In the case where $d=2$, $(NL)^2=16$, we divide $[0,1]^2$ into $K^2$ sub-squares of equal size with $K=4$ so that each sub-square has length $1/K=0.25$. 
The values of $f^\dagger$ in the ``bad'' region (green) can be arbitrary. However, in the blue regions with index $\alpha\in \{1,..., 4\}^2$, we can find some $f^\dagger$ such that it  approximates the pre-specified $y_\alpha$ in each sub-square.

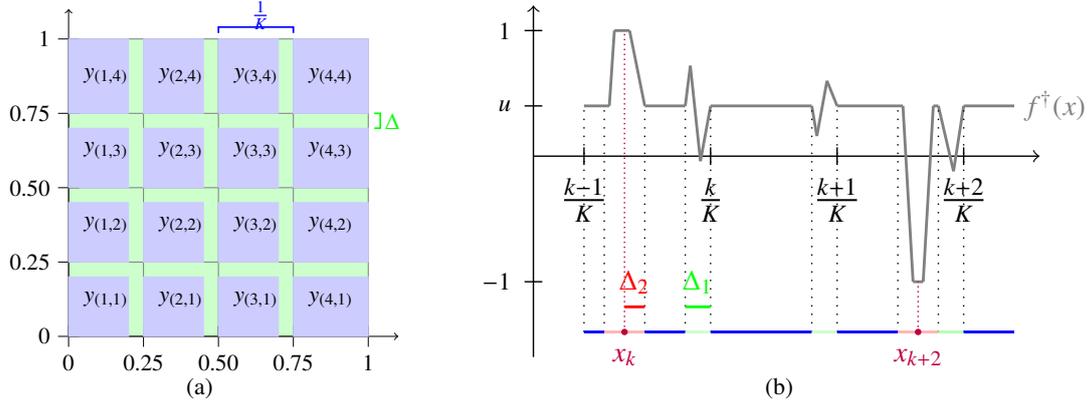
\begin{figure*}[t!]
\begin{tabular}{cc}
\subfigure[]{
\resizebox{0.36\textwidth}{!}{
\begin{tikzpicture}[scale=5]
\filldraw[green!20] (0,0) rectangle +(1,1);
\draw[step=.25cm,gray,very thin] (0,0) grid (1,1);
\draw[->] (0,0) -- (1.1,0);
\draw[->] (0,0) -- (0,1.1);
\foreach \x in {0, 0.25, 0.50, 0.75, 1}
\draw (\x cm,1pt) -- (\x cm,-1pt) node[anchor=north] {$\x$};
\foreach \y in {0, 0.25, 0.50, 0.75, 1}
\draw (1pt,\y cm) -- (-1pt,\y cm) node[anchor=east] {$\y$};

\foreach \x in {1, 2, 3}
    \foreach \y in {1, 2, 3}
    {
        \filldraw[blue!20] (\x*0.25-0.25,\y*0.25-0.25) rectangle +(0.20, 0.20);
        \draw (\x*0.25-0.125,\y*0.25-0.125) node{$y_{(\x,\y)}$};
    }

\foreach \y in {1, 2, 3}
{
	\filldraw[blue!20] (4*0.25-0.25,\y*0.25-0.25) rectangle +(0.25, 0.20);
    \draw (4*0.25-0.125,\y*0.25-0.125) node{$y_{(4,\y)}$};
}
\foreach \x in {1, 2, 3}
{
	\filldraw[blue!20] (\x*0.25-0.25,4*0.25-0.25) rectangle +(0.20, 0.25);
    \draw (\x*0.25-0.125,4*0.25-0.125) node{$y_{(\x, 4)}$};
}
\filldraw[blue!20] (4*0.25-0.25,4*0.25-0.25) rectangle +(0.25, 0.25);
\draw (4*0.25-0.125,4*0.25-0.125) node{$y_{(4, 4)}$};

\draw[blue, thick] (0.50, 1.02) -- (0.50, 1.04) -- (0.750, 1.04) -- (0.750, 1.02);
\draw[blue] (0.64, 1.08) node {$\frac{1}{K}$};

\draw[green, thick] (1.02, 0.75) -- (1.04, 0.75) -- (1.04, 0.70) -- (1.02, 0.70);
\draw[green] (1.08, 0.725) node{$\Delta$};

\end{tikzpicture}
}
} &
\subfigure[]{
\resizebox{0.56\textwidth}{!}{

\begin{tikzpicture}[scale=5]
\draw[->] (0,-0.4) -- (0,0.3);
\draw[->] (0,0) -- (1,0);

\draw (0.5pt,-0.25cm) -- (-0.5pt,-0.25cm) node[anchor=east] {\small $\scriptstyle -1$};
\draw (0.5pt,0.25cm) -- (-0.5pt,0.25cm) node[anchor=east] {\small $\scriptstyle 1$};
\draw (0.5pt,0.1cm) -- (-0.5pt,0.1cm) node[anchor=east] {\small $\scriptstyle u$};

\draw (0.10cm,0.5pt) -- (0.10 cm,-0.5pt) node[anchor=north] {\small $\frac{k-1}{K}$};

\draw (0.35cm,0.5pt) -- (0.35 cm,-0.5pt) node[anchor=north] {\small $\frac{k}{K}$};

\draw (0.60cm,0.5pt) -- (0.60 cm,-0.5pt) node[anchor=north] {\small $\frac{k+1}{K}$};

\draw (0.85cm,0.5pt) -- (0.85 cm,-0.5pt) node[anchor=north] {\small $\frac{k+2}{K}$};

\draw[gray, thick] (0.10, 0.1) -- (0.15, 0.1);
\draw[gray, thick] (0.15, 0.1) -- (0.16, 0.25);
\draw[gray, thick] (0.16, 0.25) -- (0.19, 0.25);
\draw[gray, thick] (0.19, 0.25) -- (0.22, 0.1);
\draw[gray, thick] (0.22, 0.1) -- (0.30, 0.1);
\draw[gray, thick] (0.30, 0.1) -- (0.31, 0.18);
\draw[gray, thick] (0.31, 0.18) -- (0.33, -0.01);
\draw[gray, thick] (0.33, -0.01) -- (0.35, 0.1);
\draw[gray, thick] (0.35, 0.1) -- (0.55, 0.1);
\draw[gray, thick] (0.55, 0.1) -- (0.56, 0.04);
\draw[gray, thick] (0.56, 0.04) -- (0.58, 0.15);
\draw[gray, thick] (0.58, 0.15) -- (0.60, 0.1);
\draw[gray, thick] (0.60, 0.1) -- (0.73, 0.1);
\draw[gray, thick] (0.73, 0.1) -- (0.75, -0.25);
\draw[gray, thick] (0.75, -0.25) -- (0.77, -0.25);
\draw[gray, thick] (0.77, -0.25) -- (0.79, 0.1);
\draw[gray, thick] (0.79, 0.1) -- (0.80, 0.1);
\draw[gray, thick] (0.80, 0.1) -- (0.83, -0.03);
\draw[gray, thick] (0.83, -0.03) -- (0.85, 0.1);
\draw[gray, thick] (0.85, 0.1) -- (0.95, 0.1);

\draw[dotted] (0.10, -0.35) -- (0.10, 0.1);
\draw[dotted] (0.14, -0.35) -- (0.14, 0.1);
\draw[densely dotted, purple] (0.18, -0.35) -- (0.18, 0.25);
\draw[dotted] (0.22, -0.35) -- (0.22, 0.1);
\draw[dotted] (0.30, -0.35) -- (0.30, 0.1);
\draw[dotted] (0.35, -0.35) -- (0.35, 0.1);
\draw[dotted] (0.55, -0.35) -- (0.55, 0.1);
\draw[dotted] (0.60, -0.35) -- (0.60, 0.1);
\draw[dotted] (0.72, -0.35) -- (0.72, 0.1);
\draw[densely dotted, purple] (0.76, -0.35) -- (0.76, -0.25);
\draw[dotted] (0.80, -0.35) -- (0.80, 0.1);
\draw[dotted] (0.85, -0.35) -- (0.85, 0.1);

\draw[blue, thick] (0.10, -0.35) -- (0.14, -0.35);
\draw[red!30, thick] (0.14, -0.35) -- (0.22, -0.35); 
\draw[blue, thick] (0.22, -0.35) -- (0.30, -0.35);
\draw[green!20, thick] (0.30, -0.35) -- (0.35, -0.35);
\draw[blue, thick] (0.35, -0.35) -- (0.55, -0.35);
\draw[green!20, thick] (0.55, -0.35) -- (0.60, -0.35);
\draw[blue, thick] (0.60, -0.35) -- (0.72, -0.35);
\draw[red!30, thick] (0.72, -0.35) -- (0.80, -0.35);
\draw[green!30, thick] (0.80, -0.35) -- (0.85, -0.35);
\draw[blue, thick] (0.85, -0.35) -- (0.95, -0.35);

\filldraw[purple] (0.18, -0.35) circle [radius=0.005cm];
\filldraw[purple] (0.76, -0.35) circle [radius=0.005cm];

\draw[purple] (0.18, -0.40) node{$\scriptstyle x_k$};
\draw[purple] (0.76, -0.40) node{$\scriptstyle x_{k+2}$};

\draw[green, thick] (0.30, -0.30) -- (0.35, -0.30);
\draw[green] (0.325, -0.25) node{ $\scriptstyle \Delta_1$};
\draw[red, thick] (0.18, -0.30) -- (0.22, -0.30);
\draw[red] (0.20, -0.25) node{ $\scriptstyle \Delta_2$};
\draw[gray] (1.03, 0.1) node{ $\scriptstyle f^\dagger(x)$};
\end{tikzpicture}
}
}
\end{tabular}
\caption{Illustrative explanations of Theorems \ref{thm:approx-n2l2-dim-d} and  \ref{thm:approx-noise-dim-d}.  The left panel shows that there exists a deep ReLU neural network with depth $L$ and width $N$ that is approximately piecewise constant in $N^2L^2 \asymp K^d$ blue regions with pre-specified values except for green regions. The right panel indicates that there exists a deep ReLU neural network that takes value 1 at $x_k$ and value $-1$ at $x_{k+2}$ and constant value $u$ in other regions except in small regions around the points $x_k$ and $x_{k+2}$ with radius $\Delta_2$ (red) and regions  $(j/K-\Delta_1, j/K]$ $(j=k, k+1, k+2)$ (green). The second part of the exception region is similar to the green region in the left panel for 1-dimension.    }
\label{fig:thm-approx}
\end{figure*}

Based on Theorem \ref{thm:approx-n2l2-dim-d}, we are ready to give a formal statement on the approximation ability of ReLU neural networks. 

\begin{theorem}
\label{thm:approx-noise-dim-d}
	Given any integers $N, L \geq 1$, let $K=\lfloor N^{1/d}\rfloor^2 \lfloor L^{1/d}\rfloor^2$. For any $\Delta_1 \in (0, 1/(3K)]$, $\Delta_2 >0$, suppose $(x_\alpha, y_\alpha)_{\alpha \in \tilde{\mathcal{A}}}$ is a set of arbitrary points indexed by $\tilde{\mathcal{A}} \subseteq \{1,\cdots, K\}^d$. Each element $(x_\alpha,y_\alpha)$ satisfies 
		$x_\alpha \in Q_\alpha(\Delta_1)$ defined by \eqref{eq:good-region-q-alpha} and $y_i \in \{-1,1\}$. Then there exist some constants $c_{22}$--$c_{25}$ independent of $N, L, \Delta_1, \Delta_2$ such that for any $u\in [-1,1]$, we can find a deep ReLU neural network $f_1^\dagger$ with depth $c_{22} L\log_2 (1/\Delta_2)$ and width $c_{23} N$ and a deep ReLU neural network $f_2^\dagger$  with depth $c_{24} L$, $c_{25} N \log_2 (1/\Delta_2)$ satisfying 
	\begin{align*}
		f^\dagger_s(x_\alpha) = y_\alpha ~\text{ for all }~ \alpha \in \tilde{\mathcal{A}}, \qquad s=1,2,
	\end{align*} and \begin{align*}
 		f^\dagger_s(x) = u ~\text{ if }~ x\in Q \text{, and }\|x - x_\alpha\|_\infty \ge \Delta_2 ~\text{ for all }~ \alpha \in \tilde{\mathcal{A}},\qquad s=1,2 , 
 \end{align*} 
 where $Q=\bigcup_{\alpha \in \{1,\cdots, K\}^d} Q_\alpha(\Delta_1)$.
\end{theorem}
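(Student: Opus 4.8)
The target function has the ``spike plus constant'' shape of Figure~\ref{fig:proof-sketch}: it equals $u$ on all of $Q$ except for thin $\ell_\infty$-bumps of radius $\Delta_2$ around each $x_\alpha$, where it rises (or falls) to $y_\alpha\in\{-1,+1\}$. The plan is to realize $f^\dagger$ by composing Theorem~\ref{thm:approx-n2l2-dim-d} (used to encode the piecewise-constant ``metadata'' of the point cloud, one value per sub-cube) with an $O(1)$-size block that turns this metadata into the bump profile. Fix $K=\lfloor N^{1/d}\rfloor^2\lfloor L^{1/d}\rfloor^2$, set $\epsilon=\Delta_2/10$ (assume $\Delta_2\le 1$; for $\Delta_2>1$ the second requirement is vacuous), and split $\tilde{\mathcal{A}}^{+}=\{\alpha\in\tilde{\mathcal{A}}:y_\alpha=1\}$, $\tilde{\mathcal{A}}^{-}=\{\alpha\in\tilde{\mathcal{A}}:y_\alpha=-1\}$. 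First I would invoke Theorem~\ref{thm:approx-n2l2-dim-d}, in whichever of its two forms ($f_1^\dagger$ or $f_2^\dagger$), a constant number of times in parallel on the same partition, all sharing the single ``bad region'' $[0,1]^d\setminus\bigcup_\alpha Q_\alpha(\Delta_1)$ (which depends only on $K$ and $\Delta_1$): (i) $d$ instances producing $T(x)=(T_1(x),\dots,T_d(x))$ with $\|T(x)-x_\alpha\|_\infty\le\epsilon$ on $Q_\alpha(\Delta_1)$ for $\alpha\in\tilde{\mathcal{A}}$ (targets on the other cubes are irrelevant); (ii) one instance of the \emph{exact} version (dyadic values with $r=1$, so without the $\log(1/\epsilon)$ overhead) producing $\chi^{+}(x)\in\{0,1\}$ equal to $\mathbf 1\{\alpha\in\tilde{\mathcal{A}}^{+}\}$ on $Q_\alpha(\Delta_1)$, and likewise $\chi^{-}$ for $\tilde{\mathcal{A}}^{-}$; and (iii) a pass-through of $x$ itself (since $x=\sigma(x)$ on $[0,1]^d$), padded with identity layers to the common depth. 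On $Q=\bigcup_\alpha Q_\alpha(\Delta_1)$ all of these attain their nominal values simultaneously.

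\textbf{The $O(1)$ gadget and verification.} With a block of $O(1)$ depth and width I would then form $\rho(x)=\|x-T(x)\|_\infty$ (via $|w|=\sigma(w)+\sigma(-w)$ and $\max(a,b)=\sigma(a-b)+b$), followed by the \emph{flat-top} pyramid $B(x)=\min\{1,\max\{0,(3\Delta_2/4-\rho(x))/(\Delta_2/2)\}\}$, realized as $\sigma(t)-\sigma(t-1)$ with $t=(3\Delta_2/4-\rho(x))/(\Delta_2/2)$, so $B\equiv 1$ on $\{\rho\le\Delta_2/4\}$ and $B\equiv 0$ on $\{\rho\ge 3\Delta_2/4\}$. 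The decisive observation is that multiplying a $\{0,1\}$-valued quantity by a $[0,1]$-valued one is \emph{exact} with a single ReLU: $\sigma(B+\chi^{+}-1)$ equals $B$ when $\chi^{+}=1$ and $0$ when $\chi^{+}=0$. Setting $\beta^{\pm}(x)=\sigma(B(x)+\chi^{\pm}(x)-1)$ and, as a last affine layer,
\[
 f^\dagger(x)=u+(1-u)\,\beta^{+}(x)-(1+u)\,\beta^{-}(x),
\]
completes the construction (its range lies in $[-1,1]$, so no truncation is needed). Correctness on $Q$: at $x=x_\alpha$ with $\alpha\in\tilde{\mathcal{A}}^{+}$ one has $\chi^{+}=1,\chi^{-}=0$ exactly and $\rho(x_\alpha)\le\epsilon<\Delta_2/4$, so $B=1$ and $f^\dagger(x_\alpha)=u+(1-u)=1=y_\alpha$ (symmetrically on $\tilde{\mathcal{A}}^{-}$); and for $x\in Q_\alpha(\Delta_1)$ with $\|x-x_\beta\|_\infty\ge\Delta_2$ for every $\beta$, either $\alpha\notin\tilde{\mathcal{A}}$, whence $\chi^{\pm}=0$ and $\beta^{\pm}=0$, or $\alpha\in\tilde{\mathcal{A}}$ and $\rho(x)\ge\|x-x_\alpha\|_\infty-\epsilon\ge 9\Delta_2/10>3\Delta_2/4$, whence $B=0$ and $\beta^{\pm}=0$; in both cases $f^\dagger(x)=u$. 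For the sizes, the $O(1)$-many parallel copies of Theorem~\ref{thm:approx-n2l2-dim-d} give depth $\lesssim L\log_2(1/\Delta_2)$ and width $\lesssim N$ (first form) or depth $\lesssim L$ and width $\lesssim N\log_2(1/\Delta_2)$ (second form), with the fixed $d$ absorbed into the constant, and the gadget adds only $O(1)$ to depth and width; this yields $c_{22}$--$c_{25}$.

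\textbf{Main obstacle and remaining subtleties.} The genuinely delicate point is reconciling the requirement $f^\dagger(x_\alpha)=y_\alpha$ \emph{exactly} with the fact that $x_\alpha$ has non-dyadic coordinates, so Theorem~\ref{thm:approx-n2l2-dim-d} can only encode its center location up to precision $\epsilon$; this is precisely why a sharp tent will not do and the flat-top pyramid is essential — it is identically $1$ on the $\ell_\infty$-ball of radius $\Delta_2/4$ (which contains $x_\alpha$ since $\rho(x_\alpha)\le\epsilon<\Delta_2/4$) and identically $0$ outside radius $3\Delta_2/4$, so the $\epsilon$-error is harmless at both ends, and combined with the exact $\{0,1\}$-masking $\sigma(B+\chi^{\pm}-1)$ every claimed identity becomes exact rather than approximate. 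Two secondary subtleties: the pyramid built from $\rho=\|x-T(x)\|_\infty$ carries a peak inside \emph{every} cube (at that cube's encoded center), but the $\chi^{\pm}$-masks delete all peaks outside $\tilde{\mathcal{A}}^{\pm}$; and a designated point near a cube face can leak its $\Delta_2$-ball into a neighbouring cube, which is harmless because such leakage only occurs within $\ell_\infty$-distance $\Delta_2$ of some $x_\beta$, where the statement imposes no constraint. Finally, all outputs are controlled only on $Q$, so the continuous transitions of $T$ and of the indicators across the bad strips near cube boundaries never enter the argument.
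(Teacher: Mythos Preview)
Your proposal is correct and follows essentially the same approach as the paper: both use Theorem~\ref{thm:approx-n2l2-dim-d} coordinate-wise to encode the approximate center $x_\alpha$ of each sub-cube, build a flat-top $\ell_\infty$-pyramid on $\|x-T(x)\|_\infty$ (the paper's $h(x,y)=\sigma(2-(3/\Delta_2)\|x-y\|_\infty)\wedge 1$ plays the same role as your $B$), and then mask via exact $\{0,1\}$-valued indicators $\chi^{\pm}$ from the dyadic case of Theorem~\ref{thm:approx-n2l2-dim-d}. The only cosmetic differences are that the paper factors the bump construction through an intermediate Proposition~\ref{prop:noise-pre}, masks with $f^{+}\wedge m^{+}$ rather than your equivalent $\sigma(B+\chi^{+}-1)$, and combines via the clipped formula $((2g^{+}-2g^{-}+u)\vee(-1))\wedge 1$ instead of your cleaner $u+(1-u)\beta^{+}-(1+u)\beta^{-}$.
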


Theorem \ref{thm:approx-noise-dim-d} shows that if the points $\{x_i\}_{i=1}^S$ with $S\lesssim K^d\asymp L^2N^2$ are located on distinguished regions of $K^d$ sub-cubes with length $1/K$ in $[0,1]^d$, then we can find some $f^\dagger \in \mathcal{F}(d, N, L, 1)$ such that its value at each $x_i$ ($i=1,\ldots, S$) equals a pre-specified constant in $\{-1,+1\}$, while stays at a constant outside the cubes (with length $2\Delta_2$) of these $(NL)^2$ points and a ``bad'' region $[0,1]^d\setminus Q$. 
Note that $\Delta_1 \in (0, 1/(3K)]$ can be arbitrarily chosen, and the network depth or width only depends on $\Delta_2$ logarithmically.
Figure \ref{fig:thm-approx}(b) provides an example of this approximation ability when $d=1$. 

{
\color{revisecolor}
\begin{remark}
\label{remark:bounded-approximation} We use the point fitting module in \cite{lu2020deep} to construct our target function $f^\dagger$. More specifically, they use a $\tilde{\mathcal{O}}(L)$ depth $\tilde{\mathcal{O}}(N)$ width ReLU neural network, whose weights explicitly scale with $e^{N+L}$, to approximate $(NL)^2$ uniformly located points. We claim that Theorem \ref{thm:approx-noise-dim-d} also holds for uniformly bounded weights under a special scenario, in which we can use $\mathcal{O}(\log N)$ depth and $\mathcal{O}(N)$ width ReLU neural network with weights explicitly bounded by $1$ to approximate $N^2$ uniformly located points; see Lemma 14 and proof of Theorem 4 in \cite{fan2022factor}. This demonstrates that the approximability of ReLU-DNN described in Theorem \ref{thm:approx-n2l2-dim-d} and \ref{thm:approx-noise-dim-d} is universal and is not related to the unbounded weights.
\end{remark}
}

\subsection{Neural network meets heavy-tailed error: a comprehensive picture.}
\label{sec:nn-meet-htr}

In this section, we summarize the main results obtained in this paper and leave an interesting open question. 
\revise{Assume that the $d$-variate regression function $f_0$ is bounded in magnitude by $1$}, and the noise variable satisfies $\mathbb{E}[|\varepsilon|^p | X=x] \le 1$ for some $p\ge 2$. 
Table~\ref{table:convergence-rate} summarizes our main results, and for comparison purposes, also includes an upper bound result on the least squares estimator for H\"older classes \citep{kuchibhotla2019least}. \revise{We provide proofs for the lower and upper bounds for all the ReLU-DNN estimators in the table.} Theoretically, the adaptive Huber estimator outperforms its least squares counterpart under the same moment conditions. In the case of $f_0\in \mathcal{C}(d,\beta)$, Figure \ref{fig:rate-wrt-p} depicts how the optimal rate changes with the moment index $p$ for each estimator.

\begin{table}[htb!]
    \centering \small
    \begin{tabular}{llllll}
      \hline
      \hline
      $f_0$ & Function class & Loss & Upper bound & Lower bound \\
      \hline
      $\mathcal{C}(d,\beta)$ & H\"older class & Square & $n^{-\frac{\beta}{2\beta + d}}$   ($p\ge 2+d/\beta$) & $n^{-\frac{\beta}{2\beta + d}}$   ($p\ge 2$) \\
      $\mathcal{C}(d,\beta)$ & ReLU-DNN & Square & $n^{-\frac{(1-1/p) \beta}{2\beta + d(1-1/p)}}$ ($p \geq 2$) & $n^{-\frac{(1-1/p) \beta}{2\beta + d(1-1/p)}}$ ($p \geq 2$) \\
      $\mathcal{C}(d,\beta)$ & ReLU-DNN & Huber & $n^{-\frac{(1-\frac{1}{2p-1}) \beta}{2\beta + d(1-\frac{1}{2p-1})}}$ ($p \geq 2$)& $n^{-\frac{(1-\frac{1}{2p-1}) \beta}{2\beta + d(1-\frac{1}{2p-1})}}$  ($p \geq 2$) \\
      \hline
      $\mathcal{H}(d,l,\mathcal{P})$ & ReLU-DNN & Square & $n^{-\frac{(1-1/p) \gamma^*}{2\gamma^* + (1-1/p)}}$ ($p \geq 2$) & $n^{-\frac{(1-1/p) \gamma^*}{2\gamma^* + (1-1/p)}}$ ($p \geq 2$) \\
      $\mathcal{H}(d,l,\mathcal{P})$ & ReLU-DNN & Huber & $n^{-\frac{(1-\frac{1}{2p-1}) \gamma^*}{2\gamma^* + (1-\frac{1}{2p-1})}}$ ($p \geq 2$)& $n^{-\frac{(1-\frac{1}{2p-1}) \gamma^*}{2\gamma^* + (1-\frac{1}{2p-1})}}$  ($p \geq 2$) \\      
      \hline
      \hline
    \end{tabular}
	\caption{A summary of convergence rates for nonparametric estimators  when $\EE (|\varepsilon|^p|X)$ is bounded.}
    \label{table:convergence-rate}
\end{table}

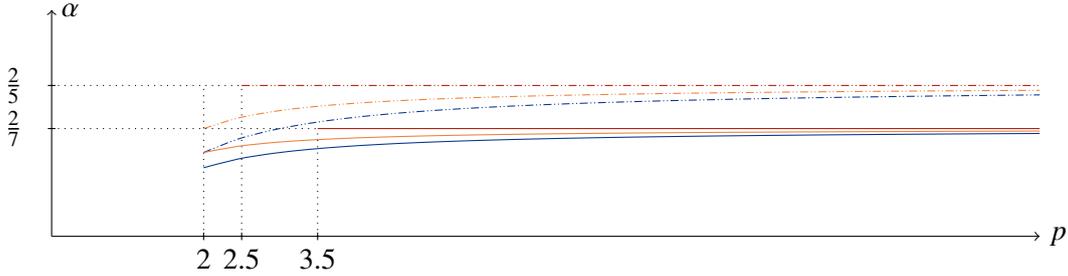
\begin{figure*}[t!]
    \centering
\begin{tikzpicture}

\draw[->] (0, 0) -- (0, 3) node[right] {$\alpha$};
\draw[->] (0, 0) -- (13, 0) node[right] {$p$};

\draw[myred, densely dashdotdotted] (2.5, 0.4*5) -- (13, 0.4*5);
\draw[scale=1, domain=2:13, smooth, variable=\p, myorange, densely dashdotdotted] plot ({\p}, {5 * (2 * (1-1/(2*\p-1))) / (4 + 1 * (1-1/(2*\p-1)))});
\draw[scale=1, domain=2:13, smooth, variable=\p, myblue, densely dashdotdotted] plot ({\p}, {5 * (2 * (1-1/\p)) / (4 + 1 * (1-1/\p))});
\draw[dotted] (0, 2) -- (2.5, 2);
\draw[black] (-0.05, 2) -- (0.05, 2);
\draw[black] (-0.5, 2) node{$\frac{2}{5}$};
\draw[dotted] (2, 0) -- (2, 2);
\draw[black] (2, -0.05) -- (2, 0.05);
\draw[black] (2, -0.3) node{$2$};
\draw[dotted] (2.5, 0) -- (2.5, 2);
\draw[black] (2.5, -0.05) -- (2.5, 0.05);
\draw[black] (2.5, -0.3) node{$2.5$};

\draw[myred] (3.5, 5*2/7) -- (13, 5*2/7);
\draw[scale=1, domain=2:13, smooth, variable=\p, myorange] plot ({\p}, {5 * (2 * (1-1/(2*\p-1))) / (4 + 3 * (1-1/(2*\p-1)))});
\draw[scale=1, domain=2:13, smooth, variable=\p, myblue] plot ({\p}, {5 * (2 * (1-1/\p)) / (4 + 3 * (1-1/\p))});

\draw[dotted] (3.5, 0) -- (3.5, 5*2/7);
\draw[dotted] (0, 5*2/7) -- (3.5, 5*2/7);
\draw[black] (-0.05, 5*2/7) -- (0.05, 5*2/7);
\draw[black] (-0.5, 5*2/7) node{$\frac{2}{7}$};

\draw[black] (3.5, -0.05) -- (3.5, 0.05);
\draw[black] (3.5, -0.3) node{$3.5$};
\end{tikzpicture}
\caption{Plots of convergence rate $\alpha(p,d, \beta)$ (see Table~\ref{table:convergence-rate}) versus $p$ for the least squares estimator in H\"older function class (\myred{red}), the adaptive Huber ReLU-DNN estimator (\myorange{orange}), and the least squares ReLU-DNN estimator (\myblue{blue}).  Here, we take $\beta =2$, $d=1$ (dash) and $3$ (solid). }
\label{fig:rate-wrt-p}

\end{figure*}

\revise{When $f_0 \in \mathcal{C}(d,\beta)$,} a natural question is whether we can construct a robust neural network regression estimator that achieves the optimal rate $n^{-\beta/(2 \beta + d)}$ under heavy-tailed errors.
The answer to this question is related to how well neural networks can approximate a function with additional smoothness control.
For example, if we can approximate regression function $f_0 \in \mathcal{C}(d,\beta)$ using $\mathcal{F}_n^{(C)} = \{f\in \mathcal{F}_n:|f(x)-f(y)| \le C\|x-y\|\}$ with some constant $C$ independent of $\bar{L}$ and $\bar{N}$, while maintaining a similar approximation error, i.e., 
\begin{align}
    \sup_{f_0 \in \mathcal{C}(d, \beta)} \inf_{f\in \mathcal{F}_n^{(C)}} \|f_0 - f\|_2 \lesssim \sup_{f_0 \in \mathcal{C}(d,\beta) } \inf_{f\in \mathcal{F}_n} \|f_0 - f\|_2,  \label{eq4.10}
\end{align} then we only need to consider the estimator 
\begin{align}
\label{smooth-estimator}
    \hat{f}_n \in  \text{argmin}_{f\in \mathcal{F}_n^{(C)}} \frac{1}{n} \sum_{i=1}^n \{Y_i - f(X_i)\}^2.
\end{align} 
Note that functions in class $\mathcal{F}_n^{(C)}$ are $C$-Lipschitz. Thus the corresponding least squares estimator is expected to achieve a similar convergence rate as the least squares estimator for H\"older class discussed in \cite{kuchibhotla2019least}. If \eqref{eq4.10} holds, using a similar argument it can be shown that the constrained least squares estimator defined in \eqref{smooth-estimator} achieves the rate $n^{-\frac{\beta}{2\beta+d}}$ when $p \geq 2+d$.
Numerically, however, efficient algorithms for solving the constrained minimization problem in \eqref{smooth-estimator} are lacking.

\section{Conclusion}

In this paper, we have studied how the heavy-tailed errors impact on the rate of convergence of nonparametric regression estimators fitted on deep ReLU neural networks. We consider the adaptive Huber estimator and establish non-asymptotic error bounds on the $L_2$ risk. By presenting a matching lower bound, we further show that this is the best possible convergence rate the adaptive Huber ReLU-DNN estimator can obtain, which is faster than that of the least squares counterpart.
This provides a comprehensive picture of the stability of deep ReLU neural networks under polynomial-tail errors.

An interesting yet challenging open question is whether it is possible to obtain a ReLU-DNN estimator that achieves the minimax-optimal rate over $\cH(d, l, \cP)$ 
when the noise distribution is heavy-tailed.
The key step is to develop a neural network approximation result \eqref{eq4.10}, which shows that constraining deep ReLU neural networks with a bounded Lipschitz constant does not reduce much the approximation error. If so, then for heavy-tailed noise distribution with $p\ge 2+d$, the estimator in \eqref{smooth-estimator} can achieve the same rate of convergence as if the noise is sub-Gaussian. 
Note that the above estimator only attains the minimax-optimal rate when $p\ge 2+d$. The case of $2\leq p<2+d$ remains unclear.
In addition, regardless of the validity of \eqref{eq4.10}, it is still interesting to understand the extent to which the `regularized' estimator can provide stability.

\bibliographystyle{apalike2}
\bibliography{arxiv_main}

\newpage
\appendix

\section{Proof for Section~\ref{sec3}}

This section contains the proofs of all the theoretical results in Section~\ref{sec3}.

\subsection{Proof of Proposition \ref{prop:strong-convex}}

To begin with, we derive from the fundamental theorem of calculus that for every $v, w \in \RR$,
\begin{align}
    \ell_\tau(v+w) - \ell_\tau(v) = \psi_\tau(v) w + \int_{0}^w \psi_\tau'(v+t) (w-t) {\rm d}t ,
\end{align} 
where $\psi_\tau(\cdot)$ and $\psi'_\tau(\cdot)$ are given in \eqref{def:score}.  
For any $\tau>0$ and function $f:[0, 1]^d \to \RR$, it follows that
\begin{align}
    \mathcal{R}_\tau(f) - \mathcal{R}_\tau(f_0) &= \mathbb{E}\big\{ \ell_\tau \big(  \varepsilon + \Delta_f(X) \big) \big\} - \mathbb{E}\big\{ \ell_\tau(\varepsilon) \big\}  \nn \\
    &= \mathbb{E} \big\{ \psi_\tau(\varepsilon) \Delta_f(X) \big\} + \mathbb{E}\Bigg[\int_{0}^{\Delta_f(X)} 1( |\varepsilon+t| \le \tau ) \{ \Delta_f(X)-t \}  {\rm d} t \Bigg] . \label{pop.loss.diff.decomposition}
\end{align}
In the following, we bound the two terms on the right-hand side of \eqref{pop.loss.diff.decomposition} separately.

For the former,   note that  $ \psi_\tau(\varepsilon) = 1\{|\varepsilon| \le \tau \} \varepsilon + 1\{\varepsilon > \tau\} \tau - 1\{\varepsilon < -\tau\} \tau$.  Recall the assumption that $\mathbb{E}( \varepsilon|X=x )=0$,  we have $ \mathbb{E}\big\{ 1( |\varepsilon| \le \tau )  \varepsilon \, | \, X=x \big\} = -\mathbb{E} \big\{ 1( |\varepsilon| > \tau )  \varepsilon  \, |  \, X=x \big\}$. Taking the conditional expectation of $\psi_\tau(\varepsilon)$ given $X=x$ yields 
\begin{align}
    \big|  \mathbb{E}\big\{  \psi_\tau(\varepsilon)|X=x \big\} \big| &=    \big|\mathbb{E} \big\{ -1( |\varepsilon| > \tau) \varepsilon + 1( \varepsilon > \tau ) \tau - 1( \varepsilon < -\tau) \tau  \,| \, X=x  \big\}  \big|   \nn \\
    &\le \mathbb{E} \big\{ (|\varepsilon|-\tau) 1( |\varepsilon| > \tau ) \,|\, X=x\big\}   \nn \\
    &\le \mathbb{E} \big\{  |\varepsilon|  (|\varepsilon| / \tau)^{p-1}   | X=x\big\}   \nn   \\
    & \leq v_p \tau^{1-p}. \label{bias.control}
\end{align}
Combining this with the Cauchy-Schwarz inequality,  we conclude that
\#
 \mathbb{E} \big\{ \psi_\tau(\varepsilon) \Delta_f(X) \big\} \geq - v_p \tau^{1-p} \EE | \Delta_f(X) | \ge - v_p \tau^{1-p}  \|  f - f_0 \|_2 .  \label{bias.bound}
\#

Turning to the second term on the right-hand side of \eqref{pop.loss.diff.decomposition},  for every $f\in \Theta$ we have
\begin{align*}
     & \mathbb{E}\Bigg[ \int_{0}^{\Delta_f(x)} 1( | \varepsilon +t| \le \tau ) \{ \Delta_f(x)-t \} {\rm  d} t \bigg| X=x  \Bigg]   \\
   &  =    \mathbb{E}\Bigg[  \int_{0}^{\Delta_f(x)} \big\{ 1-1( |\varepsilon+t| > \tau) \big\}  \{ \Delta_f(x)-t \} {\rm  d} t \Big| X=x\Bigg] \\
   & \ge  \frac{1}{2} |\Delta_f(x)|^2 - \mathbb{E}\Bigg\{  \int_{0}^{\Delta_f(x)} \big[ 1( |\varepsilon| > \tau/2 )  + 1\{|\Delta_f(x)|>\tau/2\}\big]  \{ \Delta_f(x)-t \} {\rm d} t \Big| X=x\Bigg\}   \\
   &  = \frac{1}{2} |\Delta_f(x)|^2 \big\{   1 - \mathbb{P}\big(|\varepsilon|> \tau / 2 |X=x \big)  \big\} ,
\end{align*} 
where the last step follows from the fact that $|\Delta_f(x)|=|f(x)-f_0(x)| \leq  2M \le \tau/2$ provided $\tau\geq 4M$. 
By Markov's inequality and Condition \ref{cond2},  
\begin{align*}
   \mathbb{P}\big(|\varepsilon|> \tau / 2 |X=x \big)  \le \frac{\mathbb{E}( |\varepsilon|^p|X=x ) }{(\tau/2)^p } \leq  v_p (2/\tau )^{ p} \le \frac{1}{2}~\mbox{ for all } x
\end{align*} 
as long as $\tau \ge 2(2v_p)^{1/p}$.  Taking the expectation with respect to $X \sim \cP_X$ gives
\#
    \mathbb{E}\Bigg[\int_{0}^{\Delta_f(x)} 1( |\varepsilon +t| \le \tau ) \{ \Delta_f(x)-t\}  {\rm d} t  \Bigg] \ge  \frac{1}{4} \EE \{ \Delta^2_f(X) \}  = \frac{1}{4} \| f - f_0 \|_2^2 .  \label{curvature.bound}
\#

Together, \eqref{pop.loss.diff.decomposition}, \eqref{bias.bound} and \eqref{curvature.bound} imply that as long as $\tau \geq  2\max\{  2M,  (2v_p)^{1/p} \}$, 
\begin{align*}
    \mathcal{R}_\tau(f) - \mathcal{R}_\tau(f_0) & \geq  -v_p \tau^{1-p} \|f-f_0\|_2 + \frac{1}{4} \|f-f_0\|_2^2 
\end{align*} 
holds for all $f\in \Theta$.  In particular,  for $f\in  \Theta \setminus \Theta_0(8 v_p \tau^{1-p})$ we have $  v_p \tau^{1-p} \|f - f_0\|_2 \le   \|f - f_0\|_2^2 / 8$, and hence
\begin{align*}
    \mathcal{R}_\tau(f) - \mathcal{R}_\tau(f_0) \ge \frac{1}{8} \|f-f_0\|_2^2 .
\end{align*}

If the distribution of $\varepsilon | X=x$ is symmetric,  
and since $\psi_\tau(\cdot)$ is also symmetric (around zero), we have
\$
& \mathbb{E} \{ \psi_\tau(\varepsilon)|X=x \} = \Bigg\{ \int_{-\infty}^0 + \int_0^\infty  \Bigg\} \psi_\tau(t) \, {\rm d}  F_{\varepsilon | X=x} (t) \\
& =   \int_0^\infty  \psi_\tau(t) \, {\rm d} F_{\varepsilon | X=x} (-t)  +  \int_0^\infty \psi_\tau(t) \, {\rm d}  F_{\varepsilon | X=x} (t) \\
& =   \int_0^\infty  \psi_\tau(t) \, {\rm d} \{ 1 - F_{\varepsilon | X=x} ( t)\}  +  \int_0^\infty \psi_\tau(t) \, {\rm d}  F_{\varepsilon | X=x} (t)  = 0, 
\$
implying that $\mathbb{E}\{ \psi_\tau(\varepsilon) \Delta_f(X)\}=0$.  Consequently, 
\begin{align*}
    \mathcal{R}_\tau(f) - \mathcal{R}_\tau(f_0) = \mathbb{E}\Bigg[\int_{0}^{\Delta_f(X)} 1( |\varepsilon+t| \le \tau ) \{ \Delta_f(X)-t \}  {\rm d} t \Bigg]   \ge \frac{1}{4} \|f-f_0\|_2^2 
\end{align*} 
for all $f\in \Theta$ provided that $\tau \ge  2\max\{ 2M,  (2v_p)^{1/p} \}$.  This completes the proof of Proposition~\ref{prop:strong-convex}. \qed


\subsection{Proof of Proposition \ref{prop:bias}}

Recall from the proof of Proposition \ref{prop:strong-convex} that as long as $\tau \geq 2\max\{ 2M,  (2v_p)^{1/p} \}$, 
    \begin{align*}
        \mathcal{R}_\tau(f) - \mathcal{R}_\tau( f_0) 
        \ge -v_p \tau^{1-p} \|f-f_0\|_2 + \frac{1}{4} \|f-f_0\|_2^2 ~\mbox{ for all }  f\in \Theta. 
    \end{align*} 
Taking $f=f_{0, \tau}$,  the claimed result follows immediately from the fact that $  \mathcal{R}_\tau( f_{0, \tau}) - \mathcal{R}_\tau( f_0) \le 0$.  

Next, assume that the noise variable $\varepsilon$ is sub-Gaussian satisfying \eqref{subgaussian.cond}.  Similarly to \eqref{bias.control}, now we have for any $x\in [0,1]^d$ that
$\PP\big( |\varepsilon| > \tau/2 | X=x\big)  \leq 2 e^{- \tau^2/ (8 \sigma^2) }$ and
\$
	&  \big|  \mathbb{E}\big\{  \psi_\tau(\varepsilon)|X=x \big\} \big|   \leq \EE \big\{ |\varepsilon| 1(|\varepsilon| > \tau ) | X=x \big\} \\
	 & = \sigma \EE  \int_0^\infty   1 \big(|\varepsilon/\sigma| > \tau/ \sigma \big)  1 \big(|\varepsilon/\sigma| >t  \big) {\rm d}t \\
	 & = \tau \PP \big(|\varepsilon/\sigma| > \tau/ \sigma \big)   + \sigma \int_{\tau/\sigma}^\infty \PP \big(|\varepsilon/\sigma| >t  \big) {\rm d}t \\
	 & \leq 2 \tau e^{-\frac{\tau^2}{2\sigma^2 }} + 2 \sigma \int_{\tau/\sigma}^\infty e^{-t^2/2} {\rm d} t  \leq  2 (  \tau + \sigma^2/\tau ) e^{-\frac{\tau^2}{2\sigma^2 }}  .
\$
Keeping the remaining arguments the same,  we obtain that as long as $\tau \geq  4\max\{ M, \sigma \}$, 
\$
  \mathcal{R}_\tau(f) - \mathcal{R}_\tau( f_0) 
        \ge -  2 (  \tau + \sigma^2/\tau ) e^{-\frac{\tau^2}{2\sigma^2 }}    \|f-f_0\|_2 +(1 - 2 e^{-2}) \|f-f_0\|_2^2 ~\mbox{ for all }  f\in \Theta. 
\$
This proves \eqref{bias.exponential.decay} by taking $f=f_{0, \tau}$.  	 \qed

\subsection{Proof of Theorem \ref{thm:generic-bound}}



In order to prove our main Theorem \ref{thm:generic-bound}, we first introduce some notations. For any given $f^* \in \Theta$, Define
\begin{align*}
    \Theta_*(r) = \{f \in \Theta: \|f-f^*\|_2 \le r\} ~~~~~~ \text{and} ~~~~~~ \Theta_*^c(r) = \{f \in \Theta: \|f-f^*\|_2 > r\}.
\end{align*}
Moreover, define the difference of Huber loss of $f$ and $f^*$ at $(X,\varepsilon)$ as
\begin{align*}
    h_{f,f^*}(X,\varepsilon) &= \ell_{\tau}(Y-f(X)) - \ell_{\tau}(Y-f^*(X)) \\
    &=\ell_{\tau}(\varepsilon+f_0(X) - f(X)) - \ell_{\tau}(\varepsilon+f_0(X) - f^*(X)).
\end{align*} 

Denote $\Delta_{f,*}(X) := f^*(X) - f(X)$, it follows easily from the Taylor expansion that
\begin{align}
\label{eq:hf_def}
\begin{split}
    h_{f,f^*}(X,\varepsilon) &= \ell_\tau'(\varepsilon+f_0(X) - f^*(X)) \Delta_{f,*}(X) \\
    &~~~~~~~~~~~~ + \int_{0}^{\Delta_{f,*}(X)} \ell_\tau''(t + \varepsilon+f_0(X)-f^*(X)) (\Delta_{f,*}(X)-t)\mathrm{d}t \\
    &= T_\tau(\varepsilon + f_0(X) - f^*(X)) \Delta_{f,*}(X) \\
    &~~~~~~~~~~~~ + \int_{0}^{\Delta_{f,*}(X)} 1\{|t + \varepsilon + f_0(X)-f^*(X)|\le \tau\} (\Delta_{f,*}(X)-t)\mathrm{d}t.
\end{split}
\end{align} We also denote
\begin{align}
\label{eq:hfb_def}
\begin{split}
    h^B_{f,f^*}(X,\varepsilon) &= T_{\tau \land B}(\varepsilon + f_0(X) - f^*(X)) \Delta_{f,*}(X) \\
    &~~~~~~~~~~~~ + \int_{0}^{\Delta_{f,*}(X)} 1\{|t + \varepsilon + f_0(X)-f^*(X)|\le \tau\} (\Delta_{f,*}(X)-t)\mathrm{d}t.
\end{split}
\end{align} 
We will abbreviate the above two quantities to be $h_f(X,\varepsilon)$ and $h^B_f(X,\varepsilon)$ respectively if $f^*$ is clear from the context. We also define
\begin{align}
    C_v =  v_2 \land \tau^2  .
\end{align}

\begin{lemma}[A tail probability inequality for Huber estimator]
\label{lemma:convergence-rate}
    Assume Condition \ref{cond1} holds, $C_v < \infty$. Let $\mathcal{R}_\tau$ and $\hat{\mathcal{R}}_\tau$ be the population and empirical Huber risks, respectively, parameterized by $\tau \ge 1$. Given some positive real number $D\ge 1$, $\delta_*$, $\delta_{\mathtt{opt}}$ and a sequence of real positive number $\{B_k\}_{k=1}^\infty$,  assume that
  \begin{enumerate}
  \item[(1)] there exists a function $\tilde{f} \in \mathcal{F}_n$ satisfying $\|\tilde{f} - f^*\|_2 \le 3\delta_*$ and 
  \begin{align}
  	\mathcal{R}_\tau(\tilde{f}) - \mathcal{R}_\tau(f^*) \le 9(\delta_*)^2 ;	\label{eq:tail-prob-approx-error}
  \end{align}

\item[(2)] there exits some constant $c_{26}>0$ such that for any $\delta \geq {\delta}_*$,
    \begin{align}
        \cR_\tau(f) - \cR_\tau(f^*) \ge  c_{26} \delta^2~\mbox{ for all } f \in  \Theta_*^c(\delta/2); \label{part2}
    \end{align}  
    
\item[(3)] there exists a function $\phi_n: \RR^+ \times \mathbb{R}^+ \to \RR^+$ such that for any $\delta \ge \delta_*$ and $B>0$, 
 \begin{align}
        \mathbb{E} \Bigg\{ \sup_{f\in \mathcal{F}_n \cap \Theta_*(\delta)}  |\Delta_{n,B}(f)| \Bigg\}  \le  \phi_n(\delta, B) ~~~~\text{with}~~~~  \Delta_{n,B}(f) = \frac{1}{n} \sum_{i=1}^n h_f^B(X_i, \varepsilon_i) - \mathbb{E} [h_f^B(X,\varepsilon)],\label{part3}
    \end{align} and the function $\phi_n(\cdot, \cdot)$ satisfies $\phi_n(\delta \alpha, B) \le \alpha \phi_n(\delta, B)$ for any $\delta \ge \delta_*$, $B > 0$ and $\alpha \ge 1$. 
\item[(4)] let ${\delta}^\dagger = (D\delta_* + \delta_{\mathtt{opt}})$, for any $k \in \mathbb{N}^+$, one has
    \begin{align*}
        \phi_n(2^k {\delta}^\dagger, B_k) \le c_{27} 2^{2k} ({\delta}^\dagger)^2
    \end{align*} for some universal constant $c_{27}>0$ independent of $k$.

\end{enumerate}

Then, let 
\begin{align*}
    c_{28} = \sqrt{\frac{9}{2 c_{26}}} \lor 48 \frac{c_{27}}{c_{26}}, ~~~~~~~~ c_{29} = \left(\frac{c_{26}^2}{20 \times 24^2} \land \frac{c_{26}}{128} \right) c_{28}^2, ~~~~~~~~ c_{30} = \frac{32}{c_{26} c_{28}^2}
\end{align*}
we have
\begin{align}
\begin{split}
    &\mathbb{P}\left[\exists f \in \mathcal{S}_{n,\tau}(\delta_{\mathtt{opt}}) ~~\text{s.t.}~~ \|f-f^*\|_2 \ge c_{28} {\delta}^\dagger \right] \\
    &~~~~~~~~~~~~~~~~~~~~~~~ \le \sum_{k=1}^\infty 2\exp\Bigg(-\frac{c_{29} n({\delta}^\dagger)^2}{ C_v + M^2 + M(B_k \land \tau)} 2^{2k}\Bigg) \\
    &~~~~~~~~~~~~~~~~~~~~~~~~~~~~~~~~~~~~~  +  \frac{c_{30} 1\{B_k < \tau\} \mathbb{E} \left[|\psi_\tau(\varepsilon + f_0(X) - f^*(X))|^p\right]}{B_k^{p-1} 2^{2k} (\delta^\dagger)^2}
\end{split}
\end{align}
\end{lemma}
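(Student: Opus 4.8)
The plan is to run a peeling (or ``slicing'') argument over dyadic shells $\{f : 2^{k-1}\delta^\dagger \le \|f - f^*\|_2 \le 2^k \delta^\dagger\}$ for $k \ge 1$, combined with a localized concentration bound on the empirical process $\Delta_{n,B}(f)$ using the truncated increments $h_f^B$. The overall strategy is standard in the spirit of one-step analyses for M-estimators, but the truncation parameter $B_k$ must be chosen adaptively per shell to control both the exponential-type deviation (via a Bernstein-type inequality applied to bounded increments) and the polynomial-type deviation (via a Markov bound on the part of the score that exceeds $B_k$). So I expect the core of the proof to split each shell's failure probability into two pieces, one decaying like $e^{-c\,n(\delta^\dagger)^2 2^{2k}/(\text{variance proxy})}$ and one decaying like $B_k^{1-p} 2^{-2k}(\delta^\dagger)^{-2} \,\EE|\psi_\tau(\cdots)|^p$, matching the two summands in the claimed bound.

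Concretely, I would proceed as follows. First, fix $k \ge 1$ and set $\delta = 2^k \delta^\dagger$. On the event that some $f \in \mathcal{S}_{n,\tau}(\delta_{\mathtt{opt}})$ lies in the $k$-th shell, compare $\hat{\mathcal R}_\tau(f)$ to $\hat{\mathcal R}_\tau(\tilde f)$, where $\tilde f$ is the good approximant from hypothesis (1): since $f$ is a $\delta_{\mathtt{opt}}$-approximate empirical minimizer, $\hat{\mathcal R}_\tau(f) \le \hat{\mathcal R}_\tau(\tilde f) + \delta_{\mathtt{opt}}^2$. Writing everything in terms of $h_{f} = h_{f,f^*}$ and $h_{\tilde f} = h_{\tilde f, f^*}$, this yields
\begin{align*}
\frac1n \sum_i h_f(X_i,\varepsilon_i) \le \frac1n \sum_i h_{\tilde f}(X_i,\varepsilon_i) + \delta_{\mathtt{opt}}^2 .
\end{align*}
Taking expectations and rearranging, $\mathcal R_\tau(f) - \mathcal R_\tau(f^*) \le \Delta_n(f) - \Delta_n(\tilde f) + \{\mathcal R_\tau(\tilde f) - \mathcal R_\tau(f^*)\} + \delta_{\mathtt{opt}}^2$, where $\Delta_n(g) = -(\tfrac1n\sum_i h_g - \EE h_g)$; using (2) to lower-bound the left side by $c_{26}\delta^2$, hypothesis (1) to bound $\mathcal R_\tau(\tilde f) - \mathcal R_\tau(f^*) \le 9\delta_*^2$, and the crude bound $|\Delta_n(\tilde f)|$ by a separate (small-shell) application of the same concentration, I reduce the problem to showing $\sup_{f \in \mathcal F_n \cap \Theta_*(\delta)} |\Delta_n(f)|$ is no larger than a constant fraction of $c_{26}\delta^2$ with high probability. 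Since $\delta = 2^k \delta^\dagger \ge 2^k D \delta_*$ and $\delta^\dagger \ge \delta_*$, the terms $9\delta_*^2$ and $\delta_{\mathtt{opt}}^2$ are absorbed into $O(2^{2k}(\delta^\dagger)^2)$ as long as the constant $c_{28}$ is taken large enough (this is exactly where the definitions of $c_{28}, c_{29}, c_{30}$ come from).

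Next, the heart of the matter: bounding $\PP\{\sup_{f \in \mathcal F_n \cap \Theta_*(\delta)} |\Delta_n(f)| \ge \tfrac{c_{26}}{2}\delta^2\}$. I would split $h_f = h_f^{B_k} + (h_f - h_f^{B_k})$. For the truncated part $h_f^{B_k}$, the increments are bounded (by roughly $M(B_k \wedge \tau) + M^2$ in sup-norm) and have conditional variance controlled by $C_v + M^2$ times $\|\Delta_{f,*}\|_2^2 \lesssim \delta^2$; applying Talagrand's / Bousquet's concentration inequality together with the expectation bound $\phi_n(\delta, B_k)$ from hypothesis (3) and the sub-linearity $\phi_n(\delta\alpha,B) \le \alpha\phi_n(\delta,B)$ — plus hypothesis (4), $\phi_n(2^k\delta^\dagger, B_k) \le c_{27} 2^{2k}(\delta^\dagger)^2$ — gives the first (exponential) term, of the form $2\exp(-c_{29} n(\delta^\dagger)^2 2^{2k}/(C_v + M^2 + M(B_k\wedge\tau)))$. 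For the untruncated remainder $h_f - h_f^{B_k}$, which is nonzero only on $\{B_k < \tau\}$ and only when $|\varepsilon + f_0(X) - f^*(X)| > B_k$, I would bound $\EE \sup_f |\tfrac1n\sum_i (h_{f,i} - h_{f,i}^{B_k})|$ crudely by $\EE[|\psi_\tau(\varepsilon + f_0(X)-f^*(X))|\,1\{|\cdots|>B_k\}] \cdot \|\Delta_{f,*}\|_\infty$ and then use Markov together with the moment bound $\EE[(\cdots)1\{>B_k\}] \le B_k^{1-p}\EE|\psi_\tau(\cdots)|^p$, producing the second (polynomial) term $c_{30} 1\{B_k<\tau\}\EE|\psi_\tau(\cdots)|^p / (B_k^{p-1} 2^{2k}(\delta^\dagger)^2)$. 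Summing over $k \ge 1$ and a union bound finishes it.

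The main obstacle I anticipate is the bookkeeping of the variance proxy and the sup-norm bound for the truncated increments $h_f^{B_k}$ so that the Bernstein/Bousquet application is clean: one must verify that the ``linear in $\Delta_{f,*}$'' leading term $T_{\tau\wedge B_k}(\cdots)\Delta_{f,*}$ has variance $\lesssim (C_v)\|\Delta_{f,*}\|_2^2$ — using $\EE[T_{\tau\wedge B_k}(\varepsilon+\cdots)^2|X] \lesssim v_2 \wedge \tau^2 = C_v$ after centering, which requires care because $f^*$ need not be the Huber-population-minimizer and $f_0 - f^*$ is a (bounded) shift — while the quadratic integral term contributes only $O(M^2)\|\Delta_{f,*}\|_2^2$ plus a deterministic $O(\|\Delta_{f,*}\|_2^2)$ drift that is already accounted for by (2). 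Getting the constants to line up with the stated $c_{28}, c_{29}, c_{30}$ (in particular the factor-$48$ and the $20\times 24^2$) is tedious but mechanical once the structure above is in place; I would not grind through it in the sketch, but it is where an error is most likely to hide.
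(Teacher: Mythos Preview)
Your proposal is correct and matches the paper's proof essentially step for step: peeling over dyadic shells, the basic inequality combining conditions (1) and (2), splitting $\Delta_n(f)$ into $\Delta_{n,B_k}(f)$ and $\Delta_n(f)-\Delta_{n,B_k}(f)$, a Markov bound on the untruncated remainder via $\EE|\psi_\tau(\cdot)|^p/B_k^{p-1}$, and Talagrand's inequality on the bounded part with expectation controlled by conditions (3)--(4). The only cosmetic difference is that the paper absorbs $|\Delta_n(\tilde f)|$ directly into the supremum over the $k$-th ball (since $\|\tilde f - f^*\|_2 \le 3\delta_* \le c_{28}\delta^\dagger$, so $\tilde f\in\Theta_*(2^k c_{28}\delta^\dagger)$ for every $k$), simply halving the threshold constant, rather than invoking a separate small-shell concentration as you suggest.
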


Lemma~\ref{lemma:convergence-rate} provides a general, high-level result for establishing the convergence rate of the nonparametric Huber estimator $\hat f_n$ to some function $f^*$. 
Now we focus on the case where $f^*=f_0$. Recall from Proposition~\ref{prop:strong-convex} that a lower bound for the excess risk holds outside some local region, from which \eqref{part2} follows.  The main difficulty is then to validate condition (3), that is,  inequality \eqref{part3}.  To this end, we define
\begin{align} \label{def:g}
    g_f(x ,  \epsilon)  
     = h_{f,f_0}^B(X,\varepsilon) = \psi_{\tau \land B}(\varepsilon) \Delta_f(X) + \int_{0}^{\Delta_f(X)} 1\{|\varepsilon+t| \le \tau\} (\Delta_f(X)-t) \mathrm{d}t,
\end{align} 
and let $\mathcal{G}_n = \{g=g_f : [0, 1]^d \times \RR \to \RR \,| \, f\in \mathcal{F}_n\}$. Moreover, let $\mathcal{G}_n(\delta) = \{g=g_f : [0, 1]^d \times \RR \to \RR \,| \, f\in \mathcal{F}_n \cap \Theta_0(\delta)\}$.  
The next two lemmas characterize the properties of function $g$ defined in \eqref{def:g} and the envelop  $G$ of $\cG_n$,  satisfying
\begin{align}
    \sup_{g\in \mathcal{G}_n} |g(x,\epsilon)| \le G(x,\epsilon) ~\mbox{ for all } x\in [0, 1]^d , \, \epsilon \in \RR.
\end{align}

\begin{lemma} 
\label{lemma:prop-g}
Let $g: [0, 1]^d \times \RR \to \RR$ be as in \eqref{def:g}.  
\begin{itemize}
\item[(1)] If Condition \ref{cond2} holds, we have $\mathbb{E} \{ g(X,\varepsilon)\}  \le \|f-f_0\|^2_2$ for all $f\in \Theta_0^{{\rm c}}(2v_p (\tau\land B)^{1-p})$.  If Condition \ref{cond3} holds,  we have $\mathbb{E} \{ g(X,\varepsilon)\}  \le \|f-f_0\|^2_2$ for all $f\in \Theta$.

\item[(2)] We have $\mathbb{E}\{  g^2(X,\varepsilon) \} \le 2(M^2+C_v) \|f-f_0\|_2^2$  for all $f\in \Theta$.
\end{itemize}
\end{lemma}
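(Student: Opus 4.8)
The plan is to split $g_f(x,\epsilon)$ from \eqref{def:g} into a ``linear-in-noise'' piece $L_f(X,\varepsilon):=\psi_{\tau\wedge B}(\varepsilon)\Delta_f(X)$ and a ``curvature'' piece $C_f(X,\varepsilon):=\int_0^{\Delta_f(X)}1(|\varepsilon+t|\le\tau)(\Delta_f(X)-t)\,\mathrm{d}t$, and bound each separately, where $\Delta_f(x)=f_0(x)-f(x)$ obeys $|\Delta_f(x)|\le 2M$ for $f,f_0\in\Theta$. First I would record the deterministic two-sided bound $0\le C_f(X,\varepsilon)\le\tfrac12\Delta_f(X)^2$, obtained by replacing the indicator with $1$ and integrating (treating the cases $\Delta_f(X)\ge 0$ and $\Delta_f(X)<0$ separately); taking expectations gives $\mathbb{E}\{C_f(X,\varepsilon)\}\le\tfrac12\|f-f_0\|_2^2$.

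For part (1), the linear piece is handled by the same computation that underlies \eqref{bias.control} in the proof of Proposition~\ref{prop:strong-convex}, but carried out at truncation level $\lambda:=\tau\wedge B$ instead of $\tau$: using $\mathbb{E}[\varepsilon|X=x]=0$,
\begin{align*}
\big|\mathbb{E}\{\psi_\lambda(\varepsilon)|X=x\}\big|\le\mathbb{E}\{(|\varepsilon|-\lambda)1(|\varepsilon|>\lambda)|X=x\}\le\mathbb{E}\{|\varepsilon|(|\varepsilon|/\lambda)^{p-1}|X=x\}\le v_p\lambda^{1-p}.
\end{align*}
Combining this with the Cauchy--Schwarz inequality yields $\mathbb{E}\{L_f(X,\varepsilon)\}\le v_p(\tau\wedge B)^{1-p}\,\mathbb{E}|\Delta_f(X)|\le v_p(\tau\wedge B)^{1-p}\|f-f_0\|_2$, so $\mathbb{E}\{g_f\}\le v_p(\tau\wedge B)^{1-p}\|f-f_0\|_2+\tfrac12\|f-f_0\|_2^2$; on $\Theta_0^{{\rm c}}(2v_p(\tau\wedge B)^{1-p})$ the first summand is at most $\tfrac12\|f-f_0\|_2^2$, giving $\mathbb{E}\{g_f\}\le\|f-f_0\|_2^2$. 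Under Condition~\ref{cond3}, the odd function $\psi_\lambda$ together with the conditional symmetry of $\varepsilon|X=x$ forces $\mathbb{E}\{\psi_\lambda(\varepsilon)|X=x\}=0$, hence $\mathbb{E}\{L_f(X,\varepsilon)\}=0$ and $\mathbb{E}\{g_f\}\le\tfrac12\|f-f_0\|_2^2$ for every $f\in\Theta$, exactly as in the symmetry argument at the end of the proof of Proposition~\ref{prop:strong-convex}.

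For part (2), I would use $(a+b)^2\le 2a^2+2b^2$ to get $g_f^2\le 2L_f(X,\varepsilon)^2+2C_f(X,\varepsilon)^2$. Since $\psi_\lambda(\varepsilon)^2=(|\varepsilon|\wedge\lambda)^2\le\varepsilon^2\wedge\tau^2$, conditioning on $X$ gives $\mathbb{E}\{\psi_\lambda(\varepsilon)^2|X\}\le v_2\wedge\tau^2=C_v$, so $\mathbb{E}\{L_f^2\}=\mathbb{E}\{\Delta_f(X)^2\,\mathbb{E}[\psi_\lambda(\varepsilon)^2|X]\}\le C_v\|f-f_0\|_2^2$. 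For the curvature piece, the deterministic bound above gives $C_f(X,\varepsilon)^2\le\tfrac14\Delta_f(X)^4\le M^2\Delta_f(X)^2$ using $|\Delta_f(X)|\le 2M$, whence $\mathbb{E}\{C_f^2\}\le M^2\|f-f_0\|_2^2$. Adding, $\mathbb{E}\{g_f^2\}\le 2(C_v+M^2)\|f-f_0\|_2^2$.

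The argument is essentially bookkeeping, so there is no genuine obstacle; the two points requiring a little care are re-deriving the truncated-score bias estimate at level $\tau\wedge B$ (rather than quoting it verbatim from Proposition~\ref{prop:strong-convex}) and, in part (2), the use of $\mathbb{E}[\varepsilon^2|X=x]\le v_2$, which is implied by Condition~\ref{cond2} when $p\ge 2$ (or, if one prefers, by Jensen's inequality with $v_2=v_p^{2/p}$).
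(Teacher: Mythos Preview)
Your proof is correct and follows essentially the same approach as the paper: split $g_f$ into the linear piece $\psi_{\tau\wedge B}(\varepsilon)\Delta_f(X)$ and the curvature integral, bound the latter by $\tfrac12\Delta_f(X)^2$, and re-derive the bias estimate \eqref{bias.control} at truncation level $\tau\wedge B$ for part~(1); for part~(2), both you and the paper apply $(a+b)^2\le 2a^2+2b^2$, use $|\Delta_f|\le 2M$ to control $\Delta_f^4$, and invoke $\mathbb{E}[\psi_{\tau\wedge B}^2(\varepsilon)|X]\le v_2\wedge\tau^2=C_v$. The only cosmetic difference is that the paper records the $\tau^2$ and $v_2$ bounds on $\mathbb{E}[\psi_{\tau\wedge B}^2|X]$ separately before combining them into $C_v$, whereas you take the minimum directly.
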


\begin{lemma}
\label{lemma:prop-envelop}
Let $G(x, \epsilon) = 2 M^2 + 2 M|\psi_{\tau \land B}(\epsilon)|$ be an envelop function for the class $\mathcal{G}_n$. Then
\begin{align*}
    \mathbb{E} \{ G^2(X,\varepsilon) \}  \le c_{14}:= 8 M^2( M^2 + C_v ) ~~\mbox{ and }~~   \| G \|_\infty \leq 2M ( M + \tau \land B) .
\end{align*}
\end{lemma}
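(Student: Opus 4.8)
The plan is to establish both bounds by a short pointwise computation, after first checking that $G(x,\epsilon) = 2M^2 + 2M|\psi_{\tau\land B}(\epsilon)|$ really is an envelope for $\mathcal{G}_n$. Recall from \eqref{def:g} that every element of $\mathcal{G}_n$ has the form $g_f(x,\epsilon) = \psi_{\tau\land B}(\epsilon)\Delta_f(x) + \int_0^{\Delta_f(x)} 1\{|\epsilon+t|\le\tau\}(\Delta_f(x)-t)\,{\rm d}t$ with $f\in\mathcal{F}_n(d,\bar L,\bar N,M)$, so that $\|f\|_\infty\le M$ and, combined with Condition~\ref{cond1}, $|\Delta_f(x)| = |f_0(x)-f(x)| \le 2M$ for all $x\in[0,1]^d$. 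The linear term is then trivially bounded by $|\psi_{\tau\land B}(\epsilon)\Delta_f(x)| \le 2M|\psi_{\tau\land B}(\epsilon)|$. For the integral term, the key remark is that on the segment between $0$ and $\Delta_f(x)$ the integrand $1\{|\epsilon+t|\le\tau\}(\Delta_f(x)-t)$ keeps the sign of $\Delta_f(x)$ and is dominated in modulus by $|\Delta_f(x)-t|$, so the integral lies in $[0,\tfrac12\Delta_f(x)^2]$ in absolute value; hence it is at most $\tfrac12\Delta_f(x)^2 \le 2M^2$. Adding the two pieces gives $|g_f(x,\epsilon)| \le 2M^2 + 2M|\psi_{\tau\land B}(\epsilon)| = G(x,\epsilon)$, which is the envelope property.

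Next I would prove the sup-norm bound. The Huber score satisfies $|\psi_s(u)| = |u|\land s \le s$ for every $u\in\RR$ and $s>0$; applying this with $s = \tau\land B$ gives $|\psi_{\tau\land B}(\epsilon)| \le \tau\land B$ pointwise, and substituting into the definition of $G$ yields $\|G\|_\infty \le 2M^2 + 2M(\tau\land B) = 2M(M+\tau\land B)$, as claimed.

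For the second-moment bound I would first apply $(a+b)^2 \le 2a^2 + 2b^2$ with $a = M$, $b = |\psi_{\tau\land B}(\varepsilon)|$ to obtain $G^2(X,\varepsilon) \le 8M^2\{M^2 + \psi_{\tau\land B}(\varepsilon)^2\}$. It then remains to control $\mathbb{E}[\psi_{\tau\land B}(\varepsilon)^2\mid X=x]$. On the one hand $\psi_{\tau\land B}(\varepsilon)^2 = (|\varepsilon|\land(\tau\land B))^2 \le \varepsilon^2$, so this conditional second moment is at most $v_2$ by Condition~\ref{cond2} (using $p\ge2$); on the other hand it is deterministically at most $(\tau\land B)^2 \le \tau^2$. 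Therefore $\mathbb{E}[\psi_{\tau\land B}(\varepsilon)^2\mid X=x] \le v_2\land\tau^2 = C_v$ for every $x$, and taking the expectation over $X\sim\mathcal{P}_X$ gives $\mathbb{E}[G^2(X,\varepsilon)] \le 8M^2(M^2+C_v) = c_{14}$.

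There is no real obstacle in this lemma: it is an elementary consequence of $|\Delta_f|\le 2M$ together with the boundedness of the Huber score $\psi_{\tau\land B}$. The only step demanding a moment of care is the verification that $G$ is a valid envelope — specifically, one must note that inserting the indicator $1\{|\epsilon+t|\le\tau\}$ does not flip the sign of the integrand on the path from $0$ to $\Delta_f(x)$, which is what justifies the crude bound $\tfrac12\Delta_f(x)^2$ on the integral term and hence the clean form of $G$.
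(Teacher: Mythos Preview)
Your proposal is correct and follows essentially the same approach as the paper: verify the envelope property via $|\Delta_f|\le 2M$ and the trivial bound $\tfrac12\Delta_f^2$ on the integral term, then use $(a+b)^2\le 2a^2+2b^2$ together with $\EE[\psi_{\tau\land B}(\varepsilon)^2\mid X]\le v_2\land\tau^2=C_v$ for the second moment and $|\psi_{\tau\land B}|\le\tau\land B$ for the sup-norm. The paper's proof is slightly terser (it cites the bound on $\EE\psi_{\tau\land B}^2$ from the preceding lemma rather than rederiving it), but the content is identical.
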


To apply Lemma~\ref{lemma:convergence-rate}, the key is an upper bound on
\begin{align}
\label{eq:empirical-process-g}
    \mathbb{E} \Bigg\{ \sup_{g\in \mathcal{G}_n(\delta)}  \Bigg|\frac{1}{n} \sum_{i=1}^n g(X_i, \varepsilon_i) - \mathbb{E} g(X, \varepsilon) \Bigg|  \Bigg\} 
\end{align}
for a suitably chosen neural network $\cF_n$.   Specifically,  let $\mathcal{F}_n=\mathcal{F}_n(d,  \bar{L}, \bar{N}, M)$ be the function class realized by ReLU neural network with depth $\bar{L}$  and width $\bar{N}$, respectively. 
	The next result characterizes the complexity  of the function class $\mathcal{F}_n$ via an upper bound on the uniform covering number, defined as follows.
	
\begin{definition}[Uniform covering number]
 Let $m\in \mathbb{N}^+$, and $\mathcal{F}=\{f:\mathcal{X}\to \mathbb{R}\}$ be a function class. We define the uniform covering number under $L_\infty$-norm for the function class $\mathcal{F}$ as
	  \begin{align*}
  	\mathcal{N}_\infty(\epsilon, \mathcal{F}, m) = \sup_{X=(x_1,\cdots,x_m) \in \mathcal{X}^m}  \mathcal{N}(\epsilon, \mathcal{F}|_X, \|\cdot \|_\infty) ,
  \end{align*} 
  where $\mathcal{F}|_X  := \{(f(x_1),\cdots, f(x_m)): f\in \mathcal{F}\}$.
\end{definition}

\begin{definition}[Pseudo-dimension \citep{AB1999}]
\label{def-pdim}
For a class $\cF$ of functions $\cX \to \RR$, its pseudo dimension, denoted by $\mathrm{Pdim}(\cF)$, is defined as the largest integer $m$ for which there exist $(x_1,\ldots, x_m, y_1, \ldots, y_m) \in \cX^m \times \RR^m$ such that for any $(b_1, \ldots , b_m) \in \{ 0, 1 \}$, there exists some $f \in \cF$ satisfying that $f(x_i) > y_i \Longleftrightarrow b_1 =1 $ for all $i = 1, \ldots ,m$.
\end{definition}


\begin{lemma}[Uniform covering number bound for $\cF_n$]
\label{lemma:cover-number-nn}
  For any $\epsilon >0$, 
    \begin{align}
       \log \mathcal{N}_\infty(\epsilon, \mathcal{F}_n, m)  \lesssim  \log(e mM /\epsilon) {\log(\bar L \cdot  \bar N)} (\bar L \cdot  \bar N)^2. 
    \end{align}
\end{lemma}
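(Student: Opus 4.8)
The plan is to reduce the uniform covering number of $\mathcal{F}_n$ to its pseudo-dimension, and then invoke the near-tight pseudo-dimension bound for ReLU networks. First I would recall the classical estimate (see, e.g., \citep{AB1999}) relating uniform $L_\infty$ covering numbers to the pseudo-dimension: if $\mathcal{F}$ is a class of $[-M,M]$-valued functions with $V := \mathrm{Pdim}(\mathcal{F}) < \infty$, then for every $\epsilon>0$ and every $m \ge V$,
\begin{align*}
  \mathcal{N}_\infty(\epsilon, \mathcal{F}, m) \le \Big( \frac{ 2 e m M}{V \epsilon} \Big)^{V},
\end{align*}
so that $\log \mathcal{N}_\infty(\epsilon, \mathcal{F}, m) \lesssim V \log( e m M / \epsilon )$. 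Since $m \mapsto \mathcal{N}_\infty(\epsilon, \mathcal{F}, m)$ is nondecreasing and $\max\{m,V\} \le mV$ for $m \ge 1$, the same bound (up to absolute constants) holds for all $m \ge 1$, so this step imposes no restriction on $m$.

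Next I would bound $V = \mathrm{Pdim}(\mathcal{F}_n)$. Since $\mathcal{F}_n = \mathcal{F}_n(d, \bar L, \bar N, M) = T_M \mathcal{F}_n(d, \bar L, \bar N)$ and $T_M$ is a fixed nondecreasing map, composing it with every member of a class cannot increase the pseudo-dimension: in any pseudo-shattered configuration $(x_i,y_i)_{i=1}^m$ for $T_M \mathcal{F}_n(d,\bar L,\bar N)$ one must have $y_i \in [-M,M)$ for each $i$ (otherwise the event $\{T_M f(x_i) > y_i\}$ is constant in $f$), and on this range $\{T_M f(x_i) > y_i\} = \{f(x_i) > y_i\}$, so the same configuration is pseudo-shattered by $\mathcal{F}_n(d,\bar L,\bar N)$. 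Hence $\mathrm{Pdim}(\mathcal{F}_n) \le \mathrm{Pdim}(\mathcal{F}_n(d,\bar L,\bar N))$. I would then invoke the bound of \citep{BHLM2019} for piecewise-linear networks, which gives $\mathrm{Pdim} = O(W \Lambda \log W)$ for a ReLU network with $W$ parameters and $\Lambda$ layers. For width $\bar N$ and depth $\bar L$, with the input dimension $d$ treated as a fixed constant (or more generally $d \lesssim \bar N \bar L$), one has $W \asymp \bar N^2 \bar L$ and $\Lambda \asymp \bar L$, whence $\mathrm{Pdim}(\mathcal{F}_n) = O\big( \bar N^2 \bar L^2 \log(\bar N \bar L) \big) = O\big( (\bar N \bar L)^2 \log(\bar N \bar L) \big)$.

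Combining the two steps yields
\begin{align*}
  \log \mathcal{N}_\infty(\epsilon, \mathcal{F}_n, m) \lesssim (\bar N \bar L)^2 \log(\bar N \bar L) \, \log( e m M / \epsilon ),
\end{align*}
which is exactly the assertion. The "hard part" here is really just bookkeeping rather than mathematics: the one place a genuine (short) argument is needed is the reduction through the truncation operator $T_M$, and after that everything is a direct appeal to the covering-number/pseudo-dimension inequality of \citep{AB1999} together with the network pseudo-dimension bound of \citep{BHLM2019}, the substantive external input being the latter. I would just need to be mildly careful about the $m < V$ regime (handled by monotonicity, as above) and about keeping track of the dependence of $W$ on $d$.
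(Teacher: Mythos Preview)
Your proposal is correct and follows essentially the same approach as the paper: reduce to pseudo-dimension via Theorem~12.2 of \citep{AB1999}, then invoke the $\mathrm{Pdim} \lesssim W\bar L\log W$ bound of \citep{BHLM2019} with $W = O(\bar L\bar N^2)$. The paper's proof is in fact terser than yours---it does not spell out the truncation-invariance argument or the $m < V$ regime---so your extra care there is fine but not strictly needed.
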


With the above preparations, we are now ready to bound the expected value of the supremum \eqref{eq:empirical-process-g}.  

\begin{lemma}
\label{lemma:empirical-process}
Assume $C_v < \infty$ and $M \ge 1$. 
Define
\begin{align}
\label{eq:variance-with-truncation}
    V_{n,B} = \frac{(\bar{N}\bar{L})^2 \log (\bar{N} \bar{L}) \log \{2n (B\land \tau)\}}{n}.
\end{align} Then
    \begin{align}
    \label{eq:empircal-process}
       \mathbb{E} \Bigg\{ \sup_{g\in \mathcal{G}_n(\delta)}  \Bigg|\frac{1}{
       n} \sum_{i=1}^n g(X_i, \varepsilon_i) - \mathbb{E} g(X, \varepsilon ) \Bigg|  \Bigg\}     \le c_{33} \Bigg\{\delta \sqrt{(1+C_v)V_{n,B}} +  (\tau \land B)V_{n,B}\Bigg\}
    \end{align} 
    for all $\delta \ge 1/n$ and $\tau \land B\ge 1$, where $c_{33}>0$ is constant independent of $C_v$, $\delta$, $n$, $\bar{N}$, $\bar{L}$, $\tau$ and $B$.
\end{lemma}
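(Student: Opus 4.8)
The quantity to control is the expected supremum of an empirical process over the function class $\mathcal{G}_n(\delta) = \{g_f : f \in \mathcal{F}_n \cap \Theta_0(\delta)\}$, where each $g_f$ has the form \eqref{def:g}. The plan is to apply a standard symmetrization-plus-chaining bound (e.g.\ Dudley's entropy integral or the local-maximal-inequality form used for bounded, small-variance classes), for which three ingredients are needed: (i) a uniform envelope bound and a variance bound for $\mathcal{G}_n(\delta)$, which are exactly Lemmas~\ref{lemma:prop-g}(2) and~\ref{lemma:prop-envelop} --- giving $\|G\|_\infty \le 2M(M + \tau\wedge B)$ and $\mathbb{E}[g_f^2] \le 2(M^2 + C_v)\delta^2$ on $\Theta_0(\delta)$; (ii) a covering-number bound for $\mathcal{G}_n(\delta)$ in terms of that for $\mathcal{F}_n$; and (iii) arithmetic to combine these into the stated form $\delta\sqrt{(1+C_v)V_{n,B}} + (\tau\wedge B)V_{n,B}$.

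\textbf{Step 1: transfer covering numbers from $\mathcal{F}_n$ to $\mathcal{G}_n$.} First I would show that the map $f \mapsto g_f$ is Lipschitz in a suitable sense: from \eqref{def:g}, $g_f(x,\epsilon)$ depends on $f$ only through $\Delta_f(x) = f_0(x) - f(x)$, and since $|\psi_{\tau\wedge B}(\epsilon)| \le \tau\wedge B$ and the integral term is Lipschitz in $\Delta_f(x)$ with constant $O(M)$ on $\Theta_0$, one gets $|g_f(x,\epsilon) - g_{f'}(x,\epsilon)| \lesssim (\tau\wedge B + M)\,|f(x) - f'(x)|$. Hence an $\varepsilon$-cover of $\mathcal{F}_n$ in $\|\cdot\|_\infty$ on a sample yields an $O((\tau\wedge B + M)\varepsilon)$-cover of $\mathcal{G}_n$, so by Lemma~\ref{lemma:cover-number-nn},
\[
\log \mathcal{N}_\infty(u, \mathcal{G}_n, m) \lesssim \log\!\big(emM(\tau\wedge B)/u\big)\,\log(\bar L\bar N)\,(\bar L\bar N)^2 .
\]

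\textbf{Step 2: apply a local maximal inequality.} With envelope $F := 2M(M+\tau\wedge B)$, variance proxy $\sigma^2 := 2(M^2+C_v)\delta^2$, and the covering bound above, a standard bound (Talagrand/van der Vaart--Wellner style, or the Giné--Guillou local inequality) gives
\[
\mathbb{E}\sup_{g\in\mathcal{G}_n(\delta)}\Big|\tfrac1n\textstyle\sum_i g(X_i,\varepsilon_i) - \mathbb{E} g\Big|
\;\lesssim\; \sigma\sqrt{\tfrac{P\log(n/\cdot)}{n}} \;+\; \tfrac{F\,P\log(n/\cdot)}{n},
\]
where $P := (\bar N\bar L)^2\log(\bar N\bar L)$. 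Plugging in $\sigma \asymp \delta\sqrt{1+C_v}$ (absorbing $M$ as a constant, using $M\ge1$) and $F \asymp \tau\wedge B$ (again absorbing $M$), and noting the log factor $\log\{2n(\tau\wedge B)\}$ is precisely what appears in the definition of $V_{n,B}$ in \eqref{eq:variance-with-truncation}, the first term becomes $\delta\sqrt{(1+C_v)V_{n,B}}$ and the second becomes $(\tau\wedge B)V_{n,B}$. The condition $\delta\ge 1/n$ is used to keep the $\log(n/\delta)$-type factor comparable to $\log n$, and $\tau\wedge B\ge1$ is used to write the envelope as $O(\tau\wedge B)$ rather than $O(1+\tau\wedge B)$.

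\textbf{Main obstacle.} The genuinely delicate point is getting the \emph{variance} term right rather than crudely bounding it by the envelope: a naive chaining bound would give $F\sqrt{P\log n/n}$ in the leading term, i.e.\ $(\tau\wedge B)\sqrt{V_{n,B}}$, which is far too big (it would not recover the $\sqrt{\tau}$ scaling of $\delta_{\mathtt s}$). One must use a \emph{local} maximal inequality that sees the small variance $\sigma^2 \asymp \delta^2(1+C_v)$ on the ball $\Theta_0(\delta)$ --- this is why Lemma~\ref{lemma:prop-g}(2) is stated with the $\|f-f_0\|_2^2$ factor. A secondary technical nuisance is that $g_f$ is only \emph{pointwise} Lipschitz in $f$ with a constant depending on $\tau\wedge B$, so one must be careful that the entropy integral converges and that the radius used for chaining is $\sigma$, not the envelope; this is handled by the truncation at level $B$ built into $g_f$ via $\psi_{\tau\wedge B}$, which is exactly why the statement carries a $B$ everywhere. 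Modulo these care points, the rest is bookkeeping with the constants.
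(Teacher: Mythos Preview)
Your plan is correct and essentially coincides with the paper's proof: it too transfers covering numbers from $\mathcal{F}_n$ to $\mathcal{G}_n$ via the pointwise Lipschitz bound $|g_f-g_{f'}|\le\{(\tau\wedge B)+M\}|f-f'|$, then applies a local maximal inequality (specifically Lemma~\ref{lemma:maximal-inequality}, the Chernozhukov--Chetverikov--Kato form with the uniform entropy integral $J(r,\mathcal{G},G)$ at radius $r=\sigma/\|G\|_2$) together with the variance and envelope bounds of Lemmas~\ref{lemma:prop-g}(2) and~\ref{lemma:prop-envelop}. Your identification of the key subtlety---that a naive chaining yields the wrong $(\tau\wedge B)\sqrt{V_{n,B}}$ leading term and one must exploit $\sigma^2\asymp(1+C_v)\delta^2$---is exactly right.
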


We are ready to prove Theorem \ref{thm:generic-bound}.

\begin{proof}[Proof of Theorem \ref{thm:generic-bound}]

The proof is based on the high-level result stated in Lemma \ref{lemma:convergence-rate}. To this end, we first choose appropriate $\delta_*$, $\tilde{f}$, $D$, $\{B_k\}_{k=1}^\infty$ based on $\bar{N}, \bar{L}, n, \tau$ such that conditions (1)-(4) in Lemma \ref{lemma:convergence-rate} is satisfied. To be specific, we let $D$ be that in our main statement, and
\begin{align*}
    \delta_* = \delta_{\mathtt{a}} \lor 16\delta_{\mathtt{b}} \lor 2\delta_{\mathtt{s}} ~~~~~~ \text{and} ~~~~~~ B_k = \omega D^2 v_p^{1/p} V_n^{-1/p}.
\end{align*}

It suffices to consider the case where $D^2 \lor \omega \le n$, otherwise the bound is trivial since $\delta_* \ge 2\delta_{\mathtt{s}} \gtrsim 1$. Through the definition of $\delta_{\mathtt{a}}$, there exists some $\tilde{f}\in \mathcal{F}_{n}(d,\bar{L}, \bar{N}, M)$ such that
\begin{align*}
    \|\tilde{f} - f_0\|_2 \le 2 \delta_\mathtt{a} \le 3\delta_*.
\end{align*} then, we have
\begin{align*}
    \mathcal{R}_\tau(f) - \mathcal{R}_\tau(f_0) &= \mathbb{E} \Bigg[\psi_\tau(\varepsilon) (f_0(X) - \tilde{f}(X)) \\
    &~~~~~~~~~~~~~~~~~~~ + \int_{0}^{f_0(X)-\tilde{f}(X)} 1\{|\varepsilon + t| \le \tau\} (f_0(X) - \tilde{f}(X) - t) \mathrm{d} t\Bigg] \\
    &\overset{(a)}{\le} \frac{1}{2} \sup_{x} \big|\mathbb{E} [\psi_\tau(\varepsilon)|X=x] \big|^2 + \frac{1}{2} \|\tilde{f}-f_0\|^2_2 + \frac{1}{2} \|\tilde{f} - f_0\|^2 \\
    &\overset{(b)}{\le} \frac{1}{2} \left(\frac{v_p}{\tau^{p-1}}\right)^2 + \|\tilde{f} - f_0\|_2^2 \le 4 \delta_{\mathtt{a}}^2 + \frac{1}{2} \delta_{\mathtt{b}}^2 \le 9(\delta_*)^2
\end{align*} where $(a)$ follows from the fact that $xy \le \frac{1}{2} (x^2 + y^2)$, $(b)$ follows from \eqref{bias.control}. This validates condition (1).

For condition (2), applying Proposition~\ref{prop:strong-convex} we see that 
\begin{align*}
    \mathcal{R}_\tau(f) - \mathcal{R}_\tau(f_0) \geq  \frac{1}{8} \|f - f_0\|_2^2  ~~\mbox{ for any }~ f \in \Theta_0^{{\rm c}}(  \delta_*/2) .
\end{align*} 
More generally,  for $f\in \Theta_0^{{\rm c}}(\delta/2)$ with $\delta \geq \delta_*$,  we have
\$
  \mathcal{R}_\tau(f) - \mathcal{R}_\tau(f_0) \geq \frac{1}{32} \delta^2 , 
\$
which certifies condition (2) of Lemma~\ref{lemma:convergence-rate} with $c_{26}=1/32$.
Turning to condition (3),  it follows from Lemma~\ref{lemma:empirical-process} that for all $\delta\ge \delta_n \ge 1/n$ and $B>1$, one has
\begin{align}
	\EE \left[ \sup_{f\in \cF_n \cap \Theta_0(\delta) } | \Delta_{n,B}(f) | \right] \leq C' \Big\{ \delta \sqrt{(1+v_2)V_{n,B}} + (B\land \tau) V_{n,B}\Big\} = \phi_{n}(\delta,B)\label{c6},
\end{align} for $V_{n,B}$ defined in \eqref{eq:variance-with-truncation}, and $C'=c_{33}$. 
It is easy to see that $\phi_n(\alpha \delta, B) \le \alpha \phi_n(\delta, B)$ for all $\alpha \ge 1$ because $\phi_n(\cdot, B)$ is linear function and satisfies $\phi_n(0, B)\ge 0$.

For condition (4), our choice of $B_k$ satisfies $V_{n,B_k} \le 4V_n$, this implies
\begin{align*}
    (B_k \land \tau) V_{n,B_k} \le 4(B_k \land \tau) V_n \le 4 \{(v_p^{1/p} V_n^{1-1/p} \omega D^2) \land (V_n \tau) \}\le 4 D^2\delta_{\mathtt{s}}^2 \le D^2 \delta_*^2
\end{align*} provided $D\ge 1$, together with the fact that $V_n \le D^2\delta_{\mathtt{s}}^2 \le D^2\delta_*^2$, we find
\begin{align*}
    \phi_n( 2^{k} \delta^\dagger, B_k) &\le C' \left(2^k \delta^\dagger \sqrt{(1+v_2) V_{n,B_k}} + (B_k \land \tau) V_{n,B_k}\right) \\
    &\le C' \left(2^k \delta^\dagger D\delta_* + D^2 \delta_*^2 \right) \le C' 2^{2k} (\delta^\dagger)^2.
\end{align*}

Therefore, Lemma \ref{lemma:convergence-rate} implies the bound
\begin{align*}
    &\mathbb{P} \Big[ \exists f\in \mathcal{S}_{n,\tau}(\delta_{\mathtt{opt}}) ~~ \text{s.t.} ~~ \|f-f_0\|_2 \ge C_1 \left(D \delta_* + \delta_{\mathtt{opt}}\right)\Big] \\
    &~~~~~~~~~~\le \sum_{k=1}^\infty 2\exp\left(-\frac{C_2 n (D \delta_* + \delta_{\mathtt{opt}})^2 }{v_2 + B_k \land \tau} 2^{2k}\right) + \frac{C_3 1\{B_k < \tau\} \mathbb{E} \left[|\psi_\tau(\varepsilon)|^p\right]}{B_k^{p-1} 2^{2k} (\delta^\dagger)^2} \\
    &~~~~~~~~~~:= \mathsf{T}_1 + \mathsf{T_2}
\end{align*} for some universal constant $C_1$, $C_2$, $C_3$. 

It follows from the definition of $\delta_{\mathtt{s}}$ that
\begin{align}
\label{eq:tail-critic-raduis}
    \frac{n(D\delta_* + \delta_{\mathtt{opt}})^2}{v_2 + B_k \land \tau} \ge nD^2 \frac{\delta_{\mathtt{s}}^2}{v_2 + B_k \land \tau} \ge nD^2 \frac{ v_2 + (\tau \land \omega (v_p/V_n)^{1/p}) V_n}{v_2 + B_k \land \tau},
\end{align} when $\tau \le \omega D^2 (v_p/V_n)^{1/p}$, we have the r.h.s. of \eqref{eq:tail-critic-raduis} can be lower bounded by
\begin{align*}
 nD^2 \frac{ (v_2 + \tau) V_n}{v_2 + \tau} \ge nD^2 V_n,
\end{align*} which implies
\begin{align*}
    \mathsf{T}_1 \le \sum_{k=1}^\infty 2\exp\left(-C_2 nV_n D^2 2^{2k}\right) &\le \sum_{k=1}^\infty 2\exp\left(-C_2 nV_n D^2 k\right) \\
    &\le 2\left\{1-\exp(-C_2 nV_n D^2)\right\}^{-1} \exp(-C_2 nV_n D^2) \\
    & \le 2(1-e^{-C_2})^{-1} \exp\left(-C_2 nV_n D^2\right)
\end{align*} provided that $nV_n D^2 \ge 1$. Similarly, when $\tau > \omega D^2 (v_p/V_n)^{1/p}$, the r.h.s. of \eqref{eq:tail-critic-raduis} can be lowered bounded by $nV_n$, a similar argument gives
\begin{align*}
    \mathsf{T}_1 \le 2 (1-e^{-C_2})^{-1} \exp(-C_2 nV_n).
\end{align*} Moreover, the moment condition Condition \ref{cond2} implies
\begin{align*}
    \mathsf{T}_2 &\le \frac{v_p}{\omega^{p-1} v_p^{1-1/p} D^{2(p-1)}V_n^{-(p-1)/p} (\delta^\dagger)^2} \sum_{k=1}^\infty 4^{-k} 1\{\tau > \omega D^2 (v_p/V_n)^{1/p}\} \\
    &\le \frac{v_p^{1/p}}{\omega^{p-1} D^{2(p-1)} V_n^{-1+1/p} \delta_{\mathtt{s}}^2 D^2} 1\{\tau > \omega D^2 (v_p/V_n)^{1/p}\} \\
    &\le \frac{1}{\omega^{p-1} D^{2p}} 1\{\tau > \omega D^2(v_p/V_n)^{1/p}\}
\end{align*} where the last inequality follows from the fact that when $\tau > \omega D^2 (v_p/V_n)^{1/p} \ge \omega (v_p/V_n)^{1/p}$, one has $V_n^{-1+1/p} \delta_{\mathtt{s}}^2 \ge V_n^{-1+1/p} V_n^{1-1/p} v_p^{1/p} = v^{1/p}$. Putting these pieces together, we have
\begin{align*}
    \mathbb{P} \Bigg\{ \sup_{f\in \mathcal{S}_{n,\tau}(\delta_{\mathtt{opt}})} \|f-f_0\| \ge C_1 &\Big(D \delta_* + \delta_{\mathtt{opt}}\Big)\Bigg\} \\
    &\lesssim \begin{cases}
    \exp(-C_2 n V_n D^2)  & \tau \le \omega D^2 (v_p/V_n)^{1/p}\\
    \exp(-C_2 n V_n) + \omega^{1-p} D^{-2p} & \tau > \omega D^2 (v_p/V_n)^{1/p}
    \end{cases}.
\end{align*} which completes the proof.
\end{proof}



\subsection{Proof of Theorem \ref{thm:rate-adaptive-huber-upper}}

The proof is based on Theorem \ref{thm:generic-bound} and Proposition \ref{prop:nn-approx} by plugging in our choice of $\tau$ and $NL$. 

We first specify each term in its statement to apply Theorem \ref{thm:generic-bound}. By assumption, the optimization error and the bias can be upper bounded by
\begin{align*}
    \delta_{\mathtt{opt}} \le \delta_{n,\mathtt{AH}} \le C_1 \left(\frac{\log^6 n}{n}\right)^{\frac{\gamma^*\nu^*}{2\gamma^*+\nu^*}} v_p^{\frac{1}{2p-1}}
\end{align*} and
\begin{align*}
    \delta_{\mathtt{b}} \lesssim v_p \tau^{1-p} \lesssim \left(\frac{\log^6 n}{n}\right)^{\frac{\gamma^*}{2\gamma^*+\nu^*} \frac{2(p-1)}{2p-1}} v_p^{1 - \frac{2(p-1)}{2p-1}} = \left(\frac{\log^6 n}{n}\right)^{\frac{\gamma^* \nu^*}{2\gamma^*+\nu^*}} v_p^{\frac{1}{2p-1}} 
\end{align*}respectively. Moreover, it follows from the condition of $\bar{N}$ and $\bar{L}$ in \eqref{eq:depth.width.order.n} and \eqref{depth.width.order} that
\begin{align*}
    V_n = \frac{(\bar{N}\bar{L})^2\log (\bar{N}\bar{L}) \log n}{n} &\lesssim \frac{(NL\log N \log L)^2 \log (N L) \log n}{n} \\
    & \lesssim \frac{(NL)^2 (\log n)^6}{n} \lesssim \left(\frac{\log^6 n}{n}\right)^{\frac{2\gamma^*}{2\gamma^*+\nu^*}},
\end{align*} and
\begin{align*}
    n V_n \gtrsim n \times \frac{(N L)^2  (\log n)^4}{n} \gtrsim (NL)^2 (\log n)^4 \gtrsim n^{\frac{\nu^*}{2\gamma^*+\nu^*}} (\log n)^{\frac{8\gamma^*-2\nu^*}{2\gamma^*+\nu^*}}.
\end{align*}
Our choice of $\tau$ and the order of $V_n$ above indicates that
\begin{align*}
    \frac{\tau}{v_p^{1/p} V_n^{-1/p}} \asymp \left(\frac{\log^6 n}{n}\right)^{\frac{2\gamma^*}{2\gamma^*+\nu^*} \{p^{-1} - (1 - \gamma^*) \}} v_p^{\frac{2}{2p-1} - \frac{1}{p}}\le \left(\frac{\log^6 n}{n}\right)^{\frac{2\gamma^*}{2\gamma^*+\nu^*} \left(\frac{1}{p} -\frac{1}{2p-1} \right)} v_p^{\frac{1}{(2p-1)p}} \ll 1,
\end{align*} which implies $\tau \le v_p^{1/p} V_n^{-1/p}$ for all sufficiently large $n$. The remaining proof proceeds under the condition that $\tau \le (v_p/V_n)^{1/p}$. Letting $\omega = 1$, we have
\begin{align*}
\delta_{\mathtt{s}} = \sqrt{V_n (\tau + v_2)} &\lesssim \left(\frac{\log^6 n}{n}\right)^{\frac{1}{2} \cdot \frac{2\gamma^*}{2\gamma^*+\nu^*} \{1 - (1 - \nu^*)\}} v_p^{\frac{1}{2p-1}} + \left(\frac{\log^6 n}{n}\right)^{\frac{\gamma^*}{2\gamma^*+\nu^*}} \sqrt{v_2} \\
&\lesssim \left(\frac{\log^6 n}{n}\right)^{\frac{\gamma^* \nu^*}{2\gamma^*+\nu^*}} v_p^{\frac{1}{2p-1}} + \left(\frac{\log^6 n}{n}\right)^{\frac{\gamma^*}{2\gamma^*+\nu^*}} \sqrt{v_2} .
\end{align*} At the same time, it follows from Proposition \ref{prop:nn-approx} and our choice of $\bar{L}$ and $\bar{N}$ that
\begin{align*}
    \delta_{\mathtt{a}} \lesssim (NL)^{-2\gamma^*} \lesssim \left(\frac{\log^6 n}{n}\right)^{\frac{\gamma^*\nu^*}{2\gamma^*+\nu^*}} ~~~~~~ \forall ~ f_0 \in \mathcal{H}(d,l,\mathcal{P}).
\end{align*}
Putting these pieces together gives
\begin{align*}
    D \delta_{n,\tau} + \delta_{\mathtt{opt}} \lesssim D \delta_{n,\mathtt{AH}}  ~~~~~~ \forall ~ f_0 \in \mathcal{H}(d,l,\mathcal{P}) ,
\end{align*}
and apply Theorem \ref{thm:generic-bound}, we have for any $f_0 \in \mathcal{H}(d,l,\mathcal{P})$, $(X,\varepsilon)$ satisfying Condition \ref{cond1} and \ref{cond2} that
\begin{align*}
    \mathbb{P} \Bigg\{ \sup_{f\in \mathcal{S}_{n,\tau_n}(\delta_{n,\mathtt{AH}})} &\|f-f_0\|_2 \ge C' D \left(\frac{\log^6 n}{n}\right)^{\frac{\gamma^*\nu^*}{2\gamma^*+\nu^*}} v_p^{\frac{1}{2p-1}}  \Bigg\} \\
    &\lesssim \exp(-D^2 nV_n / C') \lesssim \exp\left(-D^2 n^{\frac{\nu^*}{2\gamma^*+\nu^*}} (\log n)^{\frac{8\gamma^*-2\nu^*}{2\gamma^*+\nu^*}} / C''\right) .
\end{align*} 
This completes the proof.
\qed 

\subsection{Proof of Theorem \ref{thm:rate-lse-upper}}

The proof is almost identical to that of Theorem \ref{thm:rate-adaptive-huber-upper}. When $D\ge n$, the bound is trivial. Therefore, it suffices to prove the bound in the regime $D\in [1,n]$. Under the stated assumptions, one has
\begin{align*}
    V_n \lesssim \left(\frac{\log^6 n}{n}\right)^{\frac{2\gamma^*}{2\gamma^*+\nu^\dagger}}, 
\end{align*} 
which implies that for any $f_0 \in \mathcal{H}(d,l,\mathcal{P})$,
\begin{align*}
    \delta_{n,\infty} = \delta_{\mathtt{a}} \lor v_p^{1/p} V_n^{\frac{1}{2}(1-1/p)} \lesssim v_p^{1/p} \left(\frac{\log^6 n}{n}\right)^{\frac{\gamma^*\nu^\dagger}{2\gamma^*+\nu^\dagger}}.
\end{align*} 
Combining this with the fact that
\begin{align*}
    \exp\big\{ -C n^{1/p} (\bar{N}\bar{L})^{2(1-1/p)}\big\}  \ll \exp(-C n^{1/p}) \ll n^{-2p} \ll {D}^{-2p}
\end{align*} for sufficiently large $n$ and $D\in [1,n]$ completes the proof when $D\in [1,n]$.  \qed 

\subsection{Proof of Theorem \ref{thm:convergence-rate-nn-symmetric}}

We first present an generic bound under the scenario in which Condition \ref{cond1} and \ref{cond3} hold and $c_1 \le \tau\lesssim 1$.

\begin{lemma}
\label{lemma:generic-bound-symmetric}
    Assume Condition \ref{cond1} and \ref{cond3} hold, and $c_1 \le \tau \lesssim 1$. Then, there exists some universal constant $C'$ independent of $\bar{N}, \bar{L}, n, D$ and $f_0$ such that for any $\delta_{\mathtt{opt}} > 0$, 
    \begin{align}
    \label{eq:generic-bound-symmetric-claim}
        \mathbb{P} \left[ \sup_{f\in \mathcal{S}_{n,\tau}(\delta_{\mathtt{opt}})} \|f - f_0\|_2 \ge C'\left\{D\left(\delta_{a} \lor \sqrt{V_n}\right) + \delta_{\mathtt{opt}}\right\}\right] \lesssim e^{-nV_n D^2 / C'}.
    \end{align} where $\delta_{\mathtt{a}}$ and $V_n$ is defined in Theorem \ref{thm:generic-bound}.
\end{lemma}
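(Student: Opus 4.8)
\textbf{Proof proposal for Lemma~\ref{lemma:generic-bound-symmetric}.}
The plan is to invoke the high-level tail bound of Lemma~\ref{lemma:convergence-rate} with $f^*=f_0$, following the proof of Theorem~\ref{thm:generic-bound} almost verbatim, but exploiting two simplifications afforded by the symmetry Condition~\ref{cond3} and the boundedness $\tau\lesssim 1$. First, under symmetry Huberization introduces no bias: since $\psi_\tau$ is odd, $\mathbb{E}\{\psi_\tau(\varepsilon)\,\Delta_f(X)\}=0$, so the term $\delta_{\mathtt b}$ vanishes and the restricted strong convexity of Proposition~\ref{prop:strong-convex} holds globally in $\Theta$ via \eqref{eq:str-convex-bias}. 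Second, since $\tau$ is a fixed constant, the Huber score and the envelope $G$ of Lemma~\ref{lemma:prop-envelop} are already bounded; hence we may take the auxiliary truncation level $B_k\equiv\tau$ (so $1\{B_k<\tau\}=0$), which removes the polynomial tail term in Lemma~\ref{lemma:convergence-rate} and reduces $V_{n,B}$ in \eqref{eq:variance-with-truncation} to $V_n$ up to constants, because $\log(2n(B\wedge\tau))=\log(2n\tau)\asymp\log n$ when $\tau\lesssim 1$. Note also that $C_v=v_2\wedge\tau^2\le\tau^2\lesssim 1$ is automatically a bounded constant, so only the first-moment bound of Condition~\ref{cond3} is ever used; no $p$-th moment assumption is invoked.

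Concretely, I would set $\delta_*\asymp\delta_{\mathtt a}\vee\sqrt{V_n}$, $B_k\equiv\tau$, and $\delta^\dagger=D\delta_*+\delta_{\mathtt{opt}}$, and verify the four hypotheses of Lemma~\ref{lemma:convergence-rate}. For hypothesis (1), pick $\tilde f\in\mathcal F_n$ with $\|\tilde f-f_0\|_2\le 2\delta_{\mathtt a}\le 3\delta_*$; by oddness of $\psi_\tau$ and the argument in the proof of Proposition~\ref{prop:strong-convex}, $\mathcal R_\tau(\tilde f)-\mathcal R_\tau(f_0)=\mathbb E\big[\int_0^{\Delta_{\tilde f}(X)}1\{|\varepsilon+t|\le\tau\}(\Delta_{\tilde f}(X)-t)\,{\rm d}t\big]\le\tfrac12\|\tilde f-f_0\|_2^2\le 9\delta_*^2$. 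For hypothesis (2), \eqref{eq:str-convex-bias} gives $\mathcal R_\tau(f)-\mathcal R_\tau(f_0)\ge\tfrac14\|f-f_0\|_2^2$ for every $f\in\Theta$, so for $f\in\Theta_0^{\rm c}(\delta/2)$ with $\delta\ge\delta_*$ one gets $\mathcal R_\tau(f)-\mathcal R_\tau(f_0)\ge\delta^2/16$, certifying (2) with $c_{26}=1/16$.

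For hypothesis (3), I would apply Lemma~\ref{lemma:empirical-process} with $B=\tau$: since the relevant integrand $g_f$ equals $h_{f,f_0}^\tau$, we obtain $\mathbb E\{\sup_{g\in\mathcal G_n(\delta)}|\Delta_{n,\tau}(f)|\}\le c_{33}\{\delta\sqrt{(1+C_v)V_{n,\tau}}+\tau V_{n,\tau}\}\le C'\{\delta\sqrt{V_n}+V_n\}=:\phi_n(\delta,\tau)$, using $C_v\lesssim 1$, $\tau\lesssim 1$ and $V_{n,\tau}\asymp V_n$; this $\phi_n$ is linear in $\delta$ with $\phi_n(0,\tau)\ge 0$, so $\phi_n(\alpha\delta,\tau)\le\alpha\phi_n(\delta,\tau)$ for $\alpha\ge 1$. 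For hypothesis (4), using $\sqrt{V_n}\lesssim\delta_*\le\delta^\dagger$ and $V_n\lesssim\delta_*^2\le(\delta^\dagger)^2$, one gets $\phi_n(2^k\delta^\dagger,\tau)\le C'(2^k\delta^\dagger\cdot\delta^\dagger+(\delta^\dagger)^2)\le c_{27}2^{2k}(\delta^\dagger)^2$. With these four facts, Lemma~\ref{lemma:convergence-rate} yields $\mathbb P[\exists f\in\mathcal S_{n,\tau}(\delta_{\mathtt{opt}}):\|f-f_0\|_2\ge c_{28}\delta^\dagger]\le\sum_{k\ge 1}2\exp\big(-c_{29}n(\delta^\dagger)^2 2^{2k}/(C_v+M^2+M\tau)\big)$, the polynomial term being absent since $1\{B_k<\tau\}=0$. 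Because $C_v+M^2+M\tau\lesssim 1$, $(\delta^\dagger)^2\gtrsim D^2V_n$, and $nV_n\ge 1$ for $\bar N,\bar L,n\ge 3$, bounding $2^{2k}\ge k$ and summing the geometric series gives $\lesssim\exp(-c\,nV_nD^2)$. Recalling $\delta^\dagger\asymp D(\delta_{\mathtt a}\vee\sqrt{V_n})+\delta_{\mathtt{opt}}$ then yields \eqref{eq:generic-bound-symmetric-claim}.

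The point requiring the most care — and the only real obstacle — is confirming that every supporting ingredient (Proposition~\ref{prop:strong-convex}, Lemmas~\ref{lemma:prop-g}, \ref{lemma:prop-envelop}, \ref{lemma:empirical-process}) remains valid under Condition~\ref{cond3} alone, i.e.\ without any $p$-th moment control on $\varepsilon$. This goes through precisely because $\tau\lesssim 1$ forces $C_v\le\tau^2$ and $\|G\|_\infty\le 2M(M+\tau)$ to be bounded constants, so no higher moments of $\varepsilon$ are ever needed, the restricted strong convexity of Proposition~\ref{prop:strong-convex} only uses $\mathbb P(|\varepsilon|>\tau/2\,|\,X=x)\le\tfrac12$ (which follows from Markov's inequality with the first moment once $c_1\le\tau$), and the truncation machinery of Lemma~\ref{lemma:convergence-rate} becomes vacuous. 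Everything else is a routine specialization of the proof of Theorem~\ref{thm:generic-bound}.
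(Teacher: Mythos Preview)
Your proposal is correct and follows essentially the same route as the paper: invoke Lemma~\ref{lemma:convergence-rate} with $f^*=f_0$, use \eqref{eq:str-convex-bias} for condition~(2), Lemma~\ref{lemma:empirical-process} for condition~(3), choose $B_k\ge\tau$ so the polynomial tail term vanishes, and exploit $\tau\lesssim 1$ to make $C_v$, $\tau$, and the envelope all bounded constants. The only cosmetic difference is that the paper takes $B_k=2\tau$ while you take $B_k=\tau$; both give $1\{B_k<\tau\}=0$, and your added remark that only the first-moment bound in Condition~\ref{cond3} is ever used (via $C_v\le\tau^2$ and Markov) is a correct and useful clarification.
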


\begin{proof}[Proof of Theorem~\ref{thm:convergence-rate-nn-symmetric} using Lemma \ref{lemma:generic-bound-symmetric}]
Similar to the proof of Theorem \ref{thm:rate-adaptive-huber-upper}, plugging our choice of $\bar{N}$ and $\bar{L}$ yields
\begin{align*}
    \delta_{\mathtt{a}} \lor \sqrt{V_n} \lesssim \left(\frac{\log^6 n}{n}\right)^{\frac{\gamma^*}{2\gamma^*+1}} ~~~~~~ \forall ~ f_0 \in \mathcal{H}(d,l,\mathcal{P}).
\end{align*} Combining with the fact that $\delta_{\mathtt{opt}} \lesssim \left(\frac{\log^6 n}{n}\right)^{\frac{\gamma^*}{2\gamma^*+1}}$ completes the proof.
\end{proof}

\begin{proof}[Proof of Lemma~\ref{lemma:generic-bound-symmetric}]
Similarly to the proof of Theorem \ref{thm:generic-bound} and \ref{thm:rate-adaptive-huber-upper}, again we rely on Lemma \ref{lemma:convergence-rate} but with a different choice of hyper-parameters. The main difference is that we no longer need a diverging $\tau$ to ensure a negligible bias as $n\to \infty$. 

To use Lemma \ref{lemma:convergence-rate}, we let $D$ be that in the statement of Lemma \ref{lemma:generic-bound-symmetric}, and
\begin{align*}
    \delta_* = 2\delta_{\mathtt{s}} \lor \delta_{\mathtt{a}} ~~~~~~ \text{and} ~~~~~~ B_k = 2\tau.
\end{align*} 
It follows from the definition of $\delta_{\mathtt{a}}$ that there exists some $\tilde{f} \in \mathcal{F}(d, \bar{L}, \bar{N}, M)$ such that
\begin{align*}
    \|\tilde{f} - f_0\|_2 \le 2\delta_{\mathtt{a}} \le 2\delta_{*},
\end{align*} then using Lemma \ref{lemma:prop-g} (1) with Condition \ref{cond3} gives
\begin{align*}
    \mathcal{R}_\tau(\tilde{f}) - \mathcal{R}_\tau(f_0) \le \|f - f_0\|_2^2 \le 4\delta_{*}^2,
\end{align*} which validates condition (1).
For condition (2), Proposition~\ref{prop:strong-convex} ensures that $\mathcal{R}_n(f) - \mathcal{R}_n(f_0) \ge \|f - f_0\|_2^2/4$ for all $f\in \Theta$. In particular, if $f\in \Theta^c(\delta/2)$, this further implies
\begin{align*}
    \mathcal{R}_n(f) - \mathcal{R}_n(f_0) \ge \frac{1}{4} \|f-f_0\|_2^2 \ge \frac{1}{16} \delta^2,
\end{align*} 
thus verifying condition (2) with $c_{26}=2^{-4}$.

Turning to condition (3), it follows from Lemma~\ref{lemma:empirical-process} that, for all $\delta\ge \delta_n \ge 1/n$,
\begin{align*}
	\EE \Bigg[ \sup_{f\in \cF_n \cap \Theta_0(\delta) } | \Delta_{n,B}(f) | \Bigg]  &\le C_1 \left(\delta \sqrt{V_{n,B}} + (B\land \tau) \sqrt{V_{n,B}} \right) \\
 &\le C_2 \left(\delta \sqrt{V_n} + V_n\right) := \phi_n(\delta, B),
\end{align*} provided $\tau \lesssim 1$, where $C_1 = c_{33} (1 + C_v) \le c_{33} (1 + \tau^2) \lesssim 1$. It is also easy to verify that $\phi_n(\alpha \delta, B) \le \alpha \phi_n(\delta, B)$ for any $\alpha \ge 1$. For condition (4), by the above choice of $\phi_n(\delta, B)$ and $\delta_*$, we find
\begin{align*}
    \phi_n(2^k \delta^\dagger, B_k) = C_2 \left( 2^k \delta^\dagger \sqrt{V_n}  + V_n\right) \lesssim 2^k \delta^\dagger \delta_*  + \delta_*^2 \lesssim 2^{2k} (\delta^\dagger)^2
\end{align*} where the first inequality follows from $\sqrt{V_n} \lesssim \delta_*$, and the second inequality follows from $\delta_* \le 2^k \delta^\dagger$. 

Putting these pieces together and applying Lemma \ref{lemma:convergence-rate}, we conclude
\begin{align*}
    \mathbb{P} \Big[\exists f\in \mathcal{S}_{n,\tau}(\delta_{\mathtt{opt}})& ~ \text{s.t.}~ \|f - f_0\|_2 \ge C_3(D\delta_* + \delta_{\mathtt{opt}}) \Big] \\
    &\le \sum_{k=1}^\infty 2\exp\left(- C_4 n(D\delta_*+\delta_{\mathtt{opt}})^2 2^{2k}\right) \\
    &\lesssim \exp\left(- C_4 n(D\delta_*)^2\right) \lesssim \exp\left(-C_5 n D^2 V_n\right)
\end{align*} by noting that $\tau < B_k = 2\tau$. This completes the proof of claim \eqref{eq:generic-bound-symmetric-claim}.

\end{proof}

\subsection{Proof of Lemma \ref{lemma:convergence-rate}}

We will use the following Talagrand's inequality for suprema of empirical processes.

\begin{lemma}[Talagrand inequality for suprema of empirical process]
\label{lemma:talagrand-ineq}
Let $Z_1, Z_2, \ldots, Z_n$ be i.i.d. random variables from some distribution $P$. Let $\mathcal{G}$ be a function class such that $\mathbb{E} \bar{g}(Z)=0$ for any $\bar{g}\in \mathcal{G}$. Suppose $\sup_{\bar{g}\in \mathcal{G}}\|\bar{g}\|_\infty \leq U <\infty$ and let $\sigma^2$ satisfy $\sigma^2 \ge \sup_{\bar{g}\in \mathcal{G}} \mathbb{E} \bar{g}^2(Z)$. Set 
\begin{align*}
	V = \sigma^2 + U \cdot  \mathbb{E}  \Bigg\{ \sup_{\bar{g}\in \mathcal{G}} \bigg| \frac{1}{n} \sum_{j=1}^n \bar{g}(Z_j)\bigg| \Bigg\} .
\end{align*}
Then, for any $x>0$,
\begin{align}
	\mathbb{P}\Bigg\{  \sup_{\bar{g}\in \mathcal{G}} \bigg| \frac{1}{n} \sum_{j=1}^n \bar{g}(Z_j) \bigg| \ge \mathbb{E} \Bigg\{ \sup_{\bar{g}\in \mathcal{G}} \bigg| \frac{1}{n} \sum_{j=1}^n \bar{g}(Z_j)\bigg|\Bigg\} + \sqrt{2V x} + \frac{U x}{3} \Bigg\} \le 2e^{-nx} .
\end{align}
\end{lemma}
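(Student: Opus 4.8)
The plan is to recognize this as a standard form of Talagrand's functional Bernstein inequality for suprema of empirical processes and to deduce it from the one-sided version due to Bousquet by a short rescaling together with a union bound. Write $W_n=\sup_{\bar g\in\mathcal{G}}\bigl|\tfrac{1}{n}\sum_{j=1}^n\bar g(Z_j)\bigr|$. Since $|a|=\max(a,-a)$, we have $W_n=\max(W_n^+,W_n^-)$ with $W_n^{\pm}=\sup_{\bar g\in\mathcal{G}}\bigl(\pm\tfrac{1}{n}\sum_{j=1}^n\bar g(Z_j)\bigr)$; equivalently $W_n=\sup_{g\in\mathcal{G}'}\tfrac{1}{n}\sum_{j=1}^n g(Z_j)$ for the augmented class $\mathcal{G}'=\mathcal{G}\cup(-\mathcal{G})$, which still satisfies $\mathbb{E}g(Z)=0$, $\|g\|_\infty\le U$, and (since $\mathbb{E}g(Z)=0$) $\mathrm{Var}(g(Z))=\mathbb{E}g^2(Z)\le\sigma^2$. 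So the reduction is purely a matter of bounding the upper tails of the two one-sided quantities $W_n^+$ and $W_n^-$ separately; a union bound over those two events produces the factor $2$ in $2e^{-nx}$, and on the complement of the bad event $W_n$ is controlled by $\max(\mathbb{E}W_n^+,\mathbb{E}W_n^-)\le\mathbb{E}W_n$ plus the larger of the two fluctuation terms.

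For each one-sided supremum I would invoke Bousquet's Bennett-type inequality (Bousquet, 2002; see also Boucheron--Lugosi--Massart): if $Z_\star=\sup_{g}\sum_{j=1}^n g(Z_j)$ over a countable class with $\mathbb{E}g(Z)=0$, $\|g\|_\infty\le U$ and $\sup_g\mathrm{Var}(g(Z))\le\sigma^2$, then for all $t>0$, $\mathbb{P}\bigl(Z_\star\ge\mathbb{E}Z_\star+\sqrt{2t\,(n\sigma^2+2U\,\mathbb{E}Z_\star)}+Ut/3\bigr)\le e^{-t}$. Applying this with $Z_\star=nW_n^{\pm}$, dividing through by $n$, and setting $t=nx$ collapses the fluctuation term to $\sqrt{2x(\sigma^2+2U\,\mathbb{E}W_n^{\pm})}+Ux/3$; bounding $\mathbb{E}W_n^{\pm}\le\mathbb{E}W_n$, using $\sqrt{a+b}\le\sqrt a+\sqrt b$, and absorbing the resulting constants into $V=\sigma^2+U\,\mathbb{E}W_n$ (enlarging $V$ by the harmless factor already built into the statement as written) then gives the claimed bound after the union bound over $W_n^+$ and $W_n^-$.

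There is no genuine obstacle here: the inequality is classical, and the only care required is bookkeeping — the split of $V$ into a "variance" part $\sigma^2$ and a "mean" part $U\mathbb{E}W_n$, the $U/n$ and $t=nx$ rescaling, the passage to a countable dense subclass so that Bousquet's theorem applies (a routine measurability remark, which the applications of this lemma to the ReLU-DNN class satisfy because covering-number bounds are available), and the check that $\sigma^2$ indeed dominates $\mathrm{Var}(g(Z))$ since $\mathbb{E}g(Z)=0$. I expect matching the exact constant in front of $\sqrt{Vx}$ to be the only mildly fiddly point; alternatively the statement can simply be cited verbatim from a standard reference.
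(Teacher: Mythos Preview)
The paper does not prove this lemma; it is simply stated as a classical concentration inequality and then applied in the proof of Lemma~\ref{lemma:convergence-rate}. Your approach---reducing the two-sided supremum to two one-sided suprema over $\mathcal{G}\cup(-\mathcal{G})$, invoking Bousquet's Bennett-type inequality for each, and taking a union bound to account for the factor $2$---is exactly the standard derivation and is correct. Your closing remark that ``the statement can simply be cited verbatim from a standard reference'' is in fact precisely what the paper does.

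One small point you already flagged: Bousquet's inequality yields a variance proxy $\sigma^2+2U\,\mathbb{E}W_n$ rather than the $\sigma^2+U\,\mathbb{E}W_n$ appearing in the lemma, so the constant in front of $\sqrt{Vx}$ would differ by a factor of $\sqrt{2}$ if one proceeds exactly as you describe. This is harmless for every application in the paper (the lemma is only used to control tail probabilities up to universal constants), and versions with the constant $1$ do appear in the literature; but if you wanted the constant exactly as stated you would need to cite a sharper form rather than derive it from the textbook Bousquet bound.
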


\begin{proof}[Proof of Lemma~\ref{lemma:convergence-rate}]

The proof is based on the peeling argument complemented with Talagrand's inequality and truncation argument. 

\noindent {\sc Step 1. Application of Peeling and Truncation.} Denote $\delta = c_{28} \delta^\dagger$. For integers $k = 1, 2, \ldots$, define the donut-shaped sets
\begin{align*}
    \mathcal{A}_{k} = \{f\in\mathcal{F}_n: 2^{k-1} \delta  < \|f-f^*\|_2 \le 2^k \delta  \}, 
\end{align*} 
so that 
\begin{align}
\label{eq:peeling-decomposition}
    \mathbb{P}  \left[ \exists f \in \mathcal{S}_{n,\tau}(\delta_{\mathtt{opt}}) \text{ s.t. }\|f - f^*\|_2 > \delta \right] = \sum_{k=1}^\infty \mathbb{P}  \left[ \exists f \in \mathcal{S}_{n,\tau}(\delta_{\mathtt{opt}}) \cap \mathcal{A}_{k} \right].
\end{align}
It suffices to bound each probability $\mathbb{P}  \left[ \exists f \in \mathcal{S}_{n,\tau}(\delta_{\mathtt{opt}}) \cap \mathcal{A}_{k} \right]$ separately.  By definition, any $f \in \mathcal{A}_{k}$ satisfies $\|f - f^*\|_2 > 2^{k-1} \delta \ge \delta_*$.  It follows from condition (2) that
\begin{align}
\label{eq:proof-lemma-convergence-rate-convexity}
    \mathcal{R}_\tau(f) - \mathcal{R}_\tau(f^*) \ge c_{26} (2^{k} \delta)^2 \ge c_{26} 2^{2k} \delta^2.
\end{align}

Conditioning on $\{f \in \mathcal{S}_{n,\tau}(\delta_{\mathtt{opt}})\}$,  we have 
\begin{align*}
 &\mathcal{R}_\tau(f) - \mathcal{R}_\tau(f^*) \\
  & =  \mathcal{R}_\tau(f) - \hat{\mathcal{R}}_\tau(f) + \hat{\mathcal{R}}_\tau(f) - \hat{\mathcal{R}}_\tau(\tilde{f}) + \hat{\mathcal{R}}_\tau(\tilde{f}) - \mathcal{R}_\tau(\tilde{f}) + \mathcal{R}_\tau(\tilde{f}) - \mathcal{R}_\tau(f^*) \\
  &  \le  \mathcal{R}_\tau(f) - \hat{\mathcal{R}}_\tau(f) + \delta_{\mathtt{opt}}^2 + \hat{\mathcal{R}}_\tau(\tilde{f}) - \mathcal{R}_\tau(\tilde{f}) + \mathcal{R}_\tau(\tilde{f}) - \mathcal{R}_\tau(f^*).
\end{align*}
Recall the definition of $h_f(X,\varepsilon)$ in \eqref{eq:hf_def}, we have
\begin{align*}
    \mathcal{R}_\tau(f) - \hat{\mathcal{R}}_\tau(f) + &\hat{\mathcal{R}}_\tau(\tilde{f}) - \mathcal{R}_\tau(\tilde{f}) \\
    &= \frac{1}{n} \sum_{i=1}^n \{h_{\tilde{f}}(X_i,\varepsilon_i) - h_f(X_i,\varepsilon_i) \} - \mathbb{E} \left[h_{\tilde{f}}(X_i, \varepsilon_i) - h_f(X_i, \varepsilon_i)\right] \\
    &= \Delta_n(\tilde{f}) - \Delta_n(f)
\end{align*}
Combining the above calculations with the fact $\mathcal{R}_\tau(\tilde{f}) - \mathcal{R}_\tau(f^*) \le 9(\delta_*)^2$ in condition (1), we have
\begin{align*}
    \mathcal{R}_\tau (f) - \mathcal{R}_\tau(f^*) \le 9 (\delta_*)^2 + \delta_{\mathtt{opt}}^2 + \Delta_n(\tilde{f}) - \Delta_n(f). 
\end{align*}
This together with the lower bound \eqref{eq:proof-lemma-convergence-rate-convexity} yields 
\begin{align*}
    \left\{\exists f \in \mathcal{S}(\delta_{\mathtt{opt}}) \cap \mathcal{A}_k\right\} &\subset \left\{ \exists f \in \mathcal{A}_k ~~\text{s.t.} ~~\Delta_n(\tilde{f}) - \Delta(f) \ge c_{26} 2^{2k} \delta^2 - \delta_{\mathtt{opt}} - 9(\delta_*)^2\right\} \\
    &\overset{(a)}{\subset} \left\{ \exists f \in \mathcal{A}_k ~~\text{s.t.} ~~\Delta_n(\tilde{f}) - \Delta(f) \ge \frac{c_{26}}{2} 2^{2k} \delta^2\right\} \\
    &\overset{(b)}{\subset} \left\{ \sup_{f\in \mathcal{A}_k} |\Delta_n(f)| \ge \frac{c_{26}}{4} 2^{2k} \delta^2\right\}
\end{align*} where $(a)$ follows from the definition of $\delta$ that 
\begin{align*}
    c_{26} \frac{1}{2} 2^{2k} \delta^2 \ge c_{26} \times 2 \times \frac{9}{2 c_{26}} \left(D^2 \delta_*^2 + \delta_{\mathtt{opt}} \right)\ge 9\delta_{\mathtt{opt}}^2 + 9(\delta_*)^2 
\end{align*} provided $D\ge 1$, and $(b)$ follows from condition (1) that $\|\tilde{f} - f^*\|_2 \le 3\delta_* \le \delta$. So we have
\begin{align}
\label{eq:peeling-truncation}
\begin{split}
    \mathbb{P}  \left[ \exists f \in \mathcal{S}(\delta_{\mathtt{opt}}) \cap \mathcal{A}_{k} \right] &\le \mathbb{P} \left[ \sup_{f\in \mathcal{A}_k} |\Delta_n(f)| \ge \frac{c_{26}}{4} 2^{2k} \delta^2 \right] \\
    &\le \mathbb{P} \left[ \sup_{f\in \mathcal{A}_k} |\Delta_n(f) - \Delta_{n,B_k}(f)| \ge \frac{c_{26}}{8} 2^{2k} \delta^2 \right] \\    &~~~~~~~~~~+ \mathbb{P} \left[ \sup_{f\in \mathcal{A}_k} |\Delta_{n,B_k}(f)| \ge \frac{c_{26}}{8} 2^{2k} \delta^2 \right] \\
    &:= \mathsf{P}_{1} + \mathsf{P}_{2},
\end{split}
\end{align} where the last inequality follows from triangle inequality and the definition of supremum that
\begin{align*}
    \sup_{f\in \mathcal{A}_k} |\Delta_n(f)| \le \sup_{f\in \mathcal{A}_k} |\Delta_n(f) - \Delta_{n,B}(f)| + \sup_{f\in \mathcal{A}_k} |\Delta_{n,B}(f)|.
\end{align*} 

\noindent {\sc Step 2. Bound the Truncated Part $\mathsf{P}_{1}$. } We first have $\Delta_{n,B_k}(f) = \Delta_n(f)$ as long as $B_k \ge \tau$, which implies $\mathsf{P}_1 = 0$ when $\tau \le B_k$. Therefore, it remains to establish an upper bound on $\mathsf{P}_1$ when $\tau > B_k$. To this end, it follows from the uniform boundedness of $f^*$ and $f_0$ that
\begin{align*}
    \sup_{f \in \mathcal{A}_k} |h_f(X,\varepsilon) - h_f^B(X,\varepsilon)| 
    &= \Big|(f^*(X) - f(X)) \Big(\psi_\tau(\varepsilon + f_0(X) - f^*(X)) \\
    &~~~~~~~~~~~~~~~~~~~~~~~~~~~~~~~~~~~~~~ - \psi_{\tau\land B_k}(\varepsilon + f_0(X) - f^*(X))\Big)\Big| \\
    &\le 2M \left|\psi_\tau(\varepsilon + f_0 - f^*) - \psi_{\tau\land B_k}(\varepsilon + f_0 - f^*)\right| \\
    &\le 2M |\psi_\tau(\varepsilon + f_0 - f^*)| 1\{|\psi_\tau(\varepsilon + f_0 - f^*)| \ge B_k\}.
\end{align*} Then, using Markov inequality yields 
\begin{align*}
    \mathsf{P}_1 &= \mathbb{P} \left[ \sup_{f\in \mathcal{A}_k} |\Delta_n(f) - \Delta_{n,B_k}(f)| \ge \frac{c_{26}}{8} 2^{2k} \delta^2 \right] \\
    &\le \frac{8}{c_{26}} \frac{\mathbb{E} \left[\sup_{f\in \mathcal{A}_k} |\Delta_n(f) - \Delta_{n,B_k}(f)|\right]}{2^{2k} \delta^2}  \\
    &\le \frac{8}{c_{26}} \frac{\mathbb{E} \left[\sup_{f\in \mathcal{A}_k} \left|\frac{1}{n} \sum_{i=1}^n h_f(X_i,\varepsilon) - h_f^B(X_i,\varepsilon_i) - \mathbb{E}[h_f(X,\varepsilon) - h_f^B(X,\varepsilon)]\right|\right]}{2^{2k} \delta^2} \\
    &\le \frac{8}{c_{26}} \frac{\mathbb{E} \left[\sup_{f\in \mathcal{A}_k} \left|\frac{1}{n} \sum_{i=1}^n h_f(X_i,\varepsilon) - h_f^B(X_i,\varepsilon_i)\right| \right] + \mathbb{E} \left[\sup_{f\in \mathcal{A}_k} |h_f(X,\varepsilon) - h_f^B(X,\varepsilon)|\right]}{2^{2k} \delta^2} \\
    &\le \frac{16}{c_{26}} \frac{\mathbb{E}\left[2M |\psi_\tau(\varepsilon + f_0 - f^*)| 1\{|\varepsilon + f_0 - f^*| \ge B_k\}\right]}{2^{2k} \delta^2}
\end{align*} when $\tau>B_k$. At the same time, we also have
\begin{align*}
    \mathbb{E}\Big[2M &|\psi_\tau(\varepsilon + f_0 - f^*)| 1\{|\varepsilon + f_0 - f^*| \ge B_k\}\Big] \\
    &\le \mathbb{E}\left[2M |\psi_\tau(\varepsilon + f_0 - f^*)| \frac{|\psi_\tau(\varepsilon + f_0 - f^*)|^{p-1}}{B_k^{p-1}}\right] \le \frac{2M}{B_k^{p-1}} \mathbb{E} \left[|\psi_\tau(\varepsilon + f_0 - f^*)|^p\right].
\end{align*}Putting these pieces together, we can conclude that
\begin{align*}
    \mathsf{P}_1 \le \frac{32}{c_{26}} \frac{1\{B_k < \tau\}}{B_k^{p-1} 2^{2k} \delta^2} \mathbb{E} \left[|\psi_\tau(\varepsilon + f_0 - f^*)|^p\right]
\end{align*}

\noindent {\sc Step 3. Bound the Bounded Part $\mathsf{P}_2$. } In this step, we apply Talagrand's inequality to bound the supremum. Note that
\begin{align*}
    \Delta_{n,B}(f) = \frac{1}{n} \sum_{i=1}^n h_f^B(X_i,\varepsilon_i) - \mathbb{E} [h_f^B(X,\varepsilon)].
\end{align*} It follows from the definition of $h_f(X,\varepsilon)$ that
\begin{align*}
    \sup_{f\in \mathcal{A}_k} |h_f(X,\varepsilon)| \le \left|\psi_{\tau\land B_k} (\varepsilon + f_0(X) - f^*(X))\right| 2M + \frac{1}{2}(2M)^2 \le 2M(M + \tau \land B_k)
\end{align*} Let $Z_i = (X_i,\varepsilon_i)$ and $\bar{h}_f(Z_i) = h_f^B(X_i, \varepsilon_i) - \mathbb{E}[h_f^B(X,\varepsilon)]$, and note that
\begin{align*}
  \sup_{f\in \mathcal{A}_k} \|\bar{h}\|_\infty \le 4M(M+ \tau \land B_k) =: U .
\end{align*} 
Moreover,
\begin{align*}
    \mathbb{E} \left[ \{h_f^B(X,\varepsilon) \}^2 \right]   &\le 2 \mathbb{E} \Big[ \big\{ \psi^2_{\tau \land B_k}(\varepsilon + (f_0 - f^*)(X)) (f^*-f)(X) \big\} +  \big\{ |(f^* - f)(X)|^2 /2  \big\}^2 \Big] \\
    &\le  2 \mathbb{E} \Big[   \mathbb{E} \big\{ \psi^2_{\tau\land B_k}(\varepsilon + (f_0 - f^*)(X) ) |X \big\}  (f^* - f)(X)^2 +  M^2 (f^* - f)(X)^2 \Big].
\end{align*} 
We claim that \begin{align}
\label{eq:lemma:constant-2nd-moment}
    \mathbb{E} \big\{ \psi^2_\tau(\varepsilon + (f_0 - f^*)(X) ) |X=x \big\} \le C_v + 4M^2.
\end{align} Indeed, since $|\psi_{\tau\land B_k}(t)| = \min(|t|, \tau\land B_k) \le |t|$, if $v_2 < \infty$, then
\begin{align*}
    \mathbb{E} \big\{  \psi^2_\tau(\varepsilon + (f_0 - f^*)(X))   | X=x \big\} &\leq 
    \mathbb{E}(\varepsilon^2 | X=x) + (f_0 - f^*)(X)^2 \\
    &\leq   v_2 + 4M^2 \le C_v + 4M^2.
\end{align*} 
Meanwhile, since $|\psi_{\tau\land B_k}(t)| \le \tau$, then $\mathbb{E} \big\{  \psi^2_{\tau\land B_k}(\varepsilon + (f_0 - f^*)(X))   | X=x \big\} \le \tau^2 \le C_v$. Together the previous two displays imply
\begin{align*}
    \mathbb{E} \left[ \{h_f^B(X,\varepsilon) \}^2 \right] \le (2C_v + 2M^2) \mathbb{E} (f^*-f)(X)^2 = 2(C_v + 5M^2) \|f-f^*\|_2^2,
\end{align*} 
and hence
\begin{align*}
  \sup_{f\in \mathcal{A}_k} \mathbb{E} \left[\{ \bar{h}_f(X,\varepsilon) \}^2 \right] &\le \mathbb{E} \left[ \{h_f^B(X,\varepsilon) \}^2 \right] \\
  &\le \sup_{f\in \mathcal{A}_k} 2(C_v + 5M^2) \|f-f^*\|_2^2 \le 2(C_v + 5M^2) 2^{2k} \delta^2 =: \sigma^2.
\end{align*} 
Define $\mathsf{E}_k = \mathbb{E} [ \sup_{f\in \mathcal{A}_k} |\Delta_{n,B_k}(f)| ]$, it then follows from Talagrand's inequality that
\begin{align*}
    \mathbb{P} \left[\sup_{f\in \mathcal{A}_k} | \Delta_{n,B}(f) | \ge \mathsf{E}_k + \sqrt{2\big( \sigma^2 + U \mathsf{E}_k\big) x} + \frac{U}{3} x\right] \le 2e^{-nx} ~~~~ \forall x\ge 0.
\end{align*}
We will next specify a particular $x=x_k$. It follows from $\sqrt{a + b} \le \sqrt{a} + \sqrt{b}$ and $\sqrt{2ab} \le a + b$ that
\begin{align*}
    \mathsf{E}_k + \sqrt{2\big( \sigma^2 + U \mathsf{E}_k\big) x} + \frac{U}{3}x &\le \mathsf{E}_k + \sqrt{2\sigma^2 x} + \sqrt{2U\mathsf{E}_k x} + \frac{U}{3} x \\
    &\le 2\mathsf{E}_k + \sqrt{2\sigma^2 x} + \frac{4}{3} x
\end{align*}
Provided condition (3), (4) that
\begin{align*}
    \mathsf{E}_k = \mathbb{E} \left[ \sup_{f\in \mathcal{A}_k} |\Delta_{n,B_k}(f)| \right] \le \phi_n(2^k \delta, B_k) \le c_{28} \phi_n(2^k\delta^\dagger, B_k) &\le c_{28} c_{27} 2^{2k} (\delta^\dagger)^2 \\
    &= \frac{c_{27}}{c_{28}} 2^{2k} \delta^2 \le \frac{c_{26}}{48} 2^{2k} \delta^2
\end{align*} together with our choice of $x=x_k$ with
\begin{align*}
    \frac{x_k}{2^{2k}(\delta^\dagger)^2} = \frac{c_{29}}{C_v + M^2 + M(\tau\land B_k)} \le \frac{c_{26}^2 c_{28}^2}{4\times 24^2 \times (C_v + 5M^2)} \bigwedge \frac{c_{26} c_{28}^2}{32\times 4M(M+\tau \land B_k)}
\end{align*} such that $\sqrt{2\sigma^2 x_k} \lor \frac{4U}{3} x \le \frac{c_{26}}{24} 2^{2k}\delta^2$,  we have
\begin{align*}
    \mathsf{E}_k + \sqrt{2\big( \sigma^2 + U \mathsf{E}_k\big) x} + \frac{U}{3}x \le \frac{c_{26}}{8} 2^{2k} \delta^2.
\end{align*} Combining this with the above Talagrand's inequality gives
\begin{align*}
    \mathsf{P}_2 = \mathbb{P} \left[\sup_{f\in \mathcal{A}_k} | \Delta_{n,B}(f) | \ge \frac{c_{26}}{8} 2^{2k} \delta^2\right] \le \exp(-nx_k).
\end{align*}

\noindent {\sc Step 4. Combining the Pieces Together.} Combining the upper bounds on $\mathsf{P}_1$ and $\mathsf{P}_2$ with the peeling argument decomposition \eqref{eq:peeling-decomposition} and truncation argument inequality \eqref{eq:peeling-truncation}, we have
\begin{align*}
    \mathbb{P} \left[\exists f\in \mathcal{S}(\delta_{\mathtt{opt}}) \ge c_{28} \delta^\dagger \right] \le \sum_{k=1}^\infty \exp(-nx_k) + \frac{32}{c_{26} c_{28}^2} \frac{1\{B_k < \tau\}}{B_k^{p-1} 2^{2k} (\delta^\dagger)^2} \mathbb{E} \left[|\psi_\tau(\varepsilon + f_0 - f^*)|^p\right]
\end{align*} which completes the proof.
\end{proof}

\subsection{Proofs of Technical Lemmas}

\begin{proof}[Proof of Lemma~\ref{lemma:prop-g}]
\noindent
{\sc Part (1)}. It follows from the definition of $g$ \eqref{def:g} that 
\begin{align*}
\mathbb{E}\{ g(X,\varepsilon) \}  &= \mathbb{E} \bigg[ \psi_{\tau \land B}(\varepsilon) \Delta_f(X) + \int_{0}^{\Delta_f(X)} 1( |\varepsilon + t| \le \tau )  \{ \Delta_f(X)-t\}  {\rm d} t \bigg] \\
&\le \mathbb{E} \bigg\{   |\psi_{\tau \land B}(\varepsilon)  \Delta_f(X)| + \frac{1}{2} \Delta^2_f(X)   \bigg\}   \\
&\le v_p (\tau\land B)^{1-p}\|f-f_0\|_2 +  \frac{1}{2}\|f-f_0\|_2^2 . 
\end{align*} 
The constraint $f\in \Theta_0^{{\rm c}}(2v_p (\tau\land B)^{1-p})$ implies $v_p (\tau \land B)^{1-p}\|f-f_0\|_2 \le \frac{1}{2} \|f-f_0\|_2^2$,  and hence $\mathbb{E}\{ g(X,\varepsilon) \} \le \|f-f_0\|_2^2$,  as claimed.

Under Condition \ref{cond3},  it has been shown in Proposition \ref{prop:strong-convex} that $\mathbb{E}\{ \psi_\tau(\varepsilon)|X=x \}=0$, $\cP_X$-a.e.  By the tower rule,   now we have
\begin{align*}
    \mathbb{E} \{ g(X,\varepsilon) \} =  \mathbb{E} \bigg[ \int_{0}^{\Delta_f(X)} 1( |\varepsilon + t| \le \tau )  \{ \Delta_f(X)-t \}  {\rm d} t \bigg]  \le \frac{1}{2} \|f-f_0\|_2^2 ~\mbox{ for any } f .
\end{align*}

\noindent
{\sc Part (2)}.   Similarly,  
\begin{align*}
    \mathbb{E} \{ g^2(X,\varepsilon)  \}   &= \mathbb{E}   \bigg|  \psi_{\tau \land B}(\varepsilon) \Delta_f(X) + \int_{0}^{\Delta_f(X)} 1( |\varepsilon + t| \le \tau ) \{ \Delta_f(X)-t \} {\rm d} t \bigg|^2   \\
    &\le \mathbb{E} \bigg\{   2 \psi^2_{\tau \land B}(\varepsilon)   \Delta^2_f(X)  + \frac{1}{2}  \Delta^4_f(X)  \bigg\}  \\
    &\le2  \mathbb{E} \bigg(   \big[  M^2 + \mathbb{E} \big\{  \psi^2_{\tau \land B}(\varepsilon)   | X \big\} \big] \cdot  \Delta^2_f(X) \bigg) ,
\end{align*} 
where the last inequality uses the bound $\| f - f_0 \|_\infty  \leq 2M$. The first claim follow from $\mathbb{E}\big\{   \psi^2_{\tau \land B}(\varepsilon)  | X=x\big\} \le (\tau \land B)^2$. For the second claim,
note that $| \psi_{\tau\land B}(\varepsilon)| = \min( |\varepsilon|, \tau\land B) \leq |\varepsilon|$.
Under Condition~\ref{cond2} with $p\geq 2$, it holds for any $x$ that $\mathbb{E}\big\{   \psi^2_{\tau\land B }(\varepsilon)  | X=x\big\}  \leq \mathbb{E}\big\{ \varepsilon^2 | X=x \big\} \leq v_2$. Combining the pieces yields $ \mathbb{E} \{ g^2(X,\varepsilon)  \}   \le 2(M^2 + v_2  ) \|f-f_0\|_2^2$, as expected.
\end{proof}

\begin{proof}[Proof of Lemma~\ref{lemma:prop-envelop}]
It follows from the defintion of $g$ \eqref{def:g} and triangle inequality that
\begin{align*}
    |g(x,\epsilon)| \le |\psi_{\tau \land B}(\epsilon) \cdot \Delta_f(x)| + \frac{1}{2}  \Delta^2_f(x)  \le 2M \big\{  M +  |\psi_{\tau \land B}(\epsilon)|  \big\} = G(x,\epsilon) 
\end{align*}
holds for any $f \in \cF_n$.  Therefore $G: [0, 1]^d \times \RR \to [0, \infty)$ is an envelop of $\cG_n$.

Recall from the proof of Lemma \ref{lemma:prop-g} that $\EE \{   \psi^2_\tau(\varepsilon ) | X  \} \leq v_2 \land (\tau \land B)^2$.  This together with the Cauchy-Schwarz inequality implies $ \mathbb{E} \{  G^2(X , \varepsilon) \}   \le   8M^2( M^2 + v_2 \land (\tau \land B)^2)$.  The second bound follows from the fact that $|\psi_{\tau\land B}(\varepsilon)| \leq (\tau \land B)$ almost surely.
\end{proof}

\begin{proof}[Proof of Lemma \ref{lemma:cover-number-nn}]
By Theorem~12.2 of \cite{AB1999},   we have that for any $\epsilon>0$,
\$
	\log \mathcal{N}_\infty(\epsilon, \mathcal{F}_n, m) \leq {\rm Pdim}(\cF_n) \cdot  \log(emM/\epsilon) ,
\$
where ${\rm Pdim}(\cF_n)$ denotes the psuedo-dimension of $\cF_n$.  Applying further Theorem~7 of \cite{BHLM2019} yields the bound ${\rm Pdim}(\cF_n)  \lesssim W \bar{L} \log(W)$, where $W$ is the  number of parameters of the network $\cF_n$, satisfying $W= O(\bar{L} \cdot  \bar{N}^2)$.
\end{proof}

In order to prove Lemma \ref{lemma:empirical-process}, we need the following maximal inequality.

\begin{lemma}[A maximal inequality \citep{chernozhukov2014gaussian}]
\label{lemma:maximal-inequality}
Consider the function class $\cG$ of measurable functions $[0, 1]^d \times \RR \to \RR$, to which a measure envelop $G$ is attached.
 Suppose that $\|G\|_2 <\infty$,  and let $\sigma$ be any positive constant such that $\sup_{g\in \mathcal{G}} \mathbb{E} g^2(X, \varepsilon) \le \sigma^2 \le \|G\|_2^2$. 
 Moreover,  define $r=\sigma/\|G\|_2$ and $\overline{G} = \max_{1\leq i\leq n} G(X_i, \varepsilon_i)$. Then 
    \begin{align}
        \mathbb{E} \Bigg\{  \sup_{g\in \mathcal{G} }  \bigg|\frac{1}{\sqrt{n}} \sum_{i=1}^n g(X_i, \varepsilon_i) - \mathbb{E} g(X, \varepsilon )  \bigg|   \Bigg\} \lesssim\|G\|_2 \cdot  J(r, \mathcal{G}, G)  + \frac{ \| \overline{G} \|_2 \cdot   J^2(r, \mathcal{G}, G) }{r^2 \sqrt{n}} ,
    \end{align} 
   where
    \begin{align}
        J(r, \mathcal{G}, G) = \int_{0}^r \sup_{ Q_n } \sqrt{1+\log \mathcal{N}\big(\epsilon \|G\|_2, \mathcal{G}, \|\cdot\|_{2,Q_n} \big) }  \,  {\rm d} \epsilon
    \end{align}
   is the uniform entropy integral, and the supremum is taken over all $n$-discrete probability measures on $[0, 1]^d \times \RR$.
\end{lemma}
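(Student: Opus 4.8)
The plan is to reproduce the standard derivation of a local maximal inequality under uniform entropy, in the spirit of the local maximal inequality of van der Vaart and Wellner and of \cite{chernozhukov2014gaussian}: symmetrize, invoke Dudley's chaining bound conditionally on the sample, rescale covering numbers so as to expose the uniform entropy integral $J$, and then run a self-bounding argument that replaces the \emph{random} empirical radius of $\mathcal{G}$ by the deterministic quantity $\sigma$. Write $Z_i=(X_i,\varepsilon_i)$, let $\mathbb{P}_n$ be the empirical measure on $Z_1,\dots,Z_n$ with associated $L_2$ seminorm $\|\cdot\|_{\mathbb{P}_n,2}$, set $\hat\theta_n=\sup_{g\in\mathcal{G}}\|g\|_{\mathbb{P}_n,2}$ and $\overline{G}=\max_{1\le i\le n}G(Z_i)$, and let $S_n$ denote the left-hand side of the claimed inequality. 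We may assume $J(r,\mathcal{G},G)<\infty$, since otherwise there is nothing to prove. Note that $r=\sigma/\|G\|_2\le1$ and that, because the integrand defining $J$ is nonincreasing in $\epsilon$ and bounded below by $1$, the map $u\mapsto J(u,\mathcal{G},G)$ is concave, vanishes at $0$, satisfies $J(u)\ge u$, and obeys $J(a)\le\max(1,a/r)\,J(r)$ for all $a>0$.

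First I would symmetrize: by the classical symmetrization inequality, $S_n\le2\,\mathbb{E}\{\sup_{g\in\mathcal{G}}|n^{-1/2}\sum_i\xi_i g(Z_i)|\}$ for i.i.d.\ Rademacher signs $\xi_1,\dots,\xi_n$ independent of the data. Conditionally on $Z_1,\dots,Z_n$, the process $g\mapsto n^{-1/2}\sum_i\xi_i g(Z_i)$ is sub-Gaussian with respect to $\|g-g'\|_{\mathbb{P}_n,2}$, whose diameter is at most $2\hat\theta_n$, so Dudley's entropy bound gives, conditionally,
\begin{align*}
	\mathbb{E}_\xi\Big\{\sup_{g\in\mathcal{G}}\big|n^{-1/2}\textstyle\sum_i\xi_i g(Z_i)\big|\Big\}
	\;\lesssim\;\hat\theta_n+\int_0^{2\hat\theta_n}\sqrt{1+\log\mathcal{N}\big(\epsilon,\mathcal{G},\|\cdot\|_{\mathbb{P}_n,2}\big)}\,\mathrm{d}\epsilon ,
\end{align*}
where the additive $\hat\theta_n$ is the cost of centering at a fixed element of $\mathcal{G}$. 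Substituting $\epsilon=u\|G\|_{\mathbb{P}_n,2}$, using $\mathcal{N}(\epsilon,\mathcal{G},\|\cdot\|_{\mathbb{P}_n,2})\le\sup_{Q_n}\mathcal{N}(\epsilon,\mathcal{G},\|\cdot\|_{2,Q_n})$ and $\hat\theta_n\le\|G\|_{\mathbb{P}_n,2}$, the right-hand side is $\lesssim\|G\|_{\mathbb{P}_n,2}\,J(\hat\theta_n/\|G\|_{\mathbb{P}_n,2},\mathcal{G},G)\le J(r)\max(\|G\|_{\mathbb{P}_n,2},r^{-1}\hat\theta_n)$, the last step by concavity of $J$. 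Taking the outer expectation and applying the Cauchy--Schwarz inequality, with $\mathbb{E}\|G\|_{\mathbb{P}_n,2}\le\|G\|_2$, yields
\begin{align*}
	S_n\;\lesssim\;J(r)\,\big(\|G\|_2+r^{-1}A\big),\qquad A:=\big(\mathbb{E}\,\hat\theta_n^{\,2}\big)^{1/2}.
\end{align*}

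It remains to control $A$. Since $\hat\theta_n^{\,2}=\sup_g n^{-1}\sum_i g(Z_i)^2$ and $\sup_g\mathbb{E} g^2\le\sigma^2$, a further symmetrization gives $\mathbb{E}\,\hat\theta_n^{\,2}\le\sigma^2+2n^{-1/2}\,\mathbb{E}\{\mathbb{E}_\xi\sup_g|n^{-1/2}\sum_i\xi_i g(Z_i)^2|\}$. Conditionally on the data the Rademacher contraction principle applies to $t\mapsto t^2$, which vanishes at $0$ and is $2\overline{G}$-Lipschitz on $[-\overline{G},\overline{G}]$ (where all $g(Z_i)$ lie), so the inner supremum is $\lesssim\overline{G}\cdot\mathbb{E}_\xi\sup_g|n^{-1/2}\sum_i\xi_i g(Z_i)|$, which the conditional chaining bound above controls by $\overline{G}\cdot J(r)\max(\|G\|_{\mathbb{P}_n,2},r^{-1}\hat\theta_n)$. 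Cauchy--Schwarz then produces the self-referential quadratic inequality
\begin{align*}
	A^2\;\lesssim\;\sigma^2+n^{-1/2}J(r)\,\|\overline{G}\|_2\,\big(\|G\|_2+r^{-1}A\big).
\end{align*}
Solving for $A$ gives $A\lesssim\sigma+\big(n^{-1/2}J(r)\|\overline{G}\|_2\|G\|_2\big)^{1/2}+n^{-1/2}r^{-1}J(r)\|\overline{G}\|_2$; substituting into the bound for $S_n$, using $\sigma/r=\|G\|_2$, and absorbing the cross term through $\sqrt{xy}\le\tfrac12(x+y)$ with $x=\|G\|_2J(r)$ and $y=\|\overline{G}\|_2J(r)^2/(r^2\sqrt{n})$ yields exactly $S_n\lesssim\|G\|_2\,J(r,\mathcal{G},G)+\|\overline{G}\|_2\,J(r,\mathcal{G},G)^2/(r^2\sqrt{n})$, as claimed.

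The main obstacle is this self-bounding step: Dudley's inequality is naturally controlled by the random empirical radius $\hat\theta_n\le\|G\|_{\mathbb{P}_n,2}$ rather than by $\sigma$, and converting one to the other forces a bound on the empirical process over the squared class $\{g^2:g\in\mathcal{G}\}$ — which is why the contraction argument, and with it the truncation of the envelope at the finite random variable $\overline{G}$, is unavoidable — followed by the solution of a quadratic that refers back to the target quantity. Secondary care is needed in the covering-number rescaling, to match the mixed convention in the definition of $J$ (argument scaled by $\|G\|_2$, metric taken over discrete measures $Q_n$), and in verifying that the additive centering terms from the chaining bound are genuinely dominated by the entropy integral (which follows from $J(u)\ge u$). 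The symmetrization inequality, Dudley's bound, and the contraction principle are all invoked in their textbook forms.
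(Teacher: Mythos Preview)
The paper does not prove this lemma at all; it simply states it as a quotation from \cite{chernozhukov2014gaussian} (Corollary~5.1 there), which in turn traces back to the local maximal inequality in van der Vaart and Wellner. So there is no ``paper's own proof'' to compare against.

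That said, your derivation is correct and is essentially the standard proof of this result: symmetrize, apply Dudley's chaining conditionally on the sample, rescale the covering-number integral to obtain the uniform entropy integral $J$, and then run the self-bounding step---contract the squared class $\{g^2\}$ via the Lipschitz property with slope $2\overline{G}$, solve the resulting quadratic in $A=(\mathbb{E}\,\hat\theta_n^{2})^{1/2}$, and absorb the cross term by AM--GM. This is precisely the route taken in the cited reference, so you have reconstructed the original argument rather than offered an alternative. One minor remark: your use of concavity of $u\mapsto J(u,\mathcal{G},G)$ and the inequality $J(a)\le\max(1,a/r)J(r)$ is exactly the device used in the literature to pass from the random radius $\hat\theta_n/\|G\|_{\mathbb{P}_n,2}$ to the deterministic $r$, and you have handled the centering term correctly via $J(u)\ge u$.
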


\begin{proof}[Proof of Lemma~\ref{lemma:empirical-process}]

By Lemma \ref{lemma:cover-number-nn},  we have for any $\epsilon \in (0 , M)$ that
\begin{align*}
    \log \mathcal{N}_\infty(\epsilon, \mathcal{F}_n, n) \lesssim \log(M n/\epsilon) \cdot  (\bar{N} \cdot \bar{L})^2  \log (\bar{N} \cdot \bar{L}).
\end{align*} 
Recall the definition of $g$, we have 
\begin{align*}
    |g_f(X,\varepsilon) - g_{f'}(X,\varepsilon)| \le \{(B \land \tau) + M\} |f(X) - f'(X)|
\end{align*} which implies that any $(\epsilon/((B \land \tau) + M))$-net of $\mathcal{F}_n|_X$ under the $\|\cdot\|_\infty$-norm is also an $\epsilon$-net of $\mathcal{G}_n|_X$ under the  $\|\cdot\|_\infty$-norm. Therefore, for any $\epsilon \in (0,n M((B \land \tau) + M))$
\begin{align*}
    \log \mathcal{N}_\infty(\epsilon, \mathcal{G}_n, n) \lesssim  \log \left(\frac{Mn ((B \land \tau) + M)}{\epsilon}\right) (\bar{N}\bar{L})^2 \log(\bar{N} \bar{L}).
\end{align*}
Consequently,
\begin{align*}
	\sup_{Q_n} \sqrt{1+\log \mathcal{N}\big(\epsilon, \mathcal{G}_n, L_2(Q_n) \big) } &\le \sqrt{1 + \log \mathcal{N}_\infty(\epsilon, \mathcal{G}_n, n)} \\
	&\lesssim \left\{ \sqrt{\log \left(\frac{Mn ((B \land \tau) + M)}{\epsilon}\right)}   + 1\right\}  (\bar{N}\bar{L}) \sqrt{\log(\bar{N} \bar{L})}.
\end{align*}

By Lemma \ref{lemma:prop-g} and Lemma \ref{lemma:prop-envelop}, we have 
\begin{align*}
    \sigma^2 = 2(M^2 + C_v) \delta^2, ~~~~~~ \|G\|_2 \le 8 M^2( M^2 + C_v ), ~~~~~~ \| \overline{G} \|_2 \le \|G\|_\infty \leq 2M ( M + \tau \land B).
\end{align*}
Taking $r= \delta/2M$ yields
\begin{align*}
    J(r, \mathcal{G}_n, G) &\lesssim \int_{0}^{\delta/2M} \left\{ \sqrt{\log \left(\frac{Mn ((B \land \tau) + M)}{8 M^2( M^2 + C_v )\epsilon }\right)}   + 1\right\}  (\bar{N}\bar{L}) \sqrt{\log(\bar{N} \bar{L})} {\rm d}\epsilon \\
    &\le (\bar{N}\bar{L}) \sqrt{\log(\bar{N} \bar{L})} \Bigg\{  \frac{\delta}{2M} + \int_{0}^{\delta/2M} \Big(\sqrt{\log \{8n ((B \land \tau) + M)\}} \\
    &~~~~~~~~~~~~~~~~~~~~~~~~~~~~~~~~~~~~~~~~~~~~~~~~~~~~~~~~~~~~~~~~~~ +\sqrt{\log (1/\epsilon)} \lor 0   \Big) {\rm d}\epsilon\Bigg\}  \\
    &\le (\bar{N}\bar{L}) \sqrt{\log(\bar{N} \bar{L})} \frac{\delta}{2M} \left\{ 1+\sqrt{\log \{8n ((B \land \tau) + M)\}} + \sqrt{\log (2M/\delta)} \lor 1 \right\} ,
\end{align*} 
where the last inequality follows from the fact that $\int_{0}^x \sqrt{\log(1/y)} \lor 0  \, {\rm d} y \le x(\sqrt{\log(1/x)} \lor 1)$. We thus conclude that for any $\delta \ge 1/n$ and $\tau \land B\ge 1$,
\begin{align*}
    \frac{J(r, \mathcal{G}_n, G)}{\delta} \lesssim (\bar{N}\bar{L}) \sqrt{\log(\bar{N} \bar{L})} \sqrt{\log (2nM(B\land \tau))} \le \sqrt{n V_{n,B}}.
\end{align*} 

Putting these pieces together and applying Lemma \ref{lemma:maximal-inequality}, we have
\begin{align*}
     & \mathbb{E} \Bigg\{ \sup_{g\in \mathcal{G}_n}  \bigg|\frac{1}{\sqrt n} \sum_{i=1}^n g(X_i, \varepsilon_i) - \mathbb{E} g(X, \varepsilon) \bigg|\Bigg\}   \\
    & \le   \sqrt{8M^2(M^2+C_v)} J(r, \mathcal{G}_n, G) + \frac{2M ( M + \tau \land B) J(r, \mathcal{G}_n, G)^2 }{r^2 \sqrt{n}}\\
   & \lesssim \delta \sqrt{n (1+C_v) V_{n,B}} + \{1 + (\tau\land B)\} V_{n,B} \sqrt{n} \\
   & \lesssim \left\{\delta \sqrt{(1 + C_v)V_{n,B}} + (\tau \land B) V_{n,B} \right\} \sqrt{n}
\end{align*} 
for all $\delta \ge 1/n$ and $\tau \land B \ge 1$. This establishes the claim \eqref{eq:empircal-process}.
\end{proof}
\section{Proofs of Neural Network Approximation}

We include all the proofs of the neural network approximation result in this section. 
We first introduce some additional notations in Section \ref{sec:approx:notation}, which simplify the proofs of neural network approximation. Then, we prove Proposition \ref{prop:nn-approx} in Section~\ref{subsec:roc}. Sections~\ref{thm:approx-n2l2-dim-d} and \ref{thm:approx-noise-dim-d} provide proofs of Theorem \ref{thm:approx-n2l2-dim-d} and Theorem \ref{thm:approx-noise-dim-d}, respectively.

\subsection{Notations about the construction of neural network}

\label{sec:approx:notation}

In this subsection we introduce several notations and simple facts on the construction of neural networks that might be helpful if we want to make a constructive proof of the neural network approximation result. 

\medskip

\noindent \textbf{Representation of neural network.} In the section, we consider a more general representation of neural network, which has output dimension $o \in \mathbb{N}^+$ rather than $1$. In this case, the neural network can be viewed as a function $f:\mathbb{R}^{d} \to \mathbb{R}^{o}$. We write $f \in \mathcal{F}(d, L, N, o)$ if $f$ is a neural network with input dimension $d$, output dimension $o$, depth $\leq L$, and at most $N$ hidden nodes at each layer. Formally, suppose \begin{align}
    f(x) = \mathcal{L}_{L+1} \circ \sigma \circ \mathcal{L}_{L} \circ \sigma \circ \mathcal{L}_{L-1} \circ \sigma \circ \cdots \circ \mathcal{L}_2 \circ \sigma \circ \mathcal{L}_1 (x) \label{eq:nn-rep}
\end{align} 
where $\mathcal{L}_i(x)=W_i x + b_i$ is a linear transformation with $W_i\in \mathbb{R}^{d_i\times d_{i-1}}$, $b_i\in \mathbb{R}^{d_i}$ and $(d_0, d_1, \cdots, d_L, d_{L+1})$. Then $\mathcal{F}(d, L, N, o)$ can be written as 
\begin{align*}
    \mathcal{F}(d, L, N, o) = \big\{  f\text{ is of the form \eqref{eq:nn-rep} with } d_0=d, d_{L+1}=o \text{ and } d_i \le N \text{ for any } i \in \{1,\cdots, L\} \big\}.
\end{align*}

\medskip
\noindent \textbf{Neural network padding.} If $f$ is a neural network with depth between 1 and $L$, and at most $N$ hidden nodes at each layer, then there exists some neural network $g$ with depth $L$ and $N$ hidden nodes at each layer such that $f(x)=g(x)$ for all the input $x$. We refer to this construction as \emph{neural network padding}. The padding with respect to width is trivial. For the padding with respect to depth, assume that the neural network has $L'\ge 1$ hidden layers. We can apply the identity map together with the activation function $L-L'$ times between the first hidden layer and the layer next to it. This will not change $f(x)$, but will increase the number of layers by $L-L'$. Hence $\mathcal{F}(d, L, N, o)$ can also be seen as the set of all neural networks with input dimension $d$, output dimension $o$, depth $L$ and width $N$. From the above discussion, we also have that $\mathcal{F}(d, L, N, o) \subset \mathcal{F}(d, L', N', o)$ if $L'\ge L$ and $N'\ge N$.

\medskip
\noindent \textbf{Network composition.} Suppose $f \in \mathcal{F}(d_1, L_1, N_1, d_2)$ and $g \in \mathcal{F}(d_2, L_2, N_2, d_3)$, we use $h = g \circ f$ to denote the neural network which uses the input of $g$ as the output of $f$. It should be noted that $h$ is a neural network with width $N_1\lor N_2 \lor d_2$ and depth $L_1 + L_2$. This is because we can combine the weight connecting the final hidden layer and the output layer of $g$ and the weight connecting the input layer and the first hidden layer of $f$ as a single weight, i.e. $W_2 (W_1 x + b_1) + b_2 = W'_1 x + b_1'$. 

\medskip
\noindent \textbf{Network parallelization.} Suppose $f_i\in \mathcal{F}(d_i, N_i, L_i, o_i)$ for $i\in \{1,\ldots, k\}$. We use $h=(f_1, \ldots, f_k)$ to denote the neural network that takes $x\in \mathbb{R}^{\sum_{i=1}^k d_i}$ as the input, feeds the entries $x^{(i)}=\big(x_{\sum_{j=1}^{i-1} d_j + 1}, \cdots, x_{\sum_{j=1}^{i} d_j}\big)$ to the $i$-th sub-network $f_i$ that returns $y^{(i)}$, and combines these $y^{(i)}$ as the output. Such an $h$ is a neural network with input dimension $\sum_{i=1}^k d_i$, output dimension $\sum_{i=1}^k o_i$, depth at most $\max_{1\le i\le d} L_i$ and width at most $\sum_{i=1}^d N_i$. Suppose $d_i \le d$, we also use the notation $h=(f_1(x^{(1)}), \ldots, f_k(x^{(k)}))$ to denote the neural network that takes $x\in \mathbb{R}^d$ as the input, and feeds some of its entries $x^{(i)} = ((x)_{j_1},\ldots, (x)_{j_{d_i}})$ as input to the $i$-th subnetwork $f_i$, followed by the same procedure as above. Similarly, we conclude that $h$ is a neural network with input dimension $d$, output dimension $\sum_{i=1}^k o_i$, depth at most $\max_{1\le i\le d} L_i$ and width at most $\sum_{i=1}^d N_i$.

\medskip
\noindent \textbf{Simple functions.} At last, we introduce some simple functions that can be parameterized using ReLU neural networks:

\begin{lemma}[Identity, Absolute value, Min, Max]
\label{lemma:nn-simple}
For any $x, y\in \mathbb{R}$, the following properties hold:
\begin{itemize}
	\item[(1)] $x \in \mathcal{F}(1,1,2,1)$;
	\item[(2)] $|x| \in \mathcal{F}(1,1,2,1)$;
	\item[(3)] $\min(x,y) \in \mathcal{F}(2,1,4,1)$;
	\item[(4)] $\max(x,y) \in \mathcal{F}(2,1,4,1)$.
\end{itemize}
\end{lemma}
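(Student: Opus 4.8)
The plan is to verify all four claims directly by exhibiting explicit ReLU networks, using nothing more than the pointwise identities $x = \sigma(x) - \sigma(-x)$ and $|x| = \sigma(x) + \sigma(-x)$, which hold for every $x \in \mathbb{R}$.

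For parts (1) and (2) I would take the one-hidden-layer network whose first affine map is $\mathcal{L}_1(x) = (x, -x)^\top$ (that is, $W_1 = (1, -1)^\top$ and $b_1 = 0$), so that after the activation the hidden layer carries the two values $\sigma(x)$ and $\sigma(-x)$; the output map $\mathcal{L}_2(u_1,u_2) = u_1 - u_2$ then returns the identity $x$, while $\mathcal{L}_2(u_1,u_2) = u_1 + u_2$ returns $|x|$. In both cases the network has a single hidden layer with exactly two neurons, hence lies in $\mathcal{F}(1,1,2,1)$.

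For parts (3) and (4) I would use $\min(x,y) = \tfrac12(x+y) - \tfrac12|x-y|$ and $\max(x,y) = \tfrac12(x+y) + \tfrac12|x-y|$. Take the one-hidden-layer network on $(x,y)$ whose hidden layer computes the four values $\sigma(x+y)$, $\sigma(-(x+y))$, $\sigma(x-y)$, $\sigma(y-x)$, and whose output layer returns $\tfrac12(u_1 - u_2) - \tfrac12(u_3 + u_4)$ in the min case and $\tfrac12(u_1 - u_2) + \tfrac12(u_3 + u_4)$ in the max case; by the two identities above these equal $\tfrac12(x+y) \mp \tfrac12|x-y|$, as desired. This network has one hidden layer with four neurons, input dimension $2$ and output dimension $1$, so it belongs to $\mathcal{F}(2,1,4,1)$. (If one prefers, $\min(x,y) = \sigma(y) - \sigma(-y) - \sigma(y-x)$ uses only three hidden neurons, and the width-padding remark in Section~\ref{sec:approx:notation} embeds it into $\mathcal{F}(2,1,4,1)$; the analogous identity handles $\max$.)

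There is no genuine obstacle in this lemma; the only points requiring care are the bookkeeping under the paper's conventions — a network of depth $L$ and width $N$ has $L$ hidden layers, each with at most $N$ neurons, and is written $\mathcal{L}_{L+1}\circ\sigma\circ\cdots\circ\sigma\circ\mathcal{L}_1$, with biases $b_i$ permitted (here all taken to be zero) — and checking that the claimed node counts are achieved. These four constructions are precisely the base cases that the composition and parallelization rules of Section~\ref{sec:approx:notation} build on in the subsequent approximation arguments.
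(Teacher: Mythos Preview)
Your proposal is correct and essentially identical to the paper's own proof: the paper uses exactly the identities $x=\sigma(x)-\sigma(-x)$, $|x|=\sigma(x)+\sigma(-x)$ for (1)--(2), and then $\min(x,y)=\tfrac12(\sigma(x+y)-\sigma(-x-y)-\sigma(x-y)-\sigma(y-x))$ (and analogously for $\max$) for (3)--(4). Your additional three-neuron remark is a harmless bonus not in the paper.
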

\begin{proof}[Proof of Lemma \ref{lemma:nn-simple}]
	For claims (1) and (2), recall that $\sigma(x) = (x)_+$, we thus have $x=\sigma(x)-\sigma(-x)$, $|x|=\sigma(x)+\sigma(-x)$. For claims (3) and (4), note that $\min(x,y)=\frac{1}{2}(x+y - |x-y|)$ and $\max(x,y)=\frac{1}{2}(x+y+|x-y|)$. It follows that \begin{align*}
		\min(x,y) &= \frac{1}{2} \big(x+y - |x-y|\big) \\
				  &= \frac{1}{2} \big(\sigma(x+y) - \sigma(-x-y) - \sigma(x-y) - \sigma(y-x)\big),
	\end{align*} 
	hence proving claim (3). Claim (4) can be similarly proved.
\end{proof}

\begin{lemma}[$\|\cdot\|_\infty$ norm in $\RR^d$]
\label{lemma:sup-norm-vec}
For any $d\geq 1$, the map $\|x\|_\infty : \RR^d \to \RR \in \mathcal{F}(d,\lceil \log_2 d\rceil+1, 2d, 1)$.
\end{lemma}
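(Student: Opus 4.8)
\textbf{Proof plan for Lemma~\ref{lemma:sup-norm-vec}.}

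The plan is to realize $\|x\|_\infty = \max_{1\le i\le d}|x_i|$ as a balanced binary tree of two-argument $\max$ operations applied to the coordinatewise absolute values. First I would note that each $|x_i|$ lies in $\mathcal{F}(1,1,2,1)$ by Lemma~\ref{lemma:nn-simple}(2), so by the parallelization construction the map $x \mapsto (|x_1|,\ldots,|x_d|)$ lies in $\mathcal{F}(d,1,2d,1)$ viewed as a map $\RR^d\to\RR^d$. Then I would build the $\max$ over $d$ scalars recursively: pair up adjacent entries and apply $\max(\cdot,\cdot)\in\mathcal{F}(2,1,4,1)$ to each pair in parallel, halving the number of active values at each stage; after $\lceil\log_2 d\rceil$ such stages only one value remains, which equals $\|x\|_\infty$.

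The bookkeeping I would carry out: at stage $j$ (for $j=1,\ldots,\lceil\log_2 d\rceil$) there are $\lceil d/2^{j-1}\rceil$ incoming values, and the layer consists of $\lfloor d/2^j\rfloor$ parallel copies of the $\max$ gadget together with at most one identity gadget (from Lemma~\ref{lemma:nn-simple}(1)) to carry forward an unpaired value; each such stage contributes $1$ to the depth and has width at most $2\cdot\lceil d/2^{j-1}\rceil \le 2d$ (since $4\lfloor d/2^j\rfloor + 2 \le 2\lceil d/2^{j-1}\rceil \le 2d$). Composing the initial absolute-value layer (depth $1$, width $2d$) with these $\lceil\log_2 d\rceil$ stages, the network-composition rules give total depth $1+\lceil\log_2 d\rceil$ and width $\le 2d$, which is exactly the claimed $\mathcal{F}(d,\lceil\log_2 d\rceil+1,2d,1)$. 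One should also handle the trivial cases $d=1$ (then $\|x\|_\infty=|x|\in\mathcal{F}(1,1,2,1)$, consistent with $\lceil\log_2 1\rceil+1=1$) and $d=2$ separately, or simply observe the recursion degenerates correctly.

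The main obstacle — really the only nontrivial point — is the width accounting across composed parallel layers: one must check that padding (via the identity map of Lemma~\ref{lemma:nn-simple}(1)) to carry odd leftover coordinates does not push the per-layer width above $2d$, and that the network-padding remark lets us pad all stages to a common width $2d$ and common depth without changing the computed function. Everything else is a direct application of the composition and parallelization constructions recorded in Section~\ref{sec:approx:notation} together with Lemma~\ref{lemma:nn-simple}.
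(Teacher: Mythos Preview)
Your proposal is correct and follows essentially the same route as the paper: first parallelize the coordinatewise absolute values into $\mathcal{F}(d,1,2d,d)$ (your ``$\mathcal{F}(d,1,2d,1)$'' is a slip---the output dimension is $d$), then reduce via a balanced binary tree of $\max$ gadgets, handling odd leftovers with the identity gadget, to get $h^{(d)}\in\mathcal{F}(d,\lceil\log_2 d\rceil,2d,1)$, and compose. One small bookkeeping caution: the number of $\max$ gadgets at stage $j$ is $\lfloor \lceil d/2^{j-1}\rceil/2\rfloor$, which need not equal $\lfloor d/2^j\rfloor$ (e.g.\ $d=7$, $j=2$), but since the per-stage width is exactly $2\lceil d/2^{j-1}\rceil\le 2d$ either way, your width bound and the final conclusion are unaffected.
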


\begin{proof}[Proof of Lemma \ref{lemma:sup-norm-vec}]

The univariate case $d=1$ is trivial because  $\|x\|_\infty=|x|$ so that the result follows directly from Lemma~\ref{lemma:nn-simple}.

Assume $d\geq 2$ and let $x=(x_1,\ldots, x_d)$. For each $x_i$, by Lemma \ref{lemma:nn-simple} we can construct a neural network $\phi_i(x)\in \mathcal{F}(1,1,2,1)$ such that $\phi_i(x_i) = |x_i|$. Via a parallelization argument, we can then construct $f=(\phi_1, \ldots, \phi_d)$ such that $f(x) = (|x_1|, \ldots , |x_d|)$ and $f\in \mathcal{F}(d, 1, 2d, d)$. Since $\|x\|_\infty=\max_{1\le i\le d} |x_i|$, it suffices to build some $h^{(d)}(x)$ such that $h^{(d)}(x_1,\ldots, x_d) = \max_{1\le i\le d} x_i$. We construct such an $h^{(d)}$ by induction. We claim that $h^{(d)}\in \mathcal{F}(d,\lceil \log_2 d\rceil, 2d, 1)$.

Starting from $d=2$,  Lemma \ref{lemma:nn-simple} ensures that there exists some $h^{(2)}(x_1,x_2)\in \mathcal{F}(2,1,4,1)$ such that $h^{(2)}(x_1,x_2)=\max\{x_1,x_2\}$. Next we consider the general case of $d>2$. If $d=2k$ for some $k\geq 1$, by Lemma \ref{lemma:nn-simple} we can construct $\psi_i(x_{2i-1}, x_{2i}) \in \mathcal{F}(2,1,4,1)$ such that $\psi_i(x_{2i-1}, x_{2i})=x_{2i-1}\lor x_{2i}$ for each $1\leq i\leq k$. Again, using the parallelization argument we can construct $g=(\psi_1,\ldots, \psi_{k}) \in \mathcal{F}(d,1,2d,d)$ such that $g(x_1,\ldots, x_d) = (x_1\lor x_2,\ldots, x_{d-1}\lor x_d)$. This means that  $h^{(d)}$ can be constructed as a composition of $g$ and $h^{(d/2)}$.  Since $h^{(d/2)}\in \mathcal{F}(d/2, \lceil \log_2 (d/2)\rceil, d, 1)$ and $g \in \mathcal{F}(d,1,2d,d/2)$, it follows from a composition argument that $h^{(d)}=g \circ h^{(d/2)} \in \mathcal{F}(d,\lceil \log_2 d\rceil, 2d, 1)$. 

The case $d=2k+1$ for some $k\geq 1$ can be dealt with similarly. By Lemma \ref{lemma:nn-simple}, we can build $\psi_i(x_{2i-1}, x_{2i}) \in \mathcal{F}(2,1,4,1)$ such that $\psi_i(x_{2i-1}, x_{2i})$ for each $1 \leq i\leq k$, and $\psi_{k+1}(x_{2k+1})=x_{k+1} \in \mathcal{F}(1,1,2,1)$. Parallelization argument ensures that we can construct $g=(\psi_1,\ldots, \psi_{k+1}) \in \mathcal{F}(d,1,2d,(d+1)/2)$ such that $g(x_1,\ldots,  x_d)=(x_1\lor x_2,\ldots, x_{d-2}\lor x_{d-1}, x_d)$. By the composition argument, we further have $h^{(d)} = g\circ h^{((d+1)/2)} \in \mathcal{F}(d,\lceil \log_2 d\rceil, 2 d, 1)$.

Now we have  $f(x) = (|x_1|, \ldots , |x_d|) \in \mathcal{F}(d,1,2d,d)$ and $h^{(d)}(x) = \max_{1\le i\le d} x_i  \in \mathcal{F}(d,\lceil \log_2 d\rceil,2d,1)$. 
Finally, taking $f^\dagger=h^{(d)} \circ f$, we have $f^\dagger(x) = \|x\|_\infty$ and $f^\dagger \in \mathcal{F}(d,1+\lceil \log_2 d\rceil, 2d,1)$, as claimed.
\end{proof}

\subsection{Proof of Proposition \ref{prop:nn-approx}}

The proof is based on the following neural network approximation result for $d$-variate $(\beta,C)$ smooth functions.

\begin{lemma}
\label{lemma:nn-approx}
Let $f_0$ be a $d$-variate, $(\beta,C)$-smooth function with $\beta \in [1,\infty)$. For any $N, L\in \mathbb{N}$, there exists a function $\phi$ from a deep ReLU network with depth $c_{34} (L+2) \log_2(4L) + 2d$ and width $c_{35} (N+2)\log_2(8N)$ such that
\begin{align*}
	\|\phi - f\|_\infty	\le c_{36} (NL)^{-2\beta/d} ,
\end{align*} where $c_{34}$--$c_{36}$ are constants that only depend on $\beta$, $d$ and $C$.
\end{lemma}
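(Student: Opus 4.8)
The plan is to adapt the constructive argument of \cite{lu2020deep} to the network shape used here, in which the width is of order $N$ and the depth of order $L$ \emph{up to logarithmic factors simultaneously}. Write $\beta = r+s$ with $r = \lceil\beta\rceil-1$ and $s\in(0,1]$. Since $[0,1]^d$ is compact and the partial derivatives of $f_0$ up to order $r$ are continuous, $f_0$ and these derivatives are uniformly bounded on $[0,1]^d$; after rescaling we may assume all these bounds are at most $1$. Set $K = \lfloor N^{1/d}\rfloor^2\lfloor L^{1/d}\rfloor^2 \asymp (NL)^{2/d}$, exactly as in Theorem~\ref{thm:approx-n2l2-dim-d}, and partition $[0,1]^d$ into the $K^d$ sub-cubes $\{Q_\alpha\}_{\alpha\in\{1,\dots,K\}^d}$ of side $1/K$ with corners $x_\alpha$. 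On $Q_\alpha$ replace $f_0$ by its degree-$r$ Taylor polynomial $P_\alpha(x) = \sum_{|\gamma|\le r}\frac{\partial^\gamma f_0(x_\alpha)}{\gamma!}(x-x_\alpha)^\gamma$; Taylor's theorem with integral remainder together with $(\beta,C)$-smoothness gives $\sup_{x\in Q_\alpha}|f_0(x)-P_\alpha(x)| \le C' K^{-\beta} \asymp (NL)^{-2\beta/d}$ with $C'$ depending only on $\beta,d,C$.

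The next step is to realize, as the composition and parallelization of $\mathcal{O}(1)$-many sub-networks, the following modules. (i) A \emph{cube locator} mapping $x$ to a quantized encoding of its index $\alpha$, valid outside a thin ``bad'' neighbourhood of the grid hyperplanes: this is precisely the piecewise-constant fitting of Theorem~\ref{thm:approx-n2l2-dim-d}, together with the elementary gadgets of Lemmas~\ref{lemma:nn-simple} and \ref{lemma:sup-norm-vec}, of width $\lesssim N$ and depth $\lesssim L$ up to log factors. (ii) For each of the $\binom{r+d}{d} = \mathcal{O}(1)$ multi-indices $\gamma$, a \emph{coefficient table} sending the cube index to a dyadic quantization of $\partial^\gamma f_0(x_\alpha)/\gamma!$ to accuracy $(NL)^{-2\beta/d}$; by the second part of Theorem~\ref{thm:approx-n2l2-dim-d} this costs only an extra $\mathcal{O}(\log(NL))$ factor in \emph{either} the width or the depth, which is what is absorbed into the $\log_2(8N)$ and $\log_2(4L)$ in the statement. (iii) \emph{Monomial evaluation}: approximate $t\mapsto t^2$ on $[-1,1]$ to error $\eta$ by the classical sawtooth network of width $\mathcal{O}(1)$ and depth $\mathcal{O}(\log(1/\eta))$, hence approximate products of at most $r$ coordinate differences, and thus each $(x-x_\alpha)^\gamma$, to error $(NL)^{-2\beta/d}$ with depth $\mathcal{O}(\log(NL))$ and width $\mathcal{O}(1)$. (iv) Multiply each quantized coefficient by its approximate monomial (again via the $t^2$-gadget) and sum. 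Careful bookkeeping of this composition yields width $\le c_{35}(N+2)\log_2(8N)$ and depth $\le c_{34}(L+2)\log_2(4L) + 2d$, the additive $2d$ coming from the initial coordinatewise pre-processing and the $\lceil\log_2 d\rceil$-type max-reductions of Lemma~\ref{lemma:sup-norm-vec}.

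Assembling the error: for $x$ outside the bad region, $|\phi(x)-f_0(x)| \le |\phi(x)-P_\alpha(x)| + |P_\alpha(x)-f_0(x)| \lesssim (NL)^{-2\beta/d}$, the first term collecting coefficient quantization and monomial approximation errors and the second being the Taylor remainder. To upgrade this to a genuine $\|\cdot\|_\infty$ bound on all of $[0,1]^d$, I would use the shifted-grid device of \cite{lu2020deep}: run $d+1$ copies of the above construction on grids translated by multiples of $1/((d+1)K)$, chosen so that for every $x$ the bad regions of the copies cannot all be active at once and the coordinatewise median of the $d+1$ outputs always lies within $\mathcal{O}((NL)^{-2\beta/d})$ of $f_0(x)$; this median is itself a ReLU network of the same order of width and depth (medians of three numbers being expressible via $\min,\max$, cf.\ Lemma~\ref{lemma:nn-simple}).

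\textbf{Main obstacle.} The genuinely delicate point is the simultaneous ``width $\lesssim N$, depth $\lesssim L$'' accounting: the bit-extraction/point-fitting of Theorem~\ref{thm:approx-n2l2-dim-d} is exactly what lets a network of effective per-axis resolution $(NL)^2$ encode the $K^d \asymp (NL)^2$ grid values, and one must thread the cube index, the $\mathcal{O}(1)$ coefficient tables, and the monomial evaluations through the network without blowing up either dimension past the stated bounds, keeping the logarithmic overhead inside $\log_2(4L)$ and $\log_2(8N)$ rather than in their product. The shifted-grid/median step needed to pass from an $L_p$-type to an $L_\infty$ conclusion is the secondary point requiring care, but it is a routine constant-cost modification once the good-region version is established.
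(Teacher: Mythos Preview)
Your proposal is correct and follows essentially the same route as the paper. Both arguments reduce to the Lu--Shen--Yang--Zhang construction: locate the sub-cube, look up quantized Taylor coefficients via bit-extraction/point-fitting, approximate the monomials via the squaring gadget, and eliminate the bad region by the shifted-grid/median trick. The paper's own proof is deliberately terse: it only spells out the one step that differs from Theorem~1.1 of \cite{lu2020deep}, namely the Taylor remainder bound when $\beta=r+s$ is non-integer (using $(\beta,C)$-smoothness to control the highest-order derivative difference by $\|\cdot\|^s$), and then writes ``the remaining of the proof \ldots\ is same as Theorem~2.2 and Theorem~1.1 in \cite{lu2020deep}.'' You have instead written out the full construction, invoking the paper's Theorem~\ref{thm:approx-n2l2-dim-d} as the point-fitting black box; this is a cosmetic rather than a mathematical difference, since that theorem is itself built from the Lu et~al.\ modules. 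One minor bookkeeping note: the paper (following Lu et~al.) takes $K=\lfloor N^{1/d}\rfloor^2\lfloor L^{2/d}\rfloor$ rather than your $\lfloor N^{1/d}\rfloor^2\lfloor L^{1/d}\rfloor^2$, but both satisfy $K^d\asymp(NL)^2$ so the bound is unaffected.
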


Such lemma slightly generalize Theorem 1.1 in \cite{lu2020deep} that $\beta$ can be chosen as any positive real number instead of positive integer. The proof is almost identical to the proof of Theorem 1.1 in \cite{lu2020deep}. In the following, we only detailed the parts of the proof that are different.

\begin{proof}[Proof of Lemma~\ref{lemma:nn-approx}]
    We adopt same notations as that in \cite{lu2020deep}. Let $\beta=s+r$ where $s\in \mathbb{N}$ and $r\in (0,1]$. Denote $K=\lfloor N^{1/d}\rfloor^2 \lfloor L^{2/d} \rfloor$, $\bx_{\bbeta} = \bbeta/K$ for $\bbeta \in \{0,\ldots, K-1\}$. It suffices to show that the Taylor expansion of $f$ at these points $\{\bx_{\bbeta}\}$ can approximate $f$ with the error bound of $(NL)^{-2\beta/d}$, that is, to show 
    \begin{align*}
        \bar{f}(\bx) = \sum_{\balpha \in \mathbb{N}^d, \|\balpha\|_1\le s} \frac{\partial f}{\partial {\bx}^{\balpha}}(\bphi(\bx)) \frac{{\bh}^{\balpha}}{\balpha!} 
    \end{align*} with $\bphi(\bx)=\sum_{\bbeta} \bx_{\bbeta} 1\{\bx \in Q_{\bbeta}\}$ and $\bh = \bx - \bphi(\bx)$ satisfies $|\bar{f}(\bx) - f(\bx)| \lesssim (NL)^{-2\beta/d}$. Here $Q_{\bbeta}$ is defined as
    \begin{align*}
        Q_{\bbeta} := \left\{\bx = (x_1,\ldots, x_d), x_i \in \left[\frac{\beta_i}{K}, \frac{\beta_i+1}{K} - \delta 1\{\beta_i + 1 < K\}\right] ~\text{for}~ i=1,\ldots, d\right\}.
    \end{align*} 
    
    Apply Taylor's expansion at the point $\bphi(\bx)$ for $\bx$, we have for some $\xi \in (0,1)$ such that
    \begin{align*}
        f(\bx) = \sum_{\balpha \in \mathbb{N}^d, \|\balpha\|_1 \le s-1} \frac{\partial f}{\partial \bx^{\balpha}}(\bphi(\bx)) \frac{{\bh}^{\balpha}}{\balpha!}+\sum_{\balpha \in \mathbb{N}^d, \|\balpha\|_1 = s} \frac{\partial f}{\partial {\bx}^{\balpha}}(\bphi(\bx) + \xi \bh) \frac{{\bh}^{\balpha}}{\balpha!},
    \end{align*} then it follows from the definition of $(\beta, C)$-smooth function that
    \begin{align*}
        |\bar{f}(\bx) - f(\bx)| &= \sum_{\balpha \in \mathbb{N}^d, \|\balpha\|_1 = s} \frac{\bh^{\balpha}}{\balpha!} \left|\frac{\partial f}{\partial {\bx}^{\balpha}}(\bphi(\bx) + \xi \bh) - \frac{\partial f}{\partial \bx^{\balpha}}(\bphi(\bx))\right| \\
    &\le \sum_{\balpha \in \mathbb{N}^d, \|\balpha\|_1 = s} \frac{\bh^{\balpha}}{\balpha!} \|\xi \bh\|_2^r \le \sum_{\balpha \in \mathbb{N}^d, \|\balpha\|_1 = s} \frac{\|\bh\|_\infty^{\|\balpha\|_1}}{\balpha!} \sqrt{d}^r \|\bh\|_\infty^r \\
    &\lesssim K^{-(s+r)} \lesssim (NL)^{-2\beta/d}.
    \end{align*} The remaining of the proof, which use ReLU neural network to approximate $\bar{g}(\bx)$, is same as Theorem 2.2 and Theorem 1.1 in \cite{lu2020deep}.
\end{proof}

Now we are ready to prove Proposition \ref{prop:nn-approx}. The proof is based on a similar argument to that in {Section~4 of} \cite{kohler2021rate}. The key idea is that {neural network approximation is preserved under compositions}. To be specific, if $f$ and $g$ can be approximated by neural networks $\hat{f}$ and $\hat{g}$, each with an $\|\cdot\|_\infty$-error of $\epsilon$, and $g$ is an $L$-Lipschitz function, then $\hat{g} \circ \hat{f}$ approximates $g\circ f$ with an $\|\cdot\|_\infty$-error of $(L + 1)\epsilon$. The former `$\circ$' refers to the network composition introduced in Section \ref{sec:approx:notation}, and the latter `$\circ$' refers to function composition. Therefore, suppose the target $f_0$ is a composition of several low-dimensional smooth functions $g_1, \ldots, g_k$, then in order to approximate $f_0$ well, we only need to approximate each $g_i$ sufficiently well.

The current argument differs from that in \cite{kohler2021rate} in two aspects.
First, we rely on Lemma \ref{lemma:nn-approx} in terms of approximating smooth functions.
Compared with the proof in \cite{kohler2021rate}, it has three benefits. 
(1) It leads to more flexible choices of the depth and width parameters, whereas their proof requires  $N=\mathcal{O}(1)$ or $L = \mathcal{O}(\log N)$.
(2) The sub-networks $\hat{g}_i$ that approximate the components $g_i$ now have the same order of depth and width, i.e., for each $\hat{g}_i$, it has depth $\asymp L \log L$ and width $\asymp N\log N$. (3) The connection between the approximation error of the target function $f_0$ and the approximation errors of its components $g_i$, and the connection between the approximation error of $f_0$ and the network architecture configuration (i.e., depth and width) are described more clearly in our proof. To be specific, assume each $g_i$ is a $d_i$-variate, $(\beta_i,C)$-smooth function. From Lemma \ref{lemma:nn-approx} we see that $\|\hat{g}_i-g_i\|_\infty \lesssim (NL)^{-2\beta_i/d_i}$. Then the approximation error for $f_0$ can be controlled by $\max_{i} (NL)^{-2\beta_i/d_i}=(NL)^{-2\min_{i}\beta_i/d_i}$. Secondly, in order to better evaluate the approximation error based on compositions, it is necessary to truncate each smooth function it approximates because the smooth function approximation result only holds in the region $[0,1]^d$.

\begin{proof}[Proof of Proposition \ref{prop:nn-approx}]

\revise{We define $\beta_{\max} = \sup_{(\beta,t) \in \mathcal{P}} \beta$ and $t_{\max} = \sup_{(\beta, t) \in \mathcal{P}} t$.} Let $h^{(l)}_1(x)=f_0$ for arbitrary $f_0$ that belongs to the function class $\mathcal{H}(d, l,\mathcal{P})$ with fixed integer $l>1$. To obtain \revise{$h^{(l)}_1(x) \in \mathcal{H}(d, l,\mathcal{P})$}, one needs to compute various hierarchical composition models at level $i\in \{ 1, \ldots, l-1\}$, the number of which is denoted by $M_i$. At level $i\in \{1, \ldots, l\}$, let $h^{(i)}_j: \RR^d \to \RR$ be the $j$-th ($j\in \{1, \ldots, M_i\}$) hierarchical composition model. The dependence of $h^{(i)}_j$ on $h^{(i-1)}_{\cdot}$ depends on a $(\beta^{(i)}_j, C)$-smooth function $g^{(i)}_j : \RR^{t^{(i)}_j} \to \RR$ for some $(\beta^{(i)}_j, t^{(i)}_j) \in \mathcal{P}$. Recursively, $h^{(l)}_1(\cdot)$ is defined as 
\begin{align*}
	h^{(i)}_j(x) = g^{(i)}_j \Bigg( h^{(i-1)}_{\sum_{\ell=1}^{j-1} t^{(i)}_\ell + 1} (x), \ldots , h^{(i-1)}_{\sum_{\ell=1}^{j} t^{(i)}_\ell} (x)\Bigg) 
\end{align*}
for $j\in \{ 1,\ldots, M_i\}$ and $i \in \{ 2, \ldots, l\}$, and 
\begin{align*}
	h^{(1)}_j(x) = g_j^{(1)}\Bigg( x_{\pi(\sum_{\ell=1}^{j-1} t^{(1)}_\ell + 1)}, \ldots , x_{\pi(\sum_{\ell=1}^{j} t^{(1)}_\ell)}\Bigg)	
\end{align*}
for some $\pi:\{1,\ldots, M_1\}\to \{1,\ldots, d\}$. The quantities $M_1, \ldots, M_l$ can be defined recursively as 
\begin{align*}
	M_i = \begin{cases}
 		1 & \qquad i=l ,  \\
 		\sum_{j=1}^{M_{i+1}} t^{(i+1)}_j & \qquad i\in \{1,\ldots, l-1\},
 \end{cases}
\end{align*} \revise{then it is easy to see that $M_i\le t_{\max}^{l-i}$ for any $i\in \{1, \ldots, l\}$.}

Moreover, define 
\begin{align*} 
	K_{f_0} = \max_{i\in \{1,\ldots, l\}, j\in \{1,\ldots, M_i\}} \|g^{(i)}_j\|_\infty \lor 1
\end{align*} and let $\mathcal{D}_{j}^{(i)}$ be the domain of function $g^{(i)}_{j}$ under the hierarchical composition model, i.e., 
\begin{align*}
	\mathcal{D}_{j}^{(i)} = \begin{cases}\Bigg\{ \bigg(h^{(i-1)}_{\sum_{\ell=1}^{j-1} t^{(\ell)}_\ell + 1} (x), \ldots , h^{(i-1)}_{\sum_{\ell=1}^{j} t^{(\ell)}_\ell} (x) \bigg): x\in [0,1]^d\Bigg\}& i \in \{2,\ldots, l\} \\
		[0,1]^{t^{(1)}_j} & i=1.
 \end{cases}
\end{align*} 
\revise{It is easy to see that $K_{f_0}$ can be upper bounded by a universal constant $K$ that only depends on $t_{\max}$, $\beta_{\max}$ and $C$. } We thus have $\mathcal{D}_j^{(i)} \subseteq [-K, K]^{t^{(i)}_j}$. Without loss of generality we may assume $\mathcal{D}_j^{(i)}=[-K, K]^{t^{(i)}_j}$; otherwise we can simply extend $g_j^{(i)}$ to the cube $[-K,K]^{t^{(i)}_j}$ and the following analysis remains valid.

\medskip
\noindent {\sc Step 1. Construction of neural network.} In the rest of the proof, for notational convenience we use $\mathcal{F}(N,L)$ to denote a deep ReLU neural network with width $N$ and depth $L$. 

Fix $i \in \{1,\ldots, l\}$ and $j\in \{1, \ldots, M_i\}$. Note that $g^{(i)}_j$ is a $t^{(i)}_j$-variate, $(\beta^{(i)}_j,C)$-smooth function defined on $[-K, K]^{t^{(i)}_j}$. Define the function
\begin{align*}
	\bar{g}^{(i)}_j(z) = g^{(i)}_j(2Kz-K) ~~\text{for}~~ z \in [0,1]^d,
\end{align*} 
so that  $\bar{g}^{(i)}_j$ is a $t^{(i)}_j$-variate, $(\beta^{(i)}_j,KC)$-smooth function defined on $[0,1]^{t^{(i)}_j}$, and satisfies
\begin{align*}
	g^{(i)}_j(z) = \bar{g}^{(i)}_j\bigg(\frac{z+K}{2K}\bigg) ~~\text{for}~~ z\in \mathcal{D}_j^{(i)}.
\end{align*}

For any given $N, L\in \mathbb{N}$, Lemma \ref{lemma:nn-approx} ensures that there exists a function $\tilde{g}_j^{(i)}$ from some deep ReLU neural network $\tilde{g}_j^{(i)}$ with depth \revise{$L'=C_1 L\log_2 L + 2t^{(i)}_{j} \le C_1 L\log_2 L + 2t_{\max}$} and width $N'=C_2 N\log_2 N$ such that 
\begin{align*}
	\Bigg\|\tilde{g}_j^{(i)}\bigg(\frac{z+K}{2K}\bigg) - \bar{g}^{(i)}_j\bigg(\frac{z+K}{2K}\bigg)\Bigg\|_\infty \le C_3 (NL)^{-2\beta_{j}^{(i)}/t_{j}^{(i)}} \le C_3 (NL)^{-2\gamma^*}\text{ for all } z \in \mathcal{D}_j^{(i)}.
\end{align*}
It should be noted that the constants $C_1$, $C_2$ and $C_3$ may depend on the parameters $(\beta^{(i)}_j, t_j^{(i)})$. Since there are only finitely many $g^{(i)}_j$, we can simply choose $(C_1 , C_2)$ to be the largest among all $(C_1 , C_2)$ depending on $(\beta^{(i)}_j, t_j^{(i)})$ because $ \mathcal{F}(N, L) \subseteq \mathcal{F}(N', L')$ if $N\le N'$ and $L\le L'$. Similarly, we also choose $C_3$ to be the largest among all $C_3$'s. \revise{Here $C_1$--$C_3$ are also universal constants that only depend on $\beta_{\max}$ and $t_{\max}$.}

Next, consider a `truncated' version of $\tilde{g}_j^{(i)}$, defined as
\begin{align*}
	\hat{g}_{j}^{(i)}(z) = \max\{\min\{\tilde{g}_{j}^{(i)}(z),K\}, -K\}=\sigma(2K-\sigma(K-\tilde{g}_{j}^{(i)}(z))) - K ,
\end{align*} 
where $\sigma(x) = \max\{x, 0\}$ is the ReLU activation function. 
Note that $\|T_K f-g\|_\infty \le \epsilon$ if $\|g\|_\infty \le K$ and $\|f-g\|_\infty \le \epsilon$. Therefore, we have $\hat{g}_{j}^{(i)} \in \mathcal{F}(N', L'+2)$ and 
\begin{align}
\label{eq:nn-approx-error-single}
	\Bigg\|\hat{g}_j^{(i)}\bigg(\frac{z+K}{2K}\bigg) - \bar{g}^{(i)}_j\bigg(\frac{z+K}{2K}\bigg)\Bigg\|_\infty \le C_3 (NL)^{-2\beta_{j}^{(i)}/t_{j}^{(i)}} \le C_3 (NL)^{-2\gamma^*} ~\text{ for all }~ z \in \mathcal{D}_j^{(i)}.
\end{align} 

Now we are ready to construct a neural network $f^\dagger$ to approximate $f_0=h^{(l)}_1$. To be specific, our construction proceeds recursively as 
\begin{align*}
 	\hat{h}^{(1)}_j(x) = \hat{g}^{(1)}_j\Bigg(\frac{x_{\pi(\sum_{\ell=1}^{j-1} t^{(1)}_\ell + 1)}+K}{2K}, \ldots , \frac{x_{\pi(\sum_{\ell=1}^{j} t^{(1)}_\ell)}+K}{2K}\Bigg)
 \end{align*} and
 \begin{align*}
  	\hat{h}^{(i)}_j(x) = \hat{g}^{(i)}_j\left( \frac{\hat{h}^{(i-1)}_{\sum_{\ell=1}^{j-1} t^{(i)}_\ell + 1} (x)+K}{2K}, \ldots , \frac{\hat{h}^{(i-1)}_{\sum_{\ell=1}^{j} t^{(i)}_\ell} (x)+K}{2K}\right).
 \end{align*} 
 The corresponding composited network, denoted by $\hat{f} = \hat{g}(\alpha_1 \hat{h}_1(x) + \beta_1, \ldots, \alpha_k \hat{h}_k(x) + \beta_k)$, is realized by first applying network composition $L_i \circ \hat{h}_i$ for each $i \in \{1,\ldots, k\}$, where $L_i(x) = \alpha_i x + \beta_i$, followed by network parallelization $(L_1 \circ \hat{h}_1(x), \ldots, L_k \circ \hat{h}_k(x))$, and then followed by network composition $\hat{g} \circ (L_1 \circ \hat{h}_1(x), \ldots, L_k \circ \hat{h}_k(x))$. For $i \in \{ 1, \ldots, k\}$, assume the deep ReLU neural network $\hat{h}_i: \mathbb{R}^d\to\mathbb{R}$ has depth $L_{h_i}$ and width $N_{h_i}$, and the deep ReLU neural network $\hat{g}$ has depth $L_g$ and width $N_g$. Following the discussions on the depth and width of network composition and network parallelization  in Section~\ref{sec:approx:notation}, we conclude that the composited network $\hat{f}$ has depth $(\max L_{h_i}) + L_g$ and width $(\sum_{i=1}^k N_{h_i}) \lor N_g$.

Based on the recursive construction of neural networks, {we set $f^\dagger$ to be $\hat{h}^{(l)}_1$. Now it suffices to calculate the width, depth and approximation error of $\hat{h}^{(l)}_1$.} These quantities will also be calculated recursively. 
 
\medskip
\noindent {\sc Step 2. Specifying width and depth.} The goal is to calculate the width and depth of each $\hat{h}^{(i)}_j$ from $i=1$ to $i=l$. Let ${N}^{(i)}_j$ and $L^{(i)}_j$ be the width and depth of the network $\hat{h}^{(i)}_j$. First, by Lemma \ref{lemma:nn-approx} and the discussion before, for each $j \in \{1, \ldots , M_i\}$, the depth and width satisfy
\begin{align*}
	 L^{(1)}_{j} = C_1 L\log_2 L + 2(t^{(1)}_j+1), \qquad N^{(1)}_{j} = C_2 N \log_2 N.
\end{align*} 

Now suppose we have already calculated the depth and width for all $\hat{h}^{(i-1)}_j$. Then, based on our discussion of the composited network before, for any given $j\in\{1,\ldots, M_i\}$, the depth and width of $\hat{h}^{i}$ satisfy
\begin{align*}
	L^{(i)}_j = \max_{j'\in P(i,j)}L^{(i-1)}_j + C_1 L\log_2 L + 2(\revise{t_{\max}}+1), \qquad N^{(i)}_j = \sum_{j'\in P(i,j)} N^{(i-1)}_{j'} ,
\end{align*} 
where $P(i,j)=\{\sum_{\ell=1}^{j-1} t_{\ell}^{(i)}+1, \ldots , \sum_{\ell=1}^j t_{\ell}^{(i)}\}$. Using the above recursive calculation, the depth of $f^\dagger=\hat{h}^{(l)}_1$ can be written as
\begin{align*}
	\bar{L} = {(l C_1)} L \log_2 L + \revise{(2l) t_{\max}} \le C_4 L \log_2 L + C_5 \le c_3 L \log L,
\end{align*} 
while the depth of $f^\dagger=\hat{h}^{(l)}_1$ can be written as \begin{align*}
	\bar{N} = N^{(l)}_1 \le \Big(\frac{M_1 C_2}{\log 2}\Big) N\log N \le \revise{\underbrace{\Big(\frac{t_{\max}^{l-1} C_2}{\log 2}\Big)}_{c_4}} N\log N.
\end{align*} 

\medskip
\noindent {\sc Step 3. Approximation error.} We claim that \begin{align}
\label{eq:nn-approx-error-est}
	\|\hat{h}^{(i)}_j - h^{(i)}_j\|_\infty \le C_3 ( C\sqrt{\revise{t_{\max}}} + 1)^{i-1} (NL)^{-2\gamma^*}.
\end{align} 
We prove inequality \eqref{eq:nn-approx-error-est} by mathematical induction, starting with the case of $i=1$. By our discussion in Step~1, let $z = \big( x_{\pi(\sum_{\ell=1}^{j-1} t^{(1)}_\ell + 1)}, \ldots, x_{\pi(\sum_{\ell=1}^{j} t^{(1)}_\ell)}\big)$, we have for all $x\in [0,1]^d$ that
\begin{align*}
	|\hat{h}^{(1)}_j(x) - h^{(1)}_j(x)| &= \Bigg|\hat{g}^{(1)}_j\bigg(\frac{z+K}{2K}\bigg) - g_j^{(1)}(z)\Bigg| \\
	&= \Bigg|\hat{g}^{(1)}_j\bigg(\frac{z+K}{2K}\bigg) - \bar{g}_j^{(1)} \bigg(\frac{z+K}{2K} \bigg)\Bigg| \\
	& \leq  C_3 (NL)^{-2\gamma^*} ,
\end{align*}
where the last step follows from \eqref{eq:nn-approx-error-single}.

Suppose \eqref{eq:nn-approx-error-est} holds for $i-1$ and $j\in \{1,\ldots, M_{i-1}\}$. Write $z=\big( h^{(i-1)}_{\sum_{\ell=1}^{j-1} t^{(i)}_\ell + 1} (x), \ldots, h^{(i-1)}_{\sum_{\ell=1}^{j} t^{(i)}_\ell} (x)\big)$ and $\hat{z} = \big( \hat{h}^{(i-1)}_{\sum_{\ell=1}^{j-1} t^{(i)}_\ell + 1} (x), \ldots, \hat{h}^{(i-1)}_{\sum_{\ell=1}^{j} t^{(i)}_\ell} (x)\big)$ for $x\in[0,1]^d$, we have
\begin{align*}
	|\hat{h}^{(i)}_j(x) - h^{(i)}_j(x)| &= \Bigg|\hat{g}^{(i)}_j\bigg(\frac{\hat{z}+K}{2K}\bigg)-g_j^{(i)}(z)\Bigg| \\
	&\leq \Bigg|\hat{g}^{(i)}_j\bigg(\frac{\hat{z}+K}{2K}\bigg)-g_j^{(i)}(\hat{z})\Bigg| + |g_j^{(i)}(\hat{z})-g_j^{(i)}(z)| .
\end{align*}
Together, \eqref{eq:nn-approx-error-single} and the fact that $\hat{z} \in [-K, K]^{t^{(i)}_j}$ imply
\begin{align}
\label{eq:nn-approx-est1}
\Bigg|\hat{g}^{(i)}_j\bigg(\frac{\hat{z}+K}{2K}\bigg)-g_j^{(i)}(\hat{z})\Bigg| = \Bigg|\hat{g}^{(i)}_j\bigg(\frac{\hat{z}+K}{2K}\bigg)-\bar{g}_j^{(i)}\bigg(\frac{\hat{z}+K}{2K}\bigg)\Bigg|	\le C_3 (NL)^{-2\gamma^*}.
\end{align}
Since $g^{(i)}_j$ is at least $C$-Lipschitz, we further have
\begin{align*}
|g_j^{(i)}(\hat{z})-g_j^{(i)}(z)| &\le C \|\hat{z} - z\|_2 \\
	&\le C\sqrt{\revise{t_{\max}}} \|\hat{z} - z\|_\infty \\
	& \leq  C\sqrt{\revise{t_{\max}}} (1 + C\sqrt{\revise{t_{\max}}})^{i-2} C_3 (NL)^{-2\gamma^*} ,
\end{align*} 
where the last inequality follows from the induction. Putting together the pieces, we obtain
\begin{align*}
	|\hat{h}^{(i)}_j(x) - h^{(i)}_j(x)|
	&\leq \Bigg|\hat{g}^{(i)}_j\bigg(\frac{\hat{z}+K}{2K}\bigg)-g_j^{(i)}(\hat{z})\Bigg| + |g_j^{(i)}(\hat{z})-g_j^{(i)}(z)| \\
	&\le C_3 (NL)^{-2\gamma^*} + C_3 C\sqrt{\revise{t_{\max}}} (1 + C\sqrt{\revise{t_{\max}}})^{i-2} (NL)^{-2\gamma^*} \\
	&\le C_3 (1 + C\sqrt{\revise{t_{\max}}})^{i-1} (NL)^{-2\gamma^*}.
\end{align*}
Finally, we conclude that
\begin{align*}
	\|f^\dagger - f_0\|_\infty = \|\hat{h}^{(l)}_1 - h^{(l)}_1\|_\infty \le \underbrace{C_3 ( C\sqrt{\revise{t_{\max}}} + 1)^{l-1}}_{c_5} (NL)^{-2\gamma^*} ,
\end{align*}
as claimed.
\end{proof}

\subsection{Proof of Theorem \ref{thm:approx-n2l2-dim-d}}

To prove theorem \ref{thm:approx-n2l2-dim-d}, we need several technical lemmas in \cite{lu2020deep} to build some basic modules via deep ReLU neural networks. Having these basic modules in hand, we can apply parallelization or composition to construct more complicated functions that we are interested in.

\begin{lemma}[Step function]
\label{lemma:stepfunc2}
	For any $N, L, d\in \mathbb{N}^+$, and $\Delta\in (0,1/3K]$ with $K=\lfloor L^{2/d}\rfloor \lfloor N^{1/d}\rfloor^2$, there exists an ReLU neural network $\phi$ with depth $4L+5$ and width $4\lfloor N^{1/d}\rfloor+3$ such that 
	\begin{align*}
		\phi(x) = k ~~\text{ if }~ x\in [k/K, (k+1)/K-1_{\{k+1<K\}}\Delta]	
	\end{align*} for $k= 0 ,1 , \ldots, K-1$.
\end{lemma}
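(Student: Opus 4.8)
The plan is to give an explicit, constructive proof, in the spirit of the step-function lemmas of \cite{lu2020deep}. Write $M := \lfloor N^{1/d}\rfloor$ and $P := \lfloor L^{2/d}\rfloor$, so that $K = M^2 P$. A point $x\in[0,1]$ lying in a good interval $I_k := [k/K,\,(k+1)/K - 1_{\{k+1<K\}}\Delta]$ has interval index $k$ that factors uniquely as $k = P\cdot j + r$, where $j\in\{0,\dots,M^2-1\}$ indexes the length-$1/M^2$ super-block containing $x$ and $r\in\{0,\dots,P-1\}$ indexes the fine position of $I_k$ inside that super-block. Accordingly the target network will be realized in the form $\phi(x) = P\cdot\phi_{\mathrm c}(x) + \phi_{\mathrm f}(\tilde x)$, where $\phi_{\mathrm c}$ extracts the coarse index $j$, the quantity $\tilde x$ is an affine re-zoom of the active super-block onto $[0,1]$ computed by the same sub-network, and $\phi_{\mathrm f}$ extracts $r$ from $\tilde x$. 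Because the output must equal $k$ \emph{exactly} on $I_k$ (not merely approximately), every building block must be exactly piecewise constant on the sub-intervals that later blocks will subdivide, with all continuity ("ramps") confined to the width-$\le\Delta$ gaps; the hypothesis $\Delta\le 1/(3K)$ is precisely what guarantees this is possible.

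First I would build the coarse part. A continuous piecewise-linear scalar function with $m$ plateaus $0,1,\dots,m-1$ separated by steep ramps of horizontal extent $\le\Delta$ is a sum of $O(m)$ shifted/scaled ReLU hinges, hence a one-hidden-layer net of width $O(m)$. To get the $M^2$-plateau coarse function with width only $O(M)$, I factor $M^2 = M\cdot M$ and use the standard nested-step-function gadget: a first width-$O(M)$, depth-$O(1)$ block reads which of the $M$ length-$1/M$ blocks contains $x$ (the digit $j_1$) and emits on separate channels that digit and the affine re-zoom $x^{(1)}$ of that block onto $[0,1]$; a second identical block reads the digit $j_2$ from $x^{(1)}$; and one more affine layer forms $\phi_{\mathrm c}=M j_1 + j_2$ together with the re-zoomed coordinate $\tilde x$ of the super-block. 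This costs $O(1)$ depth and $O(M)$ width, and, because the super-block boundaries $j/M^2$ are themselves gap locations of the original $K$-partition, each block is exactly constant on the intervals that survive after removing the $\Delta$-gaps.

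Next I would build the fine part $\phi_{\mathrm f}$, which must produce $P = \lfloor L^{2/d}\rfloor$ plateaus on $[0,1]$ using depth $O(L)$ and width $O(M)$. Here depth buys resolution: composing $O(L)$ shallow step/sawtooth blocks multiplies the number of resolvable sub-intervals, so after $L$ layers one can resolve at least $P$ of them — this is the familiar sawtooth-composition mechanism underlying depth-separation and smooth-approximation results — and again each block is arranged to be exactly constant on the sub-intervals that the next block refines, with ramps hidden inside the gaps, so no error accumulates. Re-zooming the super-block shrinks the local gap width to $M^2\Delta\le 1/(3P)$, which is exactly the tolerance a $P$-step function with gaps can tolerate, and each further composition shrinks it again in a controlled way. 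Composing $\phi_{\mathrm f}$ with $\tilde x$ and forming $P\phi_{\mathrm c}(x)+\phi_{\mathrm f}(\tilde x)$ yields a function equal to $k$ precisely on $I_k$. Throughout I run the pieces in parallel with a small fixed set of bookkeeping channels — one carrying the current re-zoomed coordinate, one accumulating the partial index, the rest realizing the active block — so the total width stays at $4M+3$; counting $4$ hidden layers per composed fine block plus a constant overhead gives the stated depth $4L+5$, and small values of $L$ (where "resolve $P$ sub-intervals in depth $O(L)$" is vacuous) are absorbed by the additive constant and by padding with identity layers.

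The step I expect to be the main obstacle is the exactness-under-composition bookkeeping. Generic results of this flavour only deliver $\phi\approx k$ up to a tolerance, whereas here $\phi\equiv k$ is required on every good interval. This forces one to (i) make each shallow block exactly, not approximately, constant on the intervals the subsequent blocks subdivide; (ii) align the locations and widths of the $\Delta$-gaps consistently across all $L + O(1)$ composed blocks, so that a point in any good interval remains in good intervals of every block — this is exactly where $\Delta\le 1/(3K)$ enters; and (iii) do all of this while pinning the width at $4\lfloor N^{1/d}\rfloor+3$ and the depth at $4L+5$ via careful use of identity/padding channels rather than the looser $O(\cdot)$ counts. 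The remaining parts — writing out the ReLU hinge sums, the affine re-zoom maps, and the elementary arithmetic identities — are routine.
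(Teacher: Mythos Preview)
The paper does not actually prove this lemma; it is quoted from \cite{lu2020deep} as a technical building block, so there is no in-paper argument to compare against. Your proposal is an attempt to reconstruct that construction, and the overall architecture --- a coarse--fine decomposition in which each stage outputs a digit of the interval index together with an affine re-zoom of the active sub-interval, with the digits accumulated into $\phi$ on a side channel --- is the right one and matches the approach of \cite{lu2020deep}.

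The one genuine gap is your treatment of the fine part $\phi_{\mathrm f}$. Invoking ``the familiar sawtooth-composition mechanism underlying depth-separation'' is the wrong primitive here: iterating a tent map $L$ times does produce $2^{O(L)}$ sub-intervals, but the resulting function is periodic and carries no information about \emph{which} sub-interval $x$ lies in --- exactly the information a step function must output. What you need for the fine part is the same digit-extraction-plus-rezoom mechanism you already (correctly) use for the coarse part, together with an accumulator channel carrying the partial index; that accumulator is precisely what distinguishes a step-function construction from a sawtooth one. Concretely, for $P=\lfloor L^{2/d}\rfloor$ plateaus within depth $O(L)$ and width $O(1)$ one can, e.g., factor $P$ into at most two factors each $\le L$ and run two sequential passes that add one hinge per layer while passing $x$ and the running index through on auxiliary channels. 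Once this is fixed, the remaining gap-alignment and constant-counting issues you flag (getting exactly $4L+5$ and $4\lfloor N^{1/d}\rfloor+3$) are indeed delicate but routine, and are handled explicitly in \cite{lu2020deep}.
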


\begin{lemma}[Point fitting]
\label{lemma:pointfitting}
	For any $N, L\in \mathbb{N}^+$, and $\theta_{i} \in \{0,1\}$ for $i\in\{0,\ldots, N^2L^2-1\}$, there exists a function $\phi: \RR \to \RR$ determined by an ReLU neural network with depth $5L+7$ and width $8N+6$ such that 
	\begin{align*}
		\phi(i) = \theta_i ~~\text{ for }~ i = 0, 1, \ldots, (NL)^2-1 .	
	\end{align*}
\end{lemma}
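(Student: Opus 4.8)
The plan is to prove this ``memorization'' statement via a \textbf{bit-extraction} construction, in the spirit of \cite{lu2020deep}. The obstacle to a naive argument is that continuous piecewise-linear interpolation through $N^{2}L^{2}$ prescribed points costs width of order $N^{2}L^{2}$; to fit within a budget that is only \emph{linear} in $N$ and in $L$ one must exploit composition (depth). The idea is to hard-wire the constant $y=\sum_{i=0}^{N^{2}L^{2}-1}\theta_{i}2^{-(i+1)}\in[0,1)$ as a bias, so that $\theta_{i}=\lfloor 2\cdot\mathrm{frac}(2^{i}y)\rfloor$, and then build a network that, on integer input $i$, effectively shifts the binary point of $y$ to position $i$ and reads off the next bit --- but does so in two ``coarse/fine'' stages so that no single shift amount exceeds $O(L)$ and the branching is only $O(N)$ wide.

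Concretely I would realize $\phi$ as a composition of four modules, glued by the composition/parallelization rules of Section~\ref{sec:approx:notation}. First, an \textbf{index-splitting} module writes $i=pL^{2}+s$ with $p\in\{0,\dots,N^{2}-1\}$ and $s\in\{0,\dots,L^{2}-1\}$, then $s=q'L+q$ and $p=p'N+p''$ with all new digits in $\{0,\dots,L-1\}$ or $\{0,\dots,N-1\}$; on an integer argument each ``$\bmod$'' and ``$\lfloor\cdot/k\rfloor$'' is realized exactly by a composition of the sawtooth/step map underlying Lemma~\ref{lemma:stepfunc2}, costing $O(L)$ depth and $O(N)$ width, and I carry $(p',p'',q',q)$ forward as side channels. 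Second, a \textbf{word-selection} module interpolates, on the $N\times N$ integer grid indexed by $(p',p'')$, the look-up table $p\mapsto w_{p}$ where $w_{p}=\sum_{r=0}^{L^{2}-1}\theta_{pL^{2}+r}2^{-(r+1)}$ packs $L^{2}$ labels into one real number; a single hidden layer of width $O(N)$ built from tensor-product bump functions does this exactly, in depth $O(1)$. Third and fourth, two \textbf{bit-extraction} modules take $(w_{p},q',q)$: the first peels off the $L$-bit sub-word of $w_{p}$ starting at binary position $q'L$, by iterating $L$ times a ``multiply by $2^{L}$, drop the integer part, keep only the top $L$ bits'' step that is applied only while the loop counter is below $q'$; the second peels bit $q$ off that sub-word by the same device at resolution $2$. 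Each is $O(L)$ layers of $O(1)$ width, and the final output is $\theta_{i}$.

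Stitching the pieces together, the depths add to $O(L)$ and the widths add/max to $O(N)$; the asserted constants $5L+7$ and $8N+6$ come out of a careful but mechanical bookkeeping of the constants in Lemma~\ref{lemma:stepfunc2} and in the shift steps, after padding (Section~\ref{sec:approx:notation}) to make the sub-network shapes compatible. Alternatively, one can set up an induction on $L$ that extends the addressable range by one composed sawtooth layer at a time.

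The part I expect to be hardest is not the high-level architecture but guaranteeing \textbf{exact} equality $\phi(i)=\theta_{i}$ while staying within the tight linear budgets. The floor/step/shift maps are only piecewise linear on short intervals, so one must verify that at every integer input $i$ all intermediate quantities land strictly inside the ``safe'' cells of every sawtooth module (never on a breakpoint), that the packed reals $w_{p}$ and the successively shifted fractions stay in $[0,1)$ with the correct finite binary length, and that the two-stage shift genuinely reconstructs the $i$-th bit of $y$. Propagating these exact-arithmetic invariants through all the composed modules, while simultaneously compressing the constant factors down to $8N+6$ and $5L+7$, is the technical core; the remaining composition bookkeeping is routine given Section~\ref{sec:approx:notation}.
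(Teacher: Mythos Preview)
The paper does not give its own proof of this lemma; it is lifted from \cite{lu2020deep} and invoked as a black box in the proof of Theorem~\ref{thm:approx-n2l2-dim-d}. Your bit-extraction strategy --- hard-wire the labels into binary constants and recover $\theta_i$ by a coarse/fine shift so that no single lookup or shift exceeds the $O(L)$-depth, $O(N)$-width budget --- is exactly the approach in \cite{lu2020deep}, so the high-level plan is correct.

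There is, however, a genuine gap in your word-selection module. You claim that a ``single hidden layer of width $O(N)$ built from tensor-product bump functions'' interpolates the $N^{2}$ reals $w_{p}$ on the $N\times N$ integer grid exactly, in depth $O(1)$. This fails on two counts. First, ReLU networks cannot form tensor products: the pointwise product of two one-dimensional bumps is not piecewise linear, and min/max surrogates give only a single-cell indicator, so coupling them to $N^{2}$ independent coefficients $w_{p}$ still costs $N^{2}$ units. Second, and more basically, a single hidden layer of width $O(N)$ carries only $O(N)$ output weights and cannot match $N^{2}$ generically prescribed real targets --- and the $w_{p}$ are arbitrary $L^{2}$-bit reals, since the $\theta_i$ are arbitrary. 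In \cite{lu2020deep} this step is organized differently: one packs only $L$ bits per word (not $L^{2}$), producing $N^{2}L$ words $a_{m}\in[0,1)$, and then uses a separate, itself nontrivial, CPWL interpolation lemma that fits $N^{2}L$ arbitrary reals at integer abscissae with width $O(N)$ and depth $O(L)$; a single $L$-step bit extraction then finishes. Your index-splitting and bit-extraction pieces are fine, but the $N^{2}$-point real-valued lookup in width $O(N)$ and depth $O(1)$ must be replaced by this depth-$O(L)$ device, and the constants $5L+7$, $8N+6$ come from that construction rather than yours.
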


With the above step function and point fitting modules, we are ready to prove Theorem \ref{thm:approx-n2l2-dim-d}.

\begin{proof}[Proof of Theorem \ref{thm:approx-n2l2-dim-d}]
	Our target point fitting network $f^\dagger$ consists of two modules: the encoder module $f_e(\cdot)$ and the decoder module $f_d(\cdot)$. The encoder module $f_e(\cdot)$ takes $x \in [0, 1]^d$ as input and outputs an integer index $I(\alpha)=\sum_{i=1}^d (\alpha_i-1) K^{i-1}$ with $\alpha$ satisfying $x\in Q_\alpha(\Delta)$. The decoder module takes the index $I(\alpha) \in \{0,\ldots, K^d-1\}$ as input and outputs a value that approximates $y_\alpha$.
	
 \noindent {\sc Step 1. Construct Encoder $f_e(\cdot)$. }
Given any $N, L\in \mathbb{N}^+$, let $\tilde{L}=\lfloor {L}^{1/d}\rfloor^d$ so that $\tilde{L} \le L$ and $\lfloor \tilde{L}^{2/d}\rfloor = \lfloor L^{1/d}\rfloor^{2}$. For any $i\in \{1,\ldots, d\}$, applying Lemma \ref{lemma:stepfunc2} with $L=\tilde{L}$ and $K =\lfloor \tilde{L}^{2/d}\rfloor\lfloor N^{1/d}\rfloor^2=\lfloor L^{1/d}\rfloor^2 \lfloor N^{1/d}\rfloor^2$, 
there exists an ReLU neural network $\phi_i$ with depth at most $4\tilde{L}+5 \le 4L+5$ and width $4N+3$ such that 
	\begin{align*}
		\phi_i(x_i) = k ~~\text{ if }~~ x_i \in[k/K, (k+1)/K - 1_{\{k+1<K\}}\Delta]	
	\end{align*} for $k\in \{0,\ldots,K-1\}$.
	Via parallelization, the function $g_1=(\phi_1,\ldots, \phi_d) \in \mathcal{F}(d, 4L+5, (4N+3)d, d) : \mathbb{R}^d\to\mathbb{R}^d$ satisfies
	\begin{align*}
		g_1(x) = (\alpha_1-1, \ldots, \alpha_d-1) ~~\text{ if }~~ x\in Q_\alpha(\delta).
	\end{align*} Moreover, let $g_2(x) = \sum_{i=1}^d x_i K^{i-1}$ so that $g_2 \in \mathcal{F}(d, 0, 0, 1)$. By the composition argument, we have $f = g_2 \circ g_1 \in \mathcal{F}(d, 4L+5, (4N+3)d, 1)$, satisfying
	\begin{align*}
		f_e(x) = I(\alpha) ~~\text{ if }~~ x \in Q_\alpha(\delta) 	
	\end{align*} for all $\alpha \in \{1,\ldots, K\}^d$.
	
\noindent {\sc Step 2. Construct Decoder $f_d(\cdot)$ in Two Ways.} First, note that every $u\in[0,1]$ can be written as $u=\sum_{i=0}^\infty 2^{-i} \theta_i$.
Set $u^{(r)} = \sum_{i=0}^{r} 2^{-i}\theta_i$ with $r=\lceil \log (1/\epsilon \rceil$ such that
	\begin{align*}
		| u - u^{(r)} |	\le \sum_{i=r}^\infty 2^{-i} \theta_i \le \sum_{i=r+1}^\infty 2^{-i} = 2^{-r}.
	\end{align*} Therefore, we have $|u^{(r)}-u| \le \epsilon$. Moreover, for any $u=\sum_{i=0}^s 2^{-i} \theta_i$ with $\theta_i\in\{0,1\}$ and some $s\le r$, we have $u-u^{(r)}=0$.
	
	By the above discussions, we only need to build a neural network to fit $y_\alpha^{(r)}$ for each $\alpha$. To be specific, in this part, we target to build a neural network $f_d(\cdot)$ such that 
	\begin{align}
		\label{eq:thm:approx-l2n2-fit-decoder}
		f_d(I(\alpha)) = y_\alpha^{(r)} ~~\text{ for all }~	\alpha \in \{1,\ldots, K\}^2.
	\end{align}
    Let $S=(\lfloor L^{1/d} \rfloor^d \lfloor N^{1/d}\rfloor^d)^{2}$ and note that $I(\cdot)$ is a bijective map from $\{1,\ldots, K\}^d$ to $\{0,\ldots, S-1\}$. Thus we let $y_\alpha^{(r)}=\sum_{i=0}^r 2^{-i} \theta_{i,I(\alpha)}$. Then for each $i \in \{0,\ldots, r\}$, it follows from Lemma \ref{lemma:pointfitting} that there exists some $\psi_i(\cdot) \in \mathcal{F}(1, 5\lfloor L^{1/d} \rfloor^d + 7, 8\lfloor N^{1/d}\rfloor^d + 6, 1)\subset \mathcal{F}(1,5L+7,8N+6,1)$ such that 
	\begin{align}
	\label{eq:thm:approx-l2n2-fit-psi}
		\psi_i(I(\alpha)) = \theta_{i,I(\alpha)} ~~\text{ for all }~ \alpha \in\{1,\ldots, K\}^d. 
	\end{align}

	Finally, we use $\psi_i(\cdot)$ to construct $f_d(\cdot)$ in two ways.
	
	\noindent
	\emph{Case 1 (Parallel). } We claim that there exists some $f_d \in \mathcal{F}(1, 5L+7, (8N+6)(r+1), 1)$ such that $f_d(I(\alpha))=y_\alpha^{(r)}$. Via the parallelization argument, we have $g = (\psi_0, \cdots, \psi_r) \in \mathcal{F}(1, 5L+7, (8N+6)(r+1), r+1)$. Moreover, we have $h(x)=\sum_{i=0}^r 2^{-i} x_i \in \mathcal{F}(r+1,0,0,1)$. Then using the composition argument we conclude that $f_d^{(1)}=h\circ g \in \mathcal{F}(1,5L+7, (8N+6)(r+1), 1)$ and 
	\begin{align*}
		f_d^{(1)}(x) = \sum_{i=0}^r 2^{-i} \psi_i(x) ~~\text{ for any }~ x.
	\end{align*} 
	Combining this with \eqref	{eq:thm:approx-l2n2-fit-psi}, it is easy to see that $f_d^{(1)}$ meets the requirements in \eqref{eq:thm:approx-l2n2-fit-decoder}.
	
    \noindent
	\emph{Case 2 (Series). } Here we consider a different construction. Since $\text{Id}(x)=x \in \mathcal{F}(1,1,2,1)$, and by the parallelization argument, we have $g_0(x)=(\psi_0(x), \text{Id}(x)) \in \mathcal{F}(1,5L+7, 8N+8, 2)$. 
	Moreover, define
	\begin{align*}
		g_i(x,y) = \begin{cases}
 		(2^{-i}\psi_i(y)+x, \text{Id}(y))& \qquad 1\le i < r , \\
 		(2^{-i}\psi_i(y)+x) & \qquad i=r .
 		\end{cases}
	\end{align*}
	Note that $g_i$ can be constructed by first applying parallelization to $(\psi_i, \text{Id})$, followed by a composition with a linear function.
	We thus have $g_i(x,y)\in \mathcal{F}(2, 5L+7, 8N+10, 2)$.
	Finally, by the composition argument and induction, we conclude that $f^{(2)}_d = g_d \circ g_{d-1} \circ \cdots \circ g_0 \in \mathcal{F}(1,(5L+7)(r+1), 8N+10, 1)$ and 
	\begin{align*}
		f^{(2)}_d(x) = \sum_{i=0}^r 2^{-i} \psi_i(x) .
	\end{align*} 
	Combined with the property of $\psi(\cdot)$ in \eqref{eq:thm:approx-l2n2-fit-psi}, our constructed $f^{(2)}_d$ satisfies the requirements in \eqref{eq:thm:approx-l2n2-fit-decoder}.
	
	\noindent
	\noindent {\sc Step 3. Composition of $f_e(\cdot)$ and $f_d(\cdot)$.} To conclude, let $f^\dagger_1 = f_d^{(1)} \circ f_e$ and $f_2^\dagger = f_d^{(2)} \circ f_e$. By  the composition argument, we have
	\begin{align*}
		f^\dagger_1 \in \mathcal{F}(d, 9L+12, (4N+3)d\lor (8N+6)(r+1), 1)	
	\end{align*} and
	\begin{align*}
		f^\dagger_2 \in \mathcal{F}(d, 4L+5+(5L+7)(r+1), (4N+3)d \lor (8N+10), 1) .
	\end{align*} 
	Moreover, for each $s\in \{1 ,2 \}$, 
	\begin{align*}
		f^\dagger_s(x) = f_d(f_e(x)) = f_d^{(s)}(I(\alpha)) = y_\alpha^{(r)} ~~\text{ if }~ x\in Q_\alpha(\Delta)  , \ \ \alpha = 1,\ldots, K. 
	\end{align*}  
	From the discussions in Step 2, the claimed approximation error of $f^\dagger_s$ holds.
\end{proof}

\subsection{Proof of Theorem \ref{thm:approx-noise-dim-d}}

We first prove a weaker version of Theorem \ref{thm:approx-noise-dim-d}.

\begin{proposition}
\label{prop:noise-pre}
	For any given $N, L\in \mathbb{N}^+$, let $K=\lfloor N^{1/d}\rfloor^2 \lfloor L^{1/d}\rfloor^2$. Then for any $\Delta_1 \in (0, 1/3K]$, $\Delta_2 >0$, suppose $(x_\alpha)_{\alpha \in \mathcal{A}}$ is an arbitrary set of  points indexed by $\mathcal{A}=\{1,\ldots, K\}^d$ satisfying $x_\alpha \in Q_\alpha(\Delta_1)$, where $Q_\alpha(\Delta)$ is defined in \eqref{eq:good-region-q-alpha}. Then there exist an ReLU neural network $f_1^\dagger$ with depth $c_{37} L \lceil \log_2 (1/\Delta_2) \rceil$ and width $c_{38} N$ and an ReLU neural network $f_2^\dagger$  with depth $c_{39} L$, $c_{40} N \lceil \log_2 (1/\Delta_2) \rceil$ satisfying 
	\begin{align*}
		f^\dagger_s(x_\alpha) = 1 ~\text{ for all }~ \alpha \in \mathcal{A},
	\end{align*} and \begin{align*}
 		f^\dagger_s(x) = 0 ~\text{ if }~ x\in Q_\alpha(\delta) ~\text{ for some }~ \alpha \in \mathcal{A} ~\text{ subject to }~ \|x - x_\alpha\|_\infty \ge \delta_2 , 	
 \end{align*} 
 where $s=1, 2$.
\end{proposition}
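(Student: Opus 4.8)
The plan is to realize $f_s^\dagger$ as a composition of three modules. First, a \emph{localization/encoding} module that maps each $x\in Q_\alpha(\Delta_1)$ to its integer index $I(\alpha)=\sum_{i=1}^d(\alpha_i-1)K^{i-1}\in\{0,\ldots,K^d-1\}$; this is exactly the encoder $f_e$ built in the proof of Theorem~\ref{thm:approx-n2l2-dim-d} via Lemma~\ref{lemma:stepfunc2} and parallelization, with depth $\lesssim L$ and width $\lesssim N$. Second, a \emph{bump} module that reads $x$ together with a reconstruction of the center $x_\alpha$ (obtainable because $x_\alpha$ is determined by $\alpha$, hence by $I(\alpha)$, via a point-fitting network as in Lemma~\ref{lemma:pointfitting}) and produces a continuous piecewise-linear function of $\|x-x_\alpha\|_\infty$ that equals $1$ at $x_\alpha$ and $0$ once $\|x-x_\alpha\|_\infty\ge\Delta_2$; the $\ell_\infty$-norm itself is a ReLU network of depth $\lesssim\log d$ and width $\lesssim d$ by Lemma~\ref{lemma:sup-norm-vec}, and the one-dimensional ``tent'' $t\mapsto(1-t/\Delta_2)_+$ is a two-node network. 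Third, I combine these so that outside $Q$ (the union of good regions) the output is forced to $0$ — this can be arranged because the step-function encoder is constant on each $Q_\alpha$ and we do not care what happens on the bad region as the statement only constrains points in $\bigcup_\alpha Q_\alpha$.

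Concretely, the key steps in order would be: (1) invoke Theorem~\ref{thm:approx-n2l2-dim-d} (or directly Lemma~\ref{lemma:stepfunc2}) to build $f_e\in\mathcal{F}(d,\lesssim L,\lesssim N,1)$ with $f_e(x)=I(\alpha)$ for $x\in Q_\alpha(\Delta_1)$; (2) use Lemma~\ref{lemma:pointfitting} (applied coordinatewise, so $d$ parallel copies, costing a factor $d$ in width which is absorbed into the constant) to build a decoder $f_c$ with $f_c(I(\alpha))=x_\alpha$, and note that the binary-expansion trick in Theorem~\ref{thm:approx-n2l2-dim-d} gives \emph{exact} fitting provided the coordinates of $x_\alpha$ are dyadic of bounded depth; since $x_\alpha\in Q_\alpha(\Delta_1)$ and $\Delta_1\le 1/(3K)$, one may replace $x_\alpha$ by the nearest dyadic point at resolution $\asymp\min(\Delta_1,\Delta_2)$ inside $Q_\alpha(\Delta_1)$, which changes $\|x-x_\alpha\|_\infty$ by at most a negligible amount — this is why the depth/width carries the factor $\lceil\log_2(1/\Delta_2)\rceil$, and it can be placed either on depth (giving $f_1^\dagger$, ``series'' construction) or on width (giving $f_2^\dagger$, ``parallel'' construction) exactly as in Step~2 of the proof of Theorem~\ref{thm:approx-n2l2-dim-d}; (3) parallelize $\mathrm{Id}$ (to carry $x$) with $f_c\circ f_e$, subtract to form $x-x_\alpha$, apply the $\|\cdot\|_\infty$ network of Lemma~\ref{lemma:sup-norm-vec}, then the tent function, yielding $f_s^\dagger(x)=(1-\|x-x_\alpha\|_\infty/\Delta_2)_+$ on each $Q_\alpha(\Delta_1)$; (4) verify the two required properties: $f_s^\dagger(x_\alpha)=1$ since the argument of the tent is $0$, and $f_s^\dagger(x)=0$ whenever $x\in Q_\alpha(\Delta_1)$ with $\|x-x_\alpha\|_\infty\ge\Delta_2$ since then the tent argument is $\ge1$; (5) tally depths and widths through the composition/parallelization bounds of Section~\ref{sec:approx:notation}, absorbing all $l$-, $d$-, and $t_{\max}$-dependent factors into the universal constants $c_{37}$–$c_{40}$.

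The main obstacle I anticipate is the \emph{exactness} requirement: the proposition demands $f_s^\dagger(x_\alpha)=1$ and $f_s^\dagger(x)=0$ identically (not up to $\epsilon$) on the good regions. This forces two things to be handled carefully — the decoder must reproduce $x_\alpha$ with \emph{enough} precision that the tent function evaluates to exactly $0$ on the set $\{\|x-x_\alpha\|_\infty\ge\Delta_2\}$, which is why one works with a dyadic-rounded center and carries $\log_2(1/\Delta_2)$ bits; and the ReLU tent $(1-t/\Delta_2)_+$ must genuinely clamp at $0$, which it does exactly since ReLU is exact. A second, more bookkeeping obstacle is ensuring that on the bad region $[0,1]^d\setminus Q$ the composed network does not blow up or produce spurious values — but since the statement places no constraint there, it suffices to observe $f_e$ is bounded on all of $[0,1]^d$ (its step-function components are clamped), so $f_s^\dagger$ is well-defined everywhere, and no further control is needed. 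Finally, I would note that Proposition~\ref{prop:noise-pre} is the $u=0$, $\tilde{\mathcal{A}}=\mathcal{A}$, $y_\alpha\equiv1$ special case, and the full Theorem~\ref{thm:approx-noise-dim-d} (with arbitrary $u\in[-1,1]$, arbitrary sign pattern $y_\alpha\in\{-1,1\}$, and $\tilde{\mathcal{A}}\subseteq\mathcal{A}$) follows by taking suitable affine combinations and sign-selections of two such bump networks, which is the natural next step after this proposition.
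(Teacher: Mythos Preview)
Your overall architecture matches the paper's proof almost exactly: build $g:\mathbb{R}^d\to\mathbb{R}^d$ via Theorem~\ref{thm:approx-n2l2-dim-d} applied coordinatewise with precision $\epsilon\asymp\Delta_2$ so that $g(x)=\hat x_\alpha$ with $\|\hat x_\alpha-x_\alpha\|_\infty$ small whenever $x\in Q_\alpha(\Delta_1)$; parallelize with $\mathrm{Id}$; feed $x-g(x)$ into the $\|\cdot\|_\infty$ network of Lemma~\ref{lemma:sup-norm-vec}; finish with a one-dimensional bump. The two configurations (log factor on depth versus width) arise from the two decoder variants in Theorem~\ref{thm:approx-n2l2-dim-d}, just as you say.

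There is, however, a genuine gap in step~(3)--(4). Your bump is the pure tent $t\mapsto(1-t/\Delta_2)_+$, and you then claim $f_s^\dagger(x)=(1-\|x-x_\alpha\|_\infty/\Delta_2)_+$ on $Q_\alpha(\Delta_1)$. But the decoder only returns an \emph{approximation} $\hat x_\alpha$ of $x_\alpha$ (the $x_\alpha$ are arbitrary reals, so the dyadic point-fitting of Theorem~\ref{thm:approx-n2l2-dim-d} cannot hit them exactly, and you cannot ``replace $x_\alpha$'' since the proposition fixes them). Hence what you actually build is $(1-\|x-\hat x_\alpha\|_\infty/\Delta_2)_+$, and at $x=x_\alpha$ this equals $1-\|x_\alpha-\hat x_\alpha\|_\infty/\Delta_2<1$, violating the required exact equality $f_s^\dagger(x_\alpha)=1$. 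Refining the dyadic resolution shrinks the defect but never eliminates it.

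The paper's fix is a flat-top bump with a built-in margin: take $h(x,y)=\sigma\bigl(2-(3/\Delta_2)\|x-y\|_\infty\bigr)\wedge 1$ and approximate the centers to accuracy $\Delta_2/3$. Then $\|x_\alpha-\hat x_\alpha\|_\infty\le\Delta_2/3$ forces $h=1$ exactly at $x_\alpha$ (the argument of $\sigma$ is $\ge 1$, then capped), while $\|x-x_\alpha\|_\infty\ge\Delta_2$ implies $\|x-\hat x_\alpha\|_\infty\ge 2\Delta_2/3$, forcing $h=0$ exactly. Swapping your tent for this flat-top tent (one extra $\min$ layer) and setting $\epsilon=\Delta_2/3$ in the decoder repairs the argument; everything else in your outline goes through unchanged.
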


\begin{proof}[Proof of Proposition \ref{prop:noise-pre}]
	Suppose we have already constructed an ReLU neural network $g:\mathbb{R}^d\to\mathbb{R}^d$  with depth $\bar{L}_g$ and width $\bar{N}_g$, satisfying
	\begin{align*}
		g(x) = \hat{x}_\alpha ~~\text{ if }~ x\in \mathcal{Q}_\alpha ,
	\end{align*} 
	where $\|\hat{x}_\alpha - x_\alpha\|_\infty \le \Delta_2/3$ holds for all $\alpha \in \mathcal{A}$. 
	Consider the function
	\begin{align*}
		h(x, y) = \sigma\Big( 2 - (3/\Delta_2)\|x-y\|_\infty \Big) \land 1 , \quad  x, y\in \mathbb{R}^d .
	\end{align*}
	By the composition argument and Lemmas~\ref{lemma:nn-simple} and \ref{lemma:sup-norm-vec}, we have
	\begin{align*}
		h \in \mathcal{F}(2d, \lceil \log_2 d\rceil + 3, 4d, 1).
	\end{align*}
	Next we claim that $f^\dagger(x) = h \circ (g(x), \text{Id}(x))$ is the function of interest, where $\text{Id}(x): \mathbb{R}^d\to\mathbb{R}^d$ is the identify function. Combining the parallelization argument with Lemma \ref{lemma:nn-simple} yields $\text{Id}(x) \in \mathcal{F}(d, 1, 2d, d)$, which further implies
	\begin{align*}
		f^\dagger \in \mathcal{F}(d, \bar{L}_g + \lceil \log_2 d\rceil + 3, (\bar{N}_g + 2d) \lor 4d, 1).	
	\end{align*}

	To prove the claim, note that 
	\begin{align*}
		2 - (3/\Delta_2)\|x-\hat{x}_\alpha\|_\infty \in \begin{cases}
 		[1, 2] & \qquad \text{if } \|x - \hat{x}_\alpha\|_\infty \le \frac{\Delta_2}{3} \\
  		[0, 1) & \qquad \text{if } \frac{\Delta_2}{3} < \|x - \hat{x}_\alpha\|_\infty \le \frac{2\Delta_2}{3} \\
		(-\infty, 0) & \qquad \text{if } \frac{2\Delta_2}{3} < \|x - \hat{x}_\alpha\|_\infty
 	\end{cases}.
	\end{align*} This means $f(x_\alpha) = \sigma(2-(3/\Delta_2) \|x_\alpha - \hat{x}_\alpha \|_\infty) \land 1 = 1$. Moreover, if $x \in Q_\alpha(\Delta_1)$ but $\|x-x_\alpha\|_\infty \ge \Delta_2$, it follows that $\|x-\hat{x}_\alpha\|_\infty \ge \|x-x_\alpha\|_\infty - \|x_\alpha-\hat{x}_\alpha\|_\infty\ge \frac{2\Delta_2}{3}$. This implies $f(x) = \sigma(2-(3/\Delta_2) \|x_\alpha - \hat{x}_\alpha \|_\infty) = 0$.

It remains to show that we can implement $g$ using an ReLU neural network. To be specific, we will use the two different configurations in Theorem \ref{thm:approx-n2l2-dim-d} to construct $g$. Consider first the one that multiplies the $\log(1/\epsilon)$ factor to the depth. For any $N,L\in \mathbb{N}^+$ and $i\in \{1,\ldots, d\}$, applying Theorem \ref{thm:approx-n2l2-dim-d} with $\Delta=\Delta_1$ and $\epsilon=\Delta_2/3$ to the set $\{(x_\alpha)_i\}_{\{\alpha \in \mathcal{A}\}}$, we see that there exists a neural network $\phi_i \in \mathcal{F}(d, C_1 L \log (3/\Delta_2), C_2 N, 1)$ such that 
	\begin{align}
	\label{eq:prop:noise-pre:eq1}
		\forall \alpha \in \mathcal{A}, ~~~~|\phi_i(x) - (x_\alpha)_i| \le \Delta_2/3  ~~\text{ if }~ x\in Q_\alpha(\Delta_1).
	\end{align} 
     By the parallelization argument, $g_1 = (\phi_1,\cdots, \phi_d) \in \mathcal{F}(d, C_1 L \log (3/\Delta_2), C_2 d N, d)$ and $\|g_1(x)-x_\alpha\|_\infty \le \max_{1\le i\le d} |\phi_i(x) - (x_\alpha)_i| \le \Delta_2/3$ if $x\in Q_\alpha(\Delta_1)$. In this case, the final $f_1^\dagger = h \circ (g_1(x), \text{Id}(x))$ has depth at most $C_1 L \log_2 (3/\Delta_2) + \lceil \log_2 d\rceil + 3 \le c_{37} L \log_2 (3/\Delta_2)$ and width at most $(C_2 d N +2d)\lor 4d \le c_{38} N$.
	
	For the construction of $f^\dagger_2$, we follow the same arguments except use the neural network $\phi_i \in \mathcal{F}(d, C'_1 L, C'_2 N \log (3/\Delta_2), 1)$ such that \eqref{eq:prop:noise-pre:eq1} holds for each $i\in \{1,\ldots, d\}$. Therefore, $f_2^\dagger = h \circ (g_2(x), \text{Id}(x))$ satisfies all the requirements and has depth at most $C'_1 L + \log (3/\Delta_2) + 3\le c_{39} L$ and width at most $(C_2' d N \log_2 (3/\Delta_2)+2d)\lor 4d \le c_{40} N \log_2 (3/\Delta_2)$. This completes the proof.
\end{proof}

Now having the constructed module in Proposition \ref{prop:noise-pre}, we are ready to prove Theorem \ref{thm:approx-noise-dim-d}.

\begin{proof}[Proof of Theorem \ref{thm:approx-noise-dim-d}]
Let $N, L\in \mathbb{N}^+$, $\Delta_1 \in (0,1/3K]$, $\Delta_2>0$ be arbitrary, and write $\mathcal{A}=\{1,\ldots, K\}^d$ and \begin{align*}
 	\mathcal{A}^+ = \{\alpha \in \tilde{\mathcal{A}}: y_\alpha = 1\} \qquad \text{and} \qquad \mathcal{A}^- = \{\alpha \in \tilde{\mathcal{A}}: y_\alpha=-1\}.
 \end{align*}
 
We first construct $f_1^\dagger$. Let $X^+=\{x^+_\alpha\}_{\alpha \in \mathcal{A}}$ satisfy $x^+_\alpha = x_\alpha$ if $\alpha \in \mathcal{A}^+$ and we choose any $x^+_\alpha \in Q_\alpha(\Delta_1)$ if $\alpha \notin \mathcal{A}^+$. By Proposition \ref{prop:noise-pre} with given $N$, $L$, $\Delta_1$, $\Delta_2$, and our sample set $X^+$, there exists a neural network $f^+$ with depth at most $C_1 L \log (1/\Delta_2)$ and width at most $C_2 N$ such that
\begin{align*}
	f^+(x_\alpha)=f^+(x^+_\alpha) = 1 ~~\text{ for }~ \alpha \in \mathcal{A}^+
\end{align*} and
\begin{align*}
	f^+(x)=0  ~~\text{ if }~ x\in Q_\alpha(\Delta_1) \text{ with some } \alpha \in \mathcal{A^+} \text{ but } \|x - x_\alpha\|_\infty \ge \delta_2 .
\end{align*}
Now we try to apply Theorem \ref{thm:approx-n2l2-dim-d} with given $N$, $L$, $\Delta=\Delta_1$, $\epsilon=1/2$, and sample set $Y^+=\{y^+_\alpha\}_{\alpha}$ defined as $y^+_\alpha = 1_{\{\alpha \in \mathcal{A}^+\}}$. Because $y^+_\alpha$'s are all in $\{0,1\}$, there exists an ReLU neural network $m^+(x)$ with depth at most $C_3 L$ and width at most $C_4 N$ such that
\begin{align*}
	m^+(x) = 1_{\{\alpha \in \mathcal{A}^+\}} ~~\text{ if }~ x\in Q_\alpha(\Delta_1) 	
\end{align*} 
for all $\alpha \in \mathcal{A}$.

Here we consider the function $g^+(x) = f^+(x) \land m^+(x)$, then we have the following 
\begin{align*}
	f^+(x) \land m^+(x) = \begin{cases}
 	0 & ~~~ \text{ if } x\in Q_\alpha(\Delta_1) \text{ with some } \alpha \notin \mathcal{A}^+ 	\\
 	1 & ~~~ \text{ if } x\in Q_\alpha(\Delta_1) \text{ with some } \alpha \in \mathcal{A}^+\text{ and } x = x_\alpha \\
 	0 & ~~~ \text{ if } x\in Q_\alpha(\Delta_1) \text{ with some } \alpha \in \mathcal{A}^+\text{ but } \|x - x_\alpha\|_\infty \ge \Delta_2
 \end{cases}.
\end{align*}
Combining the parallelization and composition arguments with Lemma \ref{lemma:nn-simple}, we have $g^+(x) \in \mathcal{F}\big(d,(C_1 L\log(1/\Delta_2) \lor C_3 L) + 1, (C_2+C_4) N, 1\big)$.
Similarly, applying Proposition \ref{prop:noise-pre} to the sample set $X^-=\{x^-_\alpha\}_{\alpha \in \mathcal{A}}$ satisfying $x^-_\alpha = x_\alpha$ if $\alpha \in \mathcal{A}^-$, and applying Theorem \ref{thm:approx-n2l2-dim-d} to the sample set $Y^-=\{y^-_\alpha\}_{\alpha\in \mathcal{A}}$ with $y^-_\alpha=1_{\{\alpha \in \mathcal{A}^-\}}$, we can construct $f^-$ and $m^-$ satisfying
\begin{align*}
	g^-(x) = f^-(x) \land m^-(x) = \begin{cases}
 	0 & ~~~ \text{ if } x\in Q_\alpha(\Delta_1) \text{ with some } \alpha \notin \mathcal{A}^- 	\\
 	1 & ~~~ \text{ if } x\in Q_\alpha(\Delta_1) \text{ with some } \alpha \in \mathcal{A}^-\text{ and } x = x_\alpha \\
 	0 & ~~~ \text{ if } x\in Q_\alpha(\Delta_1) \text{ with some } \alpha \in \mathcal{A}^-\text{ but } \|x - x_\alpha\|_\infty \ge \Delta_2
 \end{cases},
\end{align*} and $g^-\in \mathcal{F}\big(d,(C'_1 L\log(1/\Delta_2) \lor C'_3 L) + 1, (C'_2+C'_4) N, 1\big)$ for some constants $C'_1-C'_4$. Now we are able to conclude that the function 
\begin{align*}
	f_1^\dagger(x) = \Big((2g^+(x)-2g^-(x) + u) \lor (-1)\Big) \land 1
\end{align*} satisfies the conditions stated in Theorem \ref{thm:approx-noise-dim-d}. If $x=x_\alpha$ with $\alpha \in \mathcal{A}^+$, we have $(2g^+(x)-2g^-(x) + u) = 2 + u - 0 \ge 1$, which implies $f^\dagger(x)=1$. Meanwhile, if $x = x_\alpha$ with $\alpha \in \mathcal{A}^-$, we have $(2g^+(x)-2g^-(x) + i) = u - 2 \le -1$, implying $f^\dagger(x)=-1$. 

At last, if $x\in Q_\alpha(\Delta_1)$ for some $\alpha \in \mathcal{A}$, and $\|x-x_{\tilde{\alpha}}\|_\infty \ge \Delta_2$ for all the $\tilde{\alpha} \in \tilde{\mathcal{A}}$, then we can divide it into two cases: for the first case, that $\alpha \notin \tilde{\mathcal{A}}$, by the condition that $g^+$ and $g^-$ satisfying, we have $g^+(x)=g^-(x)=0$, this means $(2g^+(x)-2g^-(x) + u)=u$ and $f_1^\dagger(x)=u$ because $u\in [-1,1]$; for the second case, that $\alpha \in \tilde{\mathcal{A}}$, then we have $\|x-x_\alpha\|_\infty\ge \Delta_2$ because it is hold for all the $\tilde{\alpha}\in \tilde{\mathcal{A}}$ by assumption, this gives $g^+(x)=0$ if $\alpha \in \mathcal{A}^+$ or $g^-(x)=0$ if $\alpha \in \mathcal{A}^-$, by using the same reason in the first case for another $g$ function, we can further conclude that $g^+(x)=g^-(x)=0$. So we can conclude $f_1^\dagger(x)=u$.

Finally, let us conclude by specifying the depth and width for $f^\dagger_1$, by using parallelization of $g^+$ and $g^-$ together with the composition of a given ReLU neural network with sum, min, max function implemented by ReLU neural network, we have $f_1^\dagger$ has depth at most
\begin{align*}
	\Big( (C_1 L\log(1/\Delta_2) \lor C_3 L) + 1 \Big) \lor 	\Big((C'_1 L\log(1/\Delta_2) \lor C'_3 L) + 1\Big) + 2 \le c_{22} L \log_2 (1/\Delta_2),
\end{align*} and width at most
\begin{align*}
	(C'_2+C'_4) N + (C_2+C_4) N \le c_{23} N.
\end{align*}

The construction of $f_2^\dagger$ is almost the same except that we choose different configurations to implement $g^+$ and $g^-$.
\end{proof}

\section{Proofs for Section \ref{sec:lb}}

\subsection{Proof of Lemma \ref{lemma:minimax-hcm}}

Let $\mathcal{C}(d,\beta, C)$ be the set of $d$-variate $(\beta,C)$-smooth functions. We will use the following minimax optimal rate of convergence.

\begin{lemma}[Theorem 3.2 in \cite{gyorfi2002distribution}]
\label{lemma:minimax-lower-bound-gaussian}
There exists a positive constant $c_{41}>0$ such that
\begin{align}
\label{eq:minimax-lower-bound-gaussian-noise}
    \liminf_{n\to\infty} \inf_{\hat{f}_n} \sup_{f_0 \in \mathcal{C}(d,\beta, C)} \frac{\mathbb{E}[\|\hat{f}_n - f_0\|_2^2]}{n^{-\frac{2\beta}{2\beta+d}}} \ge c_{41} ,
\end{align} 
where the infimum is taken over all possible estimators based on $n$ i.i.d. observations $(X_1,Y_1),\ldots, (X_n, Y_n)$, which follows the following data generating process 
\begin{align*}
    Y_i = f_0(X_i) + \varepsilon_i ~~~~\text{with}~~~~ \text{ independent } \varepsilon_i \sim \mathcal{N}(0,1)\text{ and } X_i\sim \mathrm{Unif}[0,1]^d.
\end{align*}
\end{lemma}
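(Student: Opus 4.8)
The plan is to establish this classical minimax lower bound by the standard reduction to a multiple-hypothesis testing problem over a Boolean hypercube, combined with Assouad's lemma. Since $X$ is uniform on $[0,1]^d$, one has $\|f\|_2^2 = \int_{[0,1]^d} f^2$, and for a single observation with Gaussian noise the Kullback--Leibler divergence between the laws of $(X,Y)$ under regression functions $f$ and $g$ equals $\frac12\|f-g\|_2^2$; these two identities are what make the argument go through cleanly.

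First I would fix a prototype ``bump'' $\varphi:\mathbb{R}^d\to\mathbb{R}$ supported in $(0,1)^d$, not identically zero, and itself $(\beta,1)$-smooth (writing $\beta=r+s$ with $r$ a nonnegative integer and $s\in(0,1]$, such a $\varphi$ exists --- for instance a smooth compactly supported bump, whose $r$-th order partials are Lipschitz hence $s$-H\"older). For a bandwidth $h\in(0,1)$ I would partition $[0,1]^d$ into $m=\lfloor 1/h\rfloor^d$ disjoint subcubes of side $h$ with corners $x_1,\dots,x_m$, and for each $\omega\in\{0,1\}^m$ set $f_\omega(x)=c\,h^{\beta}\sum_{j=1}^m\omega_j\,\varphi((x-x_j)/h)$ for a small absolute constant $c$. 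The key point to verify is that $f_\omega\in\mathcal{C}(d,\beta,C)$ for every $\omega$: the bumps have disjoint supports, and the $r$-th order partial derivatives of $h^\beta\varphi((\cdot-x_j)/h)$ have $s$-H\"older seminorm independent of $h$, because the factor $h^{\beta-r}=h^s$ produced by differentiating exactly cancels the factor $h^{-s}$ produced by rescaling the H\"older seminorm; choosing $c$ small relative to $C$ and to the H\"older seminorm of $\varphi$ then gives the claim. A direct computation yields $\|f_\omega-f_{\omega'}\|_2^2=c^2 h^{2\beta+d}\|\varphi\|_2^2\,\rho(\omega,\omega')$ with $\rho$ the Hamming distance, hence $\mathrm{KL}(P^{\otimes n}_{f_\omega}\|P^{\otimes n}_{f_{\omega'}})=\frac{n}{2}\,c^2 h^{2\beta+d}\|\varphi\|_2^2\,\rho(\omega,\omega')$.

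Next I would calibrate $h=h_n$ by $n h_n^{2\beta+d}=\kappa$ for a fixed small constant $\kappa>0$ (so that for $n$ large $h_n<1$ and $m\ge 1$); then the KL divergence between any two neighbouring hypotheses is a constant that can be made $<1$ by shrinking $\kappa$, so Pinsker's inequality bounds the total variation between neighbouring $n$-fold product measures away from $1$. Assouad's lemma in its squared-loss form (e.g.\ Tsybakov 2009, Theorem~2.12, or the Assouad-cube bound used in Gy\"orfi et al.\ 2002) then gives $\inf_{\hat f_n}\max_{\omega}\mathbb{E}_{f_\omega}\|\hat f_n-f_\omega\|_2^2\gtrsim m\, h^{2\beta+d}=h^{2\beta}\asymp n^{-2\beta/(2\beta+d)}$, with a constant depending only on $d,\beta,C,\kappa,\varphi$. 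Since $\{f_\omega\}\subseteq\mathcal{C}(d,\beta,C)$, the left side is at most $\inf_{\hat f_n}\sup_{f_0\in\mathcal{C}(d,\beta,C)}\mathbb{E}\|\hat f_n-f_0\|_2^2$; dividing by $n^{-2\beta/(2\beta+d)}$ and taking $\liminf_{n\to\infty}$ yields \eqref{eq:minimax-lower-bound-gaussian-noise} with an explicit $c_{41}>0$.

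I expect the only genuinely delicate point to be the regularity bookkeeping for non-integer $\beta$ --- ensuring the superposition $f_\omega$ stays in $\mathcal{C}(d,\beta,C)$ uniformly in $h$, which hinges on the exact cancellation $h^{\beta-r}\cdot h^{-s}=1$ and on choosing a prototype bump with precisely the right smoothness. The remaining ingredients (the single-observation KL identity, Pinsker's inequality, and the Assouad bookkeeping) are routine; alternatively, one may simply invoke Theorem~3.2 of Gy\"orfi et al.\ (2002), which is the source of this statement.
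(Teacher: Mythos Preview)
Your proposal is correct and is the standard Assouad-hypercube argument for the classical minimax lower bound over H\"older classes. The paper does not actually prove this lemma at all: it is stated as an external citation (Theorem~3.2 in Gy\"orfi et al.\ 2002) and invoked as a black box, exactly as you note in your final sentence. So there is nothing to compare against; your sketch supplies the proof that the paper simply imports.
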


\begin{proof}[Proof of Lemma~\ref{lemma:minimax-hcm}]
    When $d^* \le d$, we have $\mathcal{C}(d^*,\beta^*, C) \subset \mathcal{H}(d,l,\mathcal{P})$, this implies
    \begin{align*}
        \inf_{\hat{f}_n} \sup_{f_0 \in \mathcal{H}(d, l, \mathcal{P})} \mathbb{E}[\|\hat{f}_n - f_0\|_2^2] \ge \inf_{\hat{f}_n} \sup_{f_0 \in \mathcal{C}(d^*, \beta^*, C)} \mathbb{E}[\|\hat{f}_n - f_0\|_2^2],
    \end{align*} which completes the proof.
\end{proof}

\subsection{Proof of Theorem \ref{thm:neural-network-approx-lower-bound}}

In this part, we consider the following nonparameteric regression task
\begin{align}
\label{eq:dgp-regression-gaussian-noise}
    Y = f_0(X) + \varepsilon, 
\end{align} where $\varepsilon$ is a \emph{standard normal} noise variable, $X\sim \text{Uniform}[0,1]^d$, and the regression function $f_0 \in \mathcal{F}_0$ for some function class with intrinsic dimension adjusted smoothness upper bounded by $\alpha$. The key idea to prove Theorem \ref{thm:neural-network-approx-lower-bound} is that if we can achieve a faster approximation rate, i.e., 
\begin{align*}
    \omega(L, N)\equiv \sup_{f_0\in \mathcal{C}(d,\beta)}  \inf_{f\in \mathcal{F}(d,L,N,1)} \|f-f_0\|_2 = o\left( (N^2L^2\log^{4}(NL))^{-\alpha}\right) ,
\end{align*} 
then we can obtain a better convergence rate than the minimax optimal rate, which is impossible. Before proving our main theorem, we need some results from the nonparameteric regression literature.

Lemma~\ref{lemma:expectation-l2-risk-deco} below provides an upper bound on $\mathbb{E}[\|\hat{f}_n - f_0\|_2^2]$ using a combination of $\omega(L, N)$ and a statistical error term.

\begin{lemma}[Lemma 18 in \cite{kohler2021rate}]
\label{lemma:expectation-l2-risk-deco}
Assume that the response variable $Y$ satisfies $\mathbb{E}\{\exp(c_{42} Y^2)\}<\infty$ for some constant $c_{42}>0$ and the regression function $f_0$ is uniformly bounded. Let $\tilde{f}_n$ be the least squares estimator 
\begin{align*}
    \tilde{f}_n = {\rm argmin}_{f\in \mathcal{F}_n} \frac{1}{n} \sum_{i=1}^n |Y_i - f(X_i)|^2
\end{align*}
constrained on some model class $\mathcal{F}_n$, and let $\hat{f}_n = T_{c_{43} \log n} \tilde{f}_n$ for some constant $c_{43}>0$, where $T_c$ is a truncation operator defined as $(T_c f)(x) = \text{sgn}(f(x)) (|f(x)|\land c)$. Then $\hat{f}_n$ satisfies
\begin{align*}
    \mathbb{E}[\|\hat{f}_n - f_0\|_2^2] \le \frac{c_{44} (\log n)^2 \Big(\log \mathcal{N}_\infty(\frac{1}{  c_{43} n \log(n)}, T_{c_{43} \log(n)}\mathcal{F}_n, n) + 1\Big)}{n} + 2 \inf_{f\in \mathcal{F}_n} \|f-f_0\|_2^2,
\end{align*} where $T_c \mathcal{F} = \{T_c f: f\in \mathcal{F}\}$. 
\end{lemma}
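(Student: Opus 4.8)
The plan is to reproduce the classical oracle inequality for truncated least squares over a class of controlled uniform metric entropy, in the spirit of Chapter~11 (notably Theorem~11.5) of \cite{gyorfi2002distribution}, specialized to the present notation. Put $\beta_n = c_{43}\log n$ and $\mathcal{F}_n^{\ast} = T_{c_{43}\log n}\mathcal{F}_n$, so that $\hat f_n\in\mathcal{F}_n^{\ast}$ and, once $n$ is large enough that $\beta_n\ge\|f_0\|_\infty$, truncation contracts toward $f_0$, i.e. $|\hat f_n(x)-f_0(x)|\le|\tilde f_n(x)-f_0(x)|$ for all $x$. Write $\varepsilon = Y-f_0(X)$; since $f_0$ is bounded and $\mathbb{E}\exp(c_{42}Y^2)<\infty$, the noise obeys $\mathbb{E}\exp(c\varepsilon^2)<\infty$ for some $c>0$, so $\mathbb{E}[\varepsilon\mid X]=0$ and all moments of $\varepsilon$ are finite. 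I will repeatedly use the identity, valid for any $f$,
\begin{align*}
\frac1n\sum_{i=1}^n\big(f(X_i)-Y_i\big)^2-\frac1n\sum_{i=1}^n\big(f_0(X_i)-Y_i\big)^2 = \|f-f_0\|_n^2-\frac2n\sum_{i=1}^n\varepsilon_i\big(f(X_i)-f_0(X_i)\big) =: T_n(f),
\end{align*}
whose expectation equals $\|f-f_0\|_2^2$.

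\noindent\emph{Step 1: basic inequality.} By optimality of $\tilde f_n$ on $\mathcal{F}_n$, splitting the sample according to $|Y_i|\le\beta_n$ (truncation contraction) versus $|Y_i|>\beta_n$ (crude bound $|\hat f_n(X_i)-Y_i|^2\le 4Y_i^2$) yields
\begin{align*}
T_n(\hat f_n)\le\min_{f\in\mathcal{F}_n}T_n(f)+\frac4n\sum_{i=1}^n Y_i^2\,\mathbf{1}\{|Y_i|>\beta_n\}=:\min_{f\in\mathcal{F}_n}T_n(f)+R_n ,
\end{align*}
and $\mathbb{E}R_n\le 4\,\mathbb{E}[Y^2\mathbf{1}\{|Y|>\beta_n\}]\le 4\sqrt{\mathbb{E}Y^4}\,\sqrt{\mathbb{P}(|Y|>\beta_n)}$, which is super-polynomially small (hence $o(1/n)$) since $\mathbb{P}(|Y|>\beta_n)\le e^{-c_{42}\beta_n^2}\mathbb{E}e^{c_{42}Y^2}$. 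As $\mathbb{E}[\min_{f}T_n(f)]\le\min_f\mathbb{E}T_n(f)=\inf_{f\in\mathcal{F}_n}\|f-f_0\|_2^2$, the approximation term is already under control.

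\noindent\emph{Step 2: uniform relative deviation.} It remains to control $\|\hat f_n-f_0\|_2^2-2T_n(\hat f_n)$, for which it suffices to bound $\sup_{g\in\mathcal{F}_n^{\ast}}\big\{\|g-f_0\|_2^2-2T_n(g)\big\}$. Expanding,
\begin{align*}
\|g-f_0\|_2^2-2T_n(g)=\big(\|g-f_0\|_2^2-2\|g-f_0\|_n^2\big)+\frac4n\sum_{i=1}^n\varepsilon_i\big(g(X_i)-f_0(X_i)\big).
\end{align*}
For the first bracket apply a ratio-type Bernstein inequality to the class $\{(g-f_0)^2:g\in\mathcal{F}_n^{\ast}\}$, whose members are bounded by $(2\beta_n)^2$ with variance $\le(2\beta_n)^2\|g-f_0\|_2^2$; discretizing by an $L_\infty$-net of cardinality $\mathcal{N}_\infty(\tfrac{1}{c_{43}n\log n},\mathcal{F}_n^{\ast},n)$ gives, with probability at least $1-1/n$ and uniformly in $g$, $\|g-f_0\|_2^2-2\|g-f_0\|_n^2\le\tfrac14\|g-f_0\|_2^2+C\beta_n^2(\log\mathcal{N}_\infty+1)/n$. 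For the weighted term, condition on $X_1,\dots,X_n$ and truncate $\varepsilon_i$ at level $O(\log n)$ (the discarded tail contributes $o(1/n)$ in expectation by the $\psi_2$-type tail on $\varepsilon$); the summand is then conditionally mean-zero and bounded, and a Bernstein bound along the same $L_\infty$-net gives, with probability at least $1-1/n$ and uniformly in $g$, $\tfrac4n\sum_i\varepsilon_i(g(X_i)-f_0(X_i))\le\tfrac14\|g-f_0\|_2^2+C(\log n)^2(\log\mathcal{N}_\infty+1)/n$.

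\noindent\emph{Step 3: assembling and obstacle.} On the intersection of the two events of Step~2, applying the resulting self-normalized bound at $g=\hat f_n$ and moving $\tfrac12\|\hat f_n-f_0\|_2^2$ to the left, then invoking Step~1 together with $\mathbb{E}[\min_f T_n(f)]\le\inf_f\|f-f_0\|_2^2$, and treating the complementary event crudely via $\|\hat f_n-f_0\|_2^2\le C\beta_n^2$ and $\mathbb{P}(\mathrm{bad})\le 2/n$, yields
\begin{align*}
\mathbb{E}\big[\|\hat f_n-f_0\|_2^2\big]\le 2\inf_{f\in\mathcal{F}_n}\|f-f_0\|_2^2+\frac{c_{44}(\log n)^2\big(\log\mathcal{N}_\infty\big(\tfrac{1}{c_{43}n\log n},T_{c_{43}\log n}\mathcal{F}_n,n\big)+1\big)}{n},
\end{align*}
which is the claim; the precise universal constant on the $\inf$ term is a matter of the self-normalization bookkeeping, and a $(1+\delta)$-type split in place of the $\tfrac12$-split delivers the stated factor $2$. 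The main obstacle is Step~2: establishing the \emph{uniform} ratio-type deviation bound over $\mathcal{F}_n^{\ast}$, and in particular controlling the weighted empirical process $\tfrac1n\sum_i\varepsilon_i(g(X_i)-f_0(X_i))$ under only a $\psi_2$-type tail on $\varepsilon$. This needs a careful double truncation (of $Y$ and of $\varepsilon$ at the $O(\log n)$ scale), a one-sided Bernstein/peeling argument normalized by $\|g-f_0\|_2^2$, and the conversion of the uniform $L_\infty$ covering number of $T_{c_{43}\log n}\mathcal{F}_n$ into the entropy fed to the union bound — the propagation of the two $\log n$ factors through this bookkeeping is exactly what produces the $(\log n)^2$ in the final bound.
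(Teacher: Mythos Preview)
The paper does not give its own proof of this lemma: it is quoted verbatim as ``Lemma 18 in \cite{kohler2021rate}'' and used as a black box in the proof of Proposition~\ref{prop:approx-error-lb-limit}. So there is no in-paper argument to compare against; the relevant benchmark is the proof in \cite{kohler2021rate}, which itself is a specialization of the classical oracle-inequality machinery in Chapter~11 of \cite{gyorfi2002distribution} (Theorems~11.4--11.6 and Lemma~11.2).

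Your proposal follows exactly that classical route and is essentially correct. The decomposition $\|g-f_0\|_2^2-2T_n(g)$ into the ratio-type term $\|g-f_0\|_2^2-2\|g-f_0\|_n^2$ and the weighted empirical process $\tfrac{4}{n}\sum_i\varepsilon_i(g(X_i)-f_0(X_i))$ is precisely the one used in \cite{gyorfi2002distribution}, and your handling of each piece (Bernstein over an $L_\infty$-net for the first, truncation of $\varepsilon$ at level $O(\log n)$ plus conditional Bernstein for the second) mirrors Theorems~11.4 and~11.6 there. The truncation-contraction observation in Step~1 --- that $|T_{\beta_n}\tilde f_n(X_i)-Y_i|\le|\tilde f_n(X_i)-Y_i|$ whenever $|Y_i|\le\beta_n$ --- is the standard Lemma~10.2 device, and your tail bound on $R_n$ via $\mathbb{P}(|Y|>\beta_n)\le e^{-c_{42}\beta_n^2}\mathbb{E}e^{c_{42}Y^2}$ is the right way to make the remainder $o(1/n)$. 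The only bookkeeping point worth flagging is that in Step~3 the ``bad event'' contribution should be handled by integrating the exponential tail of the uniform deviation probability (as in the proof of Theorem~11.5), not just by the crude $\mathbb{P}(\mathrm{bad})\le 2/n$ bound; otherwise the $(\log n)^2$ factor on the complementary event may not be absorbed cleanly. But this is a routine refinement and does not affect the validity of the scheme.
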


Now we are ready to present the following Proposition, which establishes the lower bound of the approximation error either as $NL\to \infty$ or when $NL\le M$ for some fixed $M>0$.

\begin{proposition}
\label{prop:approx-error-lb-limit}
Let $\mathcal{F}_0$ be a function class with instrinsic dimension adjusted smoothness upper bounded by $\alpha$, and $\omega(L, N) = \sup_{f_0\in \mathcal{F}_0} \inf_{f\in \mathcal{F}(d,L,N,1)} \|f-f_0\|_2$, then we have the following holds for $\omega(L, N)$:

\begin{itemize}
    \item [(i) ] For any $\epsilon>0$, it holds \begin{align}
\label{eq:approx-error-lb-limit}
    \liminf_{NL\to\infty} \frac{\omega(L, N)}{(N^2L^2 \log^{4+\epsilon} (NL))^{-\revise{\alpha}}} = c_{45} > 0 
\end{align} 
for some positive constant $c_{45}$ depending on $\epsilon$.
    \item [(ii) ] For any given $M > 0$, we have
    \begin{align}
    \label{eq:approx-error-lb-finite}
        \liminf_{NL \le M} \omega(L, N) = c_{46} > 0 ,
    \end{align} where $c_{46}$ is a constant that depends on $M$.
\end{itemize}

\end{proposition}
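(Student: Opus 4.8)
\textbf{Proof plan for Proposition~\ref{prop:approx-error-lb-limit}.}
The strategy is to argue by contradiction against the minimax lower rate implicit in Definition~\ref{def:4.1}: if the $L_2$ approximation error of the ReLU network class decays too fast, then the truncated least-squares network estimator would converge faster than $n^{-\alpha/(2\alpha+1)}$, contradicting the intrinsic dimension-adjusted smoothness bound. First I would fix the data generating process \eqref{eq:dgp-regression-gaussian-noise} with standard Gaussian noise and $X\sim\mathrm{Uniform}([0,1]^d)$, so that $Y$ is sub-Gaussian and Lemma~\ref{lemma:expectation-l2-risk-deco} applies to the least-squares estimator over $\mathcal{F}_n=\mathcal{F}(d,L_n,N_n,1)$ followed by the truncation $T_{c_{43}\log n}$. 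Combining Lemma~\ref{lemma:expectation-l2-risk-deco} with the uniform covering-number bound (Lemma~\ref{lemma:cover-number-nn}, noting $\log\mathcal{N}_\infty(\cdot,T_c\mathcal{F}_n,n)\lesssim (N_nL_n)^2\log(N_nL_n)\log n$ at the relevant scale) gives, for the truncated estimator $\hat f_n$,
\begin{align*}
    \sup_{f_0\in\mathcal{F}_0}\mathbb{E}\big[\|\hat f_n-f_0\|_2^2\big]\ \lesssim\ \frac{(N_nL_n)^2\log(N_nL_n)(\log n)^3}{n}\ +\ 2\,\omega(L_n,N_n)^2 .
\end{align*}

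\textbf{Part (i): the limit as $NL\to\infty$.}
Suppose \eqref{eq:approx-error-lb-limit} fails, i.e. along some sequence $N_kL_k\to\infty$ we have $\omega(L_k,N_k)=o\big((N_k^2L_k^2\log^{4+\epsilon}(N_kL_k))^{-\alpha}\big)$. Writing $S_k=N_kL_k$, I would then choose the sample size $n=n_k$ so as to balance the two terms above: take $n_k$ of order $S_k^{2+1/\alpha}\log^{c}(S_k)$ for an appropriate power $c$ of the logarithm (chosen using the extra $\log^{4+\epsilon}$ slack in the hypothesized approximation rate, so that the stochastic term $S_k^2\log(S_k)(\log n_k)^3/n_k$ is also $o((S_k^2\log^{4+\epsilon}S_k)^{-\alpha})$, hence $o(n_k^{-2\alpha/(2\alpha+1)})$ after re-expressing $S_k$ in terms of $n_k$). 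Both terms are then $o(n_k^{-2\alpha/(2\alpha+1)})$, so $\sup_{f_0\in\mathcal{F}_0}\mathbb{E}[\|\hat f_n-f_0\|_2^2]=o(n_k^{-2\alpha/(2\alpha+1)})$ along $n_k\to\infty$, which contradicts the definition of intrinsic dimension-adjusted smoothness $\le\alpha$ (the $\liminf$ of $n^{2\alpha/(2\alpha+1)}\times$ minimax risk is strictly positive). This yields \eqref{eq:approx-error-lb-limit} with some $c_{45}=c_{45}(\epsilon)>0$. The one delicate bookkeeping point is that $n_k$ must be a valid integer sequence tending to infinity and the substitution $S_k\asymp (n_k/\log^{c} n_k)^{1/(2+1/\alpha)}$ must be self-consistent; I would handle this by solving for $S_k$ first and then defining $n_k:=\lceil S_k^{2+1/\alpha}\log^{c}S_k\rceil$, checking the resulting inequalities hold for all large $k$.

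\textbf{Part (ii): uniform positivity for bounded $NL$.}
For \eqref{eq:approx-error-lb-finite}, the point is that for $NL\le M$ the network class $\mathcal{F}(d,L,N,1)$ is, up to padding, contained in finitely many fixed parametric families, and in particular $\omega(L,N)\ge \inf_{\{(L,N):NL\le M\}}\sup_{f_0\in\mathcal{F}_0}\inf_{f\in\mathcal{F}(d,L,N,1)}\|f-f_0\|_2$ where the outer infimum is over a finite set of architectures. So it suffices to show that for each fixed architecture $(L,N)$ the quantity $\sup_{f_0\in\mathcal{F}_0}\inf_{f\in\mathcal{F}(d,L,N,1)}\|f-f_0\|_2$ is strictly positive; then the minimum over finitely many positive numbers is positive. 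Strict positivity for a fixed architecture follows again from Definition~\ref{def:4.1}: if it were zero, then $\mathcal{F}_0$ would lie in the $L_2$-closure of a fixed finite-dimensional (in fact finite-VC/pseudo-dimension) class, forcing the minimax risk over $\mathcal{F}_0$ to decay at the parametric rate $n^{-1}$ (apply Lemma~\ref{lemma:expectation-l2-risk-deco} with this fixed class, whose covering number is bounded independent of $n$), hence $n^{2\alpha/(2\alpha+1)}\times$ minimax risk $\to 0$, contradicting $\alpha>0$. Assembling these gives $\liminf_{NL\le M}\omega(L,N)=c_{46}(M)>0$.

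\textbf{Main obstacle.}
The routine parts are the covering-number/least-squares oracle inequality and the contradiction with the minimax lower bound; the genuinely fiddly step is the \emph{rate-matching in Part (i)} — choosing $n_k$ as a function of $S_k=N_kL_k$ so that \emph{both} the approximation term and the stochastic term are simultaneously $o(n_k^{-2\alpha/(2\alpha+1)})$. This is exactly where the extra logarithmic factor $\log^{4+\epsilon}$ (rather than $\log^{4}$) in the claimed rate is needed: it provides the slack to absorb the $(\log n)^3$ and $\log(N_kL_k)$ factors coming from the pseudo-dimension bound. I would be careful to track these logarithmic powers explicitly and to verify that the substitution relating $S_k$ and $n_k$ is monotone and invertible for large $k$, since a sloppy choice of $n_k$ could make one of the two terms dominate and break the contradiction.
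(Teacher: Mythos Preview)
Your overall strategy---contradict the minimax lower rate of Definition~\ref{def:4.1} by feeding an overly-good approximation bound into the oracle inequality of Lemma~\ref{lemma:expectation-l2-risk-deco} together with the covering-number bound of Lemma~\ref{lemma:cover-number-nn}---is exactly the paper's approach for both parts, and the architecture of your argument is correct.

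There is, however, a concrete arithmetic slip in the rate-matching step of Part~(i). You propose $n_k\asymp S_k^{2+1/\alpha}\log^c S_k$ with $S_k=N_kL_k$. Since $2+1/\alpha=(2\alpha+1)/\alpha$, this gives (ignoring logarithms)
\[
\frac{S_k^2}{n_k}\ \asymp\ S_k^{-1/\alpha},\qquad n_k^{-2\alpha/(2\alpha+1)}\ \asymp\ S_k^{-2}.
\]
Hence the stochastic term is $o\big(n_k^{-2\alpha/(2\alpha+1)}\big)$ only when $1/\alpha>2$, i.e.\ $\alpha<1/2$; for $\alpha\ge 1/2$ your choice of $n_k$ does \emph{not} produce the contradiction. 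The correct balance (which makes both terms $o(n_k^{-2\alpha/(2\alpha+1)})$) is $n_k\asymp S_k^{2(2\alpha+1)}$ up to logarithms, equivalently $S_k^2\asymp n_k^{1/(2\alpha+1)}/(\log n_k)^{4+\epsilon}$, which is precisely the paper's choice \eqref{eq:prop:approx-error-lb-limit:rela-NL-n}. You correctly flagged the rate-matching as the delicate step; this is exactly where the exponent went wrong, and fixing it makes the rest of your Part~(i) argument go through verbatim.

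For Part~(ii) you take a slightly different route than the paper. The paper argues directly: take a sequence $(L_n,N_n)$ with $N_nL_n\le M$ and $\omega(L_n,N_n)\to 0$, pass to a subsequence with $\omega(L_n,N_n)\le n^{-1/2}$, and run the same oracle-inequality contradiction (the covering number is now bounded uniformly in $n$, so the risk is $O((\log n)^3/n)$). Your reduction to finitely many fixed architectures, each handled by a parametric-rate contradiction, is an equally valid and arguably tidier packaging of the same idea.
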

\begin{proof}

\noindent \emph{Part (i). } We prove \eqref{eq:prop:approx-error-lb-limit:omega-rela} by contradiction. 
Suppose the LHS of \eqref{eq:approx-error-lb-limit} is 0, then there exists a sequence of $\{L_k, N_k\}_{k=1}^\infty$ satisfying $N_k L_k \to \infty$ and
\begin{align}
\label{eq:prop:approx-error-lb-limit:omega-rela}
    \varrho_k = \frac{\omega(L_k, N_k)}{\big( (N_k L_k)^2 \log^{4+\epsilon} (N_k L_k) \big)^{-\revise{\alpha}}} \to 0.
\end{align} 
Now we choose a sequence of $n_k$ such that
\begin{align}
\label{eq:prop:approx-error-lb-limit:rela-NL-n}
    (N_k L_k)^2 \asymp \frac{n_k^{\revise{\frac{1}{2\alpha + 1}}}}{(\log n_k)^{4+\epsilon}} .
\end{align} 
Since $N_k L_k \to \infty$, we have $n_k \to \infty$.

Now we try to conduct nonparametric regression. In particular, consider the data generating process defined \eqref{eq:dgp-regression-gaussian-noise} with sample size $n_k$, and construct a truncated least squares estimator over the model class $\mathcal{F}_n(d, L_k, N_k, 1)$ as in Lemma \ref{lemma:expectation-l2-risk-deco}. Since $\varepsilon$ is Gaussian and $f_0 \in \revise{\mathcal{F}_0}$ is (uniformly) bounded, $Y=f_0(X) + \varepsilon$ is a sub-Gaussian random variable, i.e. $\mathbb{E} \exp(C_1 Y^2) < \infty$ for some constant $C_1>0$. Moreover, Lemma \ref{lemma:cover-number-nn} implies
\begin{align*}
    \log \mathcal{N}_\infty\Bigg(\frac{1}{n_k c_{43} \log(n_k)}, T_{c_{43} \log(n_k)}\mathcal{F}_n, n_k\Bigg) + 1 \lesssim \log \Big(n_k^2 c_{43} \log(n_k)\Big) (N_k L_k)^2 \log (N_k L_k).
\end{align*}
Then it follows from Lemma \ref{lemma:expectation-l2-risk-deco} that there exists an estimator $\hat{f}_{n_k}$ such that
\begin{align}
\label{eq:prop:approx-error-lb-limit:risk-upperbound}
    \mathbb{E}[\|\hat{f}_{n_k} - f_0\|_2^2] \lesssim \frac{(\log n_k)^3}{n_k} (N_k L_k)^2 \log N_k L_k + \omega^2(L_k, N_k).
\end{align}
Plugging \eqref{eq:prop:approx-error-lb-limit:omega-rela} and \eqref{eq:prop:approx-error-lb-limit:rela-NL-n} into \eqref{eq:prop:approx-error-lb-limit:risk-upperbound} yields
\begin{align*}
    \mathbb{E}[\|\hat{f}_{n_k} - f_0\|_2^2] &\lesssim \frac{(\log n_k)^3}{n_k} (N_k L_k)^2 \log N_k L_k + \varrho_k^2 \Big( (N_k L_k)^2 \log^{4+\epsilon} (N_k L_k) \Big)^{-2\revise{\alpha}} \\
    &\lesssim  \frac{(\log n_k)^4}{n_k} \frac{n_k^{\revise{\frac{1}{2\alpha + 1}}}}{(\log n_k)^{4+\epsilon}} + \varrho_k^2 \big(n_k^{\revise{\frac{1}{2\alpha+1}}} \big)^{-2\revise{\alpha}} \\
    &\lesssim n_k^{-\revise{\frac{2\alpha}{2\alpha+1}}} \Big(\varrho_k^2 + \frac{1}{(\log n_k)^\epsilon}\Big).
\end{align*} 
Hence, there exist some $n_k \to \infty$ and estimator $\hat{f}_{n_k}$ based on i.i.d. samples $\{(X_i, Y_i)\}_{i=1}^{n_k}$ such that 
\begin{align*}
    \lim_{k\to \infty} \frac{\mathbb{E}[\|\hat{f}_{n_k} - f_0\|_2^2]}{n_k^{-\revise{\frac{2\alpha}{2\alpha+1}}}} = \lim_{k\to \infty} \Big(\varrho_k^2 + \frac{1}{(\log n_k)^\epsilon}\Big) = 0.
\end{align*}
This contradicts the minimax lower bound \eqref{eq:minimax-lower-bound-gaussian-noise} stated in Lemma \ref{lemma:minimax-lower-bound-gaussian}, implying that the assumption 
\begin{align*}
    \liminf_{NL\to\infty} \frac{\omega(L, N)}{(N^2L^2 \log^{4+\epsilon} (NL))^{-\revise{\alpha}}} = 0
\end{align*} cannot be true.  This concludes the proof.

\noindent \emph{Part (ii). } The proof of (ii) proceeds in a similar way 
via a contradiction argument. Suppose the LHS of \eqref{eq:approx-error-lb-finite} is $0$, then there exists a sequence of $\{(L_n, N_n)\}_{n=1}^\infty$ such that $L_n N_n \le M$ and $\omega(L_n, N_n) \to 0$ as $n\to\infty$. Without loss of generality, we assume that $\omega(L_n, N_n) \le \frac{1}{\sqrt{n}}$; otherwise we can choose a sub-sequence of $\{(L_n, N_n)\}_{n=1}^\infty$. Again, we consider the data generating process specified in \eqref{eq:dgp-regression-gaussian-noise}  and the same truncated least squares estimator over model class $\mathcal{F}_n(d, L_n, N_n, 1)$ as in Lemma \ref{lemma:expectation-l2-risk-deco}. The boundedness assumption $L_n N_n \le M$ implies
\begin{align*}
    \log \mathcal{N}_\infty\Bigg(\frac{1}{n c_{43} \log(n)}, T_{c_{43} \log(n)}\mathcal{F}_n, n\Bigg) + 1 &\lesssim \log \Big(n^2 c_{43} \log(n)\Big) (N_n L_n)^2 \log (N_n L_n) \\
    &\lesssim \log n.
\end{align*} 
By Lemma \ref{lemma:expectation-l2-risk-deco}, there exists an estimator $\hat{f}_n$ such that
\begin{align*}
    \mathbb{E}[\|\hat{f}_{n} - f_0\|_2^2] \lesssim \frac{(\log n)^3}{n} +  \omega^2(L_n, N_n) = \frac{(\log n)^3}{n} + \frac{1}{n} \lesssim \frac{(\log n)^3}{n}.
\end{align*} Therefore, we have
\begin{align*}
    \lim_{n\to \infty} \frac{\mathbb{E}[\|\hat{f}_{n} - f_0\|_2^2]}{n^{-\revise{\frac{2\alpha}{2\alpha+1}}}} = \lim_{n\to \infty} (\log n)^3 n^{-\revise{\frac{1}{2\alpha+1}}} = 0,
\end{align*} 
which contradicts the result in Lemma \ref{lemma:minimax-lower-bound-gaussian}. This completes the proof.
\end{proof}

With Proposition \ref{prop:approx-error-lb-limit} in hand, we are ready to prove Theorem \ref{thm:neural-network-approx-lower-bound}.

\begin{proof}[Proof of Theorem \ref{thm:neural-network-approx-lower-bound}]
The goal is to show that there exists some constant $c_{13}>0$ such that
\begin{align*}
    \omega(L, N) \ge c_{13} {(N^2L^2 \log^{5} (NL))^{-\revise{\alpha}}} .
\end{align*}
Applying part (i) of Proposition \ref{prop:approx-error-lb-limit} with $\epsilon=1$, we have
\begin{align*}
    \liminf_{NL\to \infty} \frac{\omega(L, N)}{(N^2L^2 \log^5 NL)^{-\revise{\alpha}}} = C_1 > 0.
\end{align*} 
If $C_1=\infty$, we can choose any $C_1>0$ so that $\liminf_{NL\to \infty} \frac{\omega(L, N)}{(N^2L^2 \log^5 NL)^{-\revise{\alpha}}} \ge C_1$. Then there exists some $M>0$ such that 
\begin{align*}
    \frac{\omega(L, N)}{(N^2L^2 \log^5 NL)^{-\revise{\alpha}}} \ge \frac{C_1}{2}
\end{align*} 
as long as $NL \ge M$. At the same time, part (ii) of Proposition \ref{prop:approx-error-lb-limit}  shows that there exists a positive constant $C_2$ such that
\begin{align*}
    \omega(N, L) \ge C_2 \ge C_2 \frac{(N^2L^2 \log^5 NL)^{-\revise{\alpha}}}{\sup_{NL\le M}(N^2L^2 \log^5 NL)^{-\revise{\alpha}}} = \frac{C_2}{e^{-2\beta/d}} (N^2L^2 \log^5 NL)^{-\revise{\alpha}}
\end{align*} 
provided $e \le NL \le M$. Putting together the pieces, we conclude that
\begin{align*}
    \omega(N, L) \ge \underbrace{\Big(\frac{C_1}{2}\Big)\land \Big(\frac{C_2}{e^{-2\revise{\alpha}}}\Big)}_{c_{13}} (N^2L^2 \log^{5} (NL))^{-\revise{\alpha}}
\end{align*} for all   $N, L$ satisfying $NL \ge e$.
\end{proof}

\subsection{Proof of Proposition \ref{prop:lowerbound-variance}}

We need the following facts from probability theory to prove Proposition \ref{prop:lowerbound-variance}. 

\begin{lemma}[Concentration for Binomial distribution]
\label{lemma:concentration-binomial}
Suppose $X$ is a binomial random variable with parameter $n\in \mathbb{N}^+$ and $p\in (0,1)$. For any $\epsilon\in (0,1)$,
\begin{align*}
	\mathbb{P}\big\{   (1-\epsilon)np\le X \le (1+\epsilon)np \big\}   \ge 1-\frac{3}{np\epsilon^2}.
\end{align*}
\end{lemma}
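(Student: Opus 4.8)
The plan is to prove this by a direct application of Chebyshev's inequality, since the claimed polynomial decay is exactly what a second-moment bound delivers; no refined tail estimate is needed.

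First I would write $X = \sum_{i=1}^n \xi_i$, where $\xi_1,\dots,\xi_n$ are i.i.d. Bernoulli$(p)$ random variables, so that $\mathbb{E}[X] = np$ and $\mathrm{Var}(X) = np(1-p)$. Next, observe that the complement of the target event is contained in a single deviation event:
\[
\{X < (1-\epsilon)np\} \cup \{X > (1+\epsilon)np\} \subseteq \{|X - np| > \epsilon np\},
\]
so it suffices to bound $\mathbb{P}(|X - np| > \epsilon np)$ from above by $3/(np\epsilon^2)$, after which taking complements gives the claim.

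Applying Chebyshev's inequality with deviation level $t = \epsilon np > 0$ then yields
\[
\mathbb{P}\big(|X - np| > \epsilon np\big) \le \frac{\mathrm{Var}(X)}{(\epsilon np)^2} = \frac{np(1-p)}{\epsilon^2 n^2 p^2} \le \frac{1}{np\epsilon^2} \le \frac{3}{np\epsilon^2},
\]
using $1-p \le 1$ in the penultimate step. This completes the argument.

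There is no genuine obstacle here: the statement is a textbook variance-based concentration bound, and the constant $3$ leaves ample slack — Chebyshev already produces the sharper constant $1$. The only ``choice'' is to use Chebyshev rather than a Chernoff/Bernstein-type inequality; the latter would give exponential decay in $np\epsilon^2$, but the polynomial rate above is all that the union-bound arguments in the proof of Proposition~\ref{prop:lowerbound-variance} require.
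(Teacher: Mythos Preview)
Your proof is correct but takes a different route from the paper's. The paper bounds the two tails separately using Feller's inequality $\mathbb{P}(X \ge r) \le r(1-p)/(r-np)^2$ for $r \ge np$: once applied to $X$ with $r=(1+\epsilon)np$ for the upper tail, and once applied to $n-X \sim \mathrm{Binomial}(n,1-p)$ for the lower tail, giving a combined bound of $(2+\epsilon)/(np\epsilon^2) \le 3/(np\epsilon^2)$ since $\epsilon<1$. Your direct Chebyshev argument is more elementary and in fact sharper, producing the constant $(1-p)/(np\epsilon^2) \le 1/(np\epsilon^2)$; the paper's approach lands exactly on the constant $3$ in the lemma statement, which explains why it was written that way, but either bound is more than enough for the union-bound use in Proposition~\ref{prop:lowerbound-variance}.
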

\begin{proof}[Proof of Lemma \ref{lemma:concentration-binomial}]
	By (3.5) of \cite{feller2008introduction} (page 151), for any $r\ge np$, we have
	\begin{align}
	\label{eq:binomail-tail-bound}
		\mathbb{P}(X \ge r) \le \frac{r(1-p)}{(r-np)^2}	.
	\end{align}
	Choosing $r=(1+\epsilon)np$, this implies
	\begin{align*}
		\mathbb{P}(X \ge (1+\epsilon)np) \le \frac{(1+\epsilon)np (1-p)}{(\epsilon np)^2} = \frac{2(1-p)}{np} \le \frac{1}{np} \frac{1+\epsilon}{\epsilon^2}.
	\end{align*}
	Moreover, note that
	\begin{align*}
		\mathbb{P}\big( X \le (1-\epsilon) np \big) = \mathbb{P}\big( n - X \ge n-(1-\epsilon)np \big)	.
	\end{align*} 
	Here $n-X$ is a Bernoulli random variable with parameter $n$ and $(1-p)$. Therefore, using \eqref{eq:binomail-tail-bound} with $r=n(1-(1-\epsilon) p) \ge n(1-p)$ gives
	\begin{align*}
			\mathbb{P}\big( n - X \ge n-(1-\epsilon)np \big) \le \frac{n(1-(1-\epsilon) p)p}{[n(1-(1-\epsilon) p)-n(1-p)]^2} = \frac{np(1-(1-\epsilon)p)}{(\epsilon np)^2} \le \frac{1}{np} \frac{1}{\epsilon^2}.
	\end{align*}
	Putting these pieces together, we have
	\begin{align*}
		\mathbb{P}\big( (1-\epsilon)np\le X \le (1+\epsilon)np \big) &= 1 - \mathbb{P}(X > (1+\epsilon)np) - \mathbb{P}\big(X < (1-\epsilon)np \big)\\
			&\ge 1 - \frac{1}{np} \frac{\epsilon+2}{\epsilon^2} \ge  1 - \frac{3}{np\epsilon^2},
	\end{align*}
	as claimed.
\end{proof}

\begin{lemma}[Concentration for number of boxes containing balls]
\label{lemma:concentration-ball}
Suppose we throw the $m$ balls into the $n$ boxes one by one independently with equal probability, and let $Z$ be the number of boxes that have at least one ball. Then we have $\mathbb{E}[Z]=n \{ 1-(1-1/n)^m \}$ and
\begin{align*}
	\mathbb{P}(|Z - \mathbb{E} Z | \ge t) \le \exp(-2t^2/m)
\end{align*}
\end{lemma}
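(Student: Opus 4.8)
\textbf{Proof plan for Lemma~\ref{lemma:concentration-ball}.}

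The plan is to handle the two assertions separately: the formula for $\mathbb{E}[Z]$ by linearity of expectation, and the concentration inequality by the bounded-differences (McDiarmid) inequality. First I would write $Z = \sum_{j=1}^n \mathbf{1}\{\text{box } j \text{ is nonempty}\}$. For a fixed box $j$, the probability that it receives no ball is $(1-1/n)^m$ since the $m$ throws are independent and each misses box $j$ with probability $1-1/n$; hence $\mathbb{P}(\text{box } j \text{ nonempty}) = 1 - (1-1/n)^m$, and summing over $j$ gives $\mathbb{E}[Z] = n\{1 - (1-1/n)^m\}$.

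For the tail bound, I would view $Z$ as a function $g(U_1,\dots,U_m)$ of the independent throw locations $U_1,\dots,U_m \in \{1,\dots,n\}$, where $U_i$ records which box ball $i$ lands in. The key observation is that changing a single coordinate $U_i$ — i.e., moving ball $i$ from one box to another while keeping all other balls fixed — changes the number of nonempty boxes by at most $1$: ball $i$'s old box may become empty (decreasing $Z$ by at most $1$) and ball $i$'s new box may become newly occupied (increasing $Z$ by at most $1$), but these two effects cannot both move $Z$ in the same direction by more than $1$ in total; more carefully, removing ball $i$ entirely changes $Z$ by at most $1$, and re-inserting it changes $Z$ by at most $1$, but one checks that the net change from a single relocation is bounded by $1$ in absolute value. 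So $g$ satisfies the bounded-differences condition with all constants $c_i = 1$. McDiarmid's inequality then yields $\mathbb{P}(|Z - \mathbb{E}Z| \ge t) \le 2\exp(-2t^2/(\sum_{i=1}^m c_i^2)) = 2\exp(-2t^2/m)$.

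The only subtlety — and the main thing to get right — is the bounded-differences constant: one must argue the constant is $1$ rather than $2$, since the naive "old box empties, new box fills" reasoning suggests a jump of $2$. The clean way is to note that for relocating ball $i$, either (a) ball $i$'s original box still contains another ball after removal, in which case emptying contributes nothing and the only change is the possible new occupation, a change of at most $1$; or (b) ball $i$'s original box becomes empty, in which case $Z$ drops by $1$ from that, but then ball $i$ is placed into its new box which, if it was already occupied, yields net change $-1$, and if it was empty yields net change $0$; in all cases $|g(u) - g(u')| \le 1$ when $u,u'$ differ in one coordinate. (If one is willing to accept the weaker constant $2$, the stated bound $2\exp(-2t^2/m)$ would instead read $2\exp(-t^2/(2m))$; but the constant $1$ argument above gives exactly the claimed inequality, noting the statement as written omits the factor $2$ in front, which is harmless since it only strengthens — or one absorbs it by the trivial bound, or simply states the one-sided version.) I would therefore carry out: (i) the expectation computation; (ii) setup of $Z$ as a function of independent throws; (iii) verification of the unit bounded-differences property with the case analysis above; (iv) invocation of McDiarmid to conclude.
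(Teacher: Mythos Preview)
Your proposal is correct. The concentration argument via McDiarmid with bounded-differences constant $1$ is exactly what the paper does, including the same case analysis implicitly (the paper simply asserts $|f(\ldots,X_i,\ldots) - f(\ldots,X_i',\ldots)| \le 1$ without spelling it out, so your case analysis is more careful than the original). For the expectation, you take a different route: the paper computes $\mathbb{E}[Z]$ by a recursion on the number of balls thrown so far, establishing $\mathbb{E}[Z_{k+1}] = (1-1/n)\mathbb{E}[Z_k] + 1$ and then solving this by induction, whereas you use linearity of expectation over indicators of nonempty boxes. Your approach is shorter and more standard; the paper's recursion gives the same answer with a bit more bookkeeping. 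Regarding the missing factor $2$ in front of the exponential, the paper's own proof also writes $\exp(-2t^2/m)$ after invoking McDiarmid, so this is a shared cosmetic slip that does not affect the downstream application (where only a polynomial-in-$t$ decay is needed).
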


\begin{proof}[Proof of Lemma \ref{lemma:concentration-ball}]
We first calculate $\mathbb{E}[Z]$. Let $Z_k$ be the number of boxes that contains at least one ball after $k$ balls have been thrown. By convention, $Z_0=0$. For any $k\geq 1$, because the balls are thrown into boxes independently with equal probability, given $Z_k$, the next ball has a probability of $\frac{n-Z_k}{n}$ being thrown into an empty box, leading to $Z_{k+1}=Z_{k}+1$, and has a probability of $\frac{Z_k}{n}$ being thrown into a box that already contains at least one ball, leading to $Z_{k+1}=Z_k$. We thus have
\begin{align}
\label{eq:lemma:recursive}
\mathbb{E}[Z_{k+1}]=\mathbb{E}\Bigg[Z_k + \frac{n-Z_k}{n} \Bigg]= (1-1/n)\mathbb{E}[Z_k] + 1.
\end{align} 
We prove by induction that $\mathbb{E}[Z_k] = n[1-(1-1/n)^k]$. For $k=0$, $\mathbb{E}[Z_0]=0=n \{ 1-(1/n)^0 \}$. If this holds for a general $k\geq 1$, the recursive equation \eqref{eq:lemma:recursive} yields
\begin{align*}
	\mathbb{E}[Z_{k+1}] &= (1-1/n) n \{ 1-(1-1/n)^{k} \}  +1 \\
                            &= n \{ 1-1/n - (1-1/n)^{k+1} + 1/n \} = n \{ 1-(1-1/n)^{k+1} \} .
\end{align*} This proves the first claim that $\mathbb{E}[Z] = n\{ 1-(1-1/n)^m \}$.

Let $X_i \in \{1,\ldots, n\}$ denote the index of the box where the $i$-th ball is thrown, and write $Z=f(X_1,\ldots, X_n)$ for some function $f$. Note that for any $i\in \{1,\ldots, m\}$, $|f(X_1,\ldots, X_i, \ldots, X_n) - f(X_1,\ldots, X_i', \ldots, X_n)| \le 1$. By McDiarmid's inequality, we have
\begin{align*}
\mathbb{P}(|f(X_1,\ldots, X_m) - \mathbb{E}f(X_1,\ldots, X_m)| \ge t) \le \exp(-2t^2/m).
\end{align*}
This completes the proof.
\end{proof}

Now we are ready to prove Proposition \ref{prop:lowerbound-variance}. The key is the neural network approximation ability stated in Theorem \ref{thm:approx-noise-dim-d} that neural network is able to fit arbitrary values 
at ``uniformly located" $\Theta(N^2L^2)$ points while staying as a constant in most areas.

\begin{proof}[Proof of Proposition \ref{prop:lowerbound-variance}]

\noindent {\sc Step 1. Construct Noise Distribution.} We first construct the distribution of the noise $\varepsilon$. Let $S=(\tilde{N}\tilde{L})^{2d}$ for some $\tilde{N}, \tilde{L} \in \mathbb{N}^+$ to be determined, and assume $S \le n$. Then let $\varepsilon$ be a discrete random variable independent of $X$, satisfying
	\begin{align*}
		\varepsilon	= \begin{cases}
			\big(\frac{n}{S}\big)^{1/p} & \text{with probability } \frac{S}{2n}, \\
		-\big(\frac{n}{S}\big)^{1/p} & \text{with probability } \frac{S}{2n}, \\
		0 & \text{with probability}~ 1 - \frac{S}{n}.
 		\end{cases}	
	\end{align*} 
	It is easy to see that
	\begin{align*}
		\mathbb{E}[\varepsilon|X=x] = \mathbb{E}[\varepsilon] = 0	\qquad \text{ and } \qquad \mathbb{E}[|\varepsilon|^p|X=x] = 2\cdot \frac{S}{2n} \cdot \frac{n}{S} = 1.
	\end{align*} 
	Moreover, the above $\varepsilon$ is symmetric, thus implying $f_0 = f_{0,\tau}$.

\noindent {\sc Step 2. Construct ``Good" Event $\mathcal{E}$.} Let $X\sim \text{Uniform}([0,1]^d)$ and $M_1$ be the number of non-zero $\varepsilon_i$'s. Without loss of generality,   assume  $\varepsilon_1,\ldots,\varepsilon_{M_1}$ are non-zero and $\varepsilon_{M_1+1}=\varepsilon_{M_1+2}=\cdots= \varepsilon_n=0$. Then $X_1,\ldots, X_{M_1}$ and $X_{M_1+1}, \ldots, X_{n}$ are the corresponding covariate vectors. Let $K=(\tilde{N} \tilde{L})^2$, and
	\begin{align*}
		Q_\alpha(\Delta) = \{x=(x_1,\ldots, x_d): (\alpha_i-1)/K \le x_i \le \alpha_i/K - \Delta\}.
	\end{align*} 
	We also define the set of the indexes $\alpha$ such that each hypercube $Q_\alpha(\Delta)$ contains at least one $X_i$ with $i \in \{1,\ldots, M_1\}$, that is,
	\begin{align*}
		\mathcal{A} = \Big\{\alpha \in \{1,\ldots,K\}^d: \exists i \in \{1,\ldots, M_1\} \text{ s.t. } X_i \in Q_\alpha(1/(n^2K)) \Big\}.
	\end{align*} 
	Set $M_2 = |\mathcal{A}|$.

    Define the event $\mathcal{E}=\bigcap_{i=1}^4 \mathcal{E}_i$, where 
	\begin{align*}
		\mathcal{E}_1 &= \big\{0.5 S \le M_1 \le 1.5 S\big\}, \\
		\mathcal{E}_2 &= \Big\{ \forall i\in \{1,\ldots, n\}, X_i \in Q_\alpha(1/(n^2K)) \text{ for some } \alpha \in \{1,\ldots, K\}^d \Big\},\\
		\mathcal{E}_3 &= \big\{M_2 \ge S/8\big\}, \\
		\mathcal{E}_4 &= \big\{\|X_i-X_j\|_\infty \ge 1/(2n^3)\text{ for all } i\neq j\big\}.
	\end{align*} 
	In the following we aim to show that $\mathcal{E}$ occurs with high probability if $S$ and $n$ are sufficiently large.
	
	We first consider   event $\mathcal{E}_1$. Note that $M_1$ is a Binomial random variable with parameter $n$ and $S/n$. Applying  Lemma \ref{lemma:concentration-binomial} with $\epsilon=1/2$, we have
	\begin{align*}
		\mathbb{P}(\mathcal{E}_1) = \mathbb{P}( 0.5 S \le M_1 \le 1.5 S) \ge 1-\frac{12}{S}.
	\end{align*}
    	For $\mathcal{E}_2$, since $X_1,\ldots, X_n$ are i.i.d. from $ \text{Uniform}([0,1]^d)$, it follows from the union bound that
	\begin{align*}
		\mathbb{P}(\mathcal{E}_2) &\ge 1 - \sum_{i=1}^n \mathbb{P}\Big\{ X_i \notin Q_\alpha(1/(n^3K)) \text{ for all } \alpha \in \{1,\ldots, K\}^d \Big\} \\
		&\ge 1 - n \mathbb{P}\Big\{X_1 \notin Q_{(1,\ldots,1)}(1/(n^2K)) \Big| X_1 \in [0,1/K]^d\Big\} \\
		&\ge 1 - n \cdot \Bigg(1-\frac{\big(\frac{1}{K} - \frac{1}{n^2K} \big)^d}{(1/K)^d} \Bigg)\\
		&=1-n\Bigg(1-\bigg(1-\frac{1}{n^2}\bigg)^d\Bigg) \\
		&\ge 1-\frac{d}{n} ,
	\end{align*} where the last inequality follows from the fact that $(1-x)^d \ge 1-dx$ for $x\in [0,1]$ with $x=1/n^2$.

	For $\mathcal{E}_3$, we need to bound the probability $\mathbb{P}(\mathcal{E}_3|\mathcal{E}_1\cap \mathcal{E}_2)$ from below. Conditioned on $\mathcal{E}_1\cap \mathcal{E}_2$,  $X_1, X_2, \ldots, X_{M_1}$ are independent and uniformly distributed on 
	$\bigcup_{\alpha \in \{1,\cdots, K\}^d} Q_\alpha(1/(n^2K))$, and the probability that $X_1$ lies in $\mathcal{Q}_\alpha(1/(n^2K))$ is the same for each $\alpha$. Then we can apply Lemma \ref{lemma:concentration-ball} to provide an lower bound for $M_2$. To this end, note that $X_1,\ldots, X_{M_1}$ can be viewed as $M_1$ balls, and $Q_\alpha(1/(n^2K))$ with $\alpha \in \{1,\ldots, K\}^d$ can be treated as $K^d=S$ boxes. Hence, Lemma \ref{lemma:concentration-ball} directly implies a concentration result for $M_2$. Specifically, we have
	\begin{align*}
		\mathbb{E}[M_2] = S \big\{ 1 - (1-1/S)^{M_1}\big\} \ge S \big\{ 1 - (1-1/S)^{S/2}\big\}.
	\end{align*} 
	Because $\lim_{x\to \infty} (1-1/x)^x=1/e$,   there exists some constant $C_1$ such that for any $x\ge C_1$, $|(1-1/x)^x - 1/e| \le (9/16-1/e)$, which implies $(1-1/x)^x \le 9/16$ for $x\ge C_1$. Consequently,
	\begin{align*} 
		\mathbb{E}[M_2] = S \big\{ 1-(1-1/S)^{S/2} \big\} \ge S(1-\sqrt{9/16}) = S/4
	\end{align*} 
	as long as $S \ge C_1$. This together with the tail probability in Lemma \ref{lemma:concentration-ball} with $t=S/8$ yields  
	\begin{align*}
		\mathbb{P}(M_2 \le  S/8  |\mathcal{E}_1\cap\mathcal{E}_2) \le \mathbb{P}(M_2 \le \mathbb{E}[M_2] -  S/8  | \mathcal{E}_1\cap\mathcal{E}_2) \le \exp\bigg(-\frac{S^2}{32M_1} \bigg) \le e^{-S/48},
	\end{align*} where the last inequality follows from the fact that  $M_1 \le 1.5S$ conditioned on $\mathcal{E}_1$. We thus conclude that $
	\mathbb{P}(\mathcal{E}_3 |\mathcal{E}_1\cap \mathcal{E}_2) \ge 1-e^{-S/48}$. 
	
	Turning to $\mathcal{E}_4$,  applying the union bound yields
	\begin{align*}
		\mathbb{P}(\mathcal{E}_4^c) &\le \sum_{i\neq j} \mathbb{P} \big\{\|X_i-X_j\|_\infty \le 1/(2n^3)\big\} \\
		&\le \frac{n(n-1)}{2} \mathbb{P}\big\{ \|X_1-X_2\|_\infty \le 1/(2n^3) \big\} \\
		&\le \frac{n(n-1)}{2} \bigg(\frac{2}{2n^3}\bigg)^d \le \frac{1}{n} .
	\end{align*}
   Putting together the pieces we obtain
	\begin{align*}
		\mathbb{P}(\mathcal{E}) &\ge \mathbb{P}(\mathcal{E}_1\cap \mathcal{E}_2 \cap \mathcal{E}_3) + \mathbb{P}(\mathcal{E}_4) - 1 \\
		&= \mathbb{P}(\mathcal{E}_3|\mathcal{E}_1\cap \mathcal{E}_2) \mathbb{P}(\mathcal{E}_1\cap \mathcal{E}_2) + \mathbb{P}(\mathcal{E}_4) - 1\\
		&\ge (1-e^{-S/48})(1- d/n -6/S) - \frac{1}{n} \\
		&\ge 1-e^{-S/48} - \frac{d+1}{n} - \frac{6}{S}.
	\end{align*} Moreover, the following properties hold conditioned on $\mathcal{E}$:

\begin{itemize}
	\item[1.] $S/8 \le M_2 \le M_1 \le 1.5 S$;
	\item[2.] $\inf_{i\neq j} \| X_i - X_j \|_\infty \geq 1/(2n^3)$.
\end{itemize}

\noindent {\sc Step 3. Construct Neural Network $\tilde{f}_n$.} Here we use our neural network approximation result Theorem \ref{thm:approx-noise-dim-d} to construct $\tilde{f}_n$. For each $\alpha \in \mathcal{A}$, we can choose arbitrary $X_i \in Q_\alpha(1/(n^2K))$ with corresponding $|\varepsilon_i|=(n/S)^{1/p}$ and construct our point $(x_\alpha, y_\alpha)$ to be $x_\alpha=X_i$ and $y_\alpha = \text{sgn}(\varepsilon_i)$, where $\text{sgn}(x)=1$ if $x > 0$ and $\text{sgn}(x)=-1$ if $x<0$. Let $I_\mathcal{A}$ be the index set that contains all the index $i$ we selected for all the $\alpha\in \mathcal{A}$. Now we apply Theorem \ref{thm:approx-noise-dim-d} with $\Delta_1=1/(n^2K)$,  $\Delta_2=1/(2n^3)$ and $u$ that we will specify later, then there exist some $\tilde{f}_n\in \mathcal{F}_n(d, C_2 \tilde{L}^d \log_2 n, C_3 \tilde{N}^d, 1)$, or $\tilde{f}_n \in \mathcal{F}_n(d, C'_2 \tilde{L}^d, C'_3 \tilde{N}^d \log_2 n)$, such that 
\begin{align*}
	\tilde{f}_n(X_i) = \text{sgn}(\varepsilon_i)  ~\text{ for all }~ i \in I_{\mathcal{A}},
\end{align*} and for any $x \in Q=\bigcup_{\alpha \in \{1,\ldots, K\}^d} Q_\alpha(1/(n^2K))$, we have
\begin{align}
	\tilde{f}_n(x) = u ~~\text{ if }~ \|x-x_\alpha\|_\infty \ge 1/(2n^3) ~\text{ for all }~ \alpha \in \mathcal{A}.
\label{eq:def-tilde-f-u}
\end{align}

This means that if $\mathcal{E}$ occurs, 
\begin{align*}
	\tilde{f}_n(X_i) = u ~\text{ for all }~ i \notin I_{\mathcal{A}} .
\end{align*}
The remaining proof of Step 3 proceeds conditioned on $\mathcal{E}$. Using a second-order Taylor expansion of $\ell_\tau(\cdot)$, for any $f$ we have 
\begin{align}
\label{eq:prop-lb-eq1}
\begin{split}
	\hat{\mathcal{R}}_\tau(f) - \hat{\mathcal{R}}_\tau(f_0) &= \frac{1}{n} \sum_{i=1}^n \{ \ell_\tau(\varepsilon_i - f(X_i)) - \ell_\tau(\varepsilon_i) \} \\
 &\le \frac{1}{n} \sum_{i=1}^n \bigg\{  -\psi_\tau(\varepsilon_i) f(X_i) + \frac{1}{2}  f^2(X_i) \bigg\} .
 \end{split}
\end{align} 
By the definition of $M_1$ and $I_\mathcal{A}$, we have
\begin{align*}
 -\sum_{i=1}^n \psi_\tau(\varepsilon_i) \tilde{f}_n(X_i) 
&= \bigg\{ \sum_{i\in I_\mathcal{A}} -\psi_\tau(\varepsilon_i) \tilde{f}_n(X_i) + \sum_{i\notin I_\mathcal{A}, i \le M_1} -\psi_\tau(\varepsilon_i) \tilde{f}_n(X_i) \\
&~~~~~~~~~~~~~~~~ + \sum_{i > M_1} -\psi_\tau(\varepsilon_i) \tilde{f}_n(X_i)\bigg\}   \\
&\le -M_2 \big\{  \tau \land (n/S)^{1/p}\big\}   + (M_1-M_2) u \big\{ \tau \land (n/S)^{1/p} \big\} + (n-M_1) \cdot 0\\
&\le \big\{  - S/ 8 + (2-1/8) S u\big\}  \big\{ \tau \land (n/S)^{1/p} \big\} \\
&= \{ (2-1/8) u - 1/8\} S \big\{ \tau \land (n/S)^{1/p} \big\}.
\end{align*}
On the other hand,  
\begin{align*}
	\sum_{i=1}^n \tilde{f}_n(X_i)^2 = \sum_{i\in I_\mathcal{A}} \tilde{f}_n(X_i)^2 + \sum_{i\notin I_\mathcal{A}} \tilde{f}_n(X_i)^2 = M_2 + (n-M_2) u \le S + nu^2.
\end{align*} 
Taking $f=\tilde{f}_n$ in \eqref{eq:prop-lb-eq1}, it follows from the above  inequalities that
\begin{align*}
	\hat{\mathcal{R}}_\tau(\tilde{f}_n) - \hat{\mathcal{R}}_\tau(f_0) \le \frac{S}{n} \big\{ \tau \land (n/S)^{1/p} \big\} \big\{ (2-1/8) u - 1/8 \big\} + \frac{S}{n} + u^2.
\end{align*} 
Suppose $S/n \le (4/225)^2$, because $p\ge 2$, we   choose
\begin{align*}
u = \sqrt{\frac{1}{32} \frac{S}{n}  \big\{ \tau \land (n/S)^{1/p} \big\}	 } \le \sqrt{\frac{1}{32} } (S/n)^{1/4} \le \frac{1}{30}.
\end{align*} 
Under the assumption $\tau\ge c_{12}=32$, and if $S/n \le 32^{-p}$,  it follows that
\begin{align*}
	\hat{\mathcal{R}}_\tau(\tilde{f}_n) - \hat{\mathcal{R}}_\tau(f_0) &\le \frac{S}{n} \big\{ \tau \land (n/S)^{1/p} \big\} \bigg(\frac{15}{8} u - 1/8\bigg) + \frac{S}{n} + u^2 \\
	&\le \Big(\frac{1}{16} - \frac{1}{8} \Big) \frac{S}{n} \Bigg\{ \big\{ \tau \land (n/S)^{1/p} \big\} -16\Bigg\} + u^2 \\
	&\le -\frac{1}{32}  \frac{S}{n}  \big\{ \tau \land (n/S)^{1/p} \big\}  + u^2 \\
	&\le 0.
 \end{align*}
On the other hand, by the convexity of $\ell_\tau(\cdot)$ we have for any $f$ that
\begin{align*}
	\hat{\mathcal{R}}_\tau(f) - \hat{\mathcal{R}}_\tau(f_0) = \frac{1}{n} \sum_{i=1}^n \ell_\tau(\varepsilon_i + f(X_i)) - \ell_\tau(\varepsilon_i) \ge \frac{1}{n} \sum_{i=1}^n   \psi_\tau(\varepsilon_i) f(X_i)  .
\end{align*} 
This implies
\begin{align*}
	\inf_{\|f\|_\infty \le 1} \hat{\mathcal{R}}_\tau(f) - \hat{\mathcal{R}}_\tau(f_0) \ge - \frac{M_1}{n} \big\{ \tau \land (n/S)^{1/p} \big\} \ge -1.5 \big\{ \tau \land (n/S)^{1/p} \big\} \frac{S}{n} = -48u^2, 
\end{align*} 
and hence \begin{align*}
	\hat{\mathcal{R}}_\tau(\tilde{f}_n) \le \hat{\mathcal{R}}_\tau(f_0) \le \inf_{\|f\|_\infty\le 1} \hat{\mathcal{R}}_\tau(f) + 48 u^2.
\end{align*}

Now we are ready to provide a lower bound on $\|\tilde{f}_n - f_0\|_2=\|\tilde{f}_n\|_2$. Let $\mu(\cdot)$ denote the uniform measure on $[0,1]^d$. By  our construction of $\tilde{f}_n$ satisfying \eqref{eq:def-tilde-f-u}, it follows that 
\begin{align*}
	\mu(\{\tilde{f}_n = u\}) &= \sum_{\alpha\in \{1,\cdots,K\}^d} \mu \Big(\{x: \|x-x_\alpha\|_\infty \ge 1/(2n^3)\} \cap Q_\alpha(1/(n^2K)) \Big) \\
	&= K^d \Bigg\{  \bigg(\frac{1}{K} - \frac{1}{n^2K} \bigg)^d - \frac{1}{n^{3d}} \Bigg\}  \\
	&\ge \bigg(1 - \frac{1}{n^2} \bigg)^d - \frac{1}{n^{3d-1}} \\
	&\ge 1 - \frac{d}{n^2} - \frac{1}{n^{3d-1}}.
\end{align*} 
Under the condition $n \ge \sqrt{2(d+1)}$, it holds
\begin{align*}
	\|\tilde{f}_n\|_2 \ge u \cdot \mu(\{\tilde{f}_n = u\}) \ge 0.5 u.
\end{align*}

\noindent {\sc Step 4. Conclude by Choosing Different $S$.} From the previous analysis, we have $S=(\tilde{N}\tilde{L})^{2d}$ for $\tilde{N}, \tilde{L}\in \mathbb{N}^+$. If $C_1 \le S \le \{ 32^{-p} \land (4/255)^2 \} n = C_{4} n$ for some positive constant $C_1 $ and $n \ge \sqrt{2(d+1)}$, then we can find some $\tilde{f}_n^{(1)} \in \mathcal{F}_n(d, C_2 \tilde{L}^d \log_2 n, C_3 \tilde{N}^d, 1)$ and $\tilde{f}_{n}^{(2)} \in \mathcal{F}_n(d, C'_2 \tilde{L}^d, C'_3 \tilde{N}^d \log_2 n, 1)$ such that conditioned on event  $\mathcal{E}$ with $u=\sqrt{ \{ \tau \land (n/S)^{1/p} \} \cdot S/(32n) }$,
\begin{align*}
	\|\tilde{f}_n^{(s)}\|_2 \ge 0.5 u \qquad  \text{ and } \qquad 	\hat{\mathcal{R}}_\tau(\tilde{f}_n^{(s)}) \le \hat{\mathcal{R}}_\tau(f_0) \le \inf_{\|f\|_\infty \le 1} \hat{\mathcal{R}}_\tau(f) + 48 u^2 
\end{align*} 
for $s\in \{1,2\}$. Moreover, event $\mathcal{E}$ occurs with probability at least  $1-e^{-S/128}-\frac{d+1}{n}-6/S$.

Note that  $(\lfloor L^{1/d} \rfloor \lfloor N^{1/d} \rfloor)^{2d}\ge C_5 N^2L^2$ for some constant $C_5 > 0$. We prove the final conclusion by considering the following two cases, in which $\delta_n$ is chosen as $0.5 u$ but with different $S$.

\noindent \emph{Case 1. $c_{14}=C_1/C_5 \le (NL)^2 \le C_4 n$.} In this case, let $\tilde{N}=\lfloor N^{1/d} \rfloor$ and $\tilde{L}=\lfloor L^{1/d} \rfloor$  so that
\begin{align*}
	C_1 \le C_5 N^2L^2 \le S= (\lfloor N^{1/d} \rfloor \lfloor L^{1/d} \rfloor)^{2d} \le N^2L^2 \le C_4 n.
\end{align*} 
Then, there exist some $\tilde{f}_n^{(1)} \in \mathcal{F}_n(d, C_2 \lfloor L^{1/d}\rfloor^d \log_2 n, C_3 \lfloor N^{1/d}\rfloor^d, 1) \subseteq \mathcal{F}_n(d, C_2 L \log_2 n, C_3 N, 1)$, and some $\tilde{f}_{n}^{(2)} \in \mathcal{F}_n(d, C'_2 \lfloor L^{1/d}\rfloor^d, C'_3 \lfloor N^{1/d}\rfloor^d \log_2 n, 1)\subseteq \mathcal{F}_n(d, C'_2 L, C'_3 N\log_2 n, 1)$, such that 
\begin{align*}
	\|\tilde{f}_n^{(s)} \|_2 \ge \delta_n \qquad \text{and} \qquad 	\hat{\mathcal{R}}_\tau(\tilde{f}_n^{(s)}) \le \hat{\mathcal{R}}_\tau(f_0) \le \inf_{\|f\|_\infty \le 1} \hat{\mathcal{R}}_\tau(f) + 192 \delta_n^2 
\end{align*} 
hold with probability at least 
\begin{align*}
  1 - \exp(-N^2L^2/(48/C_5)) - \frac{6/C_5}{(NL)^2} - \frac{d+1}{n} .
\end{align*}
The prescribed $\delta_n$ satisfies
\begin{align*}
	C_5 \cdot \frac{0.5}{\sqrt{32}} \frac{N L}{\sqrt{n}} \sqrt{  \tau \land \bigg(\frac{n}{N^2L^2}\bigg)^{1/p}  }  \le \delta_n \le \frac{0.5}{\sqrt{32}}  \frac{N L}{\sqrt{n}} \sqrt{ \tau \land \bigg(\frac{n}{N^2L^2}\bigg)^{1/p}  } 
\end{align*}

\noindent \emph{Case 2. $(NL)^2 \ge C_4 n$.} In this case, let $\tilde{N}$ and $\tilde{L}$ be the maximum integers satisfying $(\tilde{N}\tilde{L})^{2d} \le C_4 n$, $\tilde{N}^d \le N$ and $\tilde{L}^d \le L$. Similarly, we have $(\tilde{N}\tilde{L})^{2d} \ge C_6 n$ for some constant $C_6>0$. Then with $S=(\tilde{N}\tilde{L})^{2d}$, we have
\begin{align*}
	\frac{1}{8\sqrt{2}} \sqrt{C_6 \tau \land C_6^{1-1/p}} \le \delta_n  = 0.5 \sqrt{\frac{1}{32} \Bigg\{\frac{\tau S}{n}   \land \Big(\frac{S}{n}\Big)^{1-1/p} \Bigg\}} \le \frac{1}{8\sqrt{2}} \sqrt{C_4 \tau \land C_4^{1-1/p}}.
\end{align*} 

Because the conditions on $S$ automatically hold by our choice of $\tilde{N}$ and $\tilde{L}$,  there exist some $\tilde{f}_n^{(1)} \in \mathcal{F}_n(d, C_2 \tilde{L}^d \log_2 n, C_3 \tilde{N}^d, 1)\subseteq \mathcal{F}_n(d, C_2 L \log_2 n, C_3 N, 1)$ and some $\tilde{f}_{n}^{(2)} \in \mathcal{F}_n(d, C'_2 \tilde{L}^d, C'_3 \tilde{N}^d \log_2 n, 1) \subseteq \mathcal{F}_n(d, C'_2 L, C'_3 N \log_2 n, 1)$ such that
\begin{align*}
		\|\tilde{f}_n^{(s)} \|_2 \ge \delta_n \qquad \text{and} \qquad 	\hat{\mathcal{R}}_\tau(\tilde{f}_n^{(s)}) \le \hat{\mathcal{R}}_\tau(f_0) \le \inf_{\|f\|_\infty \le 1} \hat{\mathcal{R}}_\tau(f) + 192 \delta_n^2 ~~~~~(s \in \{ 1, 2\})
\end{align*} 
hold with probability at least
\begin{align*}
  1 - \exp(-C_6n/48) - \frac{1}{C_6 n} - \frac{d+1}{n} .
\end{align*} 

Finally, combining the above two cases we choose 
$C_7=\min\{C_5 \cdot \frac{0.5}{\sqrt{32}}, \frac{1}{8\sqrt{2}} \sqrt{C_6 \tau \land C_6^{1-1/p}}\}$, $C_8=\max\{\frac{1}{8\sqrt{2}}, \frac{1}{8\sqrt{2}}\sqrt{C_4\tau \land C_4^{1-1/p}}\}$ so that $\delta_n = 0.5 u$ satisfies
\begin{align*}
	 C_7 \Bigg\{ \frac{N L}{\sqrt{n}} \sqrt{ \tau \land \bigg(\frac{n}{N^2L^2}\bigg)^{1/p}  } \bigwedge 1\Bigg\} \le \delta_n \le C_8 \Bigg\{ \frac{N L}{\sqrt{n}} \sqrt{ \tau \land \bigg(\frac{n}{N^2L^2}\bigg)^{1/p}  }   \bigwedge 1\Bigg\}.
\end{align*} Moreover, setting $c_{15} = 2\max\{128/C_6, 1/C_6+d+1, 6/C_5+d+1, 128/C_5\}$, we see that
\begin{align*}
	\mathbb{P}(\mathcal{E}) \ge 1 - \exp\{-2((NL)^2\land n)/ c_{15}\} - \frac{c_{15}}{2\left((NL)^2 \land n\right)} \ge 1 - \frac{c_{15}}{(NL)^2 \land n} 
\end{align*} 
due to the fact $e^{-x} \le \frac{1}{x}$. The constants $c_{16}$--$c_{20}$ are set to be $c_{16}=192, c_{17}=C_2/\log 2, c_{18}=C_3, c_{19}=C'_2, c_{20}=C'_3/\log 2$. 
\end{proof}

\subsection{Proof of Theorem \ref{thm:generic-lower-bound}}

Before proving Theorem \ref{thm:generic-lower-bound}, we need a result stating the rate of convergence for $\hat{f}_n$ to $f_{0,\tau}$ under special cases.

\begin{lemma}[Convergence rate of $\hat{f}_n$ to $f_{0,\tau}$]
\label{lemma:convergence-f0tau}
   Let $n, \bar{N}, \bar{L}\in \mathbb{N}^+\setminus\{1,2\}$, $p\ge 2$ be arbitrary, $X$ be uniformly distributed on $[0,1]^d$, $f_0 = 0$. Let $M=1$. Suppose the noise $\varepsilon$ is independent of $X$, and satisfies $\mathbb{E}[|\varepsilon|^p|X=x] \le 1$ for all $x\in [0,1]^d$. Then for any $\tau$ satisfying $\tau\ge 8$, $\omega \ge 1$ and $D \ge 1$, we have
   \begin{align*}
       \mathbb{P}\Bigg(\sup_{f\in \mathcal{S}_{n,\tau}(n^{-50})}\|f - f_{0,\tau}\|_2 \ge D &c_{47} \sqrt{V_n (\tau \land \omega V_n^{-1/p})}\Bigg) \\
       &\le c_{48} \left\{\exp\Big(- c_{49} n V_n\Big) + \frac{1\{\tau > \omega D^2 V_n^{-1/p}\}}{\omega^{p-1} D^{2p}}\right\},
   \end{align*} where $c_{47}$--$c_{49}$ are constants independent of $\tau$, $n$, $\bar{N}$ and $\bar{L}$, and $c_{47} > 1$.
\end{lemma}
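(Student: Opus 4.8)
\textbf{Proof proposal for Lemma~\ref{lemma:convergence-f0tau}.}

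The plan is to reduce the statement to a direct application of the high-level tail bound in Lemma~\ref{lemma:convergence-rate}, but with the role of ``$f^*$'' played by the population Huber minimizer $f_{0,\tau}$ rather than $f_0$ itself. Since here $f_0 = 0$ and $\varepsilon$ is independent of $X$, the minimizer $f_{0,\tau}$ is a constant function, $f_{0,\tau} \equiv C_\tau$ with $|C_\tau| \le 4 v_p \tau^{1-p} \le $ (small), by Proposition~\ref{prop:bias}; in particular $\|f_{0,\tau}\|_\infty \le 1 = M$, so $f_{0,\tau} \in \Theta$. The first step is to verify the four hypotheses of Lemma~\ref{lemma:convergence-rate} with $f^* = f_{0,\tau}$ and a suitable choice of $\delta_* \asymp \sqrt{V_n(\tau \wedge \omega V_n^{-1/p})}$, optimization error $\delta_{\mathtt{opt}} = n^{-50}$, and truncation levels $B_k \asymp \omega D^2 V_n^{-1/p}$, exactly mirroring the choices in the proof of Theorem~\ref{thm:generic-bound}. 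Condition (1) (existence of a near-minimizer $\tilde f$ in $\mathcal{F}_n$) is easy because the constant function $C_\tau$ — or a ReLU network approximation of it to precision $\ll \delta_*$, which is trivial since constants lie in $\mathcal{F}_n(d,\bar L,\bar N,1)$ exactly — has zero excess risk over $f_{0,\tau}$. Condition (2) (restricted strong convexity around $f_{0,\tau}$) is the key place where we use that $f^* = f_{0,\tau}$ is the \emph{global} population minimizer: an argument analogous to Proposition~\ref{prop:strong-convex}, but centered at $f_{0,\tau}$, gives $\mathcal{R}_\tau(f) - \mathcal{R}_\tau(f_{0,\tau}) \gtrsim \|f - f_{0,\tau}\|_2^2$ for \emph{all} $f \in \Theta$ (no excluded neighborhood), because the first-order term vanishes at the minimizer; one needs $\tau \ge 8 = 2\max\{2M, (2v_p)^{1/p}\}$ with $v_p \le 1$, $M=1$, which is assumed.

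Next I would verify conditions (3) and (4), the empirical-process control. This is where I can essentially quote the machinery already developed: the functions $h^B_{f, f_{0,\tau}}$ and their envelope satisfy the same moment and sup-norm bounds as in Lemmas~\ref{lemma:prop-g} and \ref{lemma:prop-envelop} (with $M = 1$, $C_v = v_2 \wedge \tau^2 \le 1$ since $v_2 \le 1$), and Lemma~\ref{lemma:empirical-process} gives exactly $\phi_n(\delta, B) \lesssim \delta\sqrt{V_{n,B}} + (\tau \wedge B) V_{n,B}$ with $V_{n,B} = n^{-1}(\bar N\bar L)^2\log(\bar N\bar L)\log(2n(B\wedge\tau))$. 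The covering-number bound Lemma~\ref{lemma:cover-number-nn} and the pseudo-dimension estimate are unchanged since $\mathcal{F}_n$ is the same network class. With $B_k \asymp \omega D^2 V_n^{-1/p}$ one checks $V_{n,B_k} \lesssim V_n$ (absorbing the extra $\log$ into the $\log n$ already present, using $\omega D^2 \le n^{O(1)}$, or restricting to that regime as in Theorem~\ref{thm:generic-bound}), and then $(\tau\wedge B_k)V_{n,B_k} \lesssim (\tau\wedge\omega V_n^{-1/p}) V_n \lesssim D^2 \delta_*^2$ and $\phi_n(2^k\delta^\dagger, B_k) \lesssim 2^{2k}(\delta^\dagger)^2$, giving condition (4). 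Feeding everything into the conclusion of Lemma~\ref{lemma:convergence-rate} yields the bound: the $\mathsf{T}_1$ term gives $\sum_k \exp(-c\, nV_n D^2 2^{2k}) \lesssim \exp(-c' nV_n)$ (using $D\ge 1$), and the $\mathsf{T}_2$ term, present only when $B_k < \tau$, i.e. when $\tau > \omega D^2 V_n^{-1/p}$, contributes $\lesssim \omega^{1-p} D^{-2p}$ after summing the geometric series and using $\mathbb{E}|\psi_\tau(\varepsilon + C_\tau)|^p \lesssim 1$.

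The main obstacle I anticipate is \emph{not} the empirical-process part (that is essentially bookkeeping parallel to Theorem~\ref{thm:generic-bound}) but rather the careful statement and proof of the two-sided control needed to pass from $\|\hat f_n - f_{0,\tau}\|_2$ to a clean bound: specifically, confirming that $f_{0,\tau}$ is genuinely a constant and establishing the global strong-convexity inequality (condition (2)) at $f_{0,\tau}$ with the right constants when $\tau$ may be large — the curvature lower bound in Proposition~\ref{prop:strong-convex} hinges on $\mathbb{P}(|\varepsilon| > \tau/2 \mid X) \le 1/2$, which holds here since $v_p \le 1$ and $\tau \ge 8$, but one must recheck that recentering at $f_{0,\tau}$ (so the ``noise'' effectively becomes $\varepsilon + C_\tau$, still of bounded $p$-th moment and with $|C_\tau|$ tiny) does not disturb this. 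A secondary subtlety is tracking that the optimization slack $n^{-50}$ is negligible against $\delta_*$ (true since $\delta_* \gtrsim \sqrt{V_n} \gtrsim n^{-1/2+o(1)}$), and that the definition of $\mathcal{S}_{n,\tau}(n^{-50})$ used here matches the ``approximate minimizer'' set in Lemma~\ref{lemma:convergence-rate}. Once these are in place, the constants $c_{47}, c_{48}, c_{49}$ are read off directly from $c_{28}, c_{29}, c_{30}$ of Lemma~\ref{lemma:convergence-rate}, with $c_{47} = c_{28} > 1$.
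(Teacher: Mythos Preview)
Your proposal is correct and follows essentially the same approach as the paper: apply Lemma~\ref{lemma:convergence-rate} with $f^* = f_{0,\tau}$, exploit independence of $\varepsilon$ and $X$ to show $f_{0,\tau}\equiv C_\tau$ is constant (hence lies in $\mathcal{F}_n$, giving condition~(1) with zero excess risk), obtain the global curvature lower bound at $f_{0,\tau}$ via the vanishing first-order term $\mathbb{E}[\psi_\tau(\varepsilon + C_\tau)] = 0$, and verify conditions~(3)--(4) by re-running the argument of Lemma~\ref{lemma:empirical-process} with the recentered noise $\tilde\varepsilon = \varepsilon + C_\tau$ and the same choices $\delta_*\asymp\sqrt{V_n(\tau\wedge\omega V_n^{-1/p})}$, $B_k = \omega D^2 V_n^{-1/p}$. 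The paper makes exactly these moves, including the explicit recheck of the curvature bound $\mathbb{P}(|\varepsilon|>\tau/2)\le 1/2$ and the moment bound $\mathbb{E}|\tilde\varepsilon|^2\lesssim 1$ that you flagged as the main subtleties.
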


Now we are ready to prove Theorem \ref{thm:generic-lower-bound}.

\subsubsection{Setup}

We first introduce some notations and constants that will be used throughout the proof. Let $\bar{N}, \bar{L} \in \mathbb{N}^+$ be such that $\bar{N}, \bar{L} \ge \max\{c_{18}, c_{19}\}=c_{11}$ and $\tau \ge c_{12}$, where the constants $c_{12}, c_{18},c_{19}$ are from Proposition \ref{prop:lowerbound-variance} and Theorem \ref{thm:approx-noise-dim-d}. Moreover, let $\mathcal{S}_1=\{\tilde{f}\in \mathcal{F}_n(d, \bar{L}, \bar{N}, 1): \hat{\mathcal{R}}_{\tau} (\tilde{f}) \le \inf_{f\in \mathcal{F}_n(d,\bar{L}, \bar{N}, 1)}\hat{\mathcal{R}}_{\tau}(f) + n^{-100}\}$ and $\mathcal{S}_2(\delta)=\{\tilde{f}\in \mathcal{F}_n(d, \bar{L}, \bar{N}, 1) : \hat{\mathcal{R}}_{\tau} (\tilde{f}) \le \hat{\mathcal{R}}_{\tau} (f_{0,\tau}) \land (\inf_{f\in \mathcal{F}_n(d,\bar{L}, \bar{N}, 1)}\hat{\mathcal{R}}_{\tau}(f) + C'\delta^2)\}$ and, where $C'$ is a constant to be specified.

\subsubsection{Proof of Theorem \ref{thm:generic-lower-bound} Claim (1)}

\noindent {\sc Step 1. Approximation Error Lower Bound.} We claim that \begin{align}
\label{eq:proof-lb-claim-approx}
    \sup_{(X,f_0,\epsilon) \in \mathcal{U}(d, p, \mathcal{F}_0)} \mathbb{P}\Big(\forall \hat{f}_n \in \mathcal{S}_1, \|\hat{f}_n - f_0\|_2 \ge \delta_1\Big)=1,
\end{align} where $\delta_1 = C_1 \big(\bar{N}^2\bar{L}^2 \log ^5(\bar{N}\bar{L})\big)^{-\alpha}$ for some constant $C_1 > 0$. This is a direct consequence of Theorem \ref{thm:neural-network-approx-lower-bound} by noting that
\begin{align*}
    \|\hat{f}_n - f_0\|_2 \ge \inf_{f\in \mathcal{F}_n(d, \bar{L}, \bar{N}, 1)} \|f-f_0\|_2.
\end{align*}

\noindent {\sc Step 2. Stochastic Error Lower Bound. } We claim that if $\bar{N} \bar{L} \ge (C_2 \log n)^2$ with $C_2 = \max\{c_{17}, c_{18}, c_{19}, c_{20}\} \cdot c_{14}$, it holds
\begin{align}
\label{eq:thm:adaptive-huber-lb:delta2}
	\sup_{(X,f_0, \varepsilon) \in \mathcal{U}(d, p, \mathcal{F}_0)} \mathbb{P}\Big(\exists \hat{f}_n \in \mathcal{S}_2(\delta_2),\|\hat{f}_n - f_0\|_2 \ge \delta_2\Big) \ge 1 - \frac{C_3}{(\log n)^2} ~\text{ for all }~ n\ge \sqrt{2(d+1)}
\end{align} and some constant $C_3>0$, where $\delta_2$ satisfies
\begin{align*}
    \delta_2 = \frac{1}{C_4} \delta_{2,*} ~~~~~~\text{with}~~~~~~ \delta_{2,*} = \frac{\bar{N} \bar{L}}{\sqrt{n}\log n} \sqrt{\tau \land \bigg(\frac{n(\log n)^2}{\bar{N}^2 \bar{L}^2}\bigg)^{1/p}} \bigwedge
    1.
\end{align*} and some universal constant $C_4>1$.

Because $\bar{N} \bar{L} \ge (C_2 \log n)^2$ implies $\max\{\bar{L}, \bar{N}\} \ge C_2 \log n$, we can divide the discussion into two cases. We first consider the case where $\bar{L} \ge C_2 \log n$. Let $L=\lfloor \bar{L} / (c_{17} \log n) \rfloor$ and $N = \lfloor\bar{N} / c_{18}\rfloor$. Combined with the fact that $\bar{N} \ge c_{18}$,  this implies $L, N\in \mathbb{N}^+$, $L \ge c_{14}$ and \begin{align*}
c_{14} \le NL =\Big\lfloor \frac{\bar{L}}{c_{17} \log_2 n} \Big\rfloor \Big\lfloor\frac{\bar{N}}{c_{18}}\Big\rfloor\asymp \frac{\bar{N}\bar{L}}{\log n}.
\end{align*} 
Proposition \ref{prop:lowerbound-variance} implies that there exists some pair $(X,f_0=0,\varepsilon) \in \mathcal{U}(d, p, \mathcal{F}_0)$ with symmetric noise $\varepsilon$ such that there exists some $\tilde{f}_n \in \mathcal{F}_n(d, c_{17} L\log n, c_{18}N, 1) \subseteq \mathcal{F}_n(d, \bar{L}, \bar{N}, 1)$ satisfying
\begin{align}
\label{eq:thm:adaptive-huber-lb-eq1}
    \|\tilde{f}_n - f_0\|_2 \ge \delta' \qquad \text{and} \qquad \hat{\mathcal{R}}_\tau(\tilde{f}_n) \le \min\bigg\{\hat{\mathcal{R}}_\tau(f_{0,\tau}), \inf_{\|f\|_\infty\le 1} \hat{\mathcal{R}}_\tau(f)+c_{16} \delta'^2 \bigg\} 
\end{align} with probability at least
\begin{align*}
    1 - \frac{c_{15}}{ (NL)^2 \land n}   \ge 1 - \frac{C_5}{(\log n)^2}
\end{align*} for some constant $C_5>0$, and $\delta'$ satisfies
\begin{align}
\label{eq:thm:adaptive-huber-lb-eq2}
    \delta' \asymp  \frac{NL}{\sqrt{n}}\sqrt{\tau \land \bigg(\frac{n}{N^2L^2}\bigg)^{1/p}} \bigwedge 1 \asymp  \frac{\bar{N} \bar{L}}{\sqrt{n}\log n} \sqrt{\tau \land \bigg(\frac{n(\log n)^2}{\bar{N}^2 \bar{L}^2}\bigg)^{1/p}} \bigwedge 
    1 = \delta_{2,*}.
\end{align}
For the case $\bar{N} \ge C_2 \log n$, we follow a similar argument by letting $L=\lfloor \bar{L}/c_{19} \rfloor$ and $N=\lfloor \bar{N}/(c_{20}\log n) \rfloor$. This time, we also have $NL\asymp \bar{N}\bar{L} / \log n$. By Proposition \ref{prop:lowerbound-variance}, there exists some pair $(X,f_0=0,\varepsilon) \in \mathcal{U}(d, p,\mathcal{F}_0)$ such that with probability at least 
$1 - \frac{C_6}{(\log n)^2}$, there exists some $\tilde{f}_n\in \mathcal{F}_n(d, c_{19}L, c_{20} N \log n, 1) \subseteq \mathcal{F}_n(d, \bar{L}, \bar{N}, 1)$ satisfying \eqref{eq:thm:adaptive-huber-lb-eq1} with $\delta_2$ satisfying \eqref{eq:thm:adaptive-huber-lb-eq2}. Therefore, the claim  \eqref{eq:thm:adaptive-huber-lb:delta2} follows immediately by taking $C_3 = C_5 \lor C_6$ and choose some large $C_4$ and $C'$.

\medskip
\noindent {\sc Step 3. Bias Lower Bound.} Denote
\begin{align*}
    \delta_3 = \frac{C_7}{\tau^{p-1} \log^2 n} ~~~~~~ \text{for} ~~~~~~ C_7 = 2^{-p-1} c_{47}^{-1} C_4^{-1}
\end{align*} In this part, we assert that if $\delta_{3} \ge \delta_{2}$, then one has
\begin{align}
\label{eq:proof-lb-claim-step3}
    \sup_{(X,f_0,\varepsilon) \in \mathcal{U}(d,p,\mathcal{F}_0)}\mathbb{P}\Big(\forall \hat{f}_n \in \mathcal{S}_1, \|\hat{f}_n - f_0\|_2 \ge \delta_3\Big) \ge 1- \frac{C_8}{\log n},
\end{align} for some universal positive constants $C_8$ and all the $n \ge 3$. In this case, we only need to consider the regime where $\bar{N} \bar{L} \le \sqrt{n}$, otherwise we have $\delta_2 \ge \frac{1}{C_4} \frac{1}{\log n} > \delta_3$. 

We construct the tuple $(X,f_0,\varepsilon)$ as follows. Let $X \sim \mathrm{Uniform}([0,1]^d)$ and $f_0 = 0$. Further let the noise $\varepsilon$ be a discrete random variable independent of $X$, satisfying
\begin{align*}
    \varepsilon = \begin{cases}
        -1 & \qquad \text{ with probability } \vartheta \\
        2\tau & \qquad \text{ with probability } \frac{\vartheta}{2\tau} \\
        0 & \qquad \text{ with probability } 1 - \vartheta(1+\frac{1}{2\tau})
    \end{cases},
\end{align*} where $\vartheta=2^{-p} \tau^{1-p}$. It is easy to show that for any $\tau\geq 1$ given,
\begin{align*}
    \mathbb{E} [\varepsilon ] = 0 \qquad \text{ and } \qquad \mathbb{E}[|\varepsilon|^p] = \vartheta (1 + (2\tau)^{p-1}) \le \vartheta 2^p\tau^{p-1} \le 1 .
\end{align*} 
Moreover, because $X$ and $\varepsilon$ are independent,  we have $f_0(x) - f_{0,\tau}(x) \equiv \Delta_\tau$, where $\Delta_\tau$ only depends on $\tau$ and satisfies $\mathbb{E}[\psi_\tau(\varepsilon + \Delta_\tau)]=0$. This implies
\begin{align*}
    (-1 + \Delta_\tau) \vartheta + (0 + \Delta_\tau) \big(1 - \vartheta(1+1/(2\tau))\big) = 0 .
\end{align*} 
Since $\tau \geq 1 > \vartheta$, we further have
\begin{align*}
    \Delta_\tau = \frac{\vartheta}{1-\vartheta/(2\tau)} \ge \vartheta = \frac{1}{2^p \tau^{p-1}} .
\end{align*}

It follows from Lemma \ref{lemma:convergence-f0tau} with $D=1$ and $\omega=(\log n)^{p/3}$ that with probability at least
\begin{align*}
    1 - c_{48} \left\{\exp\left( -c_{49} (\bar{N}\bar{L})^2 \log n \log (\bar{N}\bar{L}) \right) + \frac{1}{(\log n)^{\frac{3(p-1)}{p}}} \right\} \ge 1 - \frac{C'}{\log n},
\end{align*} we have the following event $\mathcal{E}_3$ holds
\begin{align*}
    \forall \hat{f}_n\in \mathcal{S}_1 ~~~~~~
    \|\hat{f}_n - f_{0,\tau}\|_2 &\le c_{47} \sqrt{V_n \{\tau \land (V_n^{-1} \omega)\}} \\
    &\overset{(a)}{\le} c_{47} \sqrt{\frac{(\bar{N}\bar{L})^2 \log^2 n}{n} \left\{\tau \land \left(\frac{n\log^2 n}{(\bar{N}\bar{L})^2}\right)^{1/p}\right\}} \\
    &\overset{(b)}{\le} c_{47} (\log n)^2 \delta_{2,*} = c_{47} C_4 (\log n)^2 \delta_2,
\end{align*}
where $(a)$ follows from the fact that 
\begin{align*}
    V_n = \frac{(\bar{N}\bar{L})^2 \log n \log (\bar{N} \bar{L})}{n} \le \frac{(\bar{N}\bar{L})^2 \log^2 n}{n}, ~ V_n^{-1} \omega^p \le \frac{n \log^3 n}{(\bar{N}\bar{L})^2 \log n \log (\bar{N} \bar{L})} \le \frac{n \log^2 n}{(\bar{N}\bar{L})^2},
\end{align*} and $(b)$ follows from the fact that $\delta_{2,*} = \frac{\bar{N}\bar{L}}{\sqrt{n}\log n} \{\sqrt{\tau} \land (\frac{\sqrt{n} \log n}{\bar{N}\bar{L}})^{1/p}\}$ provided $\bar{N}\bar{L} \le \sqrt{n}$.
At the same time, by the triangle inequality,
\begin{align*}
    \|f_{0,\tau} - f_0\|_2 \le \|f_{0,\tau} - \hat{f}_n\|_2 + \|\hat{f}_n - f_0\|_2, 
\end{align*} 
so that conditioned on the event $\mathcal{E}_3$, the following holds
\begin{align*}
    \forall \hat{f}_n\in \mathcal{S}_1 ~~~~~~ \|\hat{f}_n - f_0\|_2 &\ge \|f_0 - f_{0,\tau}\|_2 - \|\hat{f}_n - f_{0,\tau}\|_2 \\
    &\ge |\Delta_\tau| - c_{47} C_4 \delta_2 (\log n)^2  \\
    &\ge c_{47} C_4 (\log n)^2 \left(\frac{c_{47}^{-1} C_4^{-1}}{2^p \tau^{p-1} \log^2 n} - \delta_2\right) \\
    &= c_{47} C_4 (\log n)^2 (2 \delta_3 - \delta_2) \ge \delta_3
\end{align*} provided $c_{47} \land C_4 \land (\log n) \ge 1$. This completes the proof of the claim \eqref{eq:proof-lb-claim-step3}.

\medskip
\noindent {\sc Step 4. Combining the Separate Lower Bounds. } From {\sc Step 1} and {\sc Step 2} we have
\begin{align}
\label{eq:thm:adaptive-huber-lb:step1}
    \sup_{(X,f_0,\epsilon) \in \mathcal{U}(d, p, \mathcal{F}_0)} \mathbb{P}\Big(\forall \hat{f}_n \in \mathcal{S}_1, \|\hat{f}_n - f_0\|_2 \ge \delta_1\Big)=1 
\end{align} and
\begin{align*}
	\sup_{(X,f_0, \varepsilon) \in \mathcal{U}(d, p, \mathcal{F}_0)} \mathbb{P}\Big(\exists \hat{f}_n \in \mathcal{S}_2(\delta_2),\|\hat{f}_n - f_0\|_2 \ge \delta_2\Big) \ge 1-\frac{C_3 \lor \log \sqrt{2(d+1)}}{\log n}
\end{align*} for all  $\bar{L}\bar{N} \ge (C_2\log n)^2$.

We first combine the results in {\sc Step 2} and {\sc Step 3}. We argue that if $\bar{L}\bar{N} \ge (C_2\log n)^2$ or $\delta_3 \ge \delta_2$, then
\begin{align}
\label{eq:thm:adaptive-huber-lb:step23}
    \forall n \ge 3, ~~~~~~ \sup_{(X,f_0, \varepsilon) \in \mathcal{U}(d,p, \mathcal{F}_0)} \mathbb{P}\Big(\exists \hat{f}_n \in \mathcal{S}_2(\delta_{2}) \cup \mathcal{S}_1, \|\hat{f}_n-f_0\|_2 \ge \delta_2 \lor \delta_3 \Big) \ge 1 - \frac{C_9}{\log n}
\end{align} for $C_9 = C_3 \lor C_8 \lor \log\sqrt{2(d+1)}$.

We prove the claim \eqref{eq:thm:adaptive-huber-lb:step23} by considering the two separate cases. On one hand, when $\delta_3 \ge \delta_2$, the claim is a direct consequence of the claim \eqref{eq:proof-lb-claim-step3} in {\sc Step 3} and the fact that $\mathcal{S}_1$ is not an empty set. And we do not need to impose $\bar{N}\bar{L} \ge (C_2 \log n)^2$. On the other hand, when $\delta_3 < \delta_2$, it follows from the claim \eqref{eq:thm:adaptive-huber-lb:delta2} in {\sc Step 2} that the above claim \eqref{eq:thm:adaptive-huber-lb:step23} holds provided $\bar{N}\bar{L} \ge (C_2 \log n)^2$.

Finally, we combine \eqref{eq:thm:adaptive-huber-lb:step1} and \eqref{eq:thm:adaptive-huber-lb:step23} to prove the main statement. Specifically, we claim that for all $n\ge 3$, the following holds
\begin{align}
\label{eq:thm:adaptive-huber-lb:step4}
	\sup_{(X,f_0, \varepsilon) \in \mathcal{U}(d, p, \mathcal{F}_0)} \mathbb{P}\Big(\exists \hat{f}_n \in \mathcal{S}_2(\delta^*)\cup \mathcal{S}_1,\|\hat{f}_n - f_0\|_2 \ge \delta^* \Big) \ge 1 - \frac{C_{10}}{\log n} ~\text{with}~ \delta^* = \delta_1 \lor \delta_2 \lor \delta_3
\end{align} for some large enough $C_{10}$.
To see this, we first consider the case where $\bar{N}\bar{L} \ge (C_2 \log n)^2$ or $\delta_3 \ge \delta_2$. In this case, if $\delta_1 \ge \delta_2 \lor \delta_3$, then for all $\hat{f}_n \in \mathcal{S}_1$,  $\|\hat{f}_n - f_0\|_2 \ge \delta_1 = \delta^*$. Combined with the fact that $\mathcal{S}_1$ is not an empty set, there exists some $\hat{f}_n \in \mathcal{S}_1 \cup \mathcal{S}_2(\delta_*)$ such that $\|\hat{f}_n - f_0\|_2 \ge \delta^*$. If $\delta_1 \le \delta_2 \lor \delta_3$,   \eqref{eq:thm:adaptive-huber-lb:step23} ensures that with probability at least $1-C_9 / (\log n)$, there exists some $\hat{f}_n \in \mathcal{S}_1 \cup \mathcal{S}_2(\delta_2) \subseteq \mathcal{S}_1 \cup \mathcal{S}_2(\delta_*)$ such that $\|\hat{f}_n - f_0\|_2 \ge \delta_2 \lor \delta_3 = \delta^*$. Hence \eqref{eq:thm:adaptive-huber-lb:step4} holds in this case.

It remains to prove \eqref{eq:thm:adaptive-huber-lb:step4} if $\bar{N}\bar{L} \le (C_2 \log n)^2$ and $\delta_2 > \delta_3$. In this case, we have
\begin{align*}
\frac{\delta_1}{\delta_2} \ge C_{11} \frac{\big(\bar{N}^2 \bar{L}^2 \log^5 (\bar{N}\bar{L})\big)^{-\alpha}}{\sqrt{\big(\frac{\bar{N}^2\bar{L}^2}{n}\big)^{1-1/p}}} &= C_{11} n^{\frac{1}{2}(1-1/p)} (\bar{N}^2\bar{L}^2)^{-\alpha-\frac{1}{2}(1-\frac{1}{p})} \log (\bar{N} \bar{L})^{-5\alpha} \\
&\ge C_{12} n^{\frac{1}{2}(1-\frac{1}{p})} (\log n)^{-C_{13}}.
\end{align*} 
Then there exists some universal constant $C_{14}$ such that $\delta_1 \ge \delta_3$ for all the $n \ge C_{14}$. Consequently, there exists some $\hat{f}_n \in \mathcal{S}_1\subseteq \mathcal{S}_1 \cup \mathcal{S}_2(\delta^*)$, the inequality $\|\hat{f}_n - f_0\|_2 \ge \delta_1 = \delta^*$ holds. 

Putting these pieces together, we can conclude that the claim \eqref{eq:thm:adaptive-huber-lb:step4} holds for $C_{10} = \log (C_{14}) \lor C_9$ and hence completes the proof of Theorem~\ref{thm:generic-lower-bound} Part (1). \qed

\subsubsection{Proof of Theorem \ref{thm:generic-lower-bound} Claim (2)}

The proof is very similar to that of Claim (1). For each $n \ge \sqrt{2(d+1)} \lor C_{14}$, let
\begin{align*}
    \delta_{n,*} = \min_{\tau \ge c_{12}, \bar{L}, \bar{N} \ge c_{11}} \delta_1 \lor \delta_2 \lor \delta_3.
\end{align*} Then we have
\begin{align*}
    \delta_{n,*} &\asymp \min_{\tau \ge c_{12}, \bar{L}, \bar{N} \ge c_{11}} \frac{1}{3} \left(\delta_1 + \delta_2 + \delta_3\right) \\
    &\asymp \min_{\tau \ge c_{12}, \bar{L}, \bar{N} \ge c_{11}} \frac{1}{(\bar{N}\bar{L})^{2\alpha} (\log \bar{N}\bar{L})^{5\alpha}} + \frac{\bar{N} \bar{L}}{\sqrt{n}\log n} \sqrt{\tau \land \bigg(\frac{n(\log n)^2}{\bar{N}^2 \bar{L}^2}\bigg)^{1/p}} + \frac{1}{\tau^{p-1} \log^2 n} \\
    &=\min_{\tau \ge c_{12}, \bar{L}, \bar{N} \ge c_{11}} \mathcal{L}_n(\tau, \bar{N},\bar{L})
\end{align*} It is easy to verify that $\mathcal{L}_n(\tau, \bar{N}, \bar{L})$ can attain optimal value when
\begin{align*}
    \bar{N}\bar{L} \asymp n^{\frac{\nu^*}{2(2\alpha+\nu^*)}} (\log n)^{\frac{2-\nu^*-5\alpha}{2\alpha+\nu^*}} ~~~~~~ \text{and} ~~~~~~ \tau \asymp \left(n^{\frac{\alpha}{2\alpha+\nu^*}} \log n^{\frac{3\alpha - 2}{2\alpha+\nu^*}}\right)^{2(1-\nu^*)}
\end{align*} and thus
\begin{align*}
    \delta_{n,*} \asymp \min_{\tau \ge c_{12}, \bar{L}, \bar{N} \ge c_{11}} \mathcal{L}_n(\tau, \bar{N},\bar{L}) \asymp n^{-\frac{\alpha \nu^*}{2\alpha+\nu^*}} (\log n)^{-\frac{3\alpha\mu^*+4\alpha}{2\alpha+\nu^*}}.
\end{align*} 

We will show that, for large enough $n$,
\begin{align}
    \inf_{\bar{N},\bar{L} \ge c_{11}, \tau \ge c_{12}} \sup_{(X,f_0,\varepsilon) \in \mathcal{U}(d,p,\mathcal{F}_0)} \mathbb{P}\left[\exists \hat{f}_n\in \mathcal{S}_1\cup \mathcal{S}_2(\delta_{n,*}) ~\text{s.t.}~ \|\hat{f}_n - f_0\|_2 \ge \delta_{n,*}\right] \ge 1 - \frac{C_{15}}{\log n} \label{eq:target-part2}
\end{align}

We prove claim \eqref{eq:target-part2} by considering the three cases with regard the choice of $\bar{L}$, $\bar{N}$ and $\tau$.

\noindent \emph{Case 1. $\delta_1 \ge \delta_2 \lor \delta_3$. } In this case, we must have $\delta_1 \ge \delta_{n,*}$. Hence it follows from the claim \eqref{eq:proof-lb-claim-approx} in the proof of Claim (1) {\sc Step 1} that, for $n \ge 3$,
\begin{align}
    \inf_{\tau \ge c_{12}, \bar{N},\bar{L} \ge c_{11}, \delta_1 \ge \delta_2 \lor \delta_3} \sup_{(X,f_0,\varepsilon)\in \mathcal{U}(d,p,\mathcal{F}_0)} \mathbb{P} \left( \exists \hat{f}_n \in \mathcal{S}_1, ~~ \|\hat{f}_n - f_0\|_2 \ge \delta_{n,*}\right) = 1
\label{eq:part2-approx-error}
\end{align}

\noindent \emph{Case 2. $\delta_3 \ge \delta_2 \lor \delta_1$. } In this case, we have $\delta_{3} \ge \delta_2$ and $\delta_3 \ge \delta_{n,*}$, then it follows directly from of claim \eqref{eq:proof-lb-claim-step3} in the proof of Claim (1) {\sc Step 3} that, for $n \ge 3$,
\begin{align}
    \inf_{\tau \ge c_{12}, \bar{N},\bar{L} \ge c_{11}, \delta_3 \ge \delta_2 \lor \delta_1} \sup_{(X,f_0,\varepsilon)\in \mathcal{U}(d,p,\mathcal{F}_0)} \mathbb{P} \left( \exists \hat{f}_n \in \mathcal{S}_1, ~~ \|\hat{f}_n - f_0\|_2 \ge \delta_{n,*}\right) \ge 1 - \frac{C_8}{\log n}
\label{eq:part2-bias}
\end{align}

\noindent \emph{Case 3. $\delta_{2}\ge \delta_1 \lor \delta_3$. } Because $\delta_2 \ge \delta_1$, we thus let $n$ be large enough ($n \ge n'$) such $\bar{N}\bar{L}$ satisfies
\begin{align*}
    (\bar{N}\bar{L}) \ge C_1 C_4 (n\log^2 n)^{\frac{1-1/p}{2(1-1/p+2\alpha)}} \log(\bar{N}\bar{L})^{-\frac{5\alpha}{1-1/p+2\alpha}} \ge (C_2 \log n)^2
\end{align*} When $\bar{L} \ge C_2 \log n$, we can apply Proposition \ref{prop:lowerbound-variance} with $N$, $L$ satisfying (1) $c_{17} L \log n \le \bar{L}$, (2) $c_{18} N \le \bar{N}$, (3) $(NL)^2 \ge c_{14}$ and (4)
\begin{align*}
    \delta_{n,*} \overset{(a)}{\gtrsim} \delta_n \asymp \frac{N L}{\sqrt{n}} \sqrt{  \Bigg\{\tau \land \bigg(\frac{n}{(NL)^2}\bigg)^{1/p}\Bigg\}} \bigwedge 1\overset{(b)}{\ge} \delta_{n,*}.
\end{align*} here $\delta_n$ \eqref{eq:prop-lb-delta-n} is the notation used in Proposition \ref{prop:lowerbound-variance}. We can choose some $N,L$ satisfying (1)-(3) such that the inequality (b) in (4) holds because our construction in the proof of Claim (1) {\sc Step 2} asserts that there exists some $N',L'$ such that $N'L'\ge c_{14}$, $c_{18} N' \le \bar{N}$, $c_{17} (\log n) L'\le \bar{L}$ satisfying
\begin{align*}
    \delta_n \ge \frac{\delta_{2,*}}{C_{4}} = \delta_2 \ge \delta_{n,*}.
\end{align*} Different from the choice of $N$ and $L$ in {\sc Step 2} which matches $\delta_n \asymp \delta_{2,*}$, now we choose small $N$ and $L$ instead such that $\delta_n \le C_{16} \delta_{n,*}$. Therefore, by Proposition \ref{prop:lowerbound-variance}, when $X\sim\mathrm{Uniform}([0,1]^d)$, $f_0=0$, $\varepsilon$ follows some symmetric distribution independent of $X$, with probability at least $1-C'/(\log n)^2$, there exists some $\tilde{f}_n \in \mathcal{F}_n(d,c_{17} (\log n)L, c_{18} N, 1) \subset \mathcal{F}_n(d,\bar{L},\bar{N}, 1)$ satisfying $\|\tilde{f}_n - f_0\|_2 \ge \delta_n \ge \delta_2 \ge \delta_{n,*}$ and
\begin{align*}
    \hat{\mathcal{R}}_\tau(\tilde{f}_n) &\le \min\left\{\hat{\mathcal{R}}_\tau(f_0), \inf_{\|f\|_\infty \le 1} \hat{\mathcal{R}}_\tau(f) + c_{16} \delta_n\right\} \\
    &\le \min\left\{\hat{\mathcal{R}}_\tau(f_{0,\tau}), \inf_{f\in \mathcal{F}(d,\bar{L},\bar{N},1)} \hat{\mathcal{R}}_\tau(f) + c_{16} C_{16}  \delta_{n,*}\right\},
\end{align*} where the inequality follows from the fact that $f_0=f_{0,\tau}$ when $\varepsilon$ is symmetric. The discussion when $\bar{N} \ge C_2 \log n$ is similar. In this case, we can find some $\tilde{f}_n \in \mathcal{F}_n(d,c_{19} L, c_{20} (\log n) N, 1) \subset \mathcal{F}_n(d,\bar{L},\bar{N}, 1)$ satisfying $\hat{\mathcal{R}}_\tau(\tilde{f}_n) \le \min\left\{\hat{\mathcal{R}}_\tau(f_{0,\tau}), \inf_{\|f\|_\infty \le 1} \hat{\mathcal{R}}_\tau(f) + c_{16} C_{17}  \delta_{n,*}\right\}$ and $\|\tilde{f}_n - f_0\|_2 \ge \delta_{n,*}$. Putting these pieces together, we can conclude that, for $n \ge n'$,
\begin{align}
\label{eq:part2-stochastic-error}
    \inf_{\tau \ge c_{12}, \bar{N},\bar{L} \ge c_{11}, \delta_2 \ge \delta_1 \lor \delta_3} \sup_{(X,f_0,\varepsilon)\in \mathcal{U}(d,p,\mathcal{F}_0)} \mathbb{P} \left( \exists \hat{f}_n \in \mathcal{S}_2(\delta_{n,*}), ~~ \|\hat{f}_n - f_0\|_2 \ge \delta_{n,*}\right) \ge 1-\frac{C_{18}}{\log n}
\end{align} by assigning $C'=c_{16} (C_{16} \lor C_{17})$.

Putting the pieces \eqref{eq:part2-approx-error}, \eqref{eq:part2-bias}, \eqref{eq:part2-stochastic-error} together completes the proof of Claim (2). \qed

\subsection{Proof of Theorem \ref{thm:generic-lower-bound-lse}}

\noindent The proof is almost identical to that of Theorem \ref{thm:generic-lower-bound} except we choose $\tau=\infty$, so we only provide a sketch here to highlight the difference. Let $\mathcal{S}_1$ and $\mathcal{S}_2(\delta)$ be the same as in the proof of Theorem \ref{thm:generic-lower-bound} but with $\tau=\infty$. Moreover, let $\bar{N}, \bar{L} \in \mathbb{N}^+$ be such that $\bar{N}, \bar{L} \ge \max\{c_{18}, c_{19}\}=c_{11}$.

\begin{proof}[Proof of Theorem \ref{thm:generic-lower-bound-lse} Claim (1)]
Similar to the proof of Theorem \ref{thm:generic-lower-bound} Claim (1), we have
\begin{align*}
    \sup_{(X,f_0,\epsilon) \in \mathcal{U}(d, p, \mathcal{F}_0)} \mathbb{P}\Big(\forall \hat{f}_n \in \mathcal{S}_1, \|\hat{f}_n - f_0\|_2 \ge \delta_1\Big)=1,
\end{align*} with $\delta_1 = C_1 \big(\bar{N}^2\bar{L}^2 \log ^5(\bar{N}\bar{L})\big)^{-\alpha}$, and for any $\bar{N}\bar{L} \ge (C_2 \log n)^2$\begin{align*}
	\sup_{(X,f_0, \varepsilon) \in \mathcal{U}(d, p, \mathcal{F}_0)} \mathbb{P}\Big(\exists \hat{f}_n \in \mathcal{S}_2(\delta_2),\|\hat{f}_n - f_0\|_2 \ge \delta_2\Big) \ge 1 - \frac{C_3}{(\log n)^2} ~\text{ for all }~ n\ge \sqrt{2(d+1)},
\end{align*} where $\delta_2$ satisfies
\begin{align*}
    \delta_2 = \frac{1}{C_4} \delta_{2,*} ~~~~~~\text{with}~~~~~~ \delta_{2,*} = \left(\frac{\bar{N} \bar{L}}{\sqrt{n}\log n} \right)^{1-1/p} \bigwedge
    1.
\end{align*} Here $C_1$--$C_4$ are same universal constants as those in Theorem \ref{thm:generic-lower-bound}. So it remains to combine the two lower bounds together. Similar to the discussion before, there exists some large $n_0$ such that for any $n \ge n_0$, we have $\delta_{1} \ge \delta_2$ for any $\bar{N}\bar{L} \le (C_2\log n)^2$. Therefore, we can conclude that
\begin{align*}
    \sup_{(X,f_0, \varepsilon) \in \mathcal{U}(d, p, \mathcal{F}_0)} \mathbb{P} \Big(\exists \hat{f}_n \in \mathcal{S}_1 \cup \mathcal{S}_2(\delta_2),& \|\hat{f}_n - f_0\| \ge \delta_1 \lor \delta_2 \Big) \\
    &\ge 1-\frac{C_3 \lor \log (\sqrt{2(d+1)}) \lor \log n_0}{\log n}
\end{align*} this completes the proof of Claim (1).
\end{proof}

\begin{proof}[Proof of Theorem \ref{thm:generic-lower-bound-lse} Claim (2)]
    Similarly, for any $n\ge \sqrt{2(d+1)} \lor n_0$, let $\delta_{n,*}=\min_{\bar{N},\bar{L} \ge c_{11}} \delta_1 \lor \delta_2$. Then
    \begin{align*}
        \delta_{n,*} \asymp \min_{\bar{N}, \bar{L} \ge c_{11}} \mathcal{L}_n(\bar{N},\bar{L}) ~~~~\text{with}~~~~ \mathcal{L}_n(\bar{N},\bar{L}) = \frac{1}{(\bar{N}\bar{L})^{2\alpha} (\log \bar{N}\bar{L})^{5\alpha}} + \left(\frac{\bar{N} \bar{L}}{\sqrt{n}\log n}\right)^{1-1/p}.
    \end{align*} It is easy to verify that $L_n(\bar{N}, \bar{L})$ attains optimal value at $\bar{N}\bar{L} \asymp n^{\frac{\nu^\dagger}{2(2\alpha+\nu^\dagger)}} (\log n)^{\frac{\nu^\dagger - 5\alpha}{2\alpha+\nu^\dagger}}$, thus
    \begin{align*}
        \delta_{n,*} \asymp n^{-\frac{\alpha \nu^\dagger}{2\alpha + \nu^\dagger}} (\log n)^{-\frac{7\alpha \nu^\dagger}{2\alpha + \nu^\dagger}}.
    \end{align*}
    The following discussion is similar. We consider the two cases (1) $\delta_1 \ge \delta_2$ and (2) $\delta_2 \ge \delta_1$ which is determined by the choice of $\bar{N}$ and $\bar{L}$, and use a similar argument to show that
    \begin{align*}
        \forall n \ge 3, ~~~~\inf_{\bar{N},\bar{L} \ge c_{11}, \delta_1 \ge \delta_2} \sup_{(X,f_0,\varepsilon)\in \mathcal{U}(d,p,\mathcal{F}_0)} \mathbb{P} \left( \exists \hat{f}_n \in \mathcal{S}_1, ~~ \|\hat{f}_n - f_0\|_2 \ge \delta_{n,*}\right) = 1
    \end{align*} and
    \begin{align*}
        \inf_{\bar{N},\bar{L} \ge c_{11}, \delta_2 \ge \delta_1} \sup_{(X,f_0,\varepsilon)\in \mathcal{U}(d,p,\mathcal{F}_0)} \mathbb{P} \left( \exists \hat{f}_n \in \mathcal{S}_2(\delta_{n,*}), ~~ \|\hat{f}_n - f_0\|_2 \ge \delta_{n,*}\right) \ge 1-\frac{C_{18}}{\log n}
    \end{align*} for any $n \ge n'$, where $n'$ is a large constant such that $(\bar{N}\bar{L}) \ge (C_2 \log n)^2$ whenever $\delta_2 \ge \delta_1$. Putting these pieces together completes the proof.
\end{proof}

\subsection{Proof of Lemma~\ref{lemma:convergence-f0tau}}

The proof relies on the tail probability given in Lemma \ref{lemma:convergence-rate} with $f^*=f_{0,\tau}$. Without loss of generality, we assume that $\omega \lor D^2 \le n$, otherwise the bound is trivial. Since $X$ is independent of $\varepsilon$, we claim that there exists some constant $C_1 \in [-1/2,1/2]$ depending only on $\tau$ such that $f_{0,\tau}=f_0 - C_1$. 
By the first-order condition and with a sufficiently large $\tau$, $f_{0,\tau} = \text{argmin}_{f\in \Theta} \mathcal{R}(f)$ satisfies
\begin{align*}
    \mathbb{E}[\psi_\tau(\varepsilon + f_0(X) - f_{0,\tau}(X) )|X=x] = 0 .
\end{align*} 
Because $\varepsilon$ and $X$ are independent, we must have $f_0 - f_{0,\tau} \equiv C_1$, where $C_1$ is such that $ \mathbb{E}[\psi_\tau(\epsilon + C_1)]=0$. By Proposition \ref{prop:bias}, we have $\|f_0-f_{0,\tau}\|_2 \le \frac{4}{\tau^{p-1}}$ so that $|C_1|=\|f_0 - f_{0,\tau}\|_2 \le \frac{1}{2}$ if $\tau \ge 8$. We thus choose $\tilde{f} = -C_1$, 
\begin{align*}
    \delta_* = 2 \sqrt{V_n (\tau \land \omega V_n^{-1/p})} ~~~~~~ \text{and} ~~~~~~ B_k = \omega D^2 V_n^{-1/p}.
\end{align*}
and it remains to verify conditions (1)-(4) of Lemma~\ref{lemma:convergence-rate}.

For condition (1), note that $f_n=f_{0,\tau}$, we have $\|f_n - f_{0,\tau}\|_2 = 0$ and $\mathcal{R}_\tau(f_n) - \mathcal{R}_\tau(f_{0,\tau})=0$.

For condition (2), similar to the proof of Proposition \ref{prop:strong-convex}, by Taylor's expansion, if $f\in \Theta$ with $M=1$, we have
\begin{align}
\label{eq:lemma:convergence-f0tau:eq1}
    \mathcal{R}_\tau(f) - \mathcal{R}_\tau(f_{0,\tau}) = \mathbb{E}\Big[ \psi_\tau(\varepsilon + C_1) &\Delta_{f,\tau}(X) \nn \\+ &\int_{0}^{\Delta_{f,\tau}(X)} 1\{|\varepsilon +C_1 + t| \le \tau\}(\Delta_{f,\tau}(X)-t)\mathrm{d}t\Big],
\end{align} where $\Delta_{f,\tau}(X)=(f_{0,\tau}-f)(X)$. By  the definition of $C_1$ and tower rule,
\begin{align}
\label{eq:lemma:convergence-f0tau:eq2}
    \mathbb{E}\Big[ \psi_\tau(\varepsilon + C_1) (f - f_{0,\tau})(X) \Big] = \mathbb{E}\Big[ \mathbb{E}[\psi_\tau(\varepsilon + C_1)|X] (f - f_{0,\tau})(X) \Big] = 0.
\end{align} Moreover, 
\begin{align*}
    &\mathbb{E} \Big[ \int_{0}^{\Delta_{f,\tau}(X)} 1\{|\varepsilon +C_1 + t| \le \tau\}(\Delta_{f,\tau}(X)-t)\mathrm{d}t \Big| X=x\Big] \\
    =& \mathbb{E} \Big[ \int_{0}^{\Delta_{f,\tau}(X)} \big(1-1\{|\varepsilon +C_1 + t| > \tau\}\big)(\Delta_{f,\tau}(X)-t)\mathrm{d}t \Big| X=x\Big] \\
    \ge& \frac{1}{2} (\Delta_{f,\tau}(x))^2 - \mathbb{E} \Big[ \int_{0}^{\Delta_{f,\tau}(X)} \big(1\{|\varepsilon| > \tau/2\} + 1\{|C_1 + t| > \tau/2\}\big)(\Delta_{f,\tau}(X)-t)\mathrm{d}t \Big| X=x\Big] \\
    \ge& \frac{1}{2} (\Delta_{f,\tau}(x))^2 \{1-\mathbb{P}(|\epsilon|\ge \tau/2|X=x)\},
\end{align*} where the last inequality follows from $|C_1 + t| \le 2.5 < 4 \le \tau/2$. Together with the fact that
\begin{align*}
    \mathbb{P}(|\varepsilon|>\tau/2|X=x) \le \frac{\mathbb{E}(|\epsilon|^p|X=x)}{(\tau/2)^p} \le \frac{1}{4^p} \le \frac{1}{2}, 
\end{align*} this yields
\begin{align}
\label{eq:lemma:convergence-f0tau:eq3}
    \mathbb{E} \Big[ \int_{0}^{\Delta_{f,\tau}(X)} 1\{|\varepsilon +C_1 + t| \le \tau\}(\Delta_{f,\tau}(X)-t)\mathrm{d}t\Big] \ge \frac{1}{4} \|f-f_0\|_2^2.
\end{align} 
Substituting \eqref{eq:lemma:convergence-f0tau:eq2} and \eqref{eq:lemma:convergence-f0tau:eq3} into \eqref{eq:lemma:convergence-f0tau:eq1}, we  conclude that
\begin{align*}
    \mathcal{R}_\tau(f) - \mathcal{R}_\tau(f_{0,\tau}) \ge \frac{1}{4} \|f-f_{0,\tau}\|_2^2
\end{align*} 
holds for all the $f\in \Theta$ with $M=1$. This validates condition (2).

For condition (3), by letting $\tilde{\varepsilon} = \varepsilon + C_1$ it suffices to bound 
\begin{align}
\label{eq:lemma:convergence-f0tau:eq4}
    \mathbb{E}\Bigg[\sup_{h\in \mathcal{H}_{n,\tau, B}(\delta)} \Big|\frac{1}{n} \sum_{i=1}^n h(X_i, \tilde{\varepsilon}_i) - \mathbb{E} h(X,\tilde{\varepsilon}) \Big|\Bigg]
\end{align} for
\begin{align*}
    h_f(X,\tilde{\varepsilon}) = \psi_{\tau\land B}(\tilde{\varepsilon}) \Delta_{f,\tau}(X) + \int_{0}^{\Delta_{f,\tau}(X)} 1\{|\tilde{\varepsilon} + t| \le \tau\}(\Delta_{f,\tau}(X)-t)\mathrm{d}t,
\end{align*} and $\mathcal{H}_{n,\tau, B}(\delta)=\{h_f: f\in \mathcal{F}_n  \cap \Theta_*(\delta)\}$ whose envelop function $H$ can be chosen as
\begin{align*}
    \sup_{f\in \mathcal{F}_n \cap \Theta_*(\delta)} h(X,\tilde{\varepsilon}) \le H(X,\epsilon) = 2|\psi_{\tau \land B}(\tilde{\varepsilon})| + 2 .
\end{align*}

Similar to the proof of Lemma \ref{lemma:convergence-rate}, we have for any $f\in \Theta$ that
\begin{align*}
    \mathbb{E} |h(X, \tilde{\varepsilon})|^2 &\le 2 \mathbb{E} \{\psi_\tau(\tilde{\varepsilon}) \Delta_{f,\tau}(X)\}^2 + 2 \mathbb{E} \{\frac{1}{2} |\Delta_{f,\tau}|^2\}^2 \\
    &\le 2 \mathbb{E} \{ \mathbb{E}[\psi_\tau(\tilde{\varepsilon})^2|X]\Delta_{f,\tau}(X)^2\} + 2\mathbb{E} \Delta_{f,\tau}(X)^2 \\
    &\le 2 (\mathbb{E} |\tilde{\varepsilon}|^2 + 1) \|f-f_{0,\tau}\|_2^2 \\
    &\le 2 (2v_2 + 2C_1^2 + 1) \|f-f_{0,\tau}\|_2^2 = C_2 \|f - f_{0,\tau}\|_2^2 , 
\end{align*}where $C_2 > 0$ is a constant.
Next we use Lemma \ref{lemma:maximal-inequality} to bound \eqref{eq:lemma:convergence-f0tau:eq4}. Similar to the proof of Lemma \ref{lemma:empirical-process}, because any $(\varepsilon/((B\land \tau)+1))$-net of $\mathcal{F}_n|_X$ under the $\|\cdot\|_\infty$ norm is also an $\varepsilon$-net of $\mathcal{H}_n|_X$ under the $\|\cdot\|_\infty$ norm, it follows from Lemma \ref{lemma:cover-number-nn} that
\begin{align*}
    \log \mathcal{N}_\infty (\epsilon, \mathcal{H}_n, n) \le \log \mathcal{N}_\infty (\epsilon/(\tau \land B), \mathcal{F}_n, n) \le C_3 \log(en(\tau\land B)/\epsilon) (\bar{N} \bar{L})^2 \log (\bar{N}\bar{L})
\end{align*} for all $\epsilon \in (0, n((\tau \land B)+1))$.
From previous discussions, we have $\|H\|_\infty \le 2\tau+2 \le 4\tau$, $\|H\|_2 \le \sqrt{8\mathbb{E}|\tilde{\varepsilon}|^2 + 8} = C_4$, and
\begin{align*}
    \sigma^2 = \sup_{h\in \mathcal{H}_n(\delta)} \mathbb{E} h(X,\varepsilon)^2 \le C_2 \sup_{f\in \mathcal{F}_n \cap \Theta_*(\delta)}\|f-f_{0,\tau}\|_2 \le C_2 \delta^2.
\end{align*} Therefore, letting $r=\sigma / \|H\|_2 = C_5 \delta$, we have
\begin{align*}
    J(r, \mathcal{H}_n, H) &= \int_{0}^{r} \sup_{Q\in \mathbb{P}(n)} \sqrt{1 + \log\mathcal{N}(\epsilon\|H\|_2, \mathcal{H}_n, \|\cdot\|_{2,Q})} \mathrm{d}\epsilon \\
    &\le \int_{0}^{C_5 \delta} \sqrt{1 + \log \mathcal{N}_\infty(\epsilon \|H\|_2, \mathcal{H}_n, n)} \mathrm{d}\epsilon \\
    &\le \bar{N}\bar{L} \sqrt{\log(\bar{N} \bar{L})}  \int_{0}^{C_5 \delta} \sqrt{1 + \log\Big(\frac{en(\tau \land B)}{\epsilon C_4}\Big) \lor 0} \mathrm{d}\epsilon \\
    &\le \bar{N}\bar{L} \sqrt{\log(\bar{N} \bar{L})} \Bigg(C_5 \delta + \sqrt{\log(n(\tau \land B))} C_5 \delta + C_5 \delta \big(\sqrt{\log(1/C_6\delta) \lor 1}\big) \Bigg).
\end{align*} 
When $\delta\ge \frac{1}{n}$, we conclude that
\begin{align}
    J(r, \mathcal{H}_n, H) \leq C_7 \delta \bar{N}\bar{L} \sqrt{\log(\bar{N} \bar{L}) \log (n(\tau \land B))} 
\end{align} 
for some constant $C_7>0$. Denote $V_{n,B}= n^{-1} (\bar{N}\bar{L})^2 {\log(\bar{N} \bar{L}) \log (n(\tau \land B))} $. By the maximal inequality in Lemma \ref{lemma:maximal-inequality},
\begin{align*}
    \mathbb{E}\Bigg[ \sup_{h\in \mathcal{H}_n(\delta)} \Big|\frac{1}{\sqrt{n}} \sum_{i=1}^n h(X_i, \tilde{\varepsilon}_i) - \mathbb{E} h(X,\tilde{\varepsilon}) \Big|\Bigg] &\le \|H\|_2 J(r, \mathcal{H}_n, H) + \frac{\|H\|_\infty J(r, \mathcal{H}_n, H)^2}{r^2 \sqrt{n}} \\
    &\le C_8 \Bigg( \delta \sqrt{n V_n} + (\tau \land B) \frac{nV_n}{\sqrt{n}}\Bigg) .
\end{align*}
This further implies that for any $\delta \geq \delta_n \geq 1/n$,
\begin{align*}
    \mathbb{E}\Bigg[\sup_{h\in \mathcal{H}_n(\delta)} \Big|\frac{1}{n} \sum_{i=1}^n h(X_i, \tilde{\varepsilon}_i) - \mathbb{E} h(X,\tilde{\varepsilon}) \Big|\Bigg] \le \phi_n(\delta,B) = C_8 \Bigg( \delta \sqrt{V_n} + (\tau \land B) V_n\Bigg)  .
\end{align*} 
It is easy to see that $\phi_n(\alpha \delta,B) \le \alpha \phi_n(\delta,B)$ for all $\alpha\ge 1$ and $B>1$ because $\phi_n(\delta,B)$ is linear in $\delta$ and $\phi_n(0,B)>0$.

For condition (4), our choice of $B_k$ satisfies $V_{n,B_k} \le 4V_n$, this yields
\begin{align*}
    (B_k \land \tau) V_{n,B_k} \le 4 (B_k \land \tau) V_n \le 4 \left\{(V_n^{1-1/p} \omega D^2) \land (V_n \tau) \right\} \le D^2 \delta_*^2
\end{align*} provided $D\ge 1$, combined with the fact that $V_n \le D^2 V_n \le \delta_*^2$, we find 
\begin{align*}
    \phi_n(2^k \delta^\dagger, B_k) &\le C_9 \left(2^k \delta^\dagger \sqrt{V_{n, B_k}} + (B_k \land \tau) V_{n, B_k}\right) \\&\le C_9\left(2^k \delta^\dagger D \delta_* + D^2 \delta_*^2\right) \le C_9 2^{2k} (\delta^\dagger)^2.
\end{align*} for $\delta^\dagger = D\delta_* + n^{-50}$. Similar to the proof of Theorem \ref{thm:generic-bound}, Lemma \ref{lemma:convergence-rate} implies
\begin{align*}
    \mathbb{P}\Bigg[\sup_{f\in \mathcal{S}_{n,\tau}(n^{-50})} &\|f-f_{0,\tau}\|_2 \ge C_{10} (D\delta_* + n^{-50})\Bigg] \\
    &\le \sum_{k=1}^\infty 2\exp\left(-C_{11} n V_n 2^{2k}\right) + \frac{1\{\tau > \omega D^2 V_n^{-1/p}\}}{\omega^{p-1} D^{2p}} 4^{-k} \\
    &\lesssim \exp\left( - C_{11} n V_n \right) + \frac{1\{\tau > \omega D^2 V_n^{-1/p}\}}{\omega^{p-1} D^{2p}},
\end{align*} 
which yields the conclusion by observing that $D \delta_* \ge n^{-50}$. \qed

\section{A Detailed Comparison to Related Works}

This section presents a detailed comparison to previous works by highlighting the major differences and our improvements.

\subsection{Upper bound analysis}

In Table~\ref{table:convergence-rate-upper} below, we briefly summarize the obtained upper bounds for the class of Huber ReLU-DNN estimators under heavy-tailed errors. Logarithmic factors are omitted for a clearer presentation.

\begin{table}[htb!]
    \centering \scriptsize
    \begin{tabular}{llllll}
      \hline
      \hline
      Theorem & $\varepsilon$ & $f_0$ & Choice of $\bar{N}\bar{L}$, $\tau$  & Error Bound & Tail\\
      \hline
      Theorem~\ref{thm:generic-bound} & Cond. \ref{cond2} &  any & $\tau \in [c_1, \infty]$, any $\bar{N}$, $\bar{L}$  & \begin{tabular}{@{}c@{}}$\frac{\bar{N}\bar{L}\sqrt{\tau}}{\sqrt{n}} \land (\frac{\bar{N}\bar{L}}{\sqrt{n}})^{1-1/p}$ \\ $+\tau^{1-p} + \delta_{\mathtt{a}}$\end{tabular}  & --- \\
      
      Remark~\ref{coro:generic-bound-lse} & Cond. \ref{cond2} &  any & $\tau=\infty$, any $\bar{N}$, $\bar{L}$  & $ \delta_{\mathtt{a}} + \big(\frac{\bar{N}\bar{L}}{\sqrt{n}}\big)^{1-1/p}$ & $D^{-2p}$\\

      Theorem~\ref{thm:rate-adaptive-huber-upper} & Cond. \ref{cond2} & $\mathcal{H}(d,l,\mathcal{P})$ & $\tau \asymp n^{\frac{2\gamma^*(1-\nu^*)}{2\gamma^*+\nu^*}}$, $\bar{N}\bar{L} \asymp n^{\frac{\nu^*}{4\gamma^*+2\nu^*}}$  & $n^{-\frac{\nu^*\gamma^*}{2\gamma^*+\nu^*}}$ & $e^{-D^2}$\\

      Theorem \ref{thm:rate-lse-upper} & Cond. \ref{cond2} &  $\mathcal{H}(d,l,\mathcal{P})$ & $\tau = \infty$, $\bar{N}\bar{L} \asymp n^{\frac{\nu^\dagger}{4\gamma^*+2\nu^\dagger}}$  & $n^{-\frac{\nu^\dagger\gamma^*}{2\gamma^*+\nu^\dagger}}$ & $D^{-2p}$\\
      \hline
      
      Theorem \ref{thm:convergence-rate-nn-symmetric} & Cond. \ref{cond3} & $\mathcal{H}(d,l,\mathcal{P})$ & $c_1 \le \tau \lesssim 1$, $\bar{N}\bar{L}\asymp n^{\frac{1}{4\gamma^*+2}}$ & $n^{-\frac{\gamma^*}{2\gamma^*+1}}$ & $e^{-D^2}$ \\
      \hline
    \end{tabular}
	\caption{A summary of error bounds on $\|\hat{f} - f_0\|_2$ derived in Section \ref{sec3} under Condition \ref{cond1}. Here $c_1$ is defined in Proposition \ref{prop:strong-convex}, $\delta_\mathtt{a}$ is the neural network approximation error to $f_0$, $\gamma^*=\inf_{(\beta,t)\in \mathcal{P}} (\beta/t)$, $\nu^*=1-1/(2p-1)$ and $\nu^\dagger = 1-1/p$.}
    \label{table:convergence-rate-upper}
\end{table}

\label{subsec:upper-bound-discussion}

\noindent \textbf{Non-asymptotic $L_2$ error bound for regression with Huber loss.} \cite{farrell2020deep} and \cite{farrell2021deep} established upper bounds on the $L_2$ error $\|\hat{f} - f_0\|_2$ for general loss functions under a bounded noise/response assumption. To be specific, \cite{farrell2021deep} assumed that the loss function $\ell(\cdot, Y)$ is $C_{\ell}$-Lipschitz and satisfies
\begin{align}
\label{eq:cond-liang}
    \|f - f^*\|_2^2 \lesssim \mathbb{E} [\ell(f(X),Y) - \ell(f^*(X),Y)] \lesssim \|f - f^*\|_2^2 
\end{align} 
for any $f \in \mathcal{F}(d,\bar{L}, \bar{N}, 1)$, where $f^* = \text{argmin}_f \mathbb{E}[\ell(f(X),Y)]$ is the population risk minimizer. When $Y$ is (almost surely) bounded, they showed that the empirical risk minimizer $\hat{f}$ satisfies with probability at least $1-e^{-t}$ that 
\begin{align*}
    \|\hat{f} - f^*\|_2 \lor \|\hat{f} - f^*\|_n \lesssim \delta_{\mathtt{a}} + C_{\ell}^{1/2} \sqrt{\frac{(\bar{N}\bar{L})^2 \log(\bar{N}\bar{L}) \log n}{n}} + \sqrt{\frac{t}{n}} .
\end{align*}
Our setting is similar to theirs to some extent. For example, we also derive a non-asymptotic $L_2$ error bound without requiring the network weights to be uniformly bounded. According to Proposition \ref{prop:strong-convex}, the Huber loss satisfies condition \eqref{eq:cond-liang} with $f^* = f_{0,\tau}$ and $C_\ell = \tau$. When the noise variable $\varepsilon$ is bounded, applying their results directly yields
\begin{align}
\label{eq:liang-error-bound}
    \|\hat{f} - f_{0,\tau}\|_2 \lesssim \inf_{f\in \mathcal{F}(d,\bar{L},\bar{N}, M)} \|f - f_{0,\tau} \|_\infty + \sqrt{\tau \frac{(\bar{N}\bar{L})^2 \log(\bar{N}\bar{L}) \log n}{n}} .
\end{align}
The technical proofs when $\varepsilon$ only has bounded $p$-moment can be drastically different. We thus rely on a more refined argument that combines peeling with truncation, plus a bias analysis. Consequently, our results are sharper under certain important settings. To be more precise, when $\tau  \lesssim \{n/[(\bar{N}\bar{L})^2 \log(\bar{N}\bar{L}) \log n]\}^{1/p}$, our upper bound is essentially the same as that in \eqref{eq:liang-error-bound} but is derived under a much weaker moment assumption. When $\tau  \gtrsim  \{n/[(\bar{N}\bar{L})^2 \log(\bar{N}\bar{L}) \log n]\}^{1/p}$, our upper bound becomes strictly sharper.


Under the same heavy-tailed noise setting,  \cite{shen2021robust} established the following bound on expected $L_2$ error
\begin{align}
\label{eq:shen-error-bound}
    \EE \|\hat{f} - f_{0,\tau}\|_2 \lesssim  \inf_{f\in \mathcal{F}(d,\bar{L}, \bar{N}, M)} \|f - f_{0,\tau}\|_\infty + \sqrt{\tau \frac{(\bar{N}\bar{L})^2 \log(\bar{N}\bar{L}) \log n}{n^{1-1/p}}}.
\end{align} 
The second (stochastic) term on the right-hand side of \eqref{eq:shen-error-bound} turns out to be sub-optimal, even compared to the least squares ReLU-DNN estimator. To see this, together Theorem \ref{thm:generic-bound} and Corollary \ref{coro:generic-bound-lse} show that the stochastic error for the least squares ReLU-DNN estimator is of order
\begin{align*}
    \sqrt{\frac{\{ (\bar{N}\bar{L})^2 \log(\bar{N}\bar{L}) \log n \}^{1-1/p}}{n^{1-1/p}}} , 
\end{align*} 
which is strictly smaller than theirs because $\bar{N}\bar{L}$ also increases with $n$ to achieve optimal rate in nonparametric settings.

\medskip
\noindent \textbf{$L_2$ bound for least squares ReLU-DNN estimator under heavy-tailed noise.} As a special case ($\tau=\infty$) in the class of Huber estimators, our Corollary \ref{coro:generic-bound-lse} provides an $L_2$ error bound for the LSE under heavy-tailed errors. 
For a general nonparametric LSE over some class $\cF$, \cite{kuchibhotla2019least} showed that if $p=2$, $\mathcal{F}$ is a uniformly bounded function class with finite Pseudo-dimension, and the local envelope function,  defined as $
    F_\delta(x) = \sup_{f\in \mathcal{F}, \|f - f_0\|_2 \le \delta} |f(x) - f_0(x)|$, satisfies $\|F(x)\|_2 \lesssim \delta^s$, then it holds
\begin{align*}
    \mathbb{P}\big\{ \| \hat{f} - f_0 \|_2 \ge D n^{-\frac{1}{2(2-s)}}\big\} \le C D^{-4(2-s)/3} ,
\end{align*} 
where $\hat{f} = \argmin_{f\in \mathcal{F}}   \sum_{i=1}^n \{ Y_i - f(X_i)\}^2$ and $f_0 \in \mathcal{F}$. It is easy to verify that the local envelope function for the ReLU neural network class satisfies the above condition with $s=0$. In this case, their result leads to a stochastic error of order $n^{-1/4}$ and tail probability $D^{-8/3}$.
It remains unclear whether a faster rate can be obtained when $p>2$ by extending their proof techniques. 
Our Corollary \ref{coro:generic-bound-lse}, on the other hand, leads to sharp error bound $n^{-\frac{1}{2}(1-1/p)}$ and smaller tail probability $D^{-2p}$ for any $p\geq 2$.  

\end{document}